 \newtheorem*{akn}{Acknowledgments}
 \newenvironment{akn*}{\begin{akn}\em}{\end{akn}}
\definecolor{darkgreen}{rgb}{.1,.7,.3}
\numberwithin{equation}{section}
\newtheorem{theo}{Theorem}[section]
\newtheorem{lemma}[theo]{Lemma}
\newtheorem{cor}[theo]{Corollary}
\newtheorem{prop}[theo]{Proposition}
\newtheorem{dfntn}[theo]{Definition}
\newtheorem{rem}[theo]{Remark}
\newtheorem{claim}[theo]{Claim}
\newtheorem{ass}[theo]{Assumptions}
\newtheorem{case1}[theo]{Case I:}
\newtheorem{case2}[theo]{Case II:}
\newtheorem{ex}[theo]{Example}
\newtheorem{fact}[theo]{FACT}
\newtheorem{facts}[theo]{FACTS}
\newenvironment{rem*}{\begin{rem}\em}{\end{rem}}
\newenvironment{ex*}{\begin{ex}\em}{\end{ex}}
\newenvironment{claim*}{\begin{claim}\em}{\end{claim}}
\newenvironment{facts*}{\begin{facts}\em}{\end{facts}}
\newenvironment{fact*}{\begin{fact}\em}{\end{fact}}
\newenvironment{case1*}{\begin{case1}\em}{\end{case1}}
\newenvironment{case2*}{\begin{case2}\em}{\end{case2}}
\newcommand{\brref}[1]{(\ref{#1})}
\newcommand{\Pin}[1]{{\mathbb P}^{#1}}
\newcommand{\Projcal}[1]{\mathbb{ P}({\mathcal #1})}
\newcommand{\oofp}[2]{{\mathcal O}_{\mathbb{ P}^{#1}}({#2})}
\newcommand{\scrollcal}[1]{(\Projcal{#1},\tautcal{#1})}
\newcommand {\xel} {(X, L)}
\newcommand{\tautcal}[1]{{\mathcal O}_{\mathbb{P}({\mathcal#1})}(1)}
\newcommand{\num}{\equiv}
\newcommand{\Pp}{\mathbb P}
\newcommand{\Oc}{\mathcal O}
\newcommand{\FF}{\mathbb{F}}
\newcommand{\Ee}{\mathcal{E}_e}
\newcommand{\Eo}{\mathcal{E}_0}
\newcommand{\E}{\mathcal{E}}
\newcommand{\Ext}{{\rm Ext}}
\newcommand{\cA}{\mathcal{A}}
\newcommand{\cD}{\mathcal{D}}
\newcommand{\cH}{\mathcal{H}}
\newcommand{\cK}{\mathcal{K}}
\newcommand{\cL}{\mathcal{L}}
\newcommand{\cF}{\mathcal{F}}
\newcommand{\cM}{\mathcal{M}}
\newcommand{\cG}{\mathcal{G}}
\newcommand{\cV}{\mathcal{V}}
\newcommand{\cQ}{\mathcal{Q}}
\newcommand{\cU}{\mathcal{U}}
\newcommand{\cO}{\mathcal{O}}
\title{Ulrich Bundles on  some threefold scrolls over ${\FF}_e$}
\author{Maria Lucia Fania}
\address{Maria Lucia Fania\\ Dipartimento di Ingegneria e Scienze dell'Informazione e Matematica\\
Universit\`{a} degli Studi di L'Aquila\\
Via Vetoio Loc. Coppito\\67100 L'Aquila\\Italy}
\email{marialucia.fania@univaq.it}
\author{Flaminio Flamini}
\address{Flaminio Flamini\\Dipartimento di Matematica\\ Universit\`a degli Studi di Roma
Tor Vergata \\ Viale della Ricerca Scientifica, 1 - 00133 Roma\\Italy}
\email{flamini@mat.uniroma2.it}
\subjclass[2020]{Primary 14J30, 14J26, 14J60, 14C05; Secondary 14N30}
\keywords{Ulrich bundles, $3$-folds, ruled surfaces, moduli, deformations}
\thanks{The first author is supported by PRIN 2017SSNZAW. The second author has been partially supported by  the MIUR Excellence Department Project MatMod@TOV awarded to the Department of Mathematics, University of Rome Tor Vergata.  Both authors are members of  INdAM-GNSAGA}
\begin{document}

%%%%%%%%%%%%%%%%%%%%%%%%%%%%%%%%%%%%%%%%%%%%%%%%%%%%%%%%%%%%%%%%%%%%%%%%%%%%%%
%%%%%%%%%%%%%%%%%%%% Author(s) and Address %%%%%%%%%%%%%%%%%%%%%%%%%%%%%%%%%%%
%%%%%%%%%%%%%%%%%%%%%%%%%%%%%%%%%%%%%%%%%%%%%%%%%%%%%%%%%%%%%%%%%%%%%%%%%%%%%%

%%%%%%%%%%%%%% ABSTRACT %%%%%%%%%%%%%%%%%%%%%%%%%%%%%%%%%%%%%

\begin{abstract} We investigate the existence of Ulrich vector bundles on suitable $3$-fold scrolls $X_e$ over Hirzebruch surfaces $\mathbb{F}_e$, for any $e \geqslant  0$, which arise as tautological embeddings of  projectivization of very-ample vector bundles  on $\mathbb{F}_e$ that are {\em uniform} in the sense of Brosius and Aprodu--Brinzanescu, cf. \cite{bro} and \cite{ApBr} respectively. 

We explicitely describe components of moduli spaces of rank $r \geqslant 1$ vector bundles  which are Ulrich with respect to the tautological polarization on $X_e$ and whose general point is a slope-stable, indecomposable vector bundle. We moreover determine the dimension of such components, proving also that they are generically smooth. As a direct consequence of these facts, we also compute the {\em Ulrich complexity} of any such $X_e$ and give an effective proof of the fact that these $X_e$'s turn out to be {\em geometrically Ulrich wild}. 

At last, the machinery developed for $3$--fold scrolls $X_e$ allows us to deduce Ulrichness results on rank $r \geqslant 1$ 
vector bundles on $\FF_e$, for any $e \geqslant 0$, with respect to a naturally associated (very ample) polarization. 
\end{abstract}

\maketitle
%%%%%%%%%%%%%%% INTRODUCTION %%%%%%%%%%%%%%%%%%%%%%%%%%%%%

\section*{Introduction} Let $X$ be a smooth irreducible projective variety of dimension $n \geqslant1$ and let $H$ be a very ample divisor on $X$. A vector bundle $\cU$  on $X$ is said to be 
\emph{Ulrich with respect to $H$} if it satisfies suitable cohomological conditions involving some multiples of the polarization induced by $H$ 
(cf. Definition\;\ref{def:Ulrich} below for precise statement and, e.g. \cite[Thm.\;2.3]{b}, for equivalent conditions).  

Ulrich vector bundles first appeared in Commutative Algebra in the paper \cite{U} by B. Ulrich  from  1984, since these bundles enjoy suitable extremal cohomological properties.  
After that, the attention on Ulrich bundles entered in the realm of Algebraic Geometry with the paper \cite{ESW} where, among other things, the authors compute 
the Chow form of a projective variety $X$ using Ulrich vector bundles on $X$, under the assumption that $X$ supports Ulrich bundles.  

In recent years there has been a huge amount of work on Ulrich bundles (for nice surveys the reader is referred to e.g. \cite{Co,CMP}), mainly investigating the following problems: 

\begin{itemize}
\item Given any polarization $H$ on a variety $X$, does there exist a vector bundle $\cU$ which is Ulrich with respect to $H$?
\item Or even more generally, given a variety $X$ does there exist a very ample divisor $H$, inducing a polarization on $X$, 
and a vector bundle $\cU$ on $X$ which is Ulrich with respect to $H$? 
\item What is the smallest possible rank for an Ulrich bundle on a given polarized variety $(X,H)$ (the so called 
{\em Ulrich complexity of $X$ w.r.t. $H$}, denoted by $uc_H(X)$, cf. Remark \ref{Rmk1}-(i) below)? 
\item If Ulrich bundles on $(X,H)$ do exist, are they {\em stable} bundles? If not empty, are their moduli spaces $\mathcal M$ either smooth or at 
least reduced? 
\item What is $\dim (\mathcal M)$? 
\end{itemize}

Although something is known about these problems for some specific classes of varieties (e.g. curves, Segre, Veronese, Grassmann varieties, rational normal 
scrolls, hypersurfaces, some classes of surfaces and threefolds, cf. e.g. \cite{b,c-h-g-s,Co,CMP} for overviews) the above questions  are still open in their full  generality even for surfaces. 

In the present paper we investigate the case when $X$ is a $3$-fold scroll over a Hirzebruch surface $\mathbb{F}_e$, with $e \geqslant 0$. 
More precisely we focus on $3$-fold scrolls $X_e$ arising as embedding, via very-ample tautological 
line bundles $\Oc_{\Pp(\Ee)}(1)$, of projective bundles $\Pp(\E_e)$, where $\mathcal E_e$ are very-ample 
rank-$2$ vector bundles on  $\mathbb{F}_e$ with Chern classes 
$c_1 (\mathcal E_e)$ numerically equivalent to $3 C_e + b_ef$ and $c_2(\mathcal E_e) =k_e$, where $C_e$ and $f$  are the generators 
of ${\rm Num}(\mathbb{F}_e)$ and where $b_e$ and $k_e$ are integers satisfying some natural numerical conditions (cf. Assumptions \ref{ass:AB} and 
Remark \ref{rem:assAB} below). 

In this set-up one gets $3$-fold scrolls $X_e \subset \mathbb P^{n_e}$, with $n_e = 4 b_e - k_e - 6 e + 4$, which 
are non--degenerate and of degree $\deg(X_e) = 6b_e - 9e - k_e$ (cf. \eqref{eq:nde} below), whose hyperplane section divisor 
we denote by $\xi$. The aim of this paper is to study the behaviour of $3$-fold scrolls $(X_e, \xi)$ as above 
in terms of Ulrich bundles they can support. 

A reason for such interest comes from the fact that the existence of Ulrich bundles on geometrically ruled surfaces has been considered in  \cite{a-c-mr,ant,cas} while  in  \cite{f-lc-pl} the existence of Ulrich bundles of rank one and two on  low degree smooth $3$--fold scrolls over a surface was investigated and among such $3$--folds there are scrolls over $\FF_e$ with $e=0, 1$ (results for other polarizations are contained in \cite{hoc}). Hence it is reasonable to explore what happens for  $3$-fold scrolls $(X_e, \xi)$, for any $e \geqslant 0$. On the other hand  in  \cite{be-fa}, \cite{be-fa-fl}, \cite{fa-fl} the Hilbert schemes of $3$--fold scrolls $(X_e, \xi)$ were studied and so it is natural to understand how the Ulrich bundles would behave in the irreducible components  constructed therein. 

Our main result is the following:

\bigskip

\noindent
{\bf Main Theorem} 
 {\em For any integer $e \geqslant0$, consider the Hirzebruch surface 
$\mathbb{F}_e$ and let $\Oc_{\FF_e}(\alpha,\beta)$ denote the line bundle 
$\alpha C_e + \beta f$ on $\mathbb{F}_e$, where $C_e$ and $f$ are the generators 
of ${\rm Num}(\mathbb{F}_e)$.

Let $(X_e, \xi)$ be a $3$-fold scroll over $\mathbb{F}_e$ as above, 
where $\varphi: X_e \to \FF_e$ denote the scroll map. Then: 

\smallskip

\noindent 
(a) $X_e$  does  not support any Ulrich line bundle  w.r.t. $\xi$ unless $e = 0$. In this latter case, the unique Ulrich line bundles on 
$X_0$ are the following: 
\begin{itemize}
\item[(i)]  $L_1 :=\xi+\varphi^*\Oc_{\FF_0}(2,-1) $ and $ L_2 :=\xi+\varphi^*\Oc_{\FF_0}(-1,b_0-1)$;  
\item[(ii)]  for any integer $t\geqslant  1$, $M_1 :=2\xi+\varphi^*\Oc_{\FF_0}(-1,-t-1)$ and $M_2:=\varphi^*\Oc_{\FF_0}(2,3t-1)$, which only occur for $b_0=2t, k_0=3t$.
\end{itemize}

\smallskip 

\noindent
(b) Set $e=0$ and let $r \geqslant2$ be any integer. Then the moduli space of rank-$r$ vector bundles $\cU_r$ on $X_0$ which are Ulrich w.r.t. $\xi$ and with first Chern class
\begin{eqnarray*}c_1(\cU_r) =
    \begin{cases}
      r \xi + \varphi^*\Oc_{\FF_0}(3, b_0-3) + \varphi^*\Oc_{\FF_0}\left(\frac{r-3}{2}, \frac{(r-3)}{2}(b_0-2)\right),  & \mbox{if $r$ is odd}, \\
      r \xi + \varphi^*\Oc_{\FF_0}(\frac{r}{2},\frac{r}{2}(b_0-2)), \color{black} & \mbox{if $r$ is even}.
    \end{cases}\end{eqnarray*}
    is not empty and it contains a generically smooth component $\mathcal M(r)$ of dimension 
		\begin{eqnarray*}\dim (\mathcal M(r) ) = \begin{cases} \frac{(r^2 -1)}{4}(6 b_0 -4), & \mbox{if $r$ is odd}, \\
			 \frac{r^2}{4} (6b_0-4) +1 , & \mbox{if $r$ is even}.
    \end{cases}
    \end{eqnarray*}
    The general point $[\cU_r] \in \mathcal M(r)$  
corresponds to a  slope-stable vector bundle, of slope w.r.t. $\xi$ given by 
$\mu(\cU_r) = 8b_0 - k_0 -3$. If moreover $r=2$, then $\cU_2$ is also {\em special} (cf. Def. \ref{def:special} below).

\smallskip

\noindent
(c) When $e >0$, let $r \geqslant 2$ be any integer. Then the moduli space of rank-$r$ vector bundles $\cU_{r}$ on $X_e$ which are Ulrich w.r.t. $\xi$ and with first Chern class
\begin{eqnarray*}c_1(\cU_r) =
    \begin{cases}
      r \xi + \varphi^*\Oc_{\FF_e}(3,b_e -3) + \varphi^*\Oc_{\FF_e}\left(\frac{r-3}{2}, \frac{(r-3)}{2}(b_e - e -2)\right), & \mbox{if $r$ is odd}, \\
      r \xi + \varphi^*\Oc_{\FF_e}\left(\frac{r}{2}, \frac{r}{2}(b_e-e-2)\right), & \mbox{if $r$ is even}.
    \end{cases}\end{eqnarray*}
    is not empty and it contains a generically smooth component $\mathcal M(r)$ of dimension 
		\begin{eqnarray*}\dim (\mathcal M(r) ) = \begin{cases} \left(\frac{(r -3)^2}{4}+ 2 \right)(6 b_e - 9e -4) + \frac{9}{2}(r-3) (2b_e-3e), & \mbox{if $r$ is odd}, \\
			 \frac{r^2}{4} (6b_e- 9e-4) +1 , & \mbox{if $r$ is even}.
    \end{cases}
    \end{eqnarray*} The general point $[\cU_{r}] \in \mathcal M(r)$  
corresponds to a  slope-stable vector bundle, of slope w.r.t. $\xi$ given by 
$\mu(\cU_r) = 8 b_e - k_e - 12 e - 3$.  If moreover $r=2$, then $\cU_2$ is also special. 
}

\bigskip

\noindent
The proof of  the {\bf Main Theorem}  will be  the collection of those of Theorems \ref{prop:LineB}, \ref{prop:rk 2 simple Ulrich vctB e=0;I}, \ref{thm:rk 2 vctB e>0}, \ref{thm:general0}, \ref{thm:antonelli3} and \ref{thm:generale}. 
\vspace{2mm}

 We like to point out that our result is not covered by those in \cite{hoc}, since the very ample polarizations considered therein are of the form $\xi+\varphi^*(A)$, where $A$ is a very ample polarization on the base surface (cf. \cite[Theorem B, Theorem 5.1, Corollary 5.17]{hoc}. 

Recall  
that for a given polarized variety $X$ 
 there is the notion of {\em Ulrich wildness}, as suggested by an analogous definition in \cite{DG}.  To be more precise for a projective variety $X \subset \Pp^n$ the notion of being {\em Ulrich wild} can be defined both: 

\smallskip

\noindent
$\bullet$ {\em algebraically}, i.e. in terms of functorial behavior of suitable modules over the homogeneous coordinate ring of the variety $X$, we refer 
the reader to  \cite[Section 2.2]{f-pl} for more precise details,

\noindent
$\bullet$ {\em geometrically}, namely if it possesses families of dimension $r$ of pairwise non--isomorphic, indecomposable, Ulrich vector bundles for arbitrarily large $r$, 
cf. e.g. \cite[Introduction]{DG}.

\smallskip

\noindent
Moreover, if $X$ is Ulrich wild in the {\em algebraic sense}, then it is also Ulrich wild in the {\em geometric sense} (cf. \cite[Rem.\;2.6--(iii)]{f-pl}).

We must mention that the $3$-fold scrolls $(X_e, \xi)$ studied in this paper  are {\em algebraically Ulrich wild} (and  thus, from above, also {\em geometrically Ulrich wild}) and this follows from  the results in \cite{f-pl}. In fact, when $e=0$,  the Ulrich line bundles $L_1$ and $L_2$ as in  {\bf Main Theorem}--(a)  satisfy the conditions of \cite[Theorem A,\;Corollary 3.1]{f-pl} as well as, when $e>0$, two general Ulrich rank $2$ vector bundles  as in Theorem \ref{thm:rk 2 vctB e>0}, which are not isomorphic,  satisfy the same conditions in 
\cite[Theorem A,\;Corollary 3.1]{f-pl}. These facts imply that $X_e$ is {\em (strictly) algebraically Ulrich wild} for any $e \geqslant0$, see \cite[Def.\;2.5]{f-pl} for precise definition.

In this perspective, {\bf Main Theorem}  not only computes the Ulrich complexity of the $3$-fold scrolls $(X_e, \xi)$ that we are considering but it also gives a constructive proof of the fact that  
$(X_e, \xi)$ is {\em geometrically Ulrich wild}, for  any integer $e \geqslant0$, explicitly describing families of  pairwise non--isomorphic, indecomposable, Ulrich vector bundles of arbitrarily large dimension and rank, with further details  concerning possible ranks that can actually occur. Indeed one has: 

\bigskip

\noindent
{\bf Main Corollary} {\em For any $e \geqslant0$,  the moduli spaces $\mathcal M(r)$ constructed in Main Theorem, (a)-(b)-(c), give rise to explicit families of arbitrarily 
large dimension and rank of slope-stable,  pairwise non--isomorphic, indecomposable, Ulrich vector bundles on $(X_e, \xi)$, which gives an effective proof of the geometric Ulrich wildness of such varieties. Moreover, 

\smallskip

\noindent
(a) when $e=0$, the Ulrich complexity of $X_0$ w.r.t. $\xi$ is $uc_{\xi}(X_0) = 1$ and $X_0$ supports Ulrich vector bundles w.r.t. $\xi$ 
of any rank $r \geqslant 1$, with no gaps on $r$; 

\smallskip

\noindent
(b) for $ e>0$, the Ulrich complexity of $X_e$ w.r.t. $\xi$ is $uc_{\xi}(X_e) = 2$ and $X_e$ supports  Ulrich vector bundles w.r.t. $\xi$ of any rank $r \geqslant 2$, with no gaps (except for $r=1$).

}

\bigskip

Notice that $3$-fold scrolls $(X_e, \xi)$ as above are varieties not of {\em minimal degree} in $\Pp^{n_e}$, being $d_e \neq n_e-2$ (see \eqref{eq:nde}), which are moreover {\em (strictly) Ulrich wild} as in \cite[Def.\;2.5]{f-pl}. This observation in particular implies that, for any $e \geqslant 0$, $3$-folds scrolls $(X_e, \xi)$ as above give rise to a class of varieties satisfying \cite[Conjecture\;1.]{f-pl}.

As a consequence of the previous results, we moreover deduce Ulrichness results for vector bundles on the base surface $\FF_e$ with respect to a naturally associated very ample polarization, see Theorem \ref{thm:UlrichFe}, whose proof directly follows from {\bf Main Theorem}, {\bf Main Corollary} and a natural one-to-one correspondence among rank $r$ vector bundles on $X_e$, of the form $\xi \otimes \varphi^*(\mathcal F)$, which are Ulrich w.r.t. $\xi$ on $X_e$, and rank $r$ vector bundles on $\FF_e$, of the form $\mathcal F (c_1(\mathcal E_e))$, which are Ulrich w.r.t. $c_1(\mathcal E_e) = 3 C_e + b_e f$  (cf. Theorem \ref{pullback} and Section\,\ref{S:final} below).

\medskip

An open question is certainly concerned with irreducibility of moduli spaces of rank $r$ vector bundles on $X_e$ as in {\bf Main Theorem} above. 

%A further open question is whether there are no odd-rank gaps when $e>0$, as soon as the rank $r$ is higher than one. Notice in fact that iterative constructions, developed in \S\,\ref{S:subs52} for even ranks, cannot apply in the odd rank case 
%since the base step, which is needed to start the inductive procedure, is missing. Indeed, when $e >0$, {\bf Main Theorem}--(a) shows that $X_e$ does not support any line bundle which is Ulrich w.r.t. $\xi$; moreover existence results of rank $3$ vector bundles 
%on $\FF_e$, which are Ulrich w.r.t. polarizations of the form $\alpha C_e + \beta F$ as in \cite[\S\,5]{ant} and in \cite{LiQ}, 
%cannot fit in the natural one-to-one correspondence given by Theorem \ref{pullback} and \S\;\ref{S:final}, mentioned above, since in our range for $b_e$ one obtains pairs which are not {\em Ulrich admissible} in the sense of \cite[Def.\,5.1]{ant}.   

\bigskip

The paper consists of five sections. In Section 1 we recall some generalities on 
Ulrich vector bundles on projective varieties, which will be used in the sequel, as well as preliminaries from 
\cite{al-be,be-fa-fl,bro} to properly define $3$-fold scrolls $(X_e, \xi)$ which are the core of the paper. 
Sect.\;\ref{Ulrich lb} deals with Ulrich line bundles on scrolls $(X_e, \xi)$, cf. Theorem\;\ref{prop:LineB}, whereas 
Sect.\;\ref{Ulrich rk 2 vb} focuses on the rank-$2$ case, using extensions suitably defined (cf.\;Theorem \ref{prop:rk 2 simple Ulrich vctB e=0;I}) as well as pull-back of appropriate 
bundles coming from the base  (cf. Theorem \ref{pullback} and Theorem \ref{thm:rk 2 vctB e>0}). Section \;\ref{Ulrich higher rk  vb} deals with the general case of any rank $r \geqslant1$, via 
inductive processes, extensions, deformation and modular theory (cf. Theorems \ref{thm:general0} and \ref{thm:generale}). Finally, in Section \ref{S:final}, we deal with the aforementioned Ulrichness result Theorem \ref{thm:UlrichFe} dealing with rank $r$ vector bundles on the base surface $\FF_e$ which turn out to be Ulrich w.r.t. $c_1(\mathcal E_e)$, deduced from {\bf Main Theorem} and {\bf Main Corollary} above.

\begin{akn*} We would like to thank Juan  Pons--Llopis, for pointing out reference \cite{f-pl} and for useful conversation, Antonio Rapagnetta, for pointing out references 
\cite{Drezet}, \cite{OGrady} together with precise explanations and advices concerning Claim \ref{cl:merd} below. We are also grateful to Daniele Faenzi for some of his comments.   Finally, we would like to deeply thank the anonymous referee for his/her enthusiastic report, full of encouragement and with important advices to improve the presentation. 
\end{akn*}

\subsection*{Notation and terminology} We work throughout over the field $\mathbb{C}$ of complex numbers. All schemes will be endowed with the Zariski topology. By \emph{variety}   we mean an integral algebraic scheme. We say that a property holds for a \emph{general}  point of a variety $V$ if it holds for any point in a Zariski open non--empty subset of $V$. We will  interchangeably use the terms {\em rank-$r$ vector bundle} on a variety $V$ and {\em rank-$r$ locally free sheaf} on $V$; in particular for 
the case $r=1$ of line bundles (equiv. invertible sheaves), to ease notation and if no confusion arises, we sometimes identify line bundles with Cartier divisors 
interchangeably using additive notation instead of multiplicative notation and tensor products. Thus, if $L$ and $M$ are line bundles on $V$, the {\em dual} of $L$ will be 
denoted by either $L^{\vee}$, or $L^{-1}$ or even $-L$, so that $M \otimes L^{\vee}$ will be also denoted by either $M \otimes L^{-1}$ or just $M-L$. 
If $\mathcal P$ is either a {\em parameter space} of a flat family of geometric objects $\mathcal E$ defined on $V$ (e.g. vector bundles, extensions, etc.) 
or a {\em moduli space} parametrizing geometric objects modulo a given equivalence relation, we will denote by $[\mathcal E]$ the parameter point (resp., the moduli point) corresponding to the geometric object $\mathcal E$ (resp., associated to the equivalence class of $\mathcal E$). For further non-reminded terminology, we refer the reader to \cite{H}.

%%%%%%%%%%%% NOTATION %%%%%%%%%%%%%%%%%%%%

\section{Preliminaries} We first remind some general definitions concerning Ulrich bundles on projective varieties.

\begin{dfntn}\label{def:Ulrich} Let $X\subset \Pp^N$ be a smooth variety of dimension $n$  and let $H$  be a hyperplane section of $X$.
A vector bundle $\cU$  on $X$ is said to be  {\em Ulrich} with respect to $H$ if
\begin{eqnarray*}
H^{i}(X, \cU(-jH))=0 \quad \text{for}  \quad i=0, \cdots, n \quad  \text{and} \quad 1 \leqslant j \leqslant \dim X.
\end{eqnarray*}
 \end{dfntn}

  \begin{rem*}\label{Rmk1}  (i) If $X$ supports Ulrich bundles w.r.t. $H$ then one sets $uc_H(X)$, called the {\em Ulrich complexity of $X$ w.r.t. $H$}, to be the minimum rank among possible Ulrich vector bundles w.r.t. $H$ on $X$. 

\noindent
(ii) If $\cU_1$ is a vector bundle on $X$, which is Ulrich w.r.t. $H$,  then 
$\cU_2 :=\cU_1^{\vee}(K_X +(n+1)H)$ is also Ulrich w.r.t. $H$. The vector bundle $\cU_2$ is called the {\em Ulrich dual} of $\cU_1$. 
From this we see that,  if Ulrich  bundles of some rank $r$ on $X$ do exist, then they come in pairs. 
 \end{rem*}
 
\begin{dfntn}\label{def:special} Let $X\subset \Pp^N$ be a smooth variety of dimension $n$  polarized by $H$, where $H$ is a hyperplane section of $X$,   and let $\cU$  be  a rank-$2$ Ulrich  vector bundle  on  $X$. Then $\cU$ is said to be {\em special} if $c_1(\cU) = K_{X} +(n + 1)H.$
 \end{dfntn}

\noindent
Notice that,  because $\cU$ in Definition \ref{def:special} is of rank-$2$, then $\cU^{\vee} \cong \cU (- c_1(\cU))$ therefore 
being special is equivalent for $\cU$ to be isomorphic to its Ulrich dual bundle.

We now remind facts  concerning (semi)stability and slope-(semi)stability properties of these
bundles (cf. \cite[Def.\;2.7]{c-h-g-s}). Let $\mathcal E$ be a vector bundle on $X$; recall that $\mathcal E$ is said to be {\em semistable} if for every non-zero coherent subsheaf
$\mathcal F \subset \mathcal E$, with $0 < {\rm rk}(\mathcal F) := \mbox{rank of} \; \mathcal F < {\rm rk}(\mathcal E)$, the inequality
$\frac{P_{\mathcal F}}{{\rm rk}(\mathcal F)} \leqslant  \frac{P_{\mathcal E}}{{\rm rk}(\mathcal E)}$ holds true, where 
$P_{\mathcal F}$ and $P_{\mathcal E}$ are the Hilbert polynomials of the sheaves. Furthermore, $\mathcal E$ is {\em stable} if 
the strict inequality above holds. 

Similarly, recall that the {\em slope} of a vector  bundle $\mathcal E$ (w.r.t. $\mathcal O_X(H)$) is defined to be \linebreak 
$\mu(\mathcal E) := \frac{c_1(\mathcal E) \cdot H^{n-1}}{{\rm rk}(\mathcal E)}$; the bundle $\mathcal E$ is said 
to be {\em $\mu$-semistable}, or even {\em slope-semistable}, if for every non-zero coherent subsheaf
$\mathcal F \subset \mathcal E$ with $0 < {\rm rk}(\mathcal F)   < {\rm rk}(\mathcal E)$, one has 
$\mu (\mathcal F) \leqslant \mu(\mathcal E)$. The bundle $\mathcal E$ is {\em $\mu$-stable}, or  {\em slope-stable}, if the strict inequality holds. 

The two definitions of (semi)stability are related as follows (cf. e.g. \cite[\S\;2]{c-h-g-s}): 
\begin{eqnarray*}\mbox{slope-stability} \Rightarrow \mbox{stability} \Rightarrow \mbox{semistability} \Rightarrow \mbox{slope-semistability}.\end{eqnarray*}
When the bundle in question is in particular Ulrich, the following more precise situation holds:

\begin{theo}\label{thm:stab} (cf. \cite[Thm.\;2.9]{c-h-g-s}) Let $X\subset \Pp^N$ be a smooth variety of dimension $n$ and let $H$ be a hyperplane section of $X$. Let 
$\cU$ be a rank-$r$ vector bundle on $X$ which is Ulrich w.r.t. $H$. Then: 

\noindent
(a) $\mathcal U$ is semistable, so also slope-semistable;

\noindent
(b) If $0 \to  \mathcal F \to \mathcal  U  \to \mathcal G \to 0$ is an exact sequence of coherent sheaves with $\mathcal G$ 
torsion-free, and $\mu(\mathcal F) = \mu(\mathcal U)$, then $\mathcal F$ and $\mathcal G$ are both Ulrich vector 
bundles.

\noindent
(c) If $\mathcal U$ is stable then it is also slope-stable. In particular, the notions of stability and slope-stability coincide 
for Ulrich bundles. 
\end{theo}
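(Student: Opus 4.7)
The plan is to exploit the Eisenbud--Schreyer characterization of Ulrich bundles via a general linear projection. I would first recall (or, if needed, establish) that for a general linear projection $\pi\colon X \to \Pp^n$ (finite of degree $d=\deg(X)$) induced by a general $(n{+}1)$-dimensional subspace of $H^0(X,\cO_X(H))$, a rank-$r$ bundle $\cU$ on $X$ is Ulrich w.r.t.\ $H$ if and only if $\pi_*\cU \cong \cO_{\Pp^n}^{\oplus rd}$. This converts the Ulrich vanishings into sheer triviality of the pushforward, which is the right tool to probe (semi)stability of $\cU$.

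For part (a), I would take a nonzero coherent subsheaf $\mathcal F \subset \cU$ with $0 < s = {\rm rk}(\mathcal F) < r$, push it forward, and regard $\pi_*\mathcal F$ as a subsheaf of the trivial bundle $\cO_{\Pp^n}^{\oplus rd}$. Since any coherent subsheaf of $\cO_{\Pp^n}^{\oplus rd}$ satisfies $H^0(\Pp^n, -(t)) = 0$ for all $t\geqslant 1$, the projection formula yields $H^0(X, \mathcal F(-tH))=0$ for every $t\geqslant 1$. Combined with Serre vanishing at $t\gg 0$, a standard Hilbert-polynomial comparison will produce the inequality $P_{\mathcal F}(t)/s \leqslant P_{\cU}(t)/r$ for $t \gg 0$, which is Gieseker semistability of $\cU$; slope-semistability then drops out by comparing leading coefficients.

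For part (b), I would assume $0 \to \mathcal F \to \cU \to \mathcal G \to 0$ with $\mathcal G$ torsion-free and $\mu(\mathcal F)=\mu(\cU)$. Equality of slopes forces equality of the leading terms of the reduced Hilbert polynomials, and by re-inspecting the proof of (a) this propagates to equality at every order, upgrading to the vanishings $H^i(X,\mathcal F(-jH))=0$ for all $i$ and $1\leqslant j\leqslant n$; that is, $\mathcal F$ is Ulrich. The long exact cohomology sequence together with the Ulrichness of $\cU$ and $\mathcal F$ then forces Ulrichness of $\mathcal G$ (in particular local freeness). For part (c), I would argue by contradiction: were $\cU$ Gieseker-stable but not slope-stable, semistability from (a) would produce a proper saturated subsheaf $\mathcal F$ with $\mu(\mathcal F)=\mu(\cU)$ and $\cU/\mathcal F$ torsion-free, hence by (b) $\mathcal F$ would be Ulrich, giving $P_{\mathcal F}(t)/s \equiv P_{\cU}(t)/r$ identically, which contradicts Gieseker stability.

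The main obstacle I anticipate is the bootstrap in part (b): passing from the \emph{numerical} equality $\mu(\mathcal F)=\mu(\cU)$ to the full \emph{cohomological} equalities $H^i(X,\mathcal F(-jH))=0$ for all $i$ and $1\leqslant j\leqslant n$. This requires tracking, inside the proof of (a), exactly which cohomology groups of $\pi_*\mathcal F$ are being estimated at each twist, and then showing that the numerical saturation $\mu(\mathcal F)=\mu(\cU)$ forces every intermediate bound to be an equality. The cleanest route is to phrase the estimates in terms of the Castelnuovo--Mumford regularity of $\pi_*\mathcal F$ inside the trivial bundle on $\Pp^n$, so that a uniqueness-of-extremal-configuration argument pins down $\pi_*\mathcal F$ up to the expected numerical type.
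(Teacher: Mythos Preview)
The paper does not supply a proof of this theorem: it is stated as a quotation, with an explicit reference to \cite[Thm.\;2.9]{c-h-g-s}, and is used throughout as a black box. So there is no ``paper's own proof'' to compare against.

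That said, your outline is essentially the argument given in the cited source (Casanellas--Hartshorne--Geiss--Schreyer). The key input is exactly the Eisenbud--Schreyer characterization $\pi_*\cU \cong \cO_{\Pp^n}^{\oplus rd}$ for a general finite linear projection $\pi\colon X \to \Pp^n$, and one then reads off (semi)stability and the structure of destabilizing subsheaves from the fact that the pushforward sits inside a trivial bundle. Your anticipated obstacle in (b) --- upgrading the single numerical equality $\mu(\mathcal F)=\mu(\cU)$ to the full set of Ulrich vanishings --- is handled in \cite{c-h-g-s} precisely by tracking when the Hilbert-polynomial inequalities in the proof of (a) become equalities term by term; the equality of slopes pins down the top coefficient, and the remaining coefficients are then forced because the reduced Hilbert polynomial of an Ulrich sheaf is completely determined (it equals $d\binom{t+n}{n}$ up to the rank factor). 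Your sketch of (c) is also the standard one. In short: your plan is correct and matches the literature; the paper under review simply cites that literature rather than reproving it.
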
 
\vspace{3mm}

 We like to point out that the property of being Ulrich in a family of vector bundles is an open condition.   
Indeed if $\cU$ is a deformation  of an Ulrich bundle $\cF$ then  $\cU$ is  also Ulrich and this  because the cohomology vanishing of $\cF(-j)$, for $1 \leqslant j \leqslant \dim X$, implies the cohomology vanishing of $\cU(-j)$, by semi--continuity. 
\vspace{3mm}

We also like to remark  that because Ulrich bundles are semistable, then any family of Ulrich bundles with given rank and Chern classes is bounded, see for instance A. Langer \cite{langer}.

\vspace{3mm}
%Recall that a vector bundle E on X is simple if $\rm End(E) = \mathbb{C}$. It is known that any bounded family of simple vector bundles has  a modular family. 
In particular, if the  bundles in a bounded family are simple, then Casanellas and Hartshorne have proved

\begin{prop} (see \cite[Proposition  2.10]{c-h-g-s}\label{casanellas-hartshorne} On a nonsingular projective variety $X$, any bounded family of simple bundles $\mathcal E$ with given rank and Chern classes satisfying $H^2(\mathcal E \otimes \mathcal E^{\vee}) = 0$ has a smooth modular family.
\end{prop}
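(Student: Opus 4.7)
My plan is to combine standard infinitesimal deformation theory of vector bundles with Schlessinger-type pro-representability and a Luna-type étale slice inside a Quot scheme: boundedness makes the parameter space finite-dimensional, simpleness makes the deformation functor pro-representable, and the vanishing of $H^2(\cE\otimes\cE^{\vee})$ makes it smooth. Recall that for a vector bundle $\cE$ on the smooth projective variety $X$ first-order deformations are parametrised by $\Ext^1(\cE,\cE)\cong H^1(X,\cE\otimes\cE^{\vee})$, while the obstructions live in $\Ext^2(\cE,\cE)\cong H^2(X,\cE\otimes\cE^{\vee})$. Under the hypothesis the obstruction space vanishes, so the formal miniversal deformation of $\cE$ is smooth of dimension $h^1(\cE\otimes\cE^{\vee})$, and the simpleness condition $\mathrm{Hom}(\cE,\cE)=\mathbb{C}$ upgrades miniversality to pro-representability of the deformation functor (by Schlessinger's criterion).

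Next, I would use boundedness to realise the whole family inside a Quot scheme. Fixing $n\gg 0$ so that every member $\cE$ of the family satisfies $H^{>0}(\cE(n))=0$ and $N:=h^0(\cE(n))$ is a constant independent of $\cE$, each such $\cE$ arises as a quotient $\cO_X(-n)^{\oplus N}\twoheadrightarrow\cE$ and hence defines a locally closed subscheme $R\subset\mathrm{Quot}$ on which $\mathrm{GL}(N)$ acts with only scalar stabilisers at the simple points. The tangent--obstruction analysis carries over to $R$ in the standard way, and smoothness of $R$ at $[\cE]$ is precisely the vanishing of the obstruction class in $\Ext^2(\cE,\cE)$, so $R$ is smooth along the locus parametrising our family. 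A Luna-type étale slice through a point $[\cE]\in R$ transverse to the $\mathrm{PGL}(N)$-orbit then yields a smooth base $S$ of dimension $\dim R-\dim\mathrm{PGL}(N)=h^1(\cE\otimes\cE^{\vee})$, equipped with a flat family $\cU\to X\times S$ whose Kodaira--Spencer map at every closed point is an isomorphism; simpleness ensures injectivity of the classifying map $S\to\mathcal{M}$ on geometric points, so $\cU/S$ is modular in the sense required.

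The main obstacle I would expect is precisely this last step, i.e. the passage from the infinitesimal/formal smoothness of the deformation functor to the existence of an honest étale modular family. This requires either invoking Artin's algebraisation and approximation theorems applied to the pro-representable deformation functor, or carrying out an explicit Luna-type slice construction inside $R$ together with the verification that the resulting family descends cleanly past the scalar $\mathbb{G}_m$-action. Once this step is in place, the three ingredients $H^2(\cE\otimes\cE^{\vee})=0$, simpleness, and boundedness combine to give the smooth modular family asserted in the statement.
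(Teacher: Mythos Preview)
The paper does not give its own proof of this proposition: it is quoted verbatim from \cite[Proposition~2.10]{c-h-g-s} and used as a black box throughout. So there is no ``paper's proof'' to compare against.

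That said, your sketch is essentially the standard argument behind the cited result. The ingredients you identify --- $\Ext^1(\cE,\cE)$ as tangent space, $\Ext^2(\cE,\cE)=0$ as unobstructedness, simpleness for pro-representability (or, equivalently, to ensure that the $\mathrm{PGL}(N)$-action on the relevant locus in the Quot scheme has trivial stabilisers), and boundedness to place everything inside a single Quot scheme --- are exactly the ones used in \cite{c-h-g-s}. Your honest flagging of the algebraisation/slice step as the delicate point is appropriate: this is where one has to invoke either Artin's theorems or the explicit Quot-scheme construction, and it is the only place where a genuine argument beyond formal deformation theory is needed. There is no gap in your outline; it simply reproduces the known proof rather than offering an alternative.
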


The existence of a smooth modular family of simple vector bundles along with the fact that the property of being Ulrich in a family of vector bundles is an open condition will help us in showing the existence of stable Ulrich bundles on the varieties we are dealing with.

\vspace{3mm}
In the sequel, we will  focus on $n= \dim (X) =3$; in such a case, the following notation will be used throughout this work.

\label{notation}
  \begin{enumerate}
\item [ ] $X$ is a smooth, irreducible, projective variety of dimension $3$ (or simply a {\em $3$-fold});
\item[ ]$\chi(\mathcal F) =  \sum_{i=0}^3 (-1)^i  h^i(\mathcal F)$, the Euler characteristic of $\mathcal F$, where $\mathcal F$ is any vector bundle of rank $r \geqslant1$ on $X$;
%\item[ ]  $\restrict{\mathcal F}{Y}$ the restriction of $\mathcal F$ to a subvariety $Y;$
\item[ ]  $K_X$ the canonical bundle of $X.$ When the context is clear, $X$ may be dropped, so $K_X = K$;
\item[ ] $c_i = c_i(X)$,  the $i^{th}$ Chern class of $X$;
\item[ ] $d = \deg{X} = L^3$, the degree of $X$ in the embedding given by a very-ample line bundle $L$;
\item[] $g = g(X),$ the sectional genus of $\xel$ defined by $2g-2=(K+2 L)L^2;$ 
%\item[ ] if $S$ is a smooth surface, then $q(S) = h^1 ({\mathcal O}_{S})$ denotes the irregularity of $S$, whereas $p_g(S) = h^0 (K_S)$, denotes the geometric genus of $S$;
\item[] if $S$ is a smooth surface, $\equiv$ will denote the numerical equivalence of divisors on $S$. 
\end{enumerate}

For non-reminded terminology and notation, we basically follow \cite{H}. 

\begin{dfntn}\label{specialvar} A pair $(X, L)$, where $X$ is a $3$-fold and $L$ is an ample line bundle on $X$, is
a {\em scroll} over a normal variety $Y$ if there exist an ample line
bundle $M$ on $Y$ and a surjective morphism  $\varphi: X \to Y$ with
connected fibers
such that $K_X + (4 - \dim Y) L = \varphi^*(M).$
\end{dfntn}

In particular, if $Y$ is a smooth surface and $\xel$ is a scroll over $Y$,  then (see \cite[Prop. 14.1.3]{BESO})
$X \cong \Projcal{E} $, where ${\mathcal E}= \varphi_{*}(L)$ is a vector bundle on $Y$ and
$L$ is the tautological  line bundle on $\Projcal{E}.$ Moreover, if $S \in |L|$ is a smooth divisor,
then (see e.g. \cite[Thm. 11.1.2]{BESO}) $S$ is the blow
up of $Y$ at $c_2(\mathcal{E})$ points; therefore $\chi({\mathcal O}_{Y}) = \chi({\mathcal O}_{S})$ and
\begin{equation}\label{eq:d}
d : = L^3 = c_1^2(\mathcal{E})-c_2(\mathcal{E}).
\end{equation}

For the reader convenience we recall  that the proof of  \cite[Theorem 2.4]{f-lc-pl} more precsisely shows the following result. 

\begin{theo}\label{pullback} Let $(Y,H)$ be a polarized surface with $H$ very ample and let $\mathcal E$ be a rank two vector bundle on $Y$ such that $\mathcal E$ is (very) ample and spanned. 
Let $\mathcal F$ be a rank $r \geqslant 1$ vector bundle on $Y$. Then on the $3$-fold scroll  $X \cong \mathbb{P}(\mathcal E)\xrightarrow{\pi} Y$, 
the vector bundle $\mathcal U:= \xi \otimes \pi^*\mathcal F$ is Ulrich with respect to $\xi$, where  $\xi$ denotes the tautological line bundle on $X$, with $(X,\xi) \cong (\Pp(\mathcal E), \Oc_{\Pp(\mathcal E)}(1))$, if and only if $\mathcal F$ is such that
\begin{equation}\label{needed to pullback}
\begin{array}{ccc}
  H^i(Y,\mathcal F)=0 & \text{and} & H^i(Y,\mathcal F(-c_1(\mathcal E)))=0, \; 0 \leqslant i \leqslant 2.
\end{array}
\end{equation} In particular, if $c_1(\mathcal E)$ is very ample on $Y$, then the rank $r$ vector bunde on $X$, $ \mathcal U=\xi \otimes \pi^*\mathcal F$, is Ulrich with respect to $\xi$ 
 if and only if the rank $r$ vector bundle on $Y$,  $\mathcal F (c_1(\mathcal E))$, is Ulrich with respect to $c_1(\mathcal E)$. 

\end{theo}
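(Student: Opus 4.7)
\medskip

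\noindent\textbf{Proof plan for Theorem \ref{pullback}.} The strategy is to unfold the Ulrich definition for $\cU = \xi \otimes \pi^*\cF$ on the $3$-fold $X = \Pp(\cE)$ and reduce every required cohomology vanishing on $X$ to one on $Y$ via the Leray spectral sequence and the projection formula.

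\medskip

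First, the vanishings we must establish are $H^i(X, \cU(-j\xi)) = 0$ for $0 \leqslant i \leqslant 3$ and $1 \leqslant j \leqslant 3$. Since $\cU(-j\xi) = \xi^{1-j} \otimes \pi^*\cF$, it is natural to set $m = 1-j \in \{0,-1,-2\}$ and compute the three sheaves $R^q\pi_*(\xi^m)$ for $q = 0,1$ (higher $R^q\pi_*$ vanish because the fibers of $\pi$ are $\Pp^1$). The standard computation for the projective bundle of a rank-$2$ vector bundle gives
\begin{equation*}
\pi_*\Oc_X = \Oc_Y, \quad \pi_*\xi^{-1} = 0 = R^1\pi_*\xi^{-1}, \quad \pi_*\xi^{-2} = 0, \quad R^1\pi_*\xi^{-2} = \Oc_Y(-c_1(\cE)),
\end{equation*}
the last by relative Serre duality on the $\Pp^1$-fibers. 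Together with the projection formula $R^q\pi_*(\xi^m \otimes \pi^*\cF) = R^q\pi_*(\xi^m)\otimes \cF$ and the fact that the Leray spectral sequence for $\pi$ degenerates (only two rows), one obtains the natural identifications
\begin{align*}
H^i(X,\pi^*\cF) &\cong H^i(Y,\cF), \\
H^i(X,\xi^{-1}\otimes\pi^*\cF) &= 0 \quad \text{for all } i, \\
H^i(X,\xi^{-2}\otimes\pi^*\cF) &\cong H^{i-1}(Y,\cF(-c_1(\cE))).
\end{align*}

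\medskip

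Next, matching the three values of $j$: the case $j=1$ forces $H^i(Y,\cF) = 0$ for $i=0,1,2$; the case $j=2$ is automatically satisfied; and the case $j=3$ (letting $i$ range in $0,1,2,3$, so $i-1$ ranges in $0,1,2$ since $H^{-1}=0$) forces $H^i(Y,\cF(-c_1(\cE))) = 0$ for $i=0,1,2$. These are exactly the conditions \eqref{needed to pullback}, and since the chain of identifications above is reversible, both implications of the first equivalence follow.

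\medskip

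For the final assertion, assuming $c_1(\cE)$ is very ample on $Y$, unpack the Ulrich definition for the rank-$r$ bundle $\cF(c_1(\cE))$ on the polarized surface $(Y,c_1(\cE))$: it demands $H^i(Y,\cF(c_1(\cE)) - j\, c_1(\cE)) = 0$ for $0 \leqslant i \leqslant 2$ and $1 \leqslant j \leqslant 2$. Writing out $j=1$ and $j=2$ recovers exactly the two sets of vanishings in \eqref{needed to pullback}, giving the equivalence. I expect no real obstacle here: the only slightly delicate point is getting the twist in $R^1\pi_*\xi^{-2}$ correct (it is $(\det\cE)^{-1}$, not $\Oc_Y$), but this is the standard relative Serre duality computation on a $\Pp^1$-bundle and suffices to close the argument.
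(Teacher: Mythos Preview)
Your argument is correct. The paper itself does not prove this statement; it recalls it from \cite[Theorem~2.4]{f-lc-pl}, noting that the proof there ``more precisely shows'' the result as stated. Your approach via the projection formula and the Leray spectral sequence, together with the identification $R^1\pi_*\xi^{-2}\cong\Oc_Y(-c_1(\cE))$ coming from relative Serre duality on the $\Pp^1$-fibres, is the standard one and is exactly the computation underlying the cited reference (compare also how the paper invokes \cite[Corollary~2.2]{f-lc-pl} in the proof of Theorem~\ref{prop:LineB}).
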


Throughout this work, the base $Y$ of the scroll  $X$ in Definition \ref{specialvar}  will be the Hirzebruch surface 
$\FF_e := \Pp(\Oc_{\Pp^1} \oplus\Oc_{\Pp^1}(-e))$, with $e \geqslant  0$ an integer. 

Let $\pi_e : \FF_e \to \Pp^1$ be the natural projection onto the base. Then 
${\rm Num}(\FF_e) = \mathbb{Z}[C_e] \oplus \mathbb{Z}[f],$ where:

\noindent
$\bullet$ $f := \pi_e^*(p)$, for any $p \in \Pp^1$, whereas

\noindent
$\bullet$ $C_e$ denotes either the unique section corresponding to the morphism of vector bundles on $\Pp^1$ \linebreak
$\Oc_{\Pp^1} \oplus\Oc_{\Pp^1}(-e) \to\!\!\!\to \Oc_{\Pp^1}(-e)$, when $e>0$, or the fiber of the other ruling different from that 
induced by $f$, when otherwise $e=0$.

\noindent
In particular$$C_e^2 = - e, \; f^2 = 0, \; C_ef = 1.$$

Let $\Ee$ be a rank-two vector bundle over $\FF_e$ and let $c_i(\mathcal{E}_e)$ be its  $i^{th}$-Chern class. Then $c_1( \mathcal{E}_e) \num a C_e + b f$, for some $ a, b \in \mathbb Z$, and $c_2(\mathcal{E}_e) \in \mathbb Z$. For the line bundle $\cL \num \alpha C_e + \beta f$ we will also use the notation $\Oc_{\FF_e}(\alpha,\beta)$.

From now on, we will consider the following:

\begin{ass}\label{ass:AB} Let $e \geqslant  0$, $b_e$, $k_e$ be integers such that
\begin{equation}\label{(iii)}
b_e-e< k_e< 2b_e-4e,
\end{equation} and let ${\mathcal E}_e$ be a rank-two vector bundle over $\FF_e$, with
\begin{eqnarray*}
c_1({\mathcal E}_e) \num 3 C_e + b_e f \;\; {\rm and} \;\; c_2({\mathcal E}_e) = k_e,
\end{eqnarray*} which fits in the exact sequence 
\begin{equation}\label{eq:al-be}
0 \to A_e \to {\mathcal E}_e \to B_e \to 0,
\end{equation} where $A_e$ and $B_e$ are line bundles on $\FF_e$ such that
\begin{equation}\label{eq:al-be3}
A_e \num 2 C_e + (2b_e-k_e-2e) f \;\; {\rm and} \;\; B_e \num C_e + (k_e - b_e + 2e) f
\end{equation}
\end{ass} From \eqref{eq:al-be}, in particular, one has $c_1({\mathcal E}_e) = A_e + B_e \;\; {\rm and} \;\; c_2({\mathcal E}_e) = A_eB_e$.
\vspace{3mm}

\begin{rem*}\label{rem:assAB} Here we explain the above assumptions and their consequences. First of all, notice that condition $b_e-e < k_e$ in \eqref{(iii)} ensures that the line bundle $B_e$ is very ample (cf. \cite[\S\,V,\;Prop.2.20]{H}) and {\em non--special}, i.e. $h^1(B_e)= 0$ (cf. \cite[Lemma\;3.2]{fa-fl}, where computations hold true also for cases $e=0,1$). Similarly, condition $k_e< 2b_e - 4e$ in \eqref{(iii)}  implies that also $A_e$ is very ample (cf. \cite[\S\,V,\;Prop.2.20]{H}) and non-special (cf. \cite[Lemma\;3.9]{fa-fl}, where computations hold true also for cases $e=0,1$). Notice further that \eqref{(iii)} gives in particular  
$$b_e \geqslant 3e + 1;$$on the other hand, if $b_e = 3e+1$, then \eqref{(iii)} would give 
$$2e+1 < k_e < 2e+2.$$Thus it is clear that, in order to have integers $b_e$ and $k_e$ satisfying \eqref{(iii)}, it is implicit from 
Assumptions \ref{ass:AB}  that one must have 
\begin{equation}\label{eq:rem:assAB}
b_e \geqslant 3e+2, 
\end{equation} which is in accordance with classification results by T. Fujita \cite[(1.3) Lemma]{Fu}.

Moreover, from \cite[Prop.\;2.6,\;4.2]{BDS} and from the non-speciality of $A_e$, it follows that any vector bundle ${\mathcal E}_e$ fitting in the exact sequence 
\eqref{eq:al-be} turns out to be very ample on $\FF_e$, namely the tautological line bundle $\Oc_{\Pp(\mathcal E_e)}(1)$ is very ample on $\Pp(\mathcal E_e)$. This is in accordance 
with the necessary numerical conditions for very--ampleness of ${\mathcal E}_e$ as in \cite[Prop.7.2]{al-be}.   

At last we stress that the existence of the exact sequence \eqref{eq:al-be}, with $A_e, B_e$ as in \eqref{eq:al-be3}, is a natural condition; indeed if one is concerned with very-ample rank-two vector bundles ${\mathcal E}_e$, with $c_1({\mathcal E}_e) \num 3 C_e + b_e f$, one must have 
 that the restriction of ${\mathcal E}_e$ at any fiber $f$ is isomorphic to $\Oc_{\Pp^1}(2) \oplus\Oc_{\Pp^1}(1)$, i.e. ${\mathcal E}_e$ is {\em uniform} in the sense of 
\cite{bro} and \cite{ApBr}, so ${\mathcal E}_e$  fits in an exact sequence as \eqref{eq:al-be}, with $A_e$ and $B_e$ as in \eqref{eq:al-be3} (cf. \cite[Prop.7.2]{al-be} and \cite{bro}). 
\end{rem*}

%%%
%
% Scrolls over Fe Ulrich Line bundles
%
%%%%%%%%%%%
\section{Ulrich line bundles on  $3$-fold  scrolls over $\FF_e$} \label{Ulrich lb}
In this section, we consider $3$-dimensional
scrolls over $\FF_e$,  with $e\geqslant  0$,  in projective spaces satisfying conditions as in Assumptions \ref{ass:AB}.  

Let therefore  $\E_e$ be a very ample, rank-two vector bundle over
$\FF_e$ such that $$c_1(\Ee) \num 3 C_e + b_e f, \;\; c_2(\E_e) = k_e,$$ 
with $b_e$ and $k_e$ integers as in \eqref{(iii)}. Let $\scrollcal{E_e}$ be the associated $3$-fold  scroll over $\FF_e,$ 
and let $\pi_e: \FF_e \to \Pin{1}$ and $\varphi: \Pp(\E_e) \to  \FF_e$ be the usual projections. Then $\Oc_{\Pp(\Ee)}(1)$ gives rise to the embedding
\begin{equation}\label{eq:Xe}
\Phi_e:= \Phi_{|\Oc_{\Pp(\Ee)}(1)|}: \, \Pp(\E_e) \hookrightarrow  X_e \subset \Pin{n_e},
\end{equation} where $X_e = \Phi_e( \Pp(\Ee))$ is smooth, non-degenerate, of degree $d_e$  and sectional genus $g_e$, with
\begin{equation}\label{eq:nde}
n_e = 4b_e-k_e-6e+4, \;\; d_e = 6b_e-9e-k_e \;\; {\rm and} \;\; g_e = 2b_e - 3e -2. 
\end{equation}
We set $(X_e, \xi) \cong \scrollcal{E_e}$. Our aim in this section is to find out if there actually exist on $X_e$ line bundles which are Ulrich w.r.t. $\xi$ and, in the affirmative case, 
to classify them.
\bigskip

%%%%%%%%%%%
%
%. TEOREMA  1
%%%%%%%%%%%

\begin{theo}\label{prop:LineB}  Let $e \geqslant 0$ be an integer and let $(X_e, \xi)$ be a $3$-fold scroll as above. Then $X_e$  does  not support any Ulrich line bundle  w.r.t.  $\xi$ unless $e = 0$, in which case the following are the unique Ulrich line bundles on $X_0$: 
\begin{itemize}
\item[(i)]  $L_1 :=\xi+\varphi^*\Oc_{\FF_0}(2,-1) $ and  its Ulrich dual $ L_2 :=\xi+\varphi^*\Oc_{\FF_0}(-1,b_0-1)$, with  $b_0 \geqslant 2$, see  \eqref{eq:rem:assAB}; 
\item[(ii)]  for any integer $t\geqslant  1$, $M_1 :=2\xi+\varphi^*\Oc_{\FF_0}(-1,-t-1)$ and  its Ulrich dual \\$M_2:=\varphi^*\Oc_{\FF_0}(2,3t-1)$,   which only occur for $b_0=2t, k_0=3t$.
\end{itemize}
\end{theo}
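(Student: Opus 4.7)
Write $L=a\xi+\varphi^*N$ with $a\in\mathbb{Z}$ and $N=\alpha C_e+\beta f\in\mathrm{Pic}(\FF_e)$; every line bundle on $X_e=\Pp(\mathcal{E}_e)$ has this form. The strategy is to translate each Ulrich vanishing $H^i(X_e,L-j\xi)=0$ (for $i=0,\dots,3$ and $j=1,2,3$) into all-cohomology vanishings on $\FF_e$ via the Leray spectral sequence along $\varphi$, solve the resulting Diophantine system in $(a,\alpha,\beta,b_e,k_e,e)$, and match the output against the list.

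The relevant pushforwards are $R^0\varphi_*(m\xi)=\mathrm{Sym}^m\mathcal{E}_e$ (zero for $m<0$), $R^1\varphi_*(m\xi)=0$ for $m\geqslant -1$, and $R^1\varphi_*(m\xi)=(\mathrm{Sym}^{-m-2}\mathcal{E}_e)^\vee\otimes(\det\mathcal{E}_e)^{-1}$ for $m\leqslant -2$ (relative duality for the $\Pp^1$-bundle $\varphi$). Since the fibres of $\varphi$ are curves, the Leray spectral sequence degenerates at $E_2$, so
\begin{equation*}
H^i(X_e,L-j\xi)\cong H^i(\FF_e,R^0\varphi_*((a-j)\xi)\otimes N)\,\oplus\, H^{i-1}(\FF_e,R^1\varphi_*((a-j)\xi)\otimes N).
\end{equation*}
Using $K_{X_e}=-2\xi+\varphi^*(K_{\FF_e}+c_1(\mathcal{E}_e))$ and Serre duality on $X_e$, the simultaneous vanishings $H^0(X_e,L-3\xi)=0$ and $H^3(X_e,L-\xi)=0$ force $a-3\leqslant -1$ and $-1-a\leqslant -1$ respectively; hence the only candidates are $a\in\{0,1,2\}$.

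For $a=1$ the Ulrich condition reduces to $N$ and $N-c_1(\mathcal{E}_e)$ both having all cohomology zero on $\FF_e$; for $a=2$ (resp.\ $a=0$) it reduces to the analogous vanishing for $N$ and $\mathcal{E}_e\otimes N$ (resp.\ $N-c_1(\mathcal{E}_e)$ and $\mathcal{E}_e\otimes(N-2\,c_1(\mathcal{E}_e))$, using $\mathcal{E}_e^\vee\cong\mathcal{E}_e\otimes(\det\mathcal{E}_e)^{-1}$ for rank $2$). Twisting the extension \eqref{eq:al-be} by the appropriate line bundle decouples each $\mathcal{E}_e$-vanishing (up to controllable connecting homomorphisms in the associated long exact sequence) into all-cohomology vanishings for the corresponding $A_e$-twist and $B_e$-twist. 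Each resulting line-bundle vanishing is then governed by a direct consequence of pushing down along $\pi_e:\FF_e\to\Pp^1$ together with Serre duality on $\FF_e$: $\Oc_{\FF_e}(\alpha',\beta')$ has all cohomology zero iff $\alpha'=-1$ (any $\beta'$), or $(\alpha',\beta')=(0,-1)$, or $(\alpha',\beta')=(-2,-e-1)$, or \emph{only when} $e=0$, $\beta'=-1$ (any $\alpha'$). Plugging this criterion into the cases $a\in\{0,1,2\}$ and using \eqref{(iii)}, the resulting system forces $e=0$ in every case, yielding exactly the pair $\{L_1,L_2\}$ for $a=1$ under \eqref{(iii)} (i.e.\ $b_0\geqslant 2$), and the pair $\{M_1,M_2\}$ for $a\in\{0,2\}$ only under the extra compatibility $b_0=2t$, $k_0=3t$; Ulrich duality (applied with $K_{X_0}=-2\xi+\varphi^*\Oc_{\FF_0}(1,b_0-2)$) pairs $L_1\leftrightarrow L_2$ and $M_1\leftrightarrow M_2$, halving the verification.

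The main obstacle is the cohomology of bundles of the form $\mathcal{E}_e\otimes(\text{line bundle})$: since \eqref{eq:al-be} is only a filtration and not a splitting, one cannot simply replace the $\mathcal{E}_e$-vanishings by $A_e$-vanishings plus $B_e$-vanishings, and the connecting maps in the associated long cohomology sequences have to be analysed carefully to show that no cohomology is missed. A related, structurally decisive, point is the asymmetry of $\FF_e$ when $e>0$: the line-bundle criterion above collapses to essentially three isolated cases rather than an infinite family, and it is precisely this rigidity that makes the Diophantine system inconsistent for $e>0$, thereby eliminating all Ulrich line bundles in that regime.
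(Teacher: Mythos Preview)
Your plan follows the same architecture as the paper's proof: write $L=a\xi+\varphi^*N$, restrict to $a\in\{0,1,2\}$, and for each $a$ push the Ulrich vanishings down to $\FF_e$ via Leray, then solve. The paper quotes the bound on $a$ from \cite[Cor.~2.2]{f-lc-pl} and then, in each case, first imposes $\chi=0$ via Riemann--Roch on $\FF_e$ to produce a short list of numerical candidates, checking the individual cohomology groups afterwards; you instead invoke the full all-cohomology-zero criterion for $\Oc_{\FF_e}(\alpha',\beta')$ directly (your list is correct). This reorganisation is legitimate and slightly tidier for $a=1$, but for $a=2$ both approaches converge on the same substantive computation you correctly single out as ``the main obstacle'': controlling $H^i(\mathcal{E}_e\otimes N)$ through the extension \eqref{eq:al-be}, where the connecting maps genuinely matter. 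The paper works this out case by case in its Case~II-a; your sketch stops short of that, so the plan is sound but incomplete at exactly the point the paper spends the most effort.

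One step does need repair. The claim ``$H^0(L-3\xi)=0$ forces $a-3\leqslant-1$'' (and its Serre-dual companion) is not valid as stated: for any $a\geqslant 3$ one can choose $N$ negative enough that $H^0(\mathrm{Sym}^{a-3}\mathcal{E}_e\otimes N)=0$, so no single vanishing bounds $a$. A correct self-contained argument is this: $P(m):=\chi(m\xi+\varphi^*N)$ is a cubic in $m$; the Ulrich condition gives $P(a-1)=P(a-2)=P(a-3)=0$; and $P(-1)=0$ holds automatically since $R^i\varphi_*(-\xi)=0$ for $i=0,1$. A cubic cannot have four distinct roots, so $-1\in\{a-1,a-2,a-3\}$, i.e.\ $a\in\{0,1,2\}$. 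This is essentially the content of \cite[Cor.~2.2]{f-lc-pl} that the paper cites.
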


\begin{rem*}\label{Ulrichlinedbls 2 o 4} We like to point out that, for any integer $t\geqslant  1$, if  $(b_0,k_0)= (2t,3t)$  on $\FF_0$ we have four distinct  Ulrich line bundles on $X_0$, for any fixed integer $t \geqslant 1$, 
precisely $L_1$ and its Ulrich dual $L_2$,  $M_1$ and its Ulrich dual $M_2$. For other values of  $b_0 \geqslant 2$  not in the above  mentioned range, we have only two Ulrich line bundles, $L_1$ and its Ulrich dual $L_2$. Moreover, in view of Theorem \ref{pullback}, we notice that the Ulrich line bundles $L_1$ and $L_2$ on $X_0$ are associated to the line bundles 
$\Oc_{\FF_0}(5,b_0-1)$ and $\Oc_{\FF_0}(2,2b_0-1)$, respectively, which are the only Ulrich line bundles on $\FF_0$ with respect to the very ample polarization $c_1(\mathcal E_0) = \Oc_{\FF_0}(3,b_0)$ 
(cf.\;\cite[Example\;2.3]{cas}, \cite[Prop.\;4.4]{ant}). 
\end{rem*}
\begin{proof} (of Theorem \ref{prop:LineB}) Let ${\mathcal L} = a\xi + \varphi^*\Oc_{\FF_e}(\alpha,\beta)$ be an Ulrich line bundle on $X_e$. From \cite[Corollary 2.2]{f-lc-pl} we know that $a=0, 1, 2$. 
%%%%%%%%%%%%%%%%%%%%%%%%%%%%%%%
%
%. CASO $L = \xi + \varphi^*\Oc_{\FF_e}(\alpha,\beta)$
%
%%%%%%%%%%%%%%%%%%%%%%%%%%%%%%%

\medskip

\noindent
{\bf Case I}: If $a=1$ then, by Theorem \ref{pullback}, ${\mathcal L} = \xi + \varphi^*\Oc_{\FF_e}(\alpha,\beta)$ is Ulrich with respect to $\xi$ if and only if 
\begin{eqnarray*}
H^i(\FF_e, \Oc_{\FF_e}(\alpha,\beta) ) = H^i(\FF_e, \Oc_{\FF_e}(\alpha,\beta) -c_1({\mathcal E}_e)) =0
\quad  \text{for $i=0, 1, 2.$ }
 \end{eqnarray*} 
 Thus $\chi(\FF_e, \Oc_{\FF_e}(\alpha,\beta) )=\chi(\FF_e, \Oc_{\FF_e}(\alpha,\beta) - c_1({\mathcal E}_e))=0$. 
By Riemann-Roch we get, respectively,
\begin{eqnarray}
\label{eq1}
(\alpha +1)(e\alpha -2\beta-2)=0
\end{eqnarray} and 
\begin{eqnarray}
\label{eq2}
(\alpha-2)(e\alpha -2\beta+2b_e-3e-2)=0
\end{eqnarray} 
Thus either $\alpha=-1$ which, along with \eqref{eq2}, gives  $\beta=b_e-2e-1$ or  $\alpha=2$ which, along with \eqref{eq1}, gives  $\beta=e-1$.
%Se in eq1 fosse zero il secondo fattore allora usando \eqref{eq2} ne verrebbe che o b_e =3e/2 che contraddice b_e >=3e +1, oppure alfa=2 e quindi beta=e-1.
%Similmente se in eq2 è il secondo fattore zero allora ne segue che se si sotiuisce in eq1  essa diventa (a+1)(3e-2b_2) e ancora una volta solo a=-1 è possibile
We need to check that  $H^i(\FF_e, \Oc_{\FF_e}(\alpha,\beta) ) = H^i(\FF_e, \Oc_{\FF_e}(\alpha,\beta) -c_1(\Ee)) = 0$ \quad for $i\geqslant 0,$  with $(\alpha, \beta)=(-1, b_e-2e-1)$ or $(\alpha, \beta)=(2, e-1).$ 

If $e=0$  then the vanishings follow by the K{\"u}nneth formula, hence we get $\mathcal L=L_2$ in the first case whereas $\mathcal L=L_1$ in the latter case, where $L_1$ and $L_2$ are as in the statement. 

If  $e>0$, the cohomology groups are not all zero therefore there are no Ulrich line bundles with $a=1$ in these cases. 

%%%%%%%%%%%%%%%%%%%%%%%%%%%%%%%
%
%. CASO $L = 2\xi + \varphi^*\Oc_{\FF_e}(\alpha,\beta)$
%
%%%%%%%%%%%%%%%%%%%%%%%%%%%%%%%

\medskip

\noindent
{\bf Case II}:  If $a=2$ then, by \cite[Corollary 2.2]{f-lc-pl}, $\mathcal L = 2\xi + \varphi^*\Oc_{\FF_e}(\alpha,\beta)$ is Ulrich with respect to $\xi$ if and only if 
\begin{eqnarray*}
H^i(\FF_e, \Oc_{\FF_e}(\alpha,\beta) ) = H^i(\FF_e, \E_e(\Oc_{\FF_e}(\alpha,\beta))) =0
\quad  \text{for $i=0, 1, 2.$}
 \end{eqnarray*} 
 Thus $\chi(\FF_e, \Oc_{\FF_e}(\alpha,\beta) )=\chi(\FF_e,  \E_e(\Oc_{\FF_e}(\alpha,\beta))))=0$.  By Riemann-Roch we get \eqref{eq1} and 
\begin{eqnarray} 
\label{eq3}
-e\alpha^2+2\alpha \beta+\alpha b_e-4e\alpha+2\alpha+5 \beta+4b_e-6e-k_e+5=0,
\end{eqnarray} 
respectively. From \eqref{eq1} either  $\alpha=-1$ or  $\beta=\frac{\alpha e}{2}-1.$

\medskip

{\bf Case II-a}: $\alpha=-1$. Plugging such value  in \eqref{eq3} we get 
 $\beta=-b_e+e+\frac{k_e}{3}-1$, which forces $k_e=3t$ for some $t\in  \mathbb{Z}$, hence $\beta=-b_e+e+t-1$.    We compute $H^i(\FF_e, \Oc_{\FF_e}(\alpha,\beta))=H^i(\FF_e, \Oc_{\FF_e}(-1,-b_e+e+t-1) )$ and  $H^i(\FF_e, \E_e(\Oc_{\FF_e}(-1,-b_e+e+t-1)))$ for $i=0, 1, 2.$

Now $R^{i}{\pi_e}_*(\Oc_{\FF_e}(-1,-b_e+e+t-1))=0$, for $i\geqslant 0$, 
hence, from Leray's isomorphism we have  $H^i(\FF_e, \Oc_{\FF_e}(-1,-b_e+e+t-1) )\cong H^i(\Pp^1, 0 )=0$, for $i=0, 1, 2.$
\vspace{2mm}

To compute $H^i(\FF_e, \Ee(\Oc_{\FF_e}(-1,-b_e+e+t-1)))$
we recall that the vector bundle $\Ee$ sits in the exact sequence \eqref{eq:al-be}, where $A_e\in |\Oc_{\FF_e}(2,2b_e-k_e-2e)|$ and $B_e\in  |\Oc_{\FF_e}(1,k_e-b_e+2e)|$ and after twisting \eqref{eq:al-be} with $\Oc_{\FF_e}(-1,-b_e+e+t-1)$ we have
\begin{equation}\label{eqtwist:al-be}
0 \to  \Oc_{\FF_e}(1,b_e-2t-e-1)\to {\mathcal E}_e(\Oc_{\FF_e}(-1,-b_e+e+t-1)) \to  \Oc_{\FF_e}(0,4t-2b_e+3e-1) \to 0.
\end{equation}

Now 
\begin{eqnarray*}
\begin{aligned}
R^{i}{\pi_e}_*\Oc_{\FF_e}(1,b_e-2t-e-1)=0, \quad \mbox{for $i>0$,  and} \\
{\pi_e}_*\Oc_{\FF_e}(1,b_e-2t-e-1)\cong (\Oc_{\Pp^1} \oplus \Oc_{\Pp^1}(-e)) \otimes \Oc_{\Pp^1} (b_e-2t-e-1),
\end{aligned}
\end{eqnarray*}
hence, from Leray's isomorphism we have
\begin{eqnarray}
\label{coom a sinistra}
 \, \qquad\quad  H^i(\FF_e,\Oc_{\FF_e}(1,b_e-2t-e-1))\cong H^i(\Pp^1, (\Oc_{\Pp^1} \oplus \Oc_{\Pp^1}(-e))  (b_e-2t-e-1))\\ \nonumber
= H^i(\Pp^1, \Oc_{\Pp^1} (b_e-2t-e-1))\oplus H^i(\Pp^1, \Oc_{\Pp^1} (b_e-2t-2e-1)) 
\end{eqnarray} and similarly
\begin{eqnarray}
\label{coom a destra}
H^i(\FF_e,  \Oc_{\FF_e}(0,4t-2b_e+3e-1) )&\cong&H^i(\Pp^1, \Oc_{\Pp^1}(4t-2b_e+3e-1))
\end{eqnarray} 

We  consider first the case $e=0$ and then the case $e \geqslant 1$.

%%%%%%%%%%%%%%%%%%%%%%%%
%
%. CASO e=0 \alpha =-1
%
%%%%%%%%%%%%%%%%%%

If  $e=0$, then \eqref{coom a sinistra} and \eqref{coom a destra} become
\begin{eqnarray*}
H^i(\FF_0, \Oc_{\FF_0}(1,b_0-2t-1))&=& H^i(\Pp^1, \Oc_{\Pp^1} (b_0-2t-1)^{\oplus 2}) 
\end{eqnarray*} 
\begin{eqnarray*}
H^i(\FF_0,  \Oc_{\FF_0}(0,4t-2b_0-1) )&=& H^i(\Pp^1, \Oc_{\Pp^1}(4t-2b_0-1))
\end{eqnarray*}

If $b_0-2t-1\geqslant 0$ then 
\begin{eqnarray*}
h^0(\Pp^1, \Oc_{\Pp^1} (b_0-2t-1)^{\oplus 2}) &= &2(b_0-2t) \\
h^1(\Pp^1, \Oc_{\Pp^1} (b_0-2t-1)^{\oplus 2}) &= &0, \quad \text{by Serre's duality on $\Pp^1$.}
\end{eqnarray*}
Note that if  $b_0-2t-1\geqslant 0$ then  %$2b_0-4t-2\geqslant 0$ e quindi  $4t-2b_0+2 \leqslant 0$ and thus $4t-2b_0+2 =4t-2b_0-1+3$ e quindi $4t-2b_0-1 \leqslant -3$ 
 $4t-2b_0-1 \leqslant -3$ hence $h^0(\Pp^1, \Oc_{\Pp^1}(4t-2b_0-1))=0$ and, by Serre duality, $h^1(\Pp^1, \Oc_{\Pp^1}(4t-2b_0-1))\cong h^0(\Pp^1, \Oc_{\Pp^1}(2b_0-4t-1))=2b_0-4t.$
 
These computations, along with the cohomology sequence associated to \eqref{eqtwist:al-be}, give 
\begin{eqnarray*}
h^i({\mathcal E}_0(\Oc_{\FF_0}(-1,-b_0+t-1)))&=&2b_0-4t\geqslant 2 \, \mbox{(by assumption),  for $i=0,1$,}\\
h^2({\mathcal E}_0(\Oc_{\FF_0}(-1,-b_0+t-1)))&=&0, \, \mbox{trivially.}
\end{eqnarray*}

If $b_0-2t-1< 0$ then 
\begin{eqnarray*}
h^0(\Pp^1, \Oc_{\Pp^1} (b_0-2t-1)^{\oplus 2}) &= &0 \\
h^1(\Pp^1, \Oc_{\Pp^1} (b_0-2t-1)^{\oplus 2})&\cong &2 h^0(\Pp^1, \Oc_{\Pp^1} (2t-b_0-1)), \quad \text{by Serre's duality on $\Pp^1$.}
\end{eqnarray*}
Note that 
\begin{eqnarray*}
h^0(\Pp^1, \Oc_{\Pp^1} (2t-b_0-1))= \left\{
    \begin{array}{ll}
      0 &   \mbox{ if $b_0=2t$}\\ 
       2t-b_0  & \mbox{ if $2t-b_0-1\geqslant 0$}.
    \end{array}
\right.
\end{eqnarray*}

Note also that  $b_0-2t-1< 0$ 
%implies  $2t-b_0+1> 0$ and thus $4t-2b_0+2> 0$ , ma $4t-2b_0+2 =4t-2b_0-1+3$ e quindi $4t-2b_0-1 \leqslant -2$ 
implies that   $4t-2b_0-1 \geqslant -2$, hence 
 
 if $4t-2b_0-1 \geqslant 0$,
  $h^0(\Pp^1, \Oc_{\Pp^1}(4t-2b_0-1))=4t-2b_0$ and $h^1(\Pp^1, \Oc_{\Pp^1}(4t-2b_0-1))=0;$

if $4t-2b_0-1 = -1$, $h^0(\Pp^1, \Oc_{\Pp^1}(4t-2b_0-1))=h^1(\Pp^1, \Oc_{\Pp^1}(4t-2b_0-1))=0;$

if $4t-2b_0-1 = -2$, $h^0(\Pp^1, \Oc_{\Pp^1}(4t-2b_0-1))=0$ and $h^1(\Pp^1, \Oc_{\Pp^1}(4t-2b_0-1))=1$.

These facts, along with \eqref{eqtwist:al-be}, give that 
\begin{eqnarray*}
 h^i({\mathcal E}_0(\Oc_{\FF_0}(-1,-b_0+t-1)))= 0,\, \mbox{ for $i=0,1,2$, \quad if $b_0=2t$, }\\
 h^0({\mathcal E}_0(\Oc_{\FF_0}(-1,-b_0+t-1)))\neq 0, \, \mbox{in the remaining two cases}, 
\end{eqnarray*}  
the latter case holds because otherwise from the cohomology sequence associated to  \eqref{eqtwist:al-be}  it would follow that  $h^1({\mathcal E}_0(\Oc_{\FF_0}(-1,-b_0+t-1)))< 0$, which is impossible. 
Thus we are left with the cases $b_0=2t, k_0=3t$, where $t \geqslant 1$, as it follows from   \eqref{(iii)} and \eqref{eq:rem:assAB}. 
Hence in this case we get  $\mathcal L=M_1$   is Ulrich and its Ulrich dual  is  $M_2$.

\smallskip

%%%%%%%%%%%%%%%%%%
%
%. CASO e>=1, \alpha=-1
%
%%%%%%%%%%%%%%%%%%
If $e \geqslant 1$ in order to compute the cohomology groups in \eqref{coom a sinistra} and \eqref{coom a destra} we consider first the case in which $b_e-2t-2e-1 \geqslant 0.$ Note that in this case also $b_e-2t-e-1 \geqslant 0$ and thus  in \eqref{coom a sinistra} we have
\begin{eqnarray*}
h^0(\FF_e,\Oc_{\FF_e}(1,b_e-2t-e-1)) = h^0(\Pp^1, (\Oc_{\Pp^1} \oplus \Oc_{\Pp^1}(-e))  (b_e-2t-e-1))&=&2b_e-4t-3e, \\ 
h^1(\FF_e,\Oc_{\FF_e}(1,b_e-2t-e-1))=h^1(\Pp^1, (\Oc_{\Pp^1} \oplus \Oc_{\Pp^1}(-e))  (b_e-2t-e-1))&=&0.
\end{eqnarray*}
As for \eqref{coom a destra} note that 
%$4t-2b_e+3e-1 =2t-b_e+2e+1+ 2t-b_e+e+1-3\leqslant -3$
$4t-2b_e+3e-1 \leqslant -3-e$, by assumption, and thus  
\begin{eqnarray*}
H^0(\FF_e,  \Oc_{\FF_e}(0,4t-2b_e+3e-1) )&\cong&H^0(\Pp^1, \Oc_{\Pp^1}(4t-2b_e+3e-1))=0 \quad \mbox{and}\\
h^1(\Pp^1, \Oc_{\Pp^1}(4t-2b_e+3e-1)))&=&h^0(\Pp^1, \Oc_{\Pp^1}(2b_e-4t-3e-1)))=2b_e-4t-3e.
\end{eqnarray*}

From the cohomology sequence associated to \eqref{eqtwist:al-be} it follows that 
\begin{eqnarray*}
h^0({\mathcal E}_e(\Oc_{\FF_e}(-1,-b_e+e+t-1)))&=&h^1({\mathcal E}_e(\Oc_{\FF_e}(-1,-b_e+e+t-1)))\\
&=&2b_e-4t-3e \geqslant 2+e,
\end{eqnarray*}
because $4t-2b_e+3e-1\leqslant -3-e$.

If  $b_e-2t-e-1 \geqslant 0$ and $b_e-2t-2e-1 < 0$ (the case $b_e-2t-2e-1 \geqslant 0$ was just treated), then 
\begin{eqnarray*}
h^0(\FF_e,\Oc_{\FF_e}(1,b_e-2t-e-1))&=&h^0(\Pp^1, (\Oc_{\Pp^1} \oplus \Oc_{\Pp^1}(-e))  (b_e-2t-e-1))\\&=&b_e-2t-e\geqslant 1.  
\end{eqnarray*}
Thus
 $h^0({\mathcal E}_e(\Oc_{\FF_e}(-1,-b_e+e+t-1))\geqslant h^0(\FF_e,\Oc_{\FF_e}(1,b_e-2t-e-1))= b_e-2t-e\geqslant 1$. 
%\end{case1*}

\medskip

%%%%%%%%%%%%
%
%.  Case II-b: 
%
%%%%%%%%%%%%

{\bf Case II-b: } $\beta=\frac{\alpha e}{2}-1.$  Plugging such value in \eqref{eq3} we get 
\begin{eqnarray}\label{alfa}
\alpha=\frac{-8b_e+12e+2k_e}{2b_e-3e}
\end{eqnarray}
which implies that 
\begin{eqnarray*}
(\alpha+4) (2b_e-3e)=2k_e.
\end{eqnarray*}
 By  \eqref{(iii)} and \eqref{eq:rem:assAB},   we get $2b_e-3e\geqslant 3e+4\geqslant 2$ and  $k_e> b_e-e\geqslant 2e+2$. Thus $\alpha+4 >0$, that is $\alpha \geqslant -3$.
Notice that if $\alpha =-3$ then \eqref{alfa} gives $b_e=\frac{3}{2}e+k_e= \frac{1}{2}e+e+k_e> \frac{1}{2}e+b_e$ by  \eqref{(iii)}, which is a contradiction. Hence $\alpha \geqslant -2$ and therefore from \eqref{alfa}
it follows that $k_e \geqslant 2b_e-3e$ which contradicts the condition $k_e< 2b_e-4e$ in  \eqref{(iii)}.

\medskip

The proof is complete since the case $a=0$ is the Ulrich dual of the case $a=2$.
\end{proof}

%%%%%%%%%%%%%%%%%%%%%%%%%%%
%
%  	RANK 2 Ulrich on F_e
%
%%%%%%%%%%%%%%%%%%%%%
\section{Rank-$2$ Ulrich vector  bundles on $3$-fold scrolls over $\FF_e$ } \label{Ulrich rk 2 vb}
%%%%%%%%%%%%%%%%%%%%%%%%%%%
%
%  	RANK 2 Ulrich on F_0
%
%%%%%%%%%%%%%%%%%%%%%

As in the previous section, we consider  $3$-fold
scrolls $(X_e, \xi)$,  with $e\geqslant 0$,  satisfying conditions as in Assumptions \ref{ass:AB}.   
Our aim is to prove the existence of some moduli spaces of rank-$2$ Ulrich vector bundles on such $3$-fold scrolls and to study their basic properties. 
As a matter of notation, in the sequel $F$ will always denote the fiber of the natural scroll map $\varphi: X_e \cong \Pp(\Ee) \to \mathbb{F}_e$.

 \subsection{Rank-$2 $ Ulrich vector bundles on  $3$-fold scrolls over  $\FF_0$}\label{S:subs41}

% \begin{rem*}   \label{non trivial extensions}

 From Theorem  \ref{prop:LineB}   we know that on  $X_0$ 
there exist  Ulrich line bundles. Using these line bundles, we will construct rank two Ulrich vector bundles arising as  non-trivial extensions of them.

\bigskip

\noindent {\bf Case L}: Let $L_1$ and $L_2$ be line bundles on $X_0$ as in Theorem  \ref{prop:LineB}-(i).  Notice that 
\begin{eqnarray*}
{\rm Ext}^1(L_2,L_1)&\cong& H^1(X_0,L_1-L_2)=H^1(X_0,\varphi^*\Oc_{\FF_0}(3,-b_0))\cong H^1(\FF_0, \Oc_{\FF_0}(3,-b_0))\\
&\cong& H^1(\Pp^1,S^3(\cO_{\Pp^1}\oplus\cO_{\Pp^1})(-b_0))\cong H^1(\Pp^1, \Oc_{\Pp^1}^{\oplus 4}(-b_0)) \cong H^0(\Pp^1, \Oc_{\Pp^1}^{\oplus 4}(b_0-2)).
\end{eqnarray*} Hence  $\dim {\rm Ext}^1(L_2,L_1)=4b_0-4\geqslant 4,$  being  $b_0\geqslant 2$  (see \eqref{eq:rem:assAB}). Thus there are non-trivial  extensions $\cF_1$ 
 \begin{eqnarray}
\label{extension1}
0 \to L_1  \to \cF_1 \to L_2 \to 0
\end{eqnarray} 
of  $L_2$ by $L_1$. Similarly 
\begin{eqnarray*}
{\rm Ext}^1(L_1,L_2)&\cong&H^1(X_0, L_2-L_1)=H^1(X_0, \varphi^*\Oc_{\FF_0}(-3,b_0))\cong H^1(\FF_0, \Oc_{\FF_0}(-3,b_0))\\
&\cong&H^1(\FF_0, \Oc_{\FF_0}(1,-2-b_0))\cong H^1(\Pp^1, (\cO_{\Pp^1}\oplus\cO_{\Pp^1})(-2-b_0))\\ &\cong &
H^0(\Pp^1, \Oc_{\Pp^1}^{\oplus 2}(b_0)).
\end{eqnarray*}
Hence $\dim {\rm Ext}^1(L_1,L_2)=2b_0+2\geqslant 6$ and thus there are   non-trivial extensions $\cF_1'$ 
\begin{eqnarray}
\label{extension1'}
0 \to L_2  \to \cF_1' \to L_1 \to 0
\end{eqnarray} 
of $L_1$ by $L_2$. Notice that the vector bundles $\cF_1$ and $\cF_1'$ are both rank two vector bundles which are Ulrich w.r.t. $\xi$ with 
\begin{eqnarray*}
c_1(\cF_1) = c_1(\cF_1') =2\xi+\varphi^*\Oc_{\FF_0}(1,b_0-2) \;\; \mbox{and} \;\;\\ \nonumber
 c_2(\cF_1) = c_2(\cF_1') = \xi \cdot\varphi^*\Oc_{\FF_0}(4,2b_0-2) +(2b_0-k_0-1)F.
\end{eqnarray*} 
 Moreover, since $L_1$ and $L_2$ are non-isomorphic line bundles with the same slope $$\mu(L_1)=\mu(L_2)=8b_0-k_0-3$$ with respect to $\xi$ then, by \cite[Lemma 4.2]{c-h-g-s},
$\cF_1$ and $\cF_1'$ are  simple vector bundles, in particular indecomposable.  

The family of extensions \eqref{extension1} is of dimension $4b_0-4$ while the one as in \eqref{extension1'} is of dimension $2b_0+2$, which are different positive 
integers unless $b_0=3$.

\bigskip

  \noindent {\bf Case M}:  Let $M_1$ and $M_2$ be line bundles on $X_0$ as in Theorem  \ref{prop:LineB}-(ii).  As above one computes 
 \begin{eqnarray*}
{\rm Ext}^1(M_2,M_1)&\cong& H^1(X_0, M_1-M_2)=H^1(X_0, 2\xi+\varphi^*\Oc_{\FF_0}(-3,-4t))\\
&\cong& H^1(\FF_0, S^2(\Eo)(-3,-4t)), 
\end{eqnarray*}
hence we need to compute  $H^1(\FF_0, S^2(\Eo)(-3,-4t))$, where $S^2(\Eo)$ denotes the second symmetric power of $\Eo$. The vector bundle $\Eo$ fits in the exact sequence 
\eqref{eq:al-be}, with $A_0$ and $B_0$ as in \eqref{eq:al-be3} and with $b_0=2t$ and $k_0=3t$, $t \geqslant 1$. By \cite[5.16.(c), p. 127]{H},  there is a finite filtration of 
$S^2(\Eo)$, 
$$S^2(\Eo)=F^{0}\supseteq F^{1}\supseteq F^{2} \supseteq F^{3}=0$$
with quotients 
$$ F^{p}/F^{p+1} \cong S^{p}(A_0)\otimes S^{2-p}(B_0),$$for each $0 \leqslant p \leqslant 2$.
Hence $$ F^{0}/F^{1} \cong S^{0}(A_e) \otimes S^{2}(B_0)= 2B_0$$
$$ F^{1}/F^{2} \cong S^{1}(A_0)\otimes S^{1}(B_0)=A_0+ B_0$$
$$ F^{2}/F^{3} \cong S^{2}(A_0)\otimes S^{0}(B_0)=2A_0, \text{ that is } F^{2}=2A_0,$$ since $F^3=0$. 
Thus, we get the following exact sequences
\begin{equation}\label{filtr1}
0 \to F^{1} \to S^2(\Eo) \to 2B_0\to 0
\end{equation}
\begin{equation}\label{filtr2}
0 \to  F^{2} \to F^{1}  \to A_0+B_0\to 0
\end{equation}
\begin{equation}\label{filtr3}
F^{2} = 2 A_0
\end{equation}
Twisting \brref{filtr1},  \brref{filtr2} with $\Oc_{\FF_0}(-3,-4t)$ and using  \brref{filtr3} we get 
\begin{equation}
\label{filtr1tw}
0 \to F^{1} (-3,-4t) \to S^2(\Eo) \otimes \Oc_{\FF_0}(-3,-4t) \to \Oc_{\FF_0}(-1,-2t)\to 0
\end{equation}
\begin{equation}
\label{filtr2tw}
0 \to  {\mathcal O}_{\FF_{0}}(1,-2t)\to F^{1}  \otimes \Oc_{\FF_0}(-3,-4t)  \to {\mathcal O}_{\FF_{0}}(0,-2t)\to 0
\end{equation}

First we focus on \eqref{filtr2tw}; one has $h^{i}( {\mathcal O}_{\FF_{0}}(1,-2t)) = h^{i}({\mathbb P}^1, \oofp{1}{-2t}^{\oplus 2})$,  so, for dimension reasons,  $h^i( {\mathcal O}_{\FF_{0}}(1,-2t)) = 0$, for any $i \geqslant 2$. Since  $t \geqslant 1$,  $h^{0}({\mathcal O}_{\FF_{0}}(1,-2t))=0$ and  $h^{1}({\mathcal O}_{\FF_{0}}(1,-2t))=4t-2$. Similarly 
 $$h^{i}( {\mathcal O}_{\FF_{0}}(0,-2t)) = h^{i}({\mathbb P}^1, \oofp{1}{-2t})$$  so,  $h^i( {\mathcal O}_{\FF_{0}}(0,-2t)) = 0$, for any $i \geqslant 2$, $h^{0}({\mathcal O}_{\FF_{0}}(0,-2t))=0$ and  $h^{1}({\mathcal O}_{\FF_{0}}(0,-2t))=2t-1$  then \brref{filtr2tw} gives 
\begin{eqnarray*}\label{eq:5.aiutob}
 h^{1}( F^{1}(-3, -4t))=6t-3,   \quad  h^{i}( F^{1}(-3, -4t))=0,  \,\mbox{for $i=0,2$}.
\end{eqnarray*}

Passing to \eqref{filtr1tw} observe that,  $h^i( {\mathcal O}_{\FF_{0}}(-1,-2t)) = 0$, for any $i \geqslant 0$. 
This, along with \eqref{eq:5.aiutob} and  \brref{filtr1tw} gives
\begin{eqnarray}\label{h2sym2E}
h^1(2\xi+\varphi^*\Oc_{\FF_0}(-3,-4t))&=&
h^{1}(\FF_{0}, S^2(\Eo(-3,-4t))=6t-3=3b_0-3, \quad  \\
 h^i(2\xi+\varphi^*\Oc_{\FF_0}(-3,-4t))&=&h^{i}(\FF_{0}, S^2(\Eo) (-3,-4t))= 0,  \quad\mbox{for $i=0,2,3$}. \nonumber\end{eqnarray}
%%%%%%%%%
%%%%%%%
Hence $\dim({\rm Ext}^1(M_2,M_1))=3b_0-3 \geqslant 3$  because  $b_0\geqslant 2$  (see \eqref{eq:rem:assAB}). Thus there are non-trivial  extensions $\cF_2$ 
 \begin{eqnarray}
\label{extension2}
0 \to M_1  \to \cF_2 \to M_2 \to 0
\end{eqnarray} of  $M_2$ by $M_1$. Similarly,
 
 \begin{eqnarray*}
{\rm Ext}^1(M_1,M_2)&\cong& H^1(\Pp({\mathcal E}_0), M_2-M_1)=H^1(-2\xi+\varphi^*\Oc_{\FF_0}(3,4t))\\
&\cong& H^2(\varphi^*\Oc_{\FF_0}(-2,-b_0-2))
\cong H^2(\FF_0, \Oc_{\FF_0}(-2,-b_0-2))\\&\cong& H^0(\FF_0, \Oc_{\FF_0}(0,b_0))\cong H^0(\Pp^1, \Oc_{\Pp^1}(b_0)).
\end{eqnarray*}
Hence $\dim({\rm Ext}^1(M_1,M_2))=b_0+1 \geqslant 3$  and thus there are non-trivial extensions $\cF_2'$ 
\begin{eqnarray}
\label{extension2'}
0 \to M_2  \to \cF_2' \to M_1 \to 0
\end{eqnarray} of  $M_1$ by $M_2$. Notice that the vector bundles $\cF_2$ and $\cF_2'$ are both Ulrich rank two vector bundles with 
$$c_1(\cF_2)=c_1(\cF_2')=2\xi+\varphi^*\Oc_{\FF_0}(1,2t-2)\;\; {\rm and} \;\; c_2(\cF_2)=c_2(\cF_2')=\xi \varphi^*\Oc_{\FF_0}(4,6t-2)- (5t+1)F.$$Moreover, since $M_1$ and $M_2$ are non-isomorphic line bundles with the same slope w.r.t. $\xi$ $$\mu(M_1)=\mu(M_2)=13t-3,$$then, by \cite[Lemma 4.2]{c-h-g-s},
$\cF_2$ and $\cF_2'$ are simple vector bundles, in particular indecomposable.  The family of extensions  \eqref{extension1} is of dimension $3b_0-3$ while the one as in \eqref{extension1'} has dimension $b_0+1$, which are different positive integers unless $b_0=2$.

\vspace{2mm}
%%%%%%%%%%%%%%%%%%%
%. CASO L-M
%
%%%%%%%%%%%%%%%%%%%
\noindent {\bf Case L-M}: 
If we consider extensions using both line bundles of type $L_i$ and $M_j$, with $i, j=1,2$, one can easily see that for some of them we get only trivial extensions, precisely:
\vspace{3mm}

\noindent 
${\rm Ext}^1(M_1,L_1)\cong H^1(X_0,L_1-M_1)=H^1(X_0,-\xi+\varphi^*\Oc_{\FF_0}(3,t))\cong H^1(\FF_0, 0)=0$,
\vspace{3mm}

\noindent 
${\rm Ext}^1(L_1,M_2)\cong H^1(X_0,M_2-L_1)=H^1(X_0,-\xi+\varphi^*\Oc_{\FF_0}(0,3t))\cong H^1(\FF_0, 0)=0$,
\vspace{3mm}

\noindent ${\rm Ext}^1(M_1,L_2)\cong H^1(X_0,L_2-M_1)=H^1(X_0,-\xi+\varphi^*\Oc_{\FF_0}(0,3t))\cong H\cong H^1(\FF_0, 0)=0$,

\vspace{3mm}
\noindent   ${\rm Ext}^1(L_2,M_2)\cong H^1(X_0,M_2-L_2)=H^1(X_0,-\xi+\varphi^*\Oc_{\FF_0}(3,3t-b_0)\cong  H^1(\FF_0, 0)=0.$

\vspace{3mm}

\noindent
On the contrary, in the remaining possibilities  we get   non-trivial extensions and precisely: 
\begin{eqnarray*}
{\rm Ext}^1(L_1,M_1)&\cong& H^1(X_0,M_1-L_1)=H^1(X_0,\xi+\varphi^*\Oc_{\FF_0}(-3,-t))\cong H^1(\FF_0, \Eo(-3,-t)); 
\end{eqnarray*} 
an easy computation gives that $\dim({\rm Ext}^1(L_1,M_1))=1$  and thus there are   non-trivial extensions $\cF_3$
such that $c_1(\cF_3)=3\xi+\varphi^*\Oc_{\FF_0}(1,-t-2)$ and $c_2(\cF_3)=\xi \varphi^*\Oc_{\FF_0}9,3t-3)- (8t+1)F$.
\begin{eqnarray*}
{\rm Ext}^1(M_2,L_1)&\cong& H^1(X_0,L_1-M_2)=H^1(X_0,\xi+\varphi^*\Oc_{\FF_0}(0,-3t))\cong H^1(\FF_0, \Eo(0,-t)); 
\end{eqnarray*} 
in this case $\dim({\rm Ext}^1(M_2,L_1))=5b_0-5 \geqslant 5$  and so there are    non-trivial  extensions $\cF_4$ (because $b_0 \geqslant 2$, by \eqref{eq:rem:assAB}) such that $c_1(\cF_4)=\xi+\varphi^*\Oc_{\FF_0}(4,3t-2)$ and $c_2(\cF_4)=\xi \varphi^*\Oc_{\FF_0}(2,3t-1)+ (6t-4)F$.
\begin{eqnarray*}
{\rm Ext}^1(L_2,M_1)&\cong& H^1(X_0,M_1-L_2)=H^1(X_0,\xi+\varphi^*\Oc_{\FF_0}(0,-3t))\cong H^1(\FF_0, \Eo(0,-t)),
\end{eqnarray*} 
thus $\dim({\rm Ext}^1(L_2,M_1))=5b_0-5 \geqslant 5$  and thus there are    non-trivial  extensions $\cF_5$ with   $c_1(\cF_5)=3\xi+\varphi^*\Oc_{\FF_0}(-2,t-2)$ and $c_2(\cF_5)=\xi  \varphi^*\Oc_{\FF_0}(3,7t-3)+ (2-7t)F$.
\begin{eqnarray*}
{\rm Ext}^1(M_2,L_2)&\cong& H^1(X_0,L_2-M_2)=H^1(X_0,\xi+\varphi^*\Oc_{\FF_0}(-3,-t))\cong H^1(\FF_0, \Eo(-3,-t))\cong \mathbb C;
\end{eqnarray*} 
thus there are   non-trivial extensions $\cF_6$ with   $c_1(\cF_6)=\xi+\varphi^*\Oc_{\FF_0}(1,5t-2)$ and  $c_2(\cF_6)=\xi \varphi^*\Oc_{\FF_0}(2,3t-1)+ (t-1)F$.
%\end{rem*}

\bigskip 

Previous computations show that there are Ulrich rank-$2$ vector bundles belonging to different moduli spaces, since their Chern classes are different.

As explained in Introduction, the aim of this paper is to give effective proofs for the Ulrich wildness of $3$--fold scrolls $(X_e, \xi)$ as above, for any $e \geqslant 0$, explicitely exhibiting irreducible components of moduli spaces of indecomposable Ulrich vector bundles of infinitely many ranks, together with 
all details as in {\bf Main Theorem} and {\bf Main Corollary}, namely Ulrich complexity of the $X_e$'s, generic smoothness and dimension of the modular components, etcetera.  
For these reasons,  in the sequel we will focus only on extensions of type \eqref{extension1}.  In particular, here  we prove the following theorem.

%%%%%%%%%%%
%
%. TEOREMA  2
%%%%%%%%%%%

\begin{theo}\label{prop:rk 2 simple Ulrich vctB e=0;I}  Let  $(X_0, \xi) \cong \scrollcal{E_0}$ be a $3$-fold  scroll over $\FF_0$, with  $\mathcal E_0$ as in Assumptions \ref{ass:AB}.  Let $\varphi: X_0 \to \FF_0$ be the scroll map and $F$ be the $\varphi$-fibre. Then the moduli space of rank-$2$ vector bundles $\cU$ on $X_0$ which are Ulrich w.r.t. $\xi$ and  with Chern classes 
\begin{equation}\label{eq:chern20}
c_1(\cU)=2\xi+\varphi^*\Oc_{\FF_0}(1,b_0-2) \;\; {\rm and} \;\; c_2(\cU)=\xi \cdot \varphi^*\Oc_{\FF_0}(4,2b_0-2) +(2b_0-k_0-1)F, 
\end{equation} is not empty and it contains a generically smooth component $\mathcal M$ of dimension 
$$ \dim (\mathcal M) = 6b_0-3, $$whose general point  $[\cU]$ 
corresponds to a  special and slope-stable vector bundle, of slope 
\begin{equation}\label{eq:slope20}
\mu(\cU) = 8b_0 - k_0 -3,
\end{equation}  w.r.t. $\xi$  and where $b_0 \geqslant 2$ by \eqref{eq:rem:assAB}. 
\end{theo}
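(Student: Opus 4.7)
\textit{Plan of proof.} The strategy is to realize $\cU$ as the general modular deformation of a rank $2$ Ulrich bundle $\cF_1$ arising from a non-trivial extension as in \eqref{extension1}. First, since $L_1$ and $L_2$ are Ulrich w.r.t. $\xi$ by Theorem \ref{prop:LineB}-(i), the vanishings defining the Ulrich property pass to $\cF_1$ via the long exact sequences associated to twists of \eqref{extension1}, so $\cF_1$ is Ulrich of rank $2$. The Chern-class identities $c_1(\cF_1) = L_1 + L_2$ and $c_2(\cF_1) = L_1 \cdot L_2$, combined with the Grothendieck relation $\xi^2 = \xi \cdot \varphi^*c_1(\E_0) - \varphi^*c_2(\E_0)$, yield precisely the expressions \eqref{eq:chern20}. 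The standard formula $K_{X_0} = -2\xi + \varphi^*(K_{\FF_0} + c_1(\E_0)) = -2\xi + \varphi^*\Oc_{\FF_0}(1,b_0-2)$ then shows $c_1(\cF_1) = K_{X_0} + 4\xi$, so $\cF_1$ is special in the sense of Definition \ref{def:special}, while the projection formula $\varphi_*\xi^2 = c_1(\E_0)$ together with $\xi^3 = 6b_0 - k_0$ gives $\mu(L_1) = \mu(L_2) = \mu(\cF_1) = 8b_0 - k_0 - 3$, matching \eqref{eq:slope20}.

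Simplicity of $\cF_1$ is immediate from \cite[Lemma 4.2]{c-h-g-s}, since $L_1$ and $L_2$ are non-isomorphic of the same $\xi$-slope. To invoke Proposition \ref{casanellas-hartshorne} I would check the vanishing $H^2(\cF_1 \otimes \cF_1^\vee)=0$ and compute $h^1(\cF_1 \otimes \cF_1^\vee)$ by dualizing \eqref{extension1} and tensoring by $L_1$ and $L_2$ respectively:
\begin{eqnarray*}
0 \to \varphi^*\Oc_{\FF_0}(3,-b_0) \to L_1 \otimes \cF_1^\vee \to \cO_{X_0} \to 0, \\
0 \to \cO_{X_0} \to L_2 \otimes \cF_1^\vee \to \varphi^*\Oc_{\FF_0}(-3,b_0) \to 0.
\end{eqnarray*}
Using $R^i\varphi_*\cO_{X_0}=0$ for $i>0$ and the K\"unneth formula on $\FF_0 = \Pp^1 \times \Pp^1$, the only non-zero cohomologies in sight are $h^1(\varphi^*\Oc_{\FF_0}(3,-b_0))=4b_0-4$, $h^1(\varphi^*\Oc_{\FF_0}(-3,b_0))=2b_0+2$ and $h^0(\cO_{X_0})=1$. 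Splicing into $0 \to L_1 \otimes \cF_1^\vee \to \cF_1 \otimes \cF_1^\vee \to L_2 \otimes \cF_1^\vee \to 0$ gives $h^2(\cF_1 \otimes \cF_1^\vee) = h^3(\cF_1 \otimes \cF_1^\vee)=0$ and $\chi(\cF_1 \otimes \cF_1^\vee)=4-6b_0$, so $h^1(\cF_1 \otimes \cF_1^\vee)=6b_0-3$. Proposition \ref{casanellas-hartshorne} then produces a smooth modular family through $[\cF_1]$ of this dimension, and since Ulrichness is open in flat families (as recalled in Section 1), this yields a generically smooth component $\mathcal M$ of the moduli of rank $2$ Ulrich bundles with Chern classes \eqref{eq:chern20}, with $\dim \mathcal M = 6b_0-3$.

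Finally, for a general $[\cU] \in \mathcal M$, Theorem \ref{thm:stab}-(a) yields slope-semistability; if $\cU$ were strictly slope-semistable, Theorem \ref{thm:stab}-(b) would embed $\cU$ in a short exact sequence $0 \to L' \to \cU \to L'' \to 0$ with $L', L''$ Ulrich line bundles of slope $8b_0 - k_0 - 3$. By Theorem \ref{prop:LineB}, matching both $c_1(\cU)$ and $c_2(\cU)$ forces $\{L',L''\}=\{L_1,L_2\}$: the only other candidate pair $\{M_1,M_2\}$, available in the special case $(b_0,k_0)=(2t,3t)$, is ruled out by a direct check that it produces $c_2 = \xi \cdot \varphi^*\Oc_{\FF_0}(4,6t-2) - (5t+1)F$, different from the $c_2$ in \eqref{eq:chern20}. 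The families of bundles arising from such $L_i$-extensions have projective dimension at most $\max(4b_0-5,\,2b_0+1)<6b_0-3$, so the strictly slope-semistable locus is a proper subvariety of $\mathcal M$ and the general $[\cU]$ is slope-stable, hence stable by Theorem \ref{thm:stab}-(c). The main obstacle is the $\mathrm{Ext}$-computation in the middle step: securing both the vanishing $H^2(\cF_1 \otimes \cF_1^\vee)=0$ (which gives smoothness of $\mathcal M$) and the precise value $h^1=6b_0-3$ (which pins down $\dim \mathcal M$) requires chasing the two auxiliary sequences through Leray plus K\"unneth with some care.
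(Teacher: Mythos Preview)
Your proposal is correct and follows essentially the same approach as the paper: construct $\cF_1$ as a non-trivial extension \eqref{extension1}, verify simplicity via \cite[Lemma 4.2]{c-h-g-s}, compute $h^j(\cF_1\otimes\cF_1^\vee)$ through the two auxiliary sequences obtained by tensoring the dual of \eqref{extension1} with $L_1$ and $L_2$, invoke Proposition~\ref{casanellas-hartshorne} for the smooth modular family of dimension $6b_0-3$, and establish slope-stability of the general member by the dimension count $\max(4b_0-5,\,2b_0+1)<6b_0-3$ against the only Chern-class-compatible extension families. The paper presents the specialness check at the end rather than the beginning and phrases the exclusion of the other Ulrich line-bundle pairs by a blanket reference to the $c_1,c_2$ computations in Cases~M and L--M, but the substance is identical.
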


\begin{proof}  We consider  non--trivial extensions \eqref{extension1} as in {\bf Case L}; recall that $\dim {\rm Ext}^1(L_2,L_1)=4b_0-4\geqslant 4$,  being  $b_0 \geqslant 2$, 
and  moreover that a general vector bundle $\cF_1$ arising from a general extension as in \eqref{extension1} is Ulrich w.r.t. $\xi$. 

Moreover, since $\mu(L_1) = \mu(L_2) = 8b_0 - k_0 -3$, as computed in \S\,\ref{S:subs41}-{\bf Case L}, one has $\mu(\cF_1) = 8b_0 - k_0 -3$ and furthermore, since $L_1$ and $L_2$ are slope--stable, of the same slope and non--isomorphic line bundles, by \cite[Lemma\,4.2]{c-h-g-s}, $\mathcal F_1$ is simple, that is $h^0(\cF_1\otimes\cF_1^{\vee}) = 1$, in particular it is indecomposable.

We now want to show that  $h^2(\cF_1\otimes\cF_1^{\vee}) = 0 = h^3(\cF_1\otimes\cF_1^{\vee})$ and that 
$\chi(\cF_1\otimes\cF_1^{\vee})=-6b_0+3$. Tensoring \eqref{extension1} with $\cF_1^{\vee}$ we get
\begin{eqnarray}
\label{extension1tensorFdual}
0 \to L_1 \otimes \cF_1^{\vee}   \to \cF_1\otimes \cF_1^{\vee} \to  L_2 \otimes \cF_1^{\vee} \to 0.
\end{eqnarray} Dualizing \eqref{extension1} gives the following exact sequence
\begin{eqnarray}
\label{extension1dual}
0 \to L_2^{\vee}  \to \cF_1^{\vee} \to L_1^{\vee}\ \to 0
\end{eqnarray} Tensoring \eqref{extension1dual} with $L_1$ and $L_2$, respectively, gives
\begin{eqnarray}
\label{extension1dualL1}
0 \to L_2^{\vee}\otimes L_1(= \varphi^*\Oc_{\FF_0}(3,-b_0)) \to L_1 \otimes \cF_1^{\vee} \to {\cO}_{X_0} \to 0
\end{eqnarray} 
\begin{eqnarray}
\label{extension1dualL2}
0 \to  {\cO}_{X_0} \to L_2 \otimes \cF_1^{\vee} \to L_2 \otimes L_1^{\vee}(=\varphi^*\Oc_{\FF_0}(-3,b_0)) \to 0
\end{eqnarray} 

Because  $\cF_1$ is simple, then $h^0(X,  \cF_1\otimes \cF_1^{\vee} )=1$. The remaining cohomology $H^i(X,  \cF_1\otimes \cF_1^{\vee} )$  can be easily computed from the cohomology sequence associated to \eqref{extension1dualL1} and \eqref{extension1dualL2}.  Clearly $h^i(\cO_{X_0})=0$ if $i\geqslant 1$ and  $h^0(\cO_{X_0})=1$. It remains to compute 
$H^i( \varphi^*\Oc_{\FF_0}(3,-b_0))$ and $H^i( \varphi^*\Oc_{\FF_0}(-3,b_0)).$ 
 \begin{eqnarray}\label{eq:calcoliutili}
 H^i(X_0, \varphi^*\Oc_{\FF_0}(3,-b_0))&\cong &H^i(\FF_0, \Oc_{\FF_0}(3,-b_0))\cong H^i(\Pp^1, S^3(\Oc_{\Pp^1}\oplus \Oc_{\Pp^1})(-b_0)) \\
 &&  H^i(\Pp^1, \Oc^{\oplus 4}(-b_0))=\left\{
    \begin{array}{ll}
0 &\mbox{ if $i=0,2,3$}\\
     4b_0-4&   \mbox{ if  $i=1$}
    \end{array}
\right.\nonumber
 \end{eqnarray}
 Similarly
 \begin{eqnarray}\label{eq:calcoliutili2}
 H^i(X, \varphi^*\Oc_{\FF_0}(-3,b_0))&\cong &H^i(\FF_0, \Oc_{\FF_0}(-3,b_0))\cong H^{2-i}((\FF_0, \Oc_{\FF_0}(1,-2-b_0))\\
 &&H^{2-i}(\Pp^1, \Oc_{\Pp^1}^{\oplus 2}(-2-b_0)) =\left\{
    \begin{array}{ll}
0 &\mbox{ if $i=0,2,3$}\\
     2b_0+2&   \mbox{ if  $i=1$}
    \end{array}
\right.\nonumber
 \end{eqnarray} It thus follows that $h^2(\cF_1\otimes\cF_1^{\vee}) = 0 = h^3(\cF_1\otimes\cF_1^{\vee})$. From \eqref{extension1tensorFdual} we have that 
\begin{eqnarray*}\chi(\cF_1\otimes \cF_1^{\vee} )=\chi( L_1 \otimes \cF_1^{\vee})+\chi(L_2 \otimes \cF_1^{\vee})=-6b_0+4.
\end{eqnarray*}

Since $\cF_1$ is simple with $h^2(\cF_1\otimes \cF_1^{\vee}) = 0$, by \cite[Proposition 2.10]{c-h-g-s} there exists a smooth modular family 
containing the point $[\cF_1]$. Furthermore, since $\cF_1$ is Ulrich, with Chern classes 
$$c_1(\cF_1)=2\xi+\varphi^*\Oc_{\FF_0}(1,b_0-2) \;\; {\rm and} \;\; c_2(\cF_1)=\xi \cdot \varphi^*\Oc_{\FF_0}(4,2b_0-2) +(2b_0-k_0-1)F,$$as computed in \S\,\ref{S:subs41}--{\bf Case L}, 
the general point $[\cU]$ of the smooth modular family to which $[\cF_1]$ belongs corresponds to an Ulrich rank two vector bundles with same Chern classes as above, i.e. 
as in \eqref{eq:chern20}, as it follows from the facts that Ulrichness is an open condition (by semi-continuity) and that Chern classes are invariants 
on irreducible families.

We want to show that $\cU$ is also slope--stable w.r.t. $\xi$. By Theorem \ref{thm:stab}--(b) (cf. also \cite[Sect 3, (3.2)]{b}), if $ \cU $ were not a stable bundle, it would be presented as an extension of Ulrich line bundles on $X_0$. In such a case, by the classification of Ulrich line bundles given in Theorem \ref{prop:LineB} and all the possible extensions computed in \S\,\ref{S:subs41}-{\bf Case M} or {\bf Case L-M}, we see that the only possibilities for $\cU$ to arise as an extension of Ulrich line bundles should be extensions either \eqref{extension1} or  \eqref{extension1'}, by Chern classes reasons. 
In both cases the dimension of (the projectivization) of the corresponding families of extensions are either $4b_0 -5$ or $2b_0+1$. On the other hand, by semi-continuity on the smooth modular family, 
one has 
$$h^j(\cU\otimes \cU^{\vee}) = h^j(\cF_1\otimes \cF_1^{\vee}) = 0, \; 2 \leqslant j \leqslant 3, \; {\rm and} \; h^0(\cU\otimes \cU^{\vee}) = h^0(\cF_1\otimes \cF_1^{\vee}) = 1,$$thus
$$ h^1(\cU\otimes \cU^{\vee}) = 1 - \chi (\cU\otimes \cU^{\vee} ) = 1 - \chi(\cF_1\otimes \cF_1^{\vee}) = h^1(\cF_1\otimes \cF_1^{\vee}) = 6b_0-3,$$as computed above.  In other words, 
the smooth modular family whose general point is $[\cU]$ is of dimension $6b_0 -3$, which is bigger than $4b_0-5$ and $2 b_0 +1$,  for any $b_0 \geqslant 2$. This shows 
that $[\cU]$ general corresponds to a stable, and so also slope-stable bundle  (cf. Theorem \ref{thm:stab}-(c) above).

By slope-stability of $\cU$, we deduce that the moduli space of rank two Ulrich bundles with Chern classes as in \eqref{eq:chern20} is not empty and it contains a generically smooth 
component $\mathcal M$, of dimension $6b_0-3$ whose general point $[\cU] \in \mathcal M$ is also slope-stable, whose slope w.r.t. $\xi$ is 
$\mu(\cU) = \frac{c_1(\cU) \cdot \xi^2}{2} = 8b_0 - k_0 -3$,  as $c_1( \cU ) = c_1(\cF_1)$.

Finally, note that  
\begin{eqnarray*}K_{X_0}+4\xi&=&-2\xi+\varphi^*\Oc_{\FF_0}(-2,-2)+\varphi^*\Oc_{\FF_0}(3,b_0)+4\xi=2\xi+\varphi^*\Oc_{\FF_0}(1,b_0-2)\\&=&c_1(\cF_1) = c_1(\cU).
\end{eqnarray*} This, together with the fact that $\cU$ is of rank two, gives 
$$\cU^{\vee} \cong \cU(- c_1(\cU)) = \cU (-K_{X_0} - 4 \xi),$$i.e. that $\cU^{\vee}(K_{X_0} + 4 \xi) \cong \cU$ in other words 
$\cU$ is isomorphic to its Ulrich dual bundle, i.e. $\cU $ is special, as stated. 
\end{proof}

If in particular we set $b_0=3$ then, from  Theorem \ref{prop:rk 2 simple Ulrich vctB e=0;I}, one gets 
$$c_1(\cU) = 2 \xi + \varphi^*\Oc_{\FF_0}(1, 1), \;\dim (\mathcal M ) = 15 \; {\rm and} \; 
\mu(\cU) = 21-k_0,$$which is what was obtained in \cite[Proposition 5.7]{f-lc-pl} for $k_0=7,8,9,10$.

\begin{rem}\label{rem:rapagnetta} {\normalfont Besides the reasons stated before Theorem \ref{prop:rk 2 simple Ulrich vctB e=0;I}, other motivations which  explain why the previous result focuses on rank two Ulrich bundles arising 
from (deformations of) extensions as in \eqref{extension1}, are the following. 

\noindent
(i) First of all, Theorem \ref{prop:LineB} shows that Ulrich line bundles 
$M_1$ and $M_2$ are {\em sporadic}, i.e. they exist only when $(b_0, k_0) = (2t, 3t)$, for some integer $t \geqslant 1$; the same sporadic behaviour occurs therefore for 
extensions in {\bf Case M} and in {\bf Case L-M} as in \S\,\ref{S:subs41}. Furthermore, such extensions give rise to components of different moduli spaces, 
since in any case Chern classes are different from those in \eqref{eq:chern20}. 

\noindent
(ii) Concerning extensions \eqref{extension1} and \eqref{extension1'} in {\bf Case L}, we have the following:

\begin{claim}\label{cl:merd} Deformations of bundles $\mathcal F_1$, arising as non trivial extensions in \eqref{extension1}, 
and of bundles $\mathcal F_1'$, arising as non trivial extensions in \eqref{extension1'}, give rise to the same modular component $\mathcal M$ as in Theorem \ref{prop:rk 2 simple Ulrich vctB e=0;I}.
\end{claim}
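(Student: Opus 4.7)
The plan is to mirror, for the bundles $\cF_1'$ from \eqref{extension1'}, the computations already carried out for $\cF_1$, and then to identify the smooth modular component at $[\cF_1']$ with $\mathcal M$.

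First I would observe that a non--trivial $\cF_1'$ from \eqref{extension1'} has exactly the same Chern classes as in \eqref{eq:chern20}, and is simple by the same application of \cite[Lemma\,4.2]{c-h-g-s} (since $L_1,L_2$ are non--isomorphic Ulrich line bundles of the same $\xi$-slope $8b_0-k_0-3$). To place $[\cF_1']$ on a smooth modular component I would then compute $h^2(\cF_1'\otimes \cF_1'^{\vee})$ and $h^3(\cF_1'\otimes \cF_1'^{\vee})$ in exactly the same fashion as in the proof of Theorem\;\ref{prop:rk 2 simple Ulrich vctB e=0;I}: tensor the dual of \eqref{extension1'} by $L_1$ and by $L_2$ separately, and feed the resulting two short exact sequences (with kernels/cokernels of the form $L_1-L_2$, $L_2-L_1$ and $\cO_{X_0}$) into the long cohomology sequence obtained after twisting \eqref{extension1'} by $\cF_1'^{\vee}$. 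The cohomology inputs needed are already available from \eqref{eq:calcoliutili} and \eqref{eq:calcoliutili2}, so the outcome is again $h^2=h^3=0$ and, by Riemann--Roch and simplicity, $h^1(\cF_1'\otimes \cF_1'^{\vee})=6b_0-3$. By Proposition\;\ref{casanellas-hartshorne}, $[\cF_1']$ then sits on a generically smooth irreducible component $\mathcal M'$ of the moduli space of simple bundles with the same Chern classes, and $\dim \mathcal M'=\dim\mathcal M = 6b_0-3$.

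The substantive step is to show $\mathcal M=\mathcal M'$. The plan here is to use the Gieseker moduli space $\mathbf{M}$ of $\xi$--semistable sheaves with the fixed Chern classes \eqref{eq:chern20}, inside which the stable Ulrich locus is open. Letting the extension class in $\mathrm{Ext}^1(L_2,L_1)$ (respectively $\mathrm{Ext}^1(L_1,L_2)$) degenerate to zero one obtains flat $\mathbb A^1$--families whose generic fibre is $\cF_1$ (respectively $\cF_1'$) and whose special fibre is the strictly semistable direct sum $L_1\oplus L_2$. Hence the closures of $\mathcal M$ and of $\mathcal M'$ in $\mathbf{M}$ both contain the single S--equivalence point $[L_1\oplus L_2]$. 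Applying the local analytic description of $\mathbf{M}$ at strictly semistable S--classes (Luna's slice theorem in the GIT construction, in the form exploited by Dr\'ezet \cite{Drezet} and O'Grady \cite{OGrady}), the local model at $[L_1\oplus L_2]$ is given by the invariants of a quiver representation space whose simple summands of the deformation space are governed by $\mathrm{Ext}^1(L_i,L_j)$, $i,j\in\{1,2\}$. In particular that local model is irreducible, so only one irreducible component of $\mathbf M$ meets a neighbourhood of $[L_1\oplus L_2]$; both $\mathcal M$ and $\mathcal M'$ must coincide with this component, giving $\mathcal M=\mathcal M'$ as wanted.

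The main obstacle is precisely this last step: the two families $\{\cF_1\}$ and $\{\cF_1'\}$ of dimensions $4b_0-5$ and $2b_0+1$ respectively are both of codimension $>0$ in the prospective component and, by an easy semicontinuity argument from \eqref{extension1}, a general $\cF_1$ satisfies $\mathrm{Hom}(L_2,\cF_1)=0$ (and symmetrically for $\cF_1'$), so one cannot directly connect the two families by a subbundle argument. The replacement is the degeneration to $[L_1\oplus L_2]$ inside $\mathbf M$ together with the local irreducibility statement cited above; this is the technical input we invoke from \cite{Drezet,OGrady}.
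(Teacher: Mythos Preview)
Your proposal is essentially correct and follows the same overall line as the paper: both argue by degenerating $\cF_1$ and $\cF_1'$ to the polystable point $L_1\oplus L_2$ and then invoking Luna's \'etale slice theorem (via \cite{Drezet,OGrady}) to obtain local irreducibility of the moduli/Quot space there.

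The difference is one of explicitness. The paper works upstairs in the Quot scheme $\mathcal R$ and carries out the concrete tangent--space count: it identifies the stabiliser $G_{\rho_{1,2}}\cong\mathbb C^*$, uses \cite[Prop.\;1.2.3]{OGrady} to get $\dim T_{\rho_{1,2}}(S)=h^1\big((L_1\oplus L_2)\otimes(L_1\oplus L_2)^{\vee}\big)=6b_0-2$, and hence $\dim T_{\rho_{1,2}}(\mathcal R)=\dim(G)+6b_0-3$; since a component through $\rho_{1,2}$ of dimension $\dim(G)+h^1(\cF_1\otimes\cF_1^{\vee})=\dim(G)+6b_0-3$ is already known to exist, $\mathcal R$ is smooth at $\rho_{1,2}$ and thus locally irreducible. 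You instead assert that the local model (the slice modulo the stabiliser) is irreducible by appeal to its description as an $\mathrm{Ext}^1$--quiver space. That is fine, but you should make explicit the ingredient that actually forces it: the vanishing $\mathrm{Ext}^2(L_1\oplus L_2,L_1\oplus L_2)=0$ (equivalently $h^2(L_i-L_j)=0$ for all $i,j$, which your own inputs \eqref{eq:calcoliutili}--\eqref{eq:calcoliutili2} already give). Without this the Kuranishi/slice space need not be all of $\mathrm{Ext}^1$ and the quiver--model irreducibility would not be automatic. Once you state that vanishing, your argument and the paper's coincide; the paper simply packages it as a numerical smoothness check at $\rho_{1,2}$, which has the bonus of avoiding your preliminary repetition of the $\cF_1'$ cohomology computations.
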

 
To prove the Claim, we have benefitted of useful discussions and reference advices given to us by A. Rapagnetta and we thank him for this. 
  
As a general fact, recall that the moduli space $\mathcal M$ of semistable bundles with given rank and Chern classes on a smooth projective variety $X$ is constructed as a GIT quotient 
of an open subscheme $\mathcal R$ of a suitable Quot-scheme $\mathcal Q$, modulo the action of a group $G = {\rm PGL}(k, \mathbb C)$ for a suitable integer $k >>0$ (cf. e.g. 
\cite[Section\;4.3]{HL}). Here we focus on the rank two case. 

If we denote by $\pi: \mathcal R \to \mathcal M$ the quotient map and if $\rho \in \mathcal R$ is a closed point, then $\pi(\rho) \in \mathcal M$ is a closed point iff the $G$-orbit 
$G \cdot \rho$ (with notation of left action of $G$ as in \cite{Drezet}) is closed in $\mathcal R$ iff the corresponding quotient bundle $\mathcal F_{\rho}$ on $X$ is {\em polystable}, namely it is either stable (in such a case $Aut(\mathcal F_{\rho}) \cong \mathbb C^*$) or it is (strictly) polystable of the form $\mathcal F_{\rho} = \mathcal L_1 \oplus \mathcal L_2$, i.e. $\mathcal F_{\rho}$ decomposable 
and the two line bundles $\mathcal L_1$ and $\mathcal L_2$ are not isomorphic with the same Hilbert polynomial (in such a case $Aut(\mathcal F_{\rho}) \cong \mathbb C^* \times \mathbb C^*$) or 
it is (strictly) polystable of the form $\mathcal F_{\rho} = \mathcal L^{\oplus 2}$ (in which case $Aut(\mathcal F_{\rho}) \cong {\rm GL}(2 , \mathbb C)$). 

If instead $\mathcal F_{\rho}$ is semistable but not polystable (e.g. as any non trivial extension as in \eqref{extension1}) then $\mathcal F_{\rho}$ arises as 
a non trivial extension of two line bundles $\mathcal L_1$ and $\mathcal L_2$ with same Hilbert polynomial and the Jordan-H\"older graded object of $\mathcal F_{\rho}$ is $gr^{JH} (\mathcal F_{\rho}) = 
\mathcal L_1 \oplus \mathcal L_2$ which is therefore (strictly) polystable and the corresponding point $\pi(\rho) \in \mathcal M$ is represented by the class $[\mathcal L_1 \oplus \mathcal L_2]$, namely closed points in $\mathcal M$ are in bijection with {\em S-equivalence} classes of semistable bundles (cf. \cite[Def.\;1.5.3 and Lemma\;4.1.2]{HL}). 

If $\rho \in \mathcal R$ is a point for which the corresponding bundle $\mathcal F_{\rho}$ on $X$ is polystable, namely the orbit $G \cdot \rho$ is closed in $\mathcal R$, and if $\mathcal R$ 
is locally (in the analytic topology) irreducible around $\rho \in \mathcal R$, then also $\mathcal M$ is locally (in the analytic topology) irreducible around $\pi(\rho) \in \mathcal M$. Indeed,  
since $G$ is a connected group, the action of $G$ preserves the irreducible components of $\mathcal R$ and, moreover, by the GIT action if a point $\rho' \in \mathcal R$ belongs 
to the closure $\overline{G \cdot \rho^{''}}$ in $\mathcal R$ of the $G$-orbit $G \cdot \rho^{''}$ 
of a point $\rho^{''} \in \mathcal R$ then $G \cdot \rho' \subseteq \overline{G \cdot \rho^{''}}$. 

\begin{proof}[Proof of Claim \ref{cl:merd}] With preliminaries as above, take $X = X_0$ and denote by $\rho_{1,2} \in \mathcal R$ the point corresponding to the bundle 
$L_1 \oplus L_2$ as in \S\;\ref{S:subs41}-{\bf Case L}, which is the same  Jordan-H\"older graded object of the bundles $\mathcal F_1$ and $\mathcal F_1'$ arising as non trivial extensions in \eqref{extension1} and \eqref{extension1'}, respectively. Since 
$\rho_{1,2}$ corresponds to a (strictly) polystable bundle on $X_0$, then $G \cdot \rho_{1,2}$ is a closed $G$-orbit and we can apply the same reasoning as above. Therefore, if we show that $\rho_{1,2}$ is a smooth point of $\mathcal R$, then $\mathcal R$ is locally (in the analytic topology) irreducible around $\rho_{1,2}$ and therefore the same occurs for $\mathcal M$ 
around the point $\pi(\rho_{1,2})$ which implies that the modular component  arising from (deformations of) extensions $\mathcal F_1$ as in \eqref{extension1} 
is the same component arising from (deformations of) extensions $\mathcal F'_1$ as in \eqref{extension1'}. 

Therefore, we need to show that $\mathcal R$ is smooth at the point $\rho_{1,2} $. To do this, notice first that certainly it exists an irreducible component, say $R_{1,2}$, of $\mathcal R$ passing through the point $\rho_{1,2} $ whose dimension is 
$\dim(G) + h^1 (\mathcal F_1 \otimes \mathcal F_1^{\vee})$, where $\mathcal F_1$ is a non-trivial extension as in \eqref{extension1}. Indeed, as in the proof of Theorem \ref{prop:rk 2 simple Ulrich vctB e=0;I}, $\mathcal F_1$ admits an irreducible smooth modular family of dimension $h^1 (\mathcal F_1 \otimes \mathcal F_1^{\vee})$ which gives 
rise to an irreducible parameter space $\mathcal P \subset \mathcal R$, parametrizing all quotients which are isomorphic to the bundles of such an irreducible smooth modular family, which is therefore of dimension $\dim(\mathcal P) = \dim(G) + h^1 (\mathcal F_1 \otimes \mathcal F_1^{\vee})$ and whose closure $\overline{\mathcal P}$ in $\mathcal R$ 
contains the point $\rho_{1,2}$; thus $\rho_{1,2}  \in \overline{\mathcal P} \subseteq R_{1,2}$ so one has 
\begin{equation}\label{eq:merd}
\dim(R_{1,2}) \geqslant \dim (\overline{\mathcal P}) = \dim(G) + h^1 (\mathcal F_1 \otimes \mathcal F_1^{\vee}).
\end{equation} Since $\rho_{1,2} \in \mathcal R$ corresponds to a (strictly) polystable bundle, from above, its $G$-orbit $G\cdot \rho_{1,2}$ is closed in $\mathcal R$; 
thus by Luna's \'etale slice theorem (cf. e.g. \cite[Thm.\;4.2.12]{HL}) there exists a locally closed subscheme $S \subset \mathcal R$ passing through $\rho_{1,2}$ 
which is $G_{\rho_{1,2}}$-invariant, where $G_{\rho_{1,2}}$ denotes the stabilizer in $G$ of the point $\rho_{1,2}$, and such that the multiplication map 
$G \times S \to \mathcal R$ induces a $G$-equivariant \'etale morphism $\psi : G \times_{G_{\rho_{1,2}}} S \to \mathcal R$, where 
$$G \times_{G_{\rho_{1,2}}} S := (G \times S)//G_{\rho_{1,2}}$$as in \cite[Sect.\;1.1]{Drezet}. Because $G = {\rm PGL}(k, \mathbb C)$, then one has  
$$G_{\rho_{1,2}} \cong \frac{Aut_{\mathcal R} (\rho_{1,2})}{\mathbb C^*} \cong \frac{Aut (L_1 \oplus L_2)}{\mathbb C^*} = \frac{H^0((L_1 \oplus L_2) \otimes (L_1 \oplus L_2)^{\vee})}{\mathbb C^*} \cong \frac{\mathbb C^* \times \mathbb C^*}{\mathbb C^*} \cong \mathbb C^*,$$
since $h^0(L_1 - L_2) = h^0(L_2 - L_1) = 0$ as it follows from \eqref{extension1dualL1}, \eqref{extension1dualL2}, \eqref{eq:calcoliutili}, \eqref{eq:calcoliutili2}. Therefore, we have an isomorphism 
of tangent spaces 
$$T_{\rho_{1,2}} (\mathcal R) \cong T_{(Id_G,\rho_{1,2})} (G \times_{G_{\rho_{1,2}}} S),$$the latter of dimension
$$\dim\left(T_{(Id_G,\rho_{1,2})} (G \times_{G_{\rho_{1,2}}} S) \right) = \dim\left(T_{\rho_{1,2}} (S)\right) + \dim(G) - \dim(\mathbb C^*)$$as it follows from 
\cite[Proposition 4.9-(6)]{Drezet} and the facts $\dim\left(T_{Id_G}(G) \right) = \dim(G)$ and \linebreak $\dim\left(T_{Id_G}(G_{\rho_{1,2}}) \right)= \dim(\mathbb C^*)$. In particular, 
\begin{equation}\label{eq:merd2}
\dim\left( T_{\rho_{1,2}} (\mathcal R)\right) = \dim(\left( T_{\rho_{1,2}} (S)\right) + \dim(G) -1.
\end{equation}On the other hand, from the proof of \cite[Proposition 1.2.3]{OGrady}, one has 
\begin{equation}\label{eq:merd3}
\dim\left( T_{\rho_{1,2}} (S)\right) = {\rm Ext^1}( L_1 \oplus L_2, L_1 \oplus L_2) = H^1((L_1 \oplus L_2) \otimes (L_1 \oplus L_2)^{\vee}).
\end{equation}Using \eqref{extension1dualL1}, \eqref{extension1dualL2} and the cohomological computations as in \eqref{eq:calcoliutili}, \eqref{eq:calcoliutili2} 
and the fact that $L_1 -L_1 = L_2 - L_2 = \mathcal O_{X_0}$, from 
\eqref{eq:merd3} we have $\dim\left( T_{\rho_{1,2}} (S)\right) = 6b_0 -2$ thus, from \eqref{eq:merd2}, we get 
$\dim\left( T_{\rho_{1,2}} (\mathcal R)\right)  = 6b_0 - 3 + \dim(G)$. On the other hand, from the proof of Theorem \ref{prop:rk 2 simple Ulrich vctB e=0;I}, one has 
$h^1(\mathcal F_1 \otimes \mathcal F_1^{\vee} ) = 6b_0 -3$, where $\mathcal F_1$ a non trivial extension as in \eqref{extension1}. Therefore, from above
$\dim\left( T_{\rho_{1,2}} (\mathcal R)\right) = h^1(\mathcal F_1 \otimes \mathcal F_1^{\vee} ) + \dim(G)$ which, together with \eqref{eq:merd}, implies that 
$$\dim(R_{1,2}) = \dim\left( T_{\rho_{1,2}} (\mathcal R)\right),$$i.e. that $\mathcal R$ is smooth at $\rho_{1,2}$ and so that $R_{1,2}$ is the unique irreducible component 
of $\mathcal R$ passing through $\rho_{1,2}$, which completes the proof of the claim. 
\end{proof}
}
\end{rem}

\bigskip

%%%%%%%%%%%%%%%%%%%%%%%%%%%
%
%  	RANK 2 Ulrich on F_e e>0
%
%%%%%%%%%%%%%%%%%%%%%%%%%%%
 \subsection{Rank-$2$ Ulrich vector bundles  on $3$-fold scrolls over $\FF_e$, $e>0$}\label{subs42} 
  In this section, we will focus on the case $e >0$.

\begin{theo}\label{thm:rk 2 vctB e>0}    Let  $(X_e, \xi) \cong \scrollcal{E_e}$ be a $3$-fold  scroll over $\FF_e$, with $e> 0$,  and $\mathcal E_e$ be as in Assumptions \ref{ass:AB}. Let $\varphi: X_e \to \FF_e$ be the scroll map and $F$ be the $\varphi$- fibre. Then the moduli space of  rank two vector bundles $ \cU $ on $X_e$,  which are Ulrich w.r.t. $\xi$, with Chern classes
\begin{equation}\label{eq:chern2e}
c_1( \cU )=2\xi+\varphi^*\Oc_{\FF_e}(1,b_e-e-2) \;\; {\rm and} \;\;  c_2( \cU ) = \xi \varphi^*\Oc_{F_e}(4,2b_e-e-2) + (2b_e-3e-k_e-1) F,
\end{equation}
is not empty and it contains a generically smooth component $\mathcal M$ of dimension 

$$ \dim(\mathcal M) = 6b_e-9e-3,$$
whose general point $[ \cU ]$ 
corresponds to a  special and slope-stable vector bundle of slope w.r.t. $\xi$ \linebreak
\begin{eqnarray}\label{eq:slope2e}
\mu( \cU ) = 8 b_e - k_e - 12 e - 3.
\end{eqnarray}
\end{theo}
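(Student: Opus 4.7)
Since by Theorem \ref{prop:LineB} there are no Ulrich line bundles on $X_e$ for $e>0$, we cannot proceed by extensions of Ulrich line bundles as in Theorem \ref{prop:rk 2 simple Ulrich vctB e=0;I}; the natural route is instead Theorem \ref{pullback}. It suffices to construct a rank-$2$ vector bundle $\mathcal F$ on $\FF_e$ such that $\mathcal F(c_1(\mathcal E_e))$ is Ulrich on $\FF_e$ w.r.t. the very ample polarization $c_1(\mathcal E_e) = 3C_e + b_e f$; then $\mathcal U := \xi \otimes \varphi^*\mathcal F$ will be Ulrich on $X_e$ w.r.t. $\xi$. Matching \eqref{eq:chern2e} against $c_1(\xi \otimes \varphi^*\mathcal F) = 2\xi + \varphi^*c_1(\mathcal F)$ and using the tautological relation $\xi^2 = \xi\,\varphi^*c_1(\mathcal E_e) - \varphi^*c_2(\mathcal E_e)$, one finds that $\mathcal F$ must satisfy $c_1(\mathcal F) = \mathcal O_{\FF_e}(1, b_e - e - 2)$ and $c_2(\mathcal F) = 2b_e - 3e - 1$.

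The plan is to realize $\mathcal F$ as a non-trivial extension
\[ 0 \to \mathcal L_1 \to \mathcal F \to \mathcal L_2 \to 0 \]
on $\FF_e$, where $\mathcal L_1, \mathcal L_2$ are explicit line bundles with $c_1(\mathcal L_1) + c_1(\mathcal L_2) = c_1(\mathcal F)$ and $\mathcal L_1 \cdot \mathcal L_2 = c_2(\mathcal F)$, chosen so that each $\mathcal L_j(c_1(\mathcal E_e))$ is an Ulrich line bundle on $\FF_e$ w.r.t. $c_1(\mathcal E_e)$; equivalently, such that $H^i(\mathcal L_j) = H^i(\mathcal L_j(-c_1(\mathcal E_e))) = 0$ for $0 \leqslant i \leqslant 2$ and $j = 1,2$. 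Such line bundles are pinned down by Chern-class bookkeeping parallel to the case analysis of Theorem \ref{prop:LineB}. Non-vanishing of $\Ext^1(\mathcal L_2, \mathcal L_1)$ and the Ulrichness hypotheses of Theorem \ref{pullback} then follow from routine Leray and Serre duality computations on $\FF_e$.

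The key feature of the case $e > 0$ is that slope-stability is automatic: by Theorem \ref{thm:stab}(a)-(b), a strictly slope-semistable rank-$2$ Ulrich bundle on $X_e$ would fit as an extension of two Ulrich line bundles, which do not exist on $X_e$ by Theorem \ref{prop:LineB}. Moreover, since $\mathcal L_1 \not\cong \mathcal L_2$ share the same slope w.r.t. $c_1(\mathcal E_e)$, \cite[Lemma 4.2]{c-h-g-s} gives simplicity of $\mathcal F$; and since $\mathcal U \otimes \mathcal U^\vee \cong \varphi^*(\mathcal F \otimes \mathcal F^\vee)$, the projection formula together with $\varphi_*\mathcal O_{X_e} = \mathcal O_{\FF_e}$ and $R^1\varphi_*\mathcal O_{X_e} = 0$ yield $H^j(\mathcal U \otimes \mathcal U^\vee) \cong H^j(\FF_e, \mathcal F \otimes \mathcal F^\vee)$ for $0 \leqslant j \leqslant 2$, with $h^3 = 0$ for dimension reasons. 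In particular $h^0(\mathcal U \otimes \mathcal U^\vee) = 1$, giving simplicity of $\mathcal U$.

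To invoke Proposition \ref{casanellas-hartshorne} and conclude, one verifies $H^2(\mathcal F \otimes \mathcal F^\vee) = 0$ by twisting the extension sequence and its dual by $\mathcal F^\vee$ and $\mathcal F$ respectively and evaluating standard cohomology groups on $\FF_e$. This produces a smooth modular family through $[\mathcal U]$ of dimension $h^1(\mathcal F \otimes \mathcal F^\vee) = 1 - \chi(\mathcal F \otimes \mathcal F^\vee)$. A Riemann-Roch calculation on $\FF_e$ with $c_1(\mathcal F)^2 = 2b_e - 3e - 4$ and $c_2(\mathcal F) = 2b_e - 3e - 1$ gives $\chi(\mathcal F \otimes \mathcal F^\vee) = -6b_e + 9e + 4$, yielding the stated dimension $6b_e - 9e - 3$ for $\mathcal M$. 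The slope $\mu(\mathcal U) = (c_1(\mathcal U) \cdot \xi^2)/2 = 8b_e - k_e - 12e - 3$ follows by expanding $\xi^2$ via the tautological relation, and specialness follows from the identity $K_{X_e} + 4\xi = 2\xi + \varphi^*\mathcal O_{\FF_e}(1, b_e - e - 2) = c_1(\mathcal U)$. The main technical obstacle is the explicit choice of the line bundles $\mathcal L_1, \mathcal L_2$ on $\FF_e$ and the verification of the $H^2$ vanishing required for generic smoothness; everything else is either formal or reduces to Leray/Serre-duality computations on $\FF_e$.
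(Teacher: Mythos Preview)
Your plan has a genuine gap. You propose to build $\mathcal F$ as an extension
\[
0 \to \mathcal L_1 \to \mathcal F \to \mathcal L_2 \to 0
\]
of line bundles on $\FF_e$ with each $\mathcal L_j(c_1(\mathcal E_e))$ Ulrich w.r.t.\ $c_1(\mathcal E_e)=3C_e+b_ef$. But for $e>0$ there are \emph{no} Ulrich line bundles on $\FF_e$ w.r.t.\ this polarization: this is \cite[Thm.\;2.1]{a-c-mr}, and it is exactly the content of Theorem~\ref{thm:UlrichFe}(a) below (and, via Theorem~\ref{pullback}, the reason $X_e$ itself carries no Ulrich line bundles in Theorem~\ref{prop:LineB}). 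So the ``Chern-class bookkeeping'' you allude to cannot produce the required $\mathcal L_1,\mathcal L_2$; the approach that works for $e=0$ genuinely breaks here.

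The paper sidesteps this by invoking \cite[Thm.\;3.4]{a-c-mr}: the rank-$2$ Ulrich bundle $\mathcal H_1$ on $\FF_e$ is built as an extension
\[
0 \to \Oc_{\FF_e}(3,b_e-1) \to \mathcal H_1 \to I_Z \otimes \Oc_{\FF_e}(4,2b_e-1-e) \to 0,
\]
with $Z\subset\FF_e$ a general zero-dimensional subscheme of length $2b_e-3e$. The ideal sheaf is essential: neither line bundle on its own satisfies the Ulrich vanishings, but the $h^2$ of the left-hand twist is cancelled by the contribution of $Z$ in the quotient. One then sets $\mathcal H:=\mathcal H_1(-c_1(\mathcal E_e))$ and $\mathcal V:=\xi\otimes\varphi^*\mathcal H$. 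After this, the argument proceeds essentially as you outlined: stability of $\mathcal V$ is automatic from Theorem~\ref{prop:LineB} and Theorem~\ref{thm:stab}(b), $H^i(\mathcal V\otimes\mathcal V^{\vee})\cong H^i(\FF_e,\mathcal H\otimes\mathcal H^{\vee})$, and the $h^2$-vanishing plus the dimension count are carried out by chasing the extension \eqref{extensionsuFe} (now involving $I_Z$) rather than a line-bundle extension.
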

\begin{proof}  By \cite[Theorem 3.4]{a-c-mr}, we know that there exist  rank two vector bundles $\mathcal H_1$ on $\FF_e$, which are  Ulrich with respect to  $c_1({\mathcal E}_e) =\Oc_{\FF_e}(3, b_e)$,  given by extensions
\begin{eqnarray}
\label{extensionsuFe}
0 \to \Oc_{\FF_e}(3 , b_e-1)  \to \cH_1 \to I_Z\otimes \Oc_{\FF_e}(4, 2b_e-1-e)  \to 0, 
\end{eqnarray} where $Z$ is a general zero-dimensional subscheme of $\FF_e$ of length $\ell(Z)=2b_e-3e$.
Such a bundle $\cH_1$ is stable,  cf. \cite[Remark 3.7]{a-c-mr},  hence simple, that is $h^0(\cH_1 \otimes \cH_1^{\vee})=1$, and indecomposable.

Set $\cH:=  \cH_1(-c_1({\mathcal E}_e))$, which is stable being a twist of a stable vector bundle,  so it is also simple, i.e. 
$h^0(\cH \otimes \cH^{\vee})=1$. By Theorem \ref{pullback} the vector bundle 
$ \cV : = \xi \otimes \varphi^*(\cH)$ is a rank two vector bundle on  $X_e$ which is Ulrich with respect to $\xi$.  

From \eqref{extensionsuFe} we see that $c_1(\cH)= \Oc_{\FF_e}(1 , b_e-e-2)$ and $c_2(\cH)=2b_e-3e-1$. Easy Chern classes computations give that  
\begin{eqnarray*}
c_1( \cV ) = 2\xi+\varphi^*\Oc_{\FF_e}(1,b_e-e-2) \;\; {\rm and} \;\; c_2( \cV ) = \xi \varphi^*\Oc_{F_e}(4,2b_e-e-2) + (2b_e-3e-k_e-1) F.
\end{eqnarray*} 
Moreover by Theorem \ref{thm:stab}--(b) (cf.\;also \cite[Sect 3, (3.2)]{b}) such a bundle $ \cV $  is stable,  so also slope-stable by Theorem \ref{thm:stab}-(c),  since there are no Ulrich line bundles on $(X_e, \xi) $ as it follows from Theorem \ref{prop:LineB}. 

Our next step is to compute the cohomology groups $H^i(X_e,  \cV \otimes\cV^{\vee} )$ \,  for $i=0,1,2,3$. Because $H^i(X_e,   \cV \otimes\cV^{\vee} ) = H^i(X_e,  \varphi^*(\cH\otimes \cH^{\vee}))\cong H^i(\FF_e, \cH\otimes \cH^{\vee})$ we will focus on computations of $H^i(\FF_e, \cH\otimes \cH^{\vee})$, $i=0,1,2,3$. 

First of all $ h^3(\FF_e, \cH\otimes \cH^{\vee}) = 0$, as $\FF_e$ is a surface, and $ h^0(\FF_e, \cH\otimes \cH^{\vee}) = 1$, as $\cH$ is simple. 
For the other cohomology groups, we  tensor \eqref{extensionsuFe} with $-c_1({\mathcal E}_e)=\Oc_{{\mathbb F}_e}(-3,-b_e)$ and we get
\begin{eqnarray}
\label{extensionsuFe twisted}
0 \to \Oc_{\FF_e}(0 , -1)  \to \cH \to I_Z\otimes \Oc_{\FF_e}(1, b_e-1-e)  \to 0.
\end{eqnarray} 
Because $\cH$ is of rank $2$ and $c_1(\cH)=\Oc_{\FF_e}(1 , b_e-2-e)$, we have that 
$ \cH^{\vee}\cong \cH\otimes \Oc_{\FF_e}(-1 , -b_e+2+e)$ and thus after tensoring \eqref{extensionsuFe twisted} with $\cH^{\vee} $ we get
\begin{eqnarray}
\label{extensionsuFe twisted Hdual}
0 \to\cH \otimes  \Oc_{\FF_e}(-1 , 1+e-b_e)  \to \cH \otimes \cH^{\vee} \to I_Z\otimes \cH\otimes \Oc_{\FF_e}(0, 1)  \to 0.
\end{eqnarray} 
In order to compute the cohomology groups of $\cH \otimes  \Oc_{\FF_e}(-1 , 1+e-b_e)$ we will use the short exact sequence \eqref{extensionsuFe twisted} twisted with $\Oc_{\FF_e}(-1 , 1+e-b_e)$
which gives 
\begin{eqnarray}
\label{extensionsuFe twisted O(-1,1+e-b)}
0 \to  \Oc_{\FF_e}(-1 , e-b_e)  \to \cH \otimes  \Oc_{\FF_e}(-1 , 1+e-b_e)\to I_Z
\to 0.
\end{eqnarray} 
From this we can easily see that $H^0(\cF\otimes  \Oc_{\FF_e}(-1 , 1+e-b_e))=0=H^2(\cF\otimes  \Oc_{\FF_e}(-1 , 1+e-b_e))$ and $h^1(\cF\otimes  \Oc_{\FF_e}(-1 , 1+e-b_e))=2b_e-3e-1$.

Our next task is to compute the  cohomology groups of $I_Z\otimes \cH \otimes \Oc_{\FF_e}(0, 1)$. We tensor the sequence 
\begin{eqnarray*}
0 \to  I_Z \to \Oc_{\FF_e} \to \Oc_Z  \to 0 
\end{eqnarray*}
with $\cH \otimes \Oc_{\FF_e}(0, 1)$  and $\Oc_{\FF_e}(1 , b_e-e)$, respectively, and we get \begin{eqnarray}
\label{exact4}
0 \to  I_Z\otimes \cH \otimes \Oc_{\FF_e}(0, 1)  \to \cH \otimes  \Oc_{\FF_e}(0 , 1)\to (\cH \otimes  \Oc_{\FF_e}(0 , 1))_{|Z}
\to 0
\end{eqnarray}
\begin{eqnarray}
\label{exact5}
0 \to  I_Z\otimes \Oc_{\FF_e}(1,  b_e-e)  \to \Oc_{\FF_e}(1,  b_e-e)\to \Oc_{Z}
\to 0.
\end{eqnarray}
 We  tensor \eqref{extensionsuFe twisted} with $\Oc_{\FF_e}(0 , 1
 )$ and we get
 \begin{eqnarray}
\label{exact6}
0 \to  \Oc_{\FF_e}  \to \cH \otimes  \Oc_{\FF_e}(0 , 1)\to   I_Z\otimes  \Oc_{\FF_e}(1, b_e-e) 
\to 0.
\end{eqnarray}
Now use the cohomology sequence associated to the short exact sequences \eqref{exact4}, \eqref{exact5} and \eqref{exact6}.
Note that 
\begin{align*}
h^0(\FF_e,\Oc_{\FF_e}(1, b_e-e))\cong  h^0(\Pp^1,\Oc_{\Pp^1}(b_e-e)\oplus \Oc_{\Pp^1}(b_e-2e))= 2b_e-3e+2, 
\end{align*}
 and  
\begin{align*}
h^i(\FF_e,\Oc_{\FF_e}(1, b_e-e))\cong  h^i(\Pp^1,\Oc_{\Pp^1}(b_e-e)\oplus \Oc_{\Pp^1}(b_e-2e))=0 \quad \text{for $i=1,2$ }
\end{align*} 
because  Assumptions \ref{ass:AB} forces $b_e \geqslant 3e +2$). Notice that $\dim (|\Oc_{\FF_e}(1, b_e-e)|)=2b_e-3e+1$,  so $h^0( I_Z\otimes  \Oc_{\FF_e}(1, b_e-e))=2$ , being $Z$  general of length $\ell(Z)=2b_e-3e$. Therefore from 
\eqref{exact5} it follows that $h^i( I_Z\otimes  \Oc_{\FF_e}(1, b_e-e))=0$ for $i=1,2$. Now using \eqref{exact6} it follows that $h^0(\FF_e,\cH \otimes \Oc_{\FF_e}(0, 1))=3$ and  $h^i(\FF_e,\cH \otimes \Oc_{\FF_e}(0, 1))=0$ for $i=1,2$. Thus $h^2(I_Z\otimes \cH\otimes \Oc_{\FF_e}(0, 1))=0$ and this, combined with  the cohomology sequence associated to \eqref{extensionsuFe twisted Hdual}, gives that $h^2( \cH\otimes   \cH^{\vee})=0$.

Thus from \eqref{exact4}, since $h^0(\FF_e, (\cH\otimes  \Oc_{\FF_e}(0 ,1))_{|Z}
)=2(2b_2-3e)$ and $h^i(\FF_e, \cH\otimes  \Oc_{\FF_e}(0 , 1)_{|Z}
)=0$ for $i=1,2$, it follows that $\chi(I_Z\otimes \cH\otimes \Oc_{\FF_e}(0, 1) )=\chi(\cH\otimes \Oc_{\FF_e}(0 , 1))-\chi( \cH\otimes  \Oc_{\FF_e}(0 , 1)_{|Z})=3-4b_e+6e.$
From the cohomology sequence associated to \eqref{extensionsuFe twisted Hdual} it follows that $\chi( \cH\otimes   \cH^{\vee} )=\chi(I_Z\otimes \cH\otimes \Oc_{\FF_e}(0, 1) )+\chi(\cH \otimes  \Oc_{\FF_e}(-1 , 1+e-b_e) )=3-4b_e+6e-2b_2+3e+1=4-6b_e+9e$ and thus $h^1( \cH\otimes   \cH^{\vee})=1-\chi( \cH\otimes   \cH^{\vee})=6b_e-9e-3$.

As already observed,  $H^i(X_e, \cV \otimes\cV^{\vee} )\cong H^i(\FF_e, \mathcal H\otimes \mathcal H^{\vee})$, $i=0,1,2,3$, 
hence the above computations  give us the dimensions of all the cohomology groups $H^i(X_e, \cV \otimes\cV^{\vee})$, $i=0,1,2,3$.

By \cite[Proposition 2.10]{c-h-g-s}, since $h^2( \cV  \otimes \cV^{\vee}) = 0$ and $\cV$ is simple, it follows that $\cV$ admits 
a smooth modular family giving rise to a component $\mathcal M$ of the moduli space of rank two Ulrich vector bundles with Chern classes  
$$c_1( \cV)=2\xi+\varphi^*\Oc_{\FF_e}(1,b_e-2-e)\;{\rm and}\; c_2( \cV )=\xi \varphi^*\Oc_{\FF_e}(4,b_e-2-e) +(2b_e-3e-k_e-1)F.$$Moreover, since $\cV$ is 
slope-stable, it corresponds to a smooth point $[\cV]$ of such a component $\mathcal M$ such that, 
$\dim (\mathcal M) = h^1(\cV  \otimes \cV^{\vee} )  = 6 b_e-9e-3$, as stated. 

Since Ulrichness, slope-stability, simplicity are open  conditions as well as Chern classes are constant for vector bundles varying in $\mathcal M$, it follows that  all the properties satisfied by $\mathcal V$ hold true  for the general member $[\cU ] \in \mathcal M$, 
in particular $\mathcal M$ is also generically smooth.

Note further that  $K_{X_e}+4\xi=-2\xi+\varphi^*\Oc_{\FF_e}(-2,-2-e)+\varphi^*\Oc_{\FF_e}(3,b_e)+4\xi=2\xi+\varphi^*\Oc_{\FF_0}(1,b_e-e-2)=c_1(\cV)$, 
thus $ \cU $ is  a special Ulrich bundle.  Finally, the slope of $ \cU $ with respect to $\xi$ is 
\begin{eqnarray*}
\mu( \cU ) = \frac{c_1( \cU ) \cdot \xi^2}{2} = \frac{(2\xi+\varphi^*\Oc_{\FF_e}(1,b_e-e-2) ) \cdot \xi^2}{2} = 8b_e - k_e - 12 e - 3.
\end{eqnarray*}
\end{proof}

\begin{rem*} (i) In view of Theorem \ref{pullback}, together with the fact that $c_1(\mathcal E_e) = 3 C_e + b_e f$ is very ample on $\FF_e$ (cf. \eqref{eq:rem:assAB}), the bundle 
$\cH_1$ is the rank two bundle on $\FF_e$ which is Ulrich w.r.t. $c_1(\mathcal E_e)$ and which gives rise to the rank two vector bundle $\mathcal V = \xi \otimes \varphi^*(\mathcal H_1(-c_1(\mathcal E_e)))$ as in the proof of Theorem \ref{thm:rk 2 vctB e>0}, which is Ulrich  w.r.t. $\xi$ on $X_e$. 

\smallskip

\noindent 
(ii) In \cite[Theorems 5.8, 5.9]{f-lc-pl} it was shown the existence of stable rank two Ulrich vector bundle w.r.t. $\xi$ on $X_0$ and $X_1$ of low degree. In  \cite[Corollary 5.17]{hoc} it was shown the existence of rank two Ulrich vector bundle on $\mathbb P(\mathcal E) \to \FF_e$ w.r.t. different very ample polarizations $D= \pi^*(A)+\xi$ with $A$ such that $rk(\mathcal E)A+c_1(\mathcal E)$ is also very ample, $rk(\mathcal E) \geqslant 2$ and $\pi: \mathbb P(\mathcal E) \to \FF_e$ the natural projection. But nothing was said about their moduli spaces.

\smallskip

\noindent
(iii) If we set $e=1$, $b_e=5$ and $k_1=10, 11$ we get $3$-fold scrolls $X_1$ of degree either 
$11$ or $10$, as $\deg(X_1)=21-k_1$. Applying Theorem \ref{thm:rk 2 vctB e>0} to such $3$-folds, one gets 
\begin{eqnarray*}
c_1(\cU) = 2\xi+\varphi^*\Oc_{\FF_1}(1,2), \; \dim (\mathcal M) =18 \; {\rm and} \; \mu(\cU) = 40-k_1 -12 -3 = 25 -k_1.
\end{eqnarray*} These scrolls have  been considered in \cite[Theorem 5.9]{f-lc-pl} where it was only shown the existence of rank two Ulrich bundles on them, but nothing was said about their moduli space.
\end{rem*}

%%%%%%%%%%%%%%%%%%%%%%%%%%%%%%%%%%
%
%% HIGHER RANK Ulrich vector  bundles on $3$-fold scrolls over $\FF_e$
%
%%%%%%%%%%%%%%%%%%%%%%%%%%%%
%%%%%%%%%%%%%%%%%%%%%%%%%%%%%%%%%%

\section{Higher rank Ulrich vector  bundles on $3$-fold scrolls over $\FF_e$} \label{Ulrich higher rk  vb} In this section we will construct higher rank  slope-stable  Ulrich vector bundles on $X_e$, where $e \geqslant0$. We will moreover compute the dimensions of the modular components arising from the constructed bundles,  completely proving the {\bf Main Theorem} and  the {\bf Main Corollary} stated in  the Introduction.

To do so, we will use  Theorems \ref{prop:LineB}, \ref{prop:rk 2 simple Ulrich vctB e=0;I}, 
\ref{thm:rk 2 vctB e>0}, as well as   inductive procedures and deformation arguments.

%%%%%%%%%%%%%%%%%%%%%%%%%%%
%
%  	RANK r Ulrich on F_0
%
%%%%%%%%%%%%%%%%%%%%%

 \subsection{Higher rank  Ulrich vector bundles on $3$-fold scrolls over $\FF_0$}\label{S:subs51} We will first concentrate on the case $e=0$.
From Theorem \ref{prop:LineB} we know that, under Assumptions \ref{ass:AB}, the case $e=0$ is the only case where Ulrich line bundles on 
$(X_0, \xi)$ actually exist. We will focus on line bundles
\begin{eqnarray}\label{eq:Li}
L_1 =\xi+\varphi^*\Oc_{\FF_0}(2,-1) \;\; \mbox{and its Ulrich dual} \;\;  L_2 =\xi+\varphi^*\Oc_{\FF_0}(-1,b_0-1),
\end{eqnarray}as in Theorem \ref{prop:LineB}-(i), which are Ulrich w.r.t. $\xi$ on $X_0$. 

Recalling computations in \S\;\ref{S:subs41}-{\bf Case L} and the fact that  $b_0 \geqslant 2$ by \eqref{eq:rem:assAB},  we have that: 
\begin{eqnarray}\label{eq:dimextLi}
\dim ({\rm Ext}^1 (L_2, L_1)) &=&  h^1(L_1 - L_2) = 4 (b_0-1) \geqslant4, \;\mbox{whereas} \\
 \dim ({\rm Ext}^1 (L_1, L_2)) &= & h^1(L_2 - L_1) = 2(b_0+1) \geqslant6. \nonumber
 \end{eqnarray} 

In Theorem \ref{prop:rk 2 simple Ulrich vctB e=0;I} we used such extensions to construct rank-$2$ Ulrich vector bundles; to construct higher rank Ulrich bundles on $X_0$ 
we proceed with an iterative strategy as follows.

Set $\cG_1 := L_1$; from \eqref{eq:dimextLi} the general $[\cG_2] \in {\rm Ext}^1 (L_2, \cG_1) = {\rm Ext}^1 (L_2, L_1)$ is associated to 
a non-splitting extension 
\begin{equation}\label{eq:1r1}
0 \to \cG_1=L_1 \to \cG_2 \to L_2 \to 0,
\end{equation}where $\cG_2$ is a rank-$2$ Ulrich and simple vector bundle on $X_0$ with 
$$c_1(\cG_2) = 2 \xi + \varphi^*\Oc_{\FF_0}(1,b_0-2)$$ (cf. \eqref{extension1}, where $\cG_2 := \cF_1$ therein, 
and see the proof of Theorem \ref{prop:rk 2 simple Ulrich vctB e=0;I}). 
If, in the next step,  we considered further extensions ${\rm Ext}^1 (L_2, \cG_2)$, it is easy to see that the dimension of such an extension space drops by one with respect to 
that of ${\rm Ext}^1 (L_2, \cG_1)$. Therefore, proceeding in this way,  after finitely many steps we would have only splitting bundles in ${\rm Ext}^1 (L_2, \cG_r)$ for any 
 $r \geqslant  r_0$, for some positive integer $r_0$. 

To avoid this, similarly as in \cite[\S\;4]{cfk1}, we proceed by taking extensions
 \[0 \to \cG_2 \to \cG_3 \to L_1 \to 0,\;\; 0 \to \cG_3 \to \cG_4 \to L_2 \to 0,\; \ldots , 
  \]
  and so on, that is, defining
  \begin{equation} \label{eq:jr}
    \epsilon_r: =
    \begin{cases}
      1, & \mbox{if $r$ is odd}, \\
      2, & \mbox{if $r$ is even},
    \end{cases}
  \end{equation}
  we take successive extensions $[\cG_{r}] \in \Ext^1(L_{\epsilon_{r}},\cG_{r-1})$ for all $r \geqslant2$:
  \begin{equation}\label{eq:1}
0 \to \cG_{r-1} \to \cG_{r} \to L_{\epsilon_{r}} \to 0.
 \end{equation}

The fact that we can always take   {\em non--trivial} such extensions will be proved in a moment in Corollary \ref{cor:Corollario al Lemma 4.2} below. 
In any case all vector bundles $\cG_{r}$, recursively defined as in \eqref{eq:1}, are of rank $r$ and 
Ulrich w.r.t. $\xi$, since extensions of Ulrich bundles w.r.t. $\xi$ are again Ulrich w.r.t. $\xi$. The first Chern class of $\cG_r$  is given by 
\begin{equation} \label{eq:c1rcaso0}
    c_1(\cG_r): =
    \begin{cases} 
      r \xi + \varphi^*\Oc_{\FF_0}(3, b_0 -3) + \varphi^*\Oc_{\FF_0}\left(\frac{r-3}{2}, \frac{(r-3)}{2}(b_0-2) \right), & \mbox{if $r$ is odd}, \\
      r \xi + \varphi^*\Oc_{\FF_0}\left(\frac{r}{2}, \frac{r}{2}(b_0-2)\right), & \mbox{if $r$ is even}. 
    \end{cases}
  \end{equation}Thus, for any $r \geqslant1$, its slope w.r.t. $\xi$ is  
	\begin{equation}\label{eq:slopercaso0}
	\mu(\cG_r) = 8 b_0 - k_0 - 3, 
	\end{equation}as in \S\;\ref{S:subs41}-{\bf Case L} and in \eqref{eq:slope20}. Moreover, from Theorem \ref{thm:stab}-(a), 
	any such $\cG_r$ is strictly semistable and slope-semistable, being an extension of Ulrich bundles of the same slope  
	$\mu(\cG_{r-1}) = \mu (L_{\epsilon_{r}}) = 8 b_0 - k_0 - 3$.

\begin{lemma} \label{lemma:1} Let  $L$ denote any of the two line bundles 
$L_1$ and $L_2$ as in \eqref{eq:Li}. Then, for all integers $r \geqslant1$, we have
    \begin{itemize}
    \item[(i)] $h^2(\cG_r \otimes L^{\vee})= h^3(\cG_r \otimes L^{\vee}) = 0$,
      \item[(ii)] $h^2(\cG_r^{\vee} \otimes L)=h^3(\cG_r^{\vee} \otimes L) = 0 $,
      \item[(iii)] $h^1(\cG_r \otimes L_{\epsilon_{r+1}}^{\vee})\geqslant{\min}\{4b_0-4,\;2b_0+2\} \geqslant4$.
      \end{itemize}
  \end{lemma}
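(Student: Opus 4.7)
The plan is to prove all three statements simultaneously by induction on $r$, exploiting the defining short exact sequence \eqref{eq:1} and its dual. The base case $r=1$ reduces to computing the cohomology of line bundles of the form $L_i \otimes L_j^\vee$ on $X_0$, which are either trivial (when $i=j$) or equal to $\varphi^*\Oc_{\FF_0}(\pm 3,\mp b_0)$; all of these have vanishing $h^2$ and $h^3$, and for $i\neq j$ their $h^1$ equals $4b_0-4$ or $2b_0+2$ by the calculations carried out in \eqref{eq:calcoliutili} and \eqref{eq:calcoliutili2}. This settles (i) and (ii) for $r=1$, and it settles (iii) for $r=1$ since $h^1(L_1\otimes L_2^\vee)=4b_0-4$.

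For the inductive step, assume (i)--(iii) hold for $\cG_{r-1}$. To prove (i), I would tensor \eqref{eq:1} with $L^\vee$:
\begin{equation*}
0 \to \cG_{r-1}\otimes L^\vee \to \cG_r \otimes L^\vee \to L_{\epsilon_r}\otimes L^\vee \to 0.
\end{equation*}
The inductive hypothesis (i) kills $h^2$ and $h^3$ of the leftmost term, while the base case kills $h^2$ and $h^3$ of the rightmost term; the long exact sequence then gives (i) for $\cG_r$. For (ii), I would first dualize \eqref{eq:1} to obtain the locally free resolution $0\to L_{\epsilon_r}^\vee \to \cG_r^\vee \to \cG_{r-1}^\vee \to 0$, tensor with $L$, and argue identically, using the inductive hypothesis (ii) for the rightmost term and the base-case computations for the leftmost.

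For (iii), note first that $\epsilon_{r+1}\neq \epsilon_r$ (opposite parities), so $L_{\epsilon_r}\otimes L_{\epsilon_{r+1}}^\vee$ is exactly one of $L_1\otimes L_2^\vee$ or $L_2\otimes L_1^\vee$, whose $h^1$ equals $4b_0-4$ or $2b_0+2$ and whose $h^0$ and $h^2$ vanish. Tensoring \eqref{eq:1} with $L_{\epsilon_{r+1}}^\vee$ and taking cohomology:
\begin{equation*}
H^1(\cG_{r-1}\otimes L_{\epsilon_{r+1}}^\vee) \to H^1(\cG_r\otimes L_{\epsilon_{r+1}}^\vee) \to H^1(L_{\epsilon_r}\otimes L_{\epsilon_{r+1}}^\vee) \to H^2(\cG_{r-1}\otimes L_{\epsilon_{r+1}}^\vee).
\end{equation*}
Since $L_{\epsilon_{r+1}}\in\{L_1,L_2\}$, the inductive hypothesis (i) applied to $\cG_{r-1}$ gives $H^2(\cG_{r-1}\otimes L_{\epsilon_{r+1}}^\vee)=0$, so the middle map is surjective. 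Therefore $h^1(\cG_r\otimes L_{\epsilon_{r+1}}^\vee)\geqslant h^1(L_{\epsilon_r}\otimes L_{\epsilon_{r+1}}^\vee)\geqslant \min\{4b_0-4,2b_0+2\}\geqslant 4$, using $b_0\geqslant 2$ from \eqref{eq:rem:assAB}.

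There is no real obstacle: the argument is a routine induction whose only delicate point is keeping track of the parity index, in particular the observation that $\epsilon_{r+1}$ has the opposite parity of $\epsilon_r$, which is precisely what makes the line-bundle piece on the right of the exact sequence contribute a nontrivial $H^1$. The vanishings required at every stage of the induction are exactly those supplied by the previous stage plus the initial computations \eqref{eq:calcoliutili}--\eqref{eq:calcoliutili2}.
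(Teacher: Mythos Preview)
Your proof is correct and follows essentially the same approach as the paper: induction on $r$, with the base case reduced to the explicit cohomology computations \eqref{eq:calcoliutili}--\eqref{eq:calcoliutili2}, and the inductive step handled by tensoring \eqref{eq:1} (or its dual) with the appropriate line bundle and reading off the required vanishings or surjections from the long exact sequence. The key point you correctly identify---that $\epsilon_{r+1}\neq\epsilon_r$ forces the rightmost term in (iii) to be one of $L_1-L_2$ or $L_2-L_1$, hence to contribute a nontrivial $H^1$---is exactly what the paper uses as well.
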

  
	\begin{proof} For $r=1$ we have $\cG_1 = L_1$; therefore $\cG_1 \otimes L^{\vee}$ and 
	$\cG_1^{\vee} \otimes L$  are either equal to $\mathcal O_{X_0}$, if $L=L_1$, or equal to $L_1 - L_2$ and $L_2 - L_1$, respectively, if $L=L_2$. Therefore (i) and (ii) hold true by computations as in \S\;\ref{S:subs41}-{\bf Case L}. 
	As for (iii), by \eqref{eq:jr} we have that $L_{\epsilon_{2}} = L_2$ thus 
	$h^1(\cG_1 \otimes L_{2}^{\vee}) = h^1(L_1 - L_2) = 4 b_0 - 4$, as is \S\;\ref{S:subs41}-{\bf Case L}, 
	the latter being always greater than or equal to ${\min}\{4b_0-4,\;2b_0+2\} \geqslant 4$ since $b_0 \geqslant2$ by  \eqref{eq:rem:assAB}.
	
	Therefore, we will assume $r \geqslant2$ and proceed by induction. Regarding (i), since it holds for $r=1$, assuming it holds for $r-1$ then 
	by tensoring \eqref{eq:1} with $L^{\vee}$ we get that
	\begin{eqnarray*}
     h^j(\cG_{r} \otimes L^{\vee}) =0, \;\; j=2,3,
    \end{eqnarray*}
	because $h^j(\cG_{r-1} \otimes L^{\vee}) = 0$, for $j=2,3$, by inductive hypothesis whereas 
		$h^j(L_{\epsilon_{r}} \otimes L^{\vee}) = 0 $, for $j=2,3$, since $L_{\epsilon_{r}} \otimes L^{\vee}$ 
		 is either $\mathcal O_{X_0}$, or $L_2-L_1$, or $L_1-L_2$.

    A similar reasoning, tensoring the dual of \eqref{eq:1} by $L$, proves (ii).

    To prove (iii), tensor \eqref{eq:1} by $L_{\epsilon_{r+1}}^{\vee}$ and use that $h^2(\cG_{r-1}\otimes L_{\epsilon_{r+1}}^{\vee})=0$ by (i). Thus 
		we have the surjection
$$H^1(\cG_r \otimes L_{\epsilon_{r+1}}^{\vee}) \twoheadrightarrow H^1(L_{\epsilon_{r}} \otimes L_{\epsilon_{r+1}}^{\vee}),$$which implies that 
$h^1(\cG_r \otimes L_{\epsilon_{r+1}}^{\vee}) \geqslant h^1(L_{\epsilon_{r}} \otimes L_{\epsilon_{r+1}}^{\vee})$. According to the 
parity of $r$, we have that $L_{\epsilon_{r}} \otimes L_{\epsilon_{r+1}}^{\vee}$ equals 
either $L_1 - L_2$ or $L_2 - L_1$. From computations as in \S\;\ref{S:subs41}-{\bf Case L}, 
$h^1(L_1-L_2) = 4b_0-4$ whereas $h^1(L_2-L_1) = 2b_0+2$. Notice that 
\[
{\rm min} \{4b_0-4,\;2b_0+2\} := \begin{cases}
      4b_0-4 = 4, & \mbox{if $b_0=2$}, \\
      4b_0-4 = 2b_0+2 = 8, & \mbox{if $b_0=3$}, \\
			2b_0+2 \geqslant10, & \mbox{if $b_0\geqslant4$}.
    \end{cases}
		\]Therefore one concludes.
		\end{proof}

  \begin{cor}\label{cor:Corollario al Lemma 4.2}  For any integer $r \geqslant 1$ there exist on 
	$X_0$ rank-$r$ vector bundles $\cG_r$, which are Ulrich w.r.t. $\xi$, with first Chern class 
	$c_1(\cG_r)$ as in \eqref{eq:c1rcaso0}, of slope $\mu(\cG_r) = 8 b_0 - k_0 - 3$ w.r.t. $\xi$ and which arise as non-trivial extensions 
	as in \eqref{eq:1} if $r \geqslant 2$. 
	\end{cor}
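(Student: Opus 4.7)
The plan is to prove the corollary by induction on $r \geqslant 1$, using Lemma \ref{lemma:1} as the key technical input. This reduces the statement essentially to verifying that the inductive construction \eqref{eq:1} never terminates and that the resulting invariants match the claimed formulas.

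For the base case $r=1$, I would simply take $\cG_1 := L_1$. This is an Ulrich line bundle by Theorem \ref{prop:LineB}-(i); evaluating the odd-$r$ branch of \eqref{eq:c1rcaso0} at $r=1$ collapses the second summand to $\varphi^*\Oc_{\FF_0}(-1,-(b_0-2))$, and adding it to $\varphi^*\Oc_{\FF_0}(3,b_0-3)$ recovers $\varphi^*\Oc_{\FF_0}(2,-1)$, so $c_1(\cG_1)=L_1$ as required. The slope computation $\mu(L_1)=8b_0-k_0-3$ was already recorded in \S\,\ref{S:subs41}-\textbf{Case L}.

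For the inductive step, assume $\cG_{r-1}$ has been constructed satisfying the stated properties. By Lemma \ref{lemma:1}-(iii) applied to $\cG_{r-1}$,
\[
\dim \Ext^1(L_{\epsilon_r},\cG_{r-1}) \;=\; h^1(\cG_{r-1}\otimes L_{\epsilon_r}^{\vee}) \;\geqslant\; \min\{4b_0-4,\,2b_0+2\}\;\geqslant\;4 \;>\;0,
\]
so we may pick a nontrivial extension class and obtain a rank-$r$ vector bundle $\cG_r$ fitting in \eqref{eq:1}. Ulrichness is immediate from the long exact sequence in cohomology: since both $\cG_{r-1}$ and $L_{\epsilon_r}$ are Ulrich w.r.t. $\xi$, so is $\cG_r$. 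The Chern-class formula \eqref{eq:c1rcaso0} is a direct bookkeeping check: from $c_1(\cG_r)=c_1(\cG_{r-1})+L_{\epsilon_r}$ one splits into the two parity cases (even $r$ from odd $r-1$, adding $L_2$; odd $r$ from even $r-1$, adding $L_1$) and verifies that the identities
\[
\varphi^*\Oc_{\FF_0}(3,b_0-3)+\varphi^*\Oc_{\FF_0}\bigl(\tfrac{r-4}{2},\tfrac{r-4}{2}(b_0-2)\bigr)+L_2 \;=\; \xi+\varphi^*\Oc_{\FF_0}\bigl(\tfrac{r}{2},\tfrac{r}{2}(b_0-2)\bigr)
\]
and the analogous identity in the odd case hold — both are elementary arithmetic in $\mathrm{Num}(\FF_0)$. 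The slope $\mu(\cG_r)=8b_0-k_0-3$ then follows automatically, since extensions preserve the common slope of their factors and both $L_1,L_2$ have slope $8b_0-k_0-3$ with respect to $\xi$.

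The potential obstacle here would be guaranteeing nontriviality of the extension class at every inductive step, which a priori could fail if the relevant $\Ext^1$ vanished; this is precisely what Lemma \ref{lemma:1}-(iii) rules out. With that lemma in hand the remaining content of the corollary is formal, so no serious difficulty should arise beyond the parity bookkeeping for $c_1(\cG_r)$.
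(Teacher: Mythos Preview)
Your proof is correct and follows essentially the same approach as the paper: induction on $r$ with base case $\cG_1=L_1$, then at each step invoking Lemma~\ref{lemma:1}-(iii) to guarantee nontrivial extensions \eqref{eq:1}, with Ulrichness, $c_1$, and slope following formally. The paper is slightly terser on the Chern-class bookkeeping (it simply cites \eqref{eq:c1rcaso0}, which was already established in the discussion preceding the lemma), but the content is the same.
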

	\begin{proof} For $r=1$, we have $\cG_1 = L_1$ and the statement holds true from Theorem \ref{prop:LineB} and 
	computations in \S\;\ref{S:subs41}-{\bf Case L}. 
	
	 For any $r \geqslant2$, notice that 
	$$\Ext^1(L_{\epsilon_{r}}, \cG_{r-1}) \cong H^1(\cG_{r-1} \otimes L_{\epsilon_{r}}^{\vee}).$$Therefore, 
	from Lemma \ref{lemma:1}-(iii) there exist non--trivial  extensions as in \eqref{eq:1}, which are therefore 
	Ulrich with respect to $\xi$ and whose Chern class $c_1(\cG_r)$ is exactly as in \eqref{eq:c1rcaso0}. 
	
	By induction $\mu(\cG_{r-1}) = \mu(L_{\epsilon_{r}}) = 8 b_0 - k_0 - 3$; then 
	$\cG_r$ has the same slope w.r.t. $\xi$.
	\end{proof}

 From Corollary \ref{cor:Corollario al Lemma 4.2}, at any step we can always pick {\em non--trivial} extensions of the form \eqref{eq:1} and 
we will henceforth do so.

\begin{lemma} \label{lemma:2}  Let $r \geqslant1$  be an integer. Then we have 
    \begin{itemize}
    \item[(i)] $h^1(\cG_{r+1} \otimes L_{\epsilon_{r+1}}^{\vee})=h^1(\cG_r \otimes L_{\epsilon_{r+1}}^{\vee})-1$,
    \item[(ii)] $h^1(\cG_r \otimes L_{\epsilon_{r+1}}^{\vee})= 
		\begin{cases}
      \frac{(r+1)}{2} h^1(L_1-L_2) - \frac{(r-1)}{2} = 2(r+1)(b_0-1) - \frac{(r-1)}{2}, & \mbox{if $r$ is odd}, \\
			\frac{r}{2} h^1(L_2-L_1) - \frac{(r-2)}{2} = r(b_0+1) - \frac{(r-2)}{2}, & \mbox{if $r$ is even}.
    \end{cases}$
		\item[(iii)] $h^2(\cG_r \otimes \cG_r^{\vee}) = h^3(\cG_r \otimes \cG_r^{\vee})=0$,
		
\item[(iv)] $\chi(\cG_r \otimes L_{\epsilon_{r+1}}^{\vee})= 
\begin{cases}
      \frac{(r+1)}{2} (1- h^1(L_1-L_2)) -1  =  \frac{(r+1)}{2} (5 - 4 b_0) -1, & \mbox{if $r$ is odd}, \\
			\frac{r}{2} (1- h^1(L_2-L_1)) = \frac{r}{2} (-1- 2b_0) , & \mbox{if $r$ is even}.
    \end{cases}$

\item[(v)] $\chi(L_{\epsilon_{r}} \otimes \cG_r^{\vee})= 
\begin{cases}
      \frac{(r-1)}{2} (1- h^1(L_1-L_2)) + 1  = \frac{(r-1)}{2} (5 - 4 b_0) + 1, & \mbox{if $r$ is odd}, \\
			\frac{r}{2} (1- h^1(L_2-L_1)) = \frac{r}{2} (-1- 2b_0), & \mbox{if $r$ is even}.
    \end{cases}$
\item[(vi)]  $\chi(\cG_r \otimes \cG_r^{\vee})= 
\begin{cases}
  \scriptstyle    \frac{(r^2 -1)}{4} (2-h^1(L_1-L_2)-h^1(L_2-L_1)) + 1  = \frac{(r^2 -1)}{4}(4 - 6 b_0) + 1, & \mbox{if $r$ is odd}, \\
			\frac{r^2}{4} (2- h^1(L_1-L_2)-h^1(L_2-L_1)) = \frac{r^2}{4} (4- 6b_0) , & \mbox{if $r$ is even}.
    \end{cases}$
\end{itemize}
\end{lemma}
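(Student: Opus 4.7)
The plan is to establish (i)--(vi) by induction on $r$, systematically tensoring the extension sequence \eqref{eq:1} with suitable line bundles or vector bundles and applying the vanishings of Lemma \ref{lemma:1}, together with $h^i(\cO_{X_0})=0$ for $i\geqslant 1$ (since $X_0$ is a rational $3$-fold) and the computations $h^1(L_1-L_2)=4(b_0-1)$, $h^1(L_2-L_1)=2(b_0+1)$ recalled in \S\ref{S:subs41}. Under the identification $\Ext^1(L_{\epsilon_{r+1}},\cG_r)\cong H^1(\cG_r\otimes L_{\epsilon_{r+1}}^{\vee})$, the non-triviality of every successive extension (granted by Corollary \ref{cor:Corollario al Lemma 4.2}) is the only piece of input that goes beyond pure vanishing theorems.

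For (i), I tensor the defining sequence $0\to\cG_r\to\cG_{r+1}\to L_{\epsilon_{r+1}}\to 0$ with $L_{\epsilon_{r+1}}^{\vee}$ to obtain
\[
0\to\cG_r\otimes L_{\epsilon_{r+1}}^{\vee}\to\cG_{r+1}\otimes L_{\epsilon_{r+1}}^{\vee}\to\cO_{X_0}\to 0.
\]
Lemma \ref{lemma:1}(i) gives $H^2(\cG_r\otimes L_{\epsilon_{r+1}}^{\vee})=0$, and the connecting map $\delta\colon H^0(\cO_{X_0})\cong\mathbb C\to H^1(\cG_r\otimes L_{\epsilon_{r+1}}^{\vee})$ sends $1$ to the extension class of $\cG_{r+1}$, which is non-zero; hence $\delta$ is injective and $h^1$ drops by exactly one. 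For (ii) I argue by induction on $r$, the base cases $r=1,2$ being direct. For the inductive step, tensor the level-$r$ extension by $L_{\epsilon_{r+1}}^{\vee}$; since $L_{\epsilon_r}-L_{\epsilon_{r+1}}=\pm(L_1-L_2)$ has vanishing $H^0,H^2$, the long exact sequence yields
\[
h^1(\cG_r\otimes L_{\epsilon_{r+1}}^{\vee})=h^1(\cG_{r-1}\otimes L_{\epsilon_{r+1}}^{\vee})+h^1(L_{\epsilon_r}-L_{\epsilon_{r+1}}).
\]
Using $\epsilon_{r+1}=\epsilon_{r-1}$ and applying (i) at level $r-1$ rewrites the first summand as $h^1(\cG_{r-2}\otimes L_{\epsilon_{(r-2)+1}}^{\vee})-1$, to which the inductive hypothesis at level $r-2$ applies; the resulting two-step recursion, driven by the parity-alternating value of $h^1(L_{\epsilon_r}-L_{\epsilon_{r+1}})$, reproduces the stated closed form.

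For (iii) I proceed by induction on $r$, the base case $r=1$ being immediate from $\cG_1\otimes\cG_1^{\vee}=\cO_{X_0}$. Tensoring \eqref{eq:1} with $\cG_r^{\vee}$ reduces the vanishings $h^j(\cG_r\otimes\cG_r^{\vee})=0$, $j=2,3$, to those for $\cG_{r-1}\otimes\cG_r^{\vee}$ and $L_{\epsilon_r}\otimes\cG_r^{\vee}$; the latter is Lemma \ref{lemma:1}(ii), while the former follows after dualizing \eqref{eq:1}, tensoring by $\cG_{r-1}$, and combining Lemma \ref{lemma:1}(i) with the inductive hypothesis applied to $\cG_{r-1}\otimes\cG_{r-1}^{\vee}$.

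Finally, (iv), (v), (vi) are consequences of the preceding statements via additivity of $\chi$ on exact sequences. Explicitly: (iv) follows by telescoping $\chi$ along the extensions tensored by $L_{\epsilon_{r+1}}^{\vee}$, using $\chi(L_i-L_j)=-h^1(L_i-L_j)$ and $\chi(\cO_{X_0})=1$; (v) follows analogously from the dualized sequence tensored by $L_{\epsilon_r}$; and (vi) is obtained by tensoring \eqref{eq:1} with $\cG_r^{\vee}$ and then the dual of \eqref{eq:1} with $\cG_{r-1}$, giving
\[
\chi(\cG_r\otimes\cG_r^{\vee})=\chi(\cG_{r-1}\otimes\cG_{r-1}^{\vee})+\chi(\cG_{r-1}\otimes L_{\epsilon_r}^{\vee})+\chi(L_{\epsilon_r}\otimes\cG_r^{\vee}),
\]
which unravels inductively via (iv) at level $r-1$ and (v) at level $r$. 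The main subtlety throughout is the alternation between $L_1$ and $L_2$ in successive extensions, which forces a two-step rather than one-step recursion in (ii) and requires careful parity bookkeeping in the Euler characteristic formulas; once the coboundary argument used for (i) is in hand, however, every step is forced and the telescopings go through without further obstacle.
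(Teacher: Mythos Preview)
Your proposal is correct and follows essentially the same approach as the paper's proof: the coboundary argument for (i), the two-step recursion for (ii) via tensoring the defining extension with $L_{\epsilon_{r+1}}^{\vee}$ and invoking (i) together with the parity identity $\epsilon_{r+1}=\epsilon_{r-1}$, the inductive vanishing for (iii), and the telescoping Euler characteristic computations for (iv)--(vi) all mirror the paper's argument, with only cosmetic differences in indexing (you compute the formula at level $r$ assuming it at level $r-2$, while the paper computes it at level $r+1$ assuming it at level $r-1$).
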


\begin{proof}  (i) Consider the exact sequence \eqref{eq:1}, where $r$ is replaced by $r+1$. From \linebreak 
$\Ext^1(L_{\epsilon_{r+1}},\cG_r)\cong H^1(\cG_r \otimes L_{\epsilon_{r+1}}^{\vee})$ and the fact that the exact 
sequence defining $\cG_{r+1}$ is constructed by taking a non--zero vector $[\cG_{r+1}]$ 
in $\Ext^1(L_{\epsilon_{r+1}},\cG_r)$, it follows that the coboundary map 
$$H^0(\mathcal O_{X_0}) \stackrel{\partial}{\longrightarrow } H^1(\cG_r \otimes L_{\epsilon_{r+1}}^{\vee})$$of the exact sequence 
\begin{eqnarray}\label{eq:dag}
0 \to \cG_r \otimes L_{\epsilon_{r+1}}^{\vee} \to \cG_{r+1}\otimes L_{\epsilon_{r+1}}^{\vee} \to  \mathcal O_{X_0} \to 0,
 \end{eqnarray} is non--zero so it is injective. Thus, (i) follows from the cohomology of \eqref{eq:dag}.

\medskip

\noindent
(ii) We use induction on $r$. For $r=1$, the right hand side of the formula
yields $4(b_0-1)$ which is exactly $h^1(\cG_1 \otimes L_2^{\vee})=h^1(L_1-L_2)$ as 
in \S\;\ref{S:subs41}-{\bf Case L}. 

When  $r=2$, the right hand side of the formula  is $2(b_0+1)$ which is $ h^1(\cG_2 \otimes L_1^{\vee}) = h^1(L_2 - L_1) = 2b_0+2$, as it follows from 
computations in \S\;\ref{S:subs41}-{\bf Case L}, from the exact sequence 
\begin{eqnarray*}
0 \to \mathcal O_{X_0} \to \cG_2 \otimes L_1^{\vee} \to L_2 - L_1 \to 0,
\end{eqnarray*}
obtained 
by \eqref{eq:1} with $r=2$ and tensored with $L_1^{\vee}$, and the fact that 
$h^j(\mathcal O_{X_0}) = 0$, for $j=1,2$.

Assume now that the formula holds true up to some integer $r \geqslant 2$; we have to show that it holds also for $r+1$. Consider the exact sequence \eqref{eq:1}, with $r$ replaced by $r+1$, and tensor it by $L_{\epsilon_{r+2}}^{\vee}$. We thus 
obtain
\begin{eqnarray} \label{eq:dagdag}
0 \to \cG_r\otimes L_{\epsilon_{r+2}}^{\vee} \to \cG_{r+1}\otimes L_{\epsilon_{r+2}}^{\vee} \to  L_{\epsilon_{r+1}}\otimes L_{\epsilon_{r+2}}^{\vee} \to 0
 \end{eqnarray}

If $r$ is even, then $L_{\epsilon_{r+2}} = L_2$ whereas $L_{\epsilon_{r+1}} =L_1$. Thus $h^0(L_{\epsilon_{r+1}}\otimes L_{\epsilon_{r+2}}^{\vee})= h^0(L_1 - L_2) = 0$ and  
$h^1(L_{\epsilon_{r+1}}\otimes L_{\epsilon_{r+2}}^{\vee})=h^1(L_1 - L_2) = 4b_0 -4$.  On the other hand, by Lemma \ref{lemma:1}-(i), 
$h^2(\cG_r\otimes L_{\epsilon_{r+2}}^{\vee})=0$. Thus, from \eqref{eq:dagdag}, we get: 
\begin{eqnarray*}
 h^1(\cG_{r+1} \otimes L_{\epsilon_{r+2}}^{\vee})=(4b_0-4) +h^1(\cG_r\otimes L_{\epsilon_{r+2}}^{\vee})=(4b_0-4)+
h^1(\cG_r\otimes L_{\epsilon_{r}}^{\vee}), 
 \end{eqnarray*}
 as $r$ and $r+2$ have the same parity. Using (i),  we have $h^1(\cG_r\otimes L_{\epsilon_{r}}^{\vee}) = h^1(\cG_{r-1}\otimes L_{\epsilon_{r}}^{\vee}) -1$ therefore, 
by inductive hypothesis with $r-1$ odd, we have $h^1(\cG_{r-1}\otimes L_{\epsilon_{r}}^{\vee}) = \frac{r}{2} (4b_0-4) - \frac{(r-2)}{2}$. 
 Summing up, we have 
\begin{eqnarray*}
h^1(\cG_{r+1} \otimes L_{\epsilon_{r+2}}^{\vee}) = (4b_0-4) + \frac{r}{2} (4b_0-4) - \frac{(r-2)}{2} -1,
\end{eqnarray*}
which is easily seen to be equal to the right hand side expression in (ii), when $r$ is replaced by $r+1$. 

If  $r$ is odd, the same holds for $r+2$ whereas $r+1$ is even. In this case  $L_{\epsilon_{r+2}}= L_1$, 
$L_{\epsilon_{r+1}} =L_2$ so $h^1(L_{\epsilon_{r+1}}\otimes L_{\epsilon_{r+2}}^{\vee})=h^1(L_2 - L_1) = 2b_0+2$ and one applies the same procedure 
as in the previous case. 

\medskip

\noindent
(iii)  We again use induction on $r$. For $r=1$, formula (iii) states that $h^j(L_1 -L_1)=h^j(\mathcal O_{X_0})=0$, for $j=2,3$,  which is  certainly true. 

Assume now that (iii) holds  up to some integer $r \geqslant 1$; we have to prove that it holds also for $r+1$. 
Consider the exact sequence \eqref{eq:1}, where $r$ is replaced by $r+1$, and tensor it by $\cG_{r+1}^{\vee}$. From this we get that, for $j=2,3$, 
\begin{eqnarray} \label{eq:zaniolo}
 h^j( \cG_{r+1} \otimes \cG_{r+1}^{\vee}) \leqslant h^j( \cG_{r} \otimes \cG_{r+1}^{\vee}) + h^j( L_{\epsilon_{r+1}} \otimes \cG_{r+1}^{\vee}) = h^j( \cG_{r} \otimes \cG_{r+1}^{\vee}),
\end{eqnarray} the latter equality follows from $h^j( L_{\epsilon_{r+1}} \otimes \cG_{r+1}^{\vee}) =0$, $j =2,3$, as in 
Lemma \ref{lemma:1}-(ii).  

Consider the dual exact sequence of \eqref{eq:1}, where $r$ is replaced by $r+1$, and tensor it by $\cG_{r}$. Thus, 
Lemma \ref{lemma:1}-(i) yields that, for $j=2,3$, one has 
\begin{eqnarray} \label{eq:pogba}
h^j( \cG_{r} \otimes \cG_{r+1}^{\vee}) \leqslant h^j(\cG_r \otimes L_{\epsilon_{r+1}}^{\vee})+h^j(\cG_{r} \otimes \cG_{r}^{\vee})= h^j(\cG_{r} \otimes \cG_{r}^{\vee}).
\end{eqnarray} Now \eqref{eq:zaniolo}--\eqref{eq:pogba} and the inductive hypothesis yield
$h^j( \cG_{r+1} \otimes \cG_{r+1}^{\vee})=0$, for $j=2,3$, as desired.

\medskip

\noindent
(iv)  For $r=1$,  (iv) reads $\chi(L_1-L_2)= - h^1(L_1-L_2) = 4 - 4b_0$, which is true since 
$h^j(L_1-L_2)=0$ for $j=0,2,3$.

For $r=2$, (iv) reads $\chi(\cG_2 \otimes L_1^{\vee}) = 1 - h^1(L_2-L_1) = - 1 - 2 b_0$ and this holds true because 
 if we take the exact sequence \eqref{eq:1}, with $r=2$, tensored by $L_1^{\vee}$ then 
\begin{eqnarray*} 
\chi(\cG_2 \otimes L_1^{\vee}) = \chi (\mathcal O_{X_0}) + \chi (L_2-L_1) = 1 - h^1(L_2-L_1) = 1 - (2b_0+2),
\end{eqnarray*} 
as 
$h^j(L_2-L_1)= 0$ for $j=0,2,3$. 

Assume now that the formula holds up to a certain integer $r \geqslant 2$, we have to prove that it also holds for $r+1$.  
From  \eqref{eq:dagdag} we get 
\begin{eqnarray*} 
\chi(\cG_{r+1}\otimes L_{\epsilon_{r+2}}^{\vee}) = \chi(\cG_r\otimes L_{\epsilon_{r+2}}^{\vee}) + \chi(L_{\epsilon_{r+1}}\otimes L_{\epsilon_{r+2}}^{\vee}).
\end{eqnarray*}

If $r$ is even, the same is   true for $r+2$ whereas $r+1$ is odd. Therefore, 
\begin{equation} \label{eq:maz1}
  \chi(\cG_{r+1}\otimes L_{\epsilon_{r+2}}^{\vee}) = \chi(\cG_r\otimes L_{2}^{\vee})+ \chi(L_{1} - L_{2}) = 
	\chi(\cG_r\otimes L_{2}^{\vee})-h^1(L_1-L_2).
  \end{equation} Then  \eqref{eq:dag}, with $r$ replaced by $r-1$, yields
  \begin{equation}\label{eq:maz2}
    \chi(\cG_r\otimes L_{2}^{\vee})=\chi(\cG_{r-1}\otimes L_{2}^{\vee})+\chi(\mathcal O_{X_0})= \chi(\cG_{r-1}\otimes L_{2}^{\vee})+1.
  \end{equation}
Substituting \eqref{eq:maz2} into \eqref{eq:maz1} and using the inductive hypothesis with $r-1$ odd, we get
  \begin{eqnarray*}
    \chi(\cG_{r+1}\otimes L_{2}^{\vee}) &=& \chi(\cG_{r-1}\otimes L_{2}^{\vee})+1-h^1(L_1-L_2) \\
    & = & \frac{(r)}{2} (1- h^1(L_1-L_2))-h^1(L_1-L_2) \\
		 & = & \frac{(r+2)}{2} (1- h^1(L_2-L_1)) -1,   
\end{eqnarray*} proving that the formula holds also for $r+1$ odd.

Similar procedure can be used to treat the case when $r$ is odd. In this case, $L_{\epsilon_{r+1}} = L_2$ whereas $L_{\epsilon_{r+2}} = L_1$. Thus, 
from the above computations, 
\begin{eqnarray*}
\chi(\cG_{r+1}\otimes L_1^{\vee}) = \chi(\cG_r\otimes L_{1}^{\vee})+ \chi(L_2 - L_1) = \chi(\cG_r\otimes L_{1}^{\vee})- h^1(L_2 - L_1).
\end{eqnarray*}
As in the previous case, 
$\chi(\cG_r\otimes L_{1}^{\vee}) = 1 + \chi (\cG_{r-1} \otimes L_{1}^{\vee})$ so, applying inductive hypothesis with $r-1$ even, we get 
$\chi(\cG_r\otimes L_{1}^{\vee}) = 1 + \frac{(r-1)}{2} (1 - h^1(L_2-L_1))$. Adding up all these quantities, we get 
\begin{eqnarray*}\chi(\cG_{r+1}\otimes L_{\epsilon_{r+2}}^{\vee})= \chi(\cG_{r+1}\otimes L_1^{\vee}) = \frac{r+1}{2} (1 - h^1(L_2 -L_1)),
\end{eqnarray*}
so formula (iv) 
holds true also for $r+1$ even.

\medskip

\noindent
(v) For $r=1$, (v) reads $\chi(L_1 -L_1) = \chi(\mathcal O_{X_0}) =1$, which is correct. 
For $r=2$, (v) reads  $\chi(L_2\otimes \cG_2^{\vee})= 1 - h^1(L_2-L_1)$, which is once again correct 
as it follows from the dual of sequence \eqref{eq:1} tensored by $L_2$.

Assume now that the formula holds up to a certain integer $r \geqslant 2 $ and we need to proving it  for $r+1$.  
Dualizing \eqref{eq:1}, replacing  $r$  by  $r+1$ and tensoring it  by $L_{\epsilon_{r+1}}$
 we find that
\begin{eqnarray} \label{eq:maz3}
  \chi(L_{\epsilon_{r+1}} \otimes \cG_{r+1}^{\vee} ) & = &   \chi(L_{\epsilon_{r+1}} \otimes L_{\epsilon_{r+1}}^{\vee})+ \chi(L_{\epsilon_{r+1}}\otimes \cG_{r}^{\vee}) \\
  \nonumber & = &  \chi(\mathcal O_{X_n})+\chi(L_{\epsilon_{r+1}}\otimes \cG_{r}^{\vee}) 
 =  1+\chi(L_{\epsilon_{r+1}}\otimes \cG_{r}^{\vee}).
  \end{eqnarray}
   The  dual of sequence \eqref{eq:1}, with $r$ replaced by $r-1$, tensored by $L_{\epsilon_{r+1}}$ yields
  \begin{equation}
    \label{eq:maz4}
\chi(L_{\epsilon_{r+1}}\otimes \cG_{r}^{\vee})=\chi(L_{\epsilon_{r+1}} \otimes L_{\epsilon_{r}}^{\vee})+ \chi(L_{\epsilon_{r+1}}\otimes \cG_{r-1}^{\vee}).
\end{equation}
   Substituting \eqref{eq:maz4} into \eqref{eq:maz3} and using the fact that $r+1$ and $r-1$ have the same parity, we get 
\begin{eqnarray*}	\chi(L_{\epsilon_{r+1}} \otimes \cG_{r+1}^{\vee} ) = 1 + \chi(L_{\epsilon_{r+1}} \otimes L_{\epsilon_{r}}^{\vee}) + 
\chi(L_{\epsilon_{r-1}}\otimes \cG_{r-1}^{\vee}).\end{eqnarray*}

If $r$ is even,  then $\chi(L_{\epsilon_{r+1}} \otimes L_{\epsilon_{r}}^{\vee}) = \chi(L_1-L_2) = - h^1(L_1-L_2)$ 
whereas, from the inductive hypothesis with $r-1$ odd, $\chi(L_{\epsilon_{r-1}}\otimes \cG_{r-1}^{\vee}) = 1 + \frac{(r-2)}{2} (1 - h^1(L_1-L_2))$. Thus
\begin{eqnarray*}\chi(L_{\epsilon_{r+1}} \otimes \cG_{r+1}^{\vee} ) = 1 -  h^1(L_1-L_2) + 1 + \frac{(r-2)}{2} (1 - h^1(L_1-L_2)),
\end{eqnarray*}
the latter 
equals $1 +  \frac{r}{2} (1 - h^1(L_1-L_2))$, proving that the formula holds also for $r+1$ odd.

If $r$ is odd, the strategy is similar; in this case one has 
$\chi(L_{\epsilon_{r+1}} \otimes L_{\epsilon_{r}}^{\vee}) = \chi(L_2-L_1) = - h^1(L_2-L_1)$ and, by the inductive hypothesis with $r-1$ even,
$\chi(L_{\epsilon_{r-1}}\otimes \cG_{r-1}^{\vee}) = \frac{(r-1)}{2} (1 - h^1(L_2-L_1))$ so one can conclude. 

\medskip

\noindent
(vi) We first check the given formula for $r=1,2$. We have $\chi(\cG_1 \otimes \cG_1^{\vee})=\chi(L_1-L_1)=\chi(\mathcal O_{X_0})=1$, which fits with the given formula for $r=1$. 
From \eqref{eq:1}, with $r=2$, tensored by $\cG_2^{\vee}$ we get
\begin{equation}\label{eq:cr1}
  \chi(\cG_2 \otimes \cG_2^{\vee})=\chi(L_1 \otimes \cG_2^{\vee})+\chi(L_2 \otimes \cG_2^{\vee})\stackrel{(v)}{=} \chi(L_1 \otimes \cG_2^{\vee}) + 1 - h^1(L_2-L_1).
\end{equation} From the dual of  \eqref{eq:1}, with $r=2$,  tensored by $L_1$ we get
\begin{equation} \label{eq:cr2}
  \chi(L_1 \otimes \cG_2^{\vee})=\chi(L_1 -L_1)+\chi(L_1 -L_2)=\chi(\mathcal O_{X_0})- h^1(L_1-L_2) =1-h^1(L_1-L_2).
\end{equation} Combining \eqref{eq:cr1} and \eqref{eq:cr2}, we get 
\begin{eqnarray*}
 \chi(\cG_2 \otimes \cG_2^{\vee})= 2 -h^1(L_1-L_2) - h^1(L_2-L_1),
 \end{eqnarray*} which again fits with the given formula for $r=2$.

Assume now that the given formula is valid up to a certain integer $r \geqslant2$; we need to prove it holds for $r+1$. 
From \eqref{eq:1}, in which $r$ is replaced by $r+1$, tensored by $\cG_{r+1}^{\vee}$ and successively the dual of  \eqref{eq:1}, with $r$ replaced by $r+1$,  
tensored by $\cG_r$ we get
\begin{eqnarray*}
\chi(\cG_{r+1} \otimes \cG_{r+1}^{\vee}) & = & \chi(\cG_{r} \otimes \cG_{r}^{\vee}) +\chi(\cG_{r} \otimes L_{\epsilon_{r+1}}^{\vee})+ \chi(L_{\epsilon_{r+1}} \otimes \cG_{r+1}^{\vee}).
\end{eqnarray*}

If $r$ is even, then $r+1$ is odd and $L_{\epsilon_{r+1}} = L_1$. From (v) with $(r+1)$ odd, we get 
$\chi(L_{\epsilon_{r+1}} \otimes \cG_{r+1}^{\vee}) = 1 + \frac{r}{2} (1 - h^1(L_1-L_2))$, whereas from (iv) with $r$ even
$\chi(\cG_{r} \otimes L_{\epsilon_{r+1}}^{\vee}) = \frac{r}{2} (1 - h^1(L_2-L_1))$. Finally, by the inductive hypothesis with $r$ even, 
$\chi(\cG_{r} \otimes \cG_{r}^{\vee}) = \frac{r^2}{4} (2 - h^1(L_1-L_2) - h^1(L_2-L_1))$. Summing--up the three quantities, one gets 
\begin{eqnarray*}
\chi(\cG_{r+1} \otimes \cG_{r+1}^{\vee}) = 1 + \frac{(r+1)^2-1}{4} (2 - h^1(L_1-L_2) - h^1(L_2-L_1)),
\end{eqnarray*}
proving that the formula holds for $r+1$ odd.
 
If  $r$ is odd, then 
$\chi(L_{\epsilon_{r+1}} \otimes \cG_{r+1}^{\vee}) = \frac{r+1}{2} (1 - h^1(L_2-L_1))$, as it follows from (v) with $(r+1)$ even, whereas
$\chi(\cG_{r} \otimes L_{\epsilon_{r+1}}^{\vee}) = \frac{(r+1)}{2} (1 - h^1(L_1-L_2)) - 1$, as predicted by (iv) with $r$ odd. Finally, form the inductive hypothesis with 
$r$ odd, we have 
$\chi(\cG_{r} \otimes \cG_{r}^{\vee}) = 1 + \frac{(r^2-1)}{4} (2 - h^1(L_1-L_2)- h^1(L_2-L_1))$. If we add up the three quantities, we get 
\begin{eqnarray*}
\chi(\cG_{r+1} \otimes \cG_{r+1}^{\vee}) = \frac{(r+1)^2}{4} (2 - h^1(L_1-L_2) - h^1(L_2-L_1)),
\end{eqnarray*}
finishing the proof. 
\end{proof}

Notice some fundamental properties arising from the first step of the previous iterative contruction in \eqref{eq:1}. 
We set $\cG_1 = L_1$, which is a Ulrich line bundle of slope $\mu = 8b_0 - k_0 -3$; by considering 
non--trivial extensions \eqref{eq:1r1}, $\cG_2$ turned out to be a simple bundle, as it follows from \cite[Lemma\,4.2]{c-h-g-s} and from 
the fact that $\cG_1 = L_1$ and $L_{\epsilon_2} = L_2$ are slope--stable, of the same slope $\mu = 8b_0 - k_0 -3$, non-isomorphic line bundles. 
Moreover, by construction, $\cG_2$ turned out to be Ulrich, strictly semistable and of slope $\mu = 8b_0 - k_0 -3$; on the other hand, 
in the proof of Theorem \ref{prop:rk 2 simple Ulrich vctB e=0;I} we showed that $\cG_2$ deforms in a smooth, irreducible modular family to a slope-stable 
Ulrich bundle $\cU_2 := \cU$, of same slope and Chern classes as $\cG_2$, which gives rise to a general point $[\cU_2]$ 
of the generically smooth component $\mathcal M(2) := \mathcal M$ of the moduli space of Ulrich bundles described in Theorem \ref{prop:rk 2 simple Ulrich vctB e=0;I}. 
%Furthermore, by semi-continuity, $\cU_2$ has cohomological dimensions $h^j(\cU_{2} \otimes \cU_{2}^{\vee})$, $0 \leqslant j \leqslant 3$, as those of 
%$\cG_2$, $h^j(\cG_{2} \otimes \cG_{2}^{\vee})$, $0 \leqslant j \leqslant 3$. 

In this way, by induction, we can assume that up to a given integer $r \geqslant 3$ we have constructed a generically smooth irreducible component 
$\mathcal M(r-1)$ of the moduli space of Ulrich bundles of rank $(r-1)$, with Chern class $c_1 = c_1(\cG_{r-1})$ and slope $\mu = 8b_0 - k_0 -3$, whose general 
point $[\cU_{r-1}] \in \mathcal M(r-1)$ is slope-stable.

Consider now extensions 
\begin{equation} \label{eq:estensioneL}
  0 \to \cU_{r-1} \to \cF_r \to L_{\epsilon_r} \to 0, 
\end{equation}
with $[\cU_{r-1}] \in \mathcal M(r-1)$ general and with $L_{\epsilon_r}$ defined as in \eqref{eq:jr}, \eqref{eq:1}, according to the parity of $r$. 
Notice that 
$${\rm Ext}^1(L_{\epsilon_r}, \cU_{r-1}) \cong H^1(\cU_{r-1} \otimes L_{\epsilon_r}^{\vee}).$$

\begin{lemma}\label{lem:new} In the above set-up, one has 
$$h^1(\cU_{r-1} \otimes L_{\epsilon_r}^{\vee}) \geqslant {\rm min} \{4b_0-5,\; 2b_0+1\} \geqslant 3.$$In particular, 
${\rm Ext}^1(L_{\epsilon_r}, \cU_{r-1})$ contains non-trivial extensions as in \eqref{eq:estensioneL}.
\end{lemma}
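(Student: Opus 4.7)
The plan is to control $h^1(\cU_{r-1} \otimes L_{\epsilon_r}^{\vee})$ from below by comparing the general slope-stable point $[\cU_{r-1}] \in \mathcal M(r-1)$ with the iteratively constructed bundle $\cG_{r-1}$ of Corollary \ref{cor:Corollario al Lemma 4.2}, using upper semicontinuity of cohomology together with constancy of Euler characteristic in flat families. Indeed, inductively (and in analogy with the rank-$2$ argument in Theorem \ref{prop:rk 2 simple Ulrich vctB e=0;I}) one constructs $\mathcal M(r-1)$ as the component of the moduli space containing the image of the smooth modular family provided by Proposition \ref{casanellas-hartshorne} applied to the simple Ulrich bundle $\cG_{r-1}$; in particular $[\cG_{r-1}]$ lies in $\overline{\mathcal M(r-1)}$ and $\cG_{r-1}$ and a general $\cU_{r-1}$ occur as fibers of a common irreducible flat family with matching Chern classes.

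Along such a family, Lemma \ref{lemma:1}\,(i) applied with $L = L_{\epsilon_r}$ gives $h^j(\cG_{r-1} \otimes L_{\epsilon_r}^{\vee}) = 0$ for $j = 2, 3$, so upper semicontinuity yields
$$h^2(\cU_{r-1} \otimes L_{\epsilon_r}^{\vee}) = h^3(\cU_{r-1} \otimes L_{\epsilon_r}^{\vee}) = 0.$$
Combining $h^0 \geqslant 0$ with the identity $\chi(\cU_{r-1} \otimes L_{\epsilon_r}^{\vee}) = \chi(\cG_{r-1} \otimes L_{\epsilon_r}^{\vee})$, we deduce
$$h^1(\cU_{r-1} \otimes L_{\epsilon_r}^{\vee}) \,\geqslant\, -\,\chi(\cG_{r-1} \otimes L_{\epsilon_r}^{\vee}).$$
Now Lemma \ref{lemma:2}\,(iv) evaluates the right-hand side: when $r-1$ is even one gets $-\chi = \tfrac{r-1}{2}(2b_0+1) \geqslant 2b_0+1$, while when $r-1$ is odd one gets $-\chi = \tfrac{r}{2}(4b_0-5)+1 \geqslant 4b_0-4$. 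Both quantities dominate $\min\{4b_0-5,\,2b_0+1\}$, which is $\geqslant 3$ since $b_0 \geqslant 2$ by \eqref{eq:rem:assAB}. The final ``in particular'' assertion then follows at once from $\Ext^1(L_{\epsilon_r}, \cU_{r-1}) \cong H^1(\cU_{r-1} \otimes L_{\epsilon_r}^{\vee})$.

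The most delicate point is the first one, namely verifying that the strictly semistable bundle $\cG_{r-1}$ and a slope-stable $\cU_{r-1}$ do share a common flat family, so that Euler-characteristic constancy and semicontinuity apply across the specialization. This hinges on the simplicity of $\cG_{r-1}$ together with the vanishing $h^2(\cG_{r-1} \otimes \cG_{r-1}^{\vee}) = 0$ (supplied by Lemma \ref{lemma:2}\,(iii)), so that Proposition \ref{casanellas-hartshorne} produces the desired smooth modular family whose general member is slope-stable by an argument modelled on the proof of Theorem \ref{prop:rk 2 simple Ulrich vctB e=0;I}; this is naturally set up as part of the parallel inductive construction of $\mathcal M(r-1)$ in the forthcoming Theorem \ref{thm:general0}.
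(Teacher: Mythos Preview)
Your argument is correct and in fact streamlines the paper's proof. Both proofs begin identically: specialize $\cU_{r-1}$ to $\cG_{r-1}$, use Lemma~\ref{lemma:1}(i) plus semicontinuity to kill $h^2$ and $h^3$, and invoke constancy of $\chi$ in the flat family. From there, however, the paper rewrites $\chi(\cU_{r-1}\otimes L_{\epsilon_r}^{\vee})=\chi(\cG_{r-1}\otimes L_{\epsilon_r}^{\vee})$ as
\[
h^1(\cU_{r-1}\otimes L_{\epsilon_r}^{\vee}) \;=\; h^1(\cG_{r-1}\otimes L_{\epsilon_r}^{\vee}) - \bigl(h^0(\cG_{r-1}\otimes L_{\epsilon_r}^{\vee}) - h^0(\cU_{r-1}\otimes L_{\epsilon_r}^{\vee})\bigr),
\]
then observes that $h^0(\cU_{r-1}\otimes L_{\epsilon_r}^{\vee})=0$ by slope-stability and proves by a separate induction that $h^0(\cG_{r-1}\otimes L_{\epsilon_r}^{\vee})\in\{0,1\}$ depending on the parity of $r$; this yields $h^1(\cU_{r-1}\otimes L_{\epsilon_r}^{\vee}) \geqslant h^1(\cG_{r-1}\otimes L_{\epsilon_r}^{\vee}) - 1$, and one concludes via Lemma~\ref{lemma:1}(iii). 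You instead short-circuit all of this with the trivial bound $h^1 \geqslant -\chi$ and read off $-\chi(\cG_{r-1}\otimes L_{\epsilon_r}^{\vee})$ directly from Lemma~\ref{lemma:2}(iv). Your route is more economical: it avoids both the inductive computation of $h^0(\cG_{r-1}\otimes L_{\epsilon_r}^{\vee})$ and the appeal to slope-stability of $\cU_{r-1}$. The paper's route, on the other hand, extracts the exact value of $h^0(\cG_{r-1}\otimes L_{\epsilon_r}^{\vee})$ as a byproduct, though this is not used elsewhere. One minor remark on your setup paragraph: the paper applies Proposition~\ref{casanellas-hartshorne} to the simple bundle $\cF_{r-1}$ (the extension of $L_{\epsilon_{r-1}}$ by a stable $\cU_{r-2}$) rather than directly to $\cG_{r-1}$, but since $\cU_{r-2}$ itself specializes to $\cG_{r-2}$ the specialization $\cU_{r-1}\rightsquigarrow\cG_{r-1}$ you need is established in the paper's inductive framework just before the lemma.
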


\begin{proof} By inductive assumption, $\cU_{r-1}$ specializes to $\cG_{r-1}$ in the smooth modular family thus, by semi-continuity and by Lemma \ref{lemma:1}-(i), 
one has 
\begin{equation}\label{eq:porca1}
h^j(\cU_{r-1} \otimes L_{\epsilon_r}^{\vee}) = h^j(\cG_{r-1} \otimes L_{\epsilon_r}^{\vee}) = 0, \; j = 2,3.
\end{equation} For the same reason
\begin{equation}\label{eq:(*)}
\chi(\cU_{r-1} \otimes L_{\epsilon_r}^{\vee}) = \chi(\cG_{r-1} \otimes L_{\epsilon_r}^{\vee}),
\end{equation} where the latter is as in Lemma \ref{lemma:2}-(iv).

Thus, equality in \eqref{eq:(*)}, together with \eqref{eq:porca1}, reads
$$h^0(\cU_{r-1} \otimes L_{\epsilon_r}^{\vee}) - h^1(\cU_{r-1} \otimes L_{\epsilon_r}^{\vee}) = h^0(\cG_{r-1} \otimes L_{\epsilon_r}^{\vee} ) - h^1( \cG_{r-1} \otimes L_{\epsilon_r}^{\vee}),$$namely
\begin{equation}\label{eq:porca3}
h^1(\cU_{r-1} \otimes L_{\epsilon_r}^{\vee}) = h^1( \cG_{r-1} \otimes L_{\epsilon_r}^{\vee}) - \left(h^0(\cG_{r-1} \otimes L_{\epsilon_r}^{\vee} ) - h^0(\cU_{r-1} \otimes L_{\epsilon_r}^{\vee} ) \right),
\end{equation}where $h^1(\cG_{r-1} \otimes L_{\epsilon_r}^{\vee}) \geqslant {\rm min} \{4b_0-4,\;2b_0 +2\} \geqslant 4$, as from Lemma \ref{lemma:1}-(iii) where $r$ is replaced by $r-1$. 
We claim that the following equality

\begin{equation}\label{eq:porcavacca}
h^0(\cG_{r-1} \otimes L_{\epsilon_r}^{\vee}) = 
\left\{
\begin{array}{ccl}
0 & {\rm if} &  r\; {\rm \; even} \\
1 & {\rm if} & r \; {\rm odd} 
\end{array}
\right.
\end{equation}holds true. 

Assume for a moment that \eqref{eq:porcavacca} has been proved;  since $\cU_{r-1}$ is slope-stable, 
of the same slope as $L_{\epsilon_r}$, and $\cU_{r-1}$ is not isomorphic to $L_{\epsilon_r}$, then 
$h^0(\cU_{r-1} \otimes L_{\epsilon_r}^{\vee} ) = 0$ as any non-zero homomorphism $L_{\epsilon_r} \to \cU_{r-1}$ 
should be an isomorphism. Thus, using \eqref{eq:porca3}, for any $r \geqslant 2$ 
one gets therefore 
$$h^1(\cU_{r-1} \otimes L_{\epsilon_r}^{\vee}) \geqslant h^1(\cG_{r-1} \otimes L_{\epsilon_r}^{\vee} ) -1$$which, together with Lemma \ref{lemma:1}-(iii), proves the statement. 

Thus, we are left with the proof of \eqref{eq:porcavacca}. To prove it, we will use induction on $r$. 

If $r=2$, then $\cG_1 = L_1$, $L_{\epsilon_2} = L_2$, thus 
$h^0(\cG_1 \otimes L_2^{\vee}) = h^0(L_1-L_2) = 0$, as it follows from \eqref{extension1dualL1} and from \eqref{eq:calcoliutili}. If otherwise $r=3$, then $\cG_{r-1} = \cG_2$ 
as in \eqref{eq:1r1} whereas $L_{\epsilon_3} = L_1$, as in \eqref{eq:jr}. Thus, tensoring \eqref{eq:1r1} by $L_1^{\vee}$, one gets
$$0 \to \Oc_{X_0} \to \cG_2 \otimes L_1^{\vee} \to L_2 - L_1 \to 0;$$since 
$h^0(L_2 - L_1) = 0$, from \eqref{extension1dualL2} and from \eqref{eq:calcoliutili2}, then $h^0( \cG_2 \otimes L_1^{\vee}) = h^0(\Oc_{X_0}) = 1$.

Assume therefore that, up to some integer $r-2 \geqslant 2$,  \eqref{eq:porcavacca} holds true and take $\cG_{r-1}$ a non-trivial extension as in \eqref{eq:1}, with $r$ 
replaced by $r-1$, namely 
\begin{equation}\label{eq:porca*}
0 \to \cG_{r-2} \to \cG_{r-1} \to L_{\epsilon_{r-1}} \to 0. 
\end{equation}

If $r$ is even, then $r-2$ is even and $r-1$ is odd, in particular $L_{\epsilon_{r-1}} = L_1$ and $L_{\epsilon_{r}} = L_2$. Thus, tensoring \eqref{eq:porca*} with $L_{\epsilon_{r}}^{\vee} = 
L_2^{\vee}$ gives 
$$0 \to \cG_{r-2} \otimes L_2^{\vee} \to \cG_{r-1} \otimes L_2^{\vee} \to L_1 - L_2 \to 0.$$Since $h^0(L_1 - L_2) = 0$ then 
$$h^0(\cG_{r-1} \otimes L_{\epsilon_{r}}^{\vee})  = h^0(\cG_{r-1} \otimes L_2^{\vee}) = h^0(\cG_{r-2} \otimes L_2^{\vee}).$$On the other hand, 
by \eqref{eq:1}, with $r$ replaced by $r-2$, namely 
\begin{equation}\label{eq:porca**}
0 \to \cG_{r-3} \to \cG_{r-2} \to L_{\epsilon_{r-2}} \to 0,
\end{equation}we have $L_{\epsilon_{r-2}} = L_2$, since $r-2$ is even as $r$ is. Thus, tensoring \eqref{eq:porca**} with $L_{\epsilon_r}^{\vee}$ and taking into account that $r$ is even, one gets 
$$0 \to \cG_{r-3} \otimes L_2^{\vee} \to \cG_{r-2}  \otimes L_2^{\vee}\to \Oc_{X_0} \to 0.$$Notice that 
$\cG_{r-3} \otimes L_2^{\vee} = \cG_{r-3} \otimes L_{\epsilon_{r-2}}^{\vee}$ thus, since $r-3$ is odd, $h^0(\cG_{r-3} \otimes L_2^{\vee}) = 0$ by induction and by \eqref{eq:porcavacca}. On the other hand, 
the coboundary map   
$$H^0(\Oc_{X_0}) \cong {\mathbb C} \stackrel{\partial}{\longrightarrow} H^1(\cG_{r-3} \otimes  L_2^{\vee})= H^1(\cG_{r-3} \otimes  L_{\epsilon_{r-2}}^{\vee}) \cong {\rm Ext}^1(L_{\epsilon_{r-2}}, \cG_{r-3})$$is non-zero since, by iterative construction, $\cG_{r-2}$ is taken to be a non-trivial extension; therefore $\partial$ is injective which implies 
$h^0(\cG_{r-2} \otimes  L_2^{\vee}) = 0$ and so $h^0(\cG_{r-1} \otimes  L_{\epsilon_r}^{\vee}) = 0$, as desired.

Assume now $r$ to be odd thus, $L_{\epsilon_{r}} = L_1$ whereas $L_{\epsilon_{r-1}} = L_2$. Tensoring \eqref{eq:porca*} with $L_1^{\vee}$ gives 
$$0 \to \cG_{r-2} \otimes L_1^{\vee} \to \cG_{r-1} \otimes L_1^{\vee} \to L_2 - L_1 \to 0.$$As $h^0(L_2 - L_1) = 0$, then 
$$h^0(\cG_{r-1} \otimes L_{\epsilon_{r}}^{\vee})  = h^0(\cG_{r-1} \otimes L_1^{\vee}) = h^0(\cG_{r-2} \otimes L_1^{\vee}).$$Since $r$ is odd, then also $r-2$ is odd and one gets
$$0 \to \cG_{r-3} \otimes L_1^{\vee} \to \cG_{r-2}  \otimes L_1^{\vee}\to \Oc_{X_0} \to 0.$$Notice that 
$h^0(\cG_{r-3} \otimes L_1^{\vee} ) = h^0(\cG_{r-3} \otimes L_{\epsilon_{r-2}}^{\vee}) = 1$, as it follows from \eqref{eq:porcavacca} with $r$ replaced by $r-2$ which is odd since $r$ is. 
On the other hand, the fact that $\cG_{r-2}$ arises from a non--trivial extension implies as before that the coboundary map 
$$H^0(\Oc_{X_0}) \cong {\mathbb C} \stackrel{\partial}{\longrightarrow} H^1(\cG_{r-3} \otimes  L_1^{\vee})= H^1(\cG_{r-3} \otimes  L_{\epsilon_{r-2}}^{\vee}) \cong {\rm Ext}^1(L_{\epsilon_{r-2}}, \cG_{r-3})$$is once again injective. This gives  $h^0(\cG_{r-2} \otimes  L_1^{\vee}) = h^0(\cG_{r-3} \otimes  L_1^{\vee}) = 1$, which implies 
 $h^0(\cG_{r-1} \otimes  L_{\epsilon_r}^{\vee}) = 1$. This concludes the proof of the Lemma.  \end{proof}

Lemma \ref{lem:new} ensures that there exist non-trivial extensions arising from \eqref{eq:estensioneL}. Since \linebreak $[\cU_{r-1}] \in \mathcal M(r-1)$ general 
is slope-stable, with $\cU_{r-1}$ not isomorphic to $L_{\epsilon_r}$ (if $r>2$, ${\rm rk}(\cU_{r-1}) > 1 = {\rm rk}(L_{\epsilon_r})$, if otherwise 
$r=2$, $\cU_{1} = L_1$ and $L_{\epsilon_2} = L_2$ are not isomorphic), moreover $\cU_{r-1}$ and $L_{\epsilon_r}$ have the same slope
$\mu = 8b_0 - k_0 -3$ then, by \cite[Lemma\;4.2]{c-h-g-s}, the general bundle $\mathcal F_r$ as in \eqref{eq:estensioneL} is simple, of rank $r$, Ulrich w.r.t. $\xi$ and with $c_1(\mathcal F_r) = c_1(\cU_{r-1}) + L_{\epsilon_r}$ as in \eqref{eq:c1rcaso0}. 

Moreover, as $\cU_{r-1}$ specializes to $\cG_{r-1}$ in the smooth modular family, $\mathcal F_r$ specializes to $\cG_r$ thus, by semi-continuity
$h^2(\mathcal F_r \otimes \mathcal F_r^{\vee}) = 0$ as $h^2(\cG_r \otimes \cG_r^{\vee}) = 0$ (cf. Lemma \ref{lemma:2}-(iii)). Therefore, by \cite[Proposition\;10.2]{c-h-g-s}, 
$\mathcal F_r$ admits a smooth modular family, which we denote by $\mathcal M(r)$. 

For $r \geqslant 2$, the scheme $\mathcal M(r)$ contains a subscheme, denoted by $\mathcal M(r)^{\rm ext}$, which parametrizes bundles $\cF_r$ that are 
non--trivial extensions as in \eqref{eq:estensioneL}.

\begin{lemma} \label{lemma:genUr}  Let $r \geqslant2$  be an integer and let 
$[\cU_r] \in \mathcal M(r)$ be a general point. Then $\cU_r$ is a vector bundle of rank $r$, which is 
Ulrich with respect to $\xi$, with slope w.r.t. $\xi$ given by 
$\mu:= 8b_0 - k_0 - 3$, and with  first Chern class 
\[c_1(\cU_r): =
   \begin{cases} 
      r \xi + \varphi^*\Oc_{\FF_0}(3, b_0 -3) + \varphi^*\Oc_{\FF_0}\left(\frac{r-3}{2}, \frac{(r-3)}{2}(b_0-2) \right), & \mbox{if $r$ is odd}, \\
      r \xi + \varphi^*\Oc_{\FF_0}\left(\frac{r}{2}, \frac{r}{2}(b_0-2)\right), & \mbox{if $r$ is even}.
    \end{cases}\] Moreover $\cU_r$ is simple, in particular indecomposable, with 
  \begin{itemize}
\item[(i)]  $\chi(\cU_r \otimes \cU_r^{\vee})=   \begin{cases}
\scriptstyle    \frac{(r^2 -1)}{4} (2-h^1(L_1-L_2)-h^1(L_2-L_1)) + 1  = \frac{(r^2 -1)}{4}(4 - 6 b_0) + 1, &  \mbox{if $r$ is odd,}
      \\
		\scriptstyle 	\frac{r^2}{4} (2- h^1(L_1-L_2)-h^1(L_2-L_1)) = \frac{r^2}{4} (4- 6b_0) , & \mbox{if $r$ is even}.
    \end{cases}$
 \item[(ii)] $h^j(\cU_r \otimes \cU_r^{\vee})=0$, for $j =2,3$.  
  \end{itemize}
\end{lemma}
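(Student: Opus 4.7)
The plan is to prove the lemma by induction on $r \geqslant 2$, piggy-backing on the modular family construction that already produced $\mathcal M(r)$. The base case $r=2$ is covered by Theorem \ref{prop:rk 2 simple Ulrich vctB e=0;I}: it furnishes a generically smooth component $\mathcal M(2)$ whose general point is a simple, slope-stable Ulrich rank-$2$ bundle with the stated $c_1$, slope and cohomological vanishings; the value of $\chi(\cU_2\otimes\cU_2^\vee)$ matches Lemma \ref{lemma:2}-(vi) evaluated at $r=2$, by deformation invariance of the Euler characteristic along the smooth modular family.

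For the inductive step, I would first observe that Lemma \ref{lem:new} produces a non-trivial extension $\cF_r$ as in \eqref{eq:estensioneL} starting from a general $[\cU_{r-1}] \in \mathcal M(r-1)$. By additivity of Chern classes in \eqref{eq:estensioneL} and the inductive formula for $c_1(\cU_{r-1})$, the rank, first Chern class and slope of $\cF_r$ agree with those claimed for $\cU_r$ (hence $\mu(\cF_r) = 8b_0 - k_0 - 3$); Ulrichness of $\cF_r$ is immediate because extensions of Ulrich bundles are Ulrich. Simplicity of $\cF_r$ is the one input that needs care: it follows from \cite[Lemma 4.2]{c-h-g-s} applied to the slope-stable, non-isomorphic bundles $\cU_{r-1}$ and $L_{\epsilon_r}$ of common slope $\mu$ — they are non-isomorphic for rank reasons when $r>2$, and because $L_1 \not\cong L_2$ when $r=2$.

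To invoke Proposition \ref{casanellas-hartshorne} and realize $\mathcal M(r)$ as a smooth modular family through $[\cF_r]$, I still need $h^2(\cF_r \otimes \cF_r^\vee) = 0$; this is the main obstacle. The idea is to specialize the extension data along the irreducible subscheme $\mathcal M(r)^{\rm ext}$, letting $\cU_{r-1} \leadsto \cG_{r-1}$ (possible because $[\cG_{r-1}] \in \mathcal M(r-1)$ by induction, the iterative construction in \eqref{eq:1} having placed $\cG_{r-1}$ among the extensions parametrized by $\mathcal M(r-1)^{\rm ext}$) and simultaneously deforming the extension class to the one defining $\cG_r$. Semicontinuity of $h^2$ and $h^3$ in the resulting flat family then transports the vanishings $h^j(\cG_r \otimes \cG_r^\vee) = 0$ for $j=2,3$ from Lemma \ref{lemma:2}-(iii) over to $\cF_r$.

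Finally, for a general $[\cU_r] \in \mathcal M(r)$, the rank and $c_1$ (hence $\mu$) are locally constant on the flat family, so they match the stated formulas. The Ulrich condition, simplicity and the vanishings in (ii) are all open conditions on families of coherent sheaves, hence propagate from $[\cF_r]$ to the generic point $[\cU_r]$. Item (i) then follows from the fact that $\chi(\cU_r \otimes \cU_r^\vee)$ is locally constant on the flat family and therefore equals $\chi(\cG_r \otimes \cG_r^\vee)$, computed explicitly in Lemma \ref{lemma:2}-(vi).
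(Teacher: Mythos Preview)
Your proposal is correct and follows essentially the same approach as the paper: both establish the properties of $\cF_r$ (simplicity via \cite[Lemma 4.2]{c-h-g-s}, vanishing of $h^j(\cF_r\otimes\cF_r^\vee)$ for $j=2,3$ by specializing $\cF_r$ to $\cG_r$ and invoking Lemma \ref{lemma:2}-(iii)) and then propagate everything to the general point $[\cU_r]\in\mathcal M(r)$ by openness of Ulrichness and simplicity, semicontinuity for (ii), and deformation invariance of $\chi$ for (i). The only difference is organizational: the paper places the construction of $\cF_r$, its simplicity, and the specialization argument in the discussion preceding the lemma, so its formal proof is shorter, whereas you fold this material into the proof itself.
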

  
\begin{proof} Since $\mathcal F_r$ is of rank $r$ and Ulrich w.r.t. $\xi$, the same holds for the general member $[\cU_r] \in \mathcal M(r)$, since 
Ulrichness is an open property in irreducible families as $\mathcal M(r)$. For the same reasons 
$c_1(\cU_r) = c_1(\mathcal F_r) = c_1(\cG_r)$, as in \eqref{eq:c1rcaso0}, and $\mu(\cU_r) = \mu(\mathcal F_r) = \mu(\cU_{r-1})$. 

Since $\mathcal F_r$ is simple, as observed above, by semi-continuity $h^0( \cU_r \otimes \cU_r^{\vee}) = 1$,  i.e. 
$\cU_r$ is simple, in particular it is indecomposable.

Property (ii) follows by specializing $\cF_r$ to a vector bundle $\cG_r$ constructed above, and using semi-continuity and Lemma \ref{lemma:2}-(iii) and (ii), respectively. Property (i) follows by Lemma \ref{lemma:2}-(vi), since the given $\chi$ depends only on the Chern classes of the two factors and  on $X_0$, which are constant in the irreducible 
	family $\mathcal M(r)$. 
\end{proof}

We want to prove that the general member $[\cU_r] \in \mathcal M(r)$ corresponds also to a slope--stable bundle $\cU_r$. To this aim we will first 
need the following auxiliary results.

\begin{lemma} \label{lemma:uniquedest}
  Let $r \geqslant2$ be an integer and assume that $[\cF_r] \in \mathcal M(r)^{\rm ext}$ sits in a non--splitting sequence like  \eqref{eq:estensioneL} 
with $[\cU_{r-1}] \in \mathcal M(r-1)$ being slope--stable w.r.t. $\xi$. Then, 
if $\cD$ is a destabilizing subsheaf of $\cF_r$, then $\cD^{\vee} \cong  \cU_{r-1}^{\vee}$ and $(\cF_r/\cD)^{\vee} \cong L_{\epsilon_r}^{\vee}$; if furthermore $\cF_r/\cD$ is torsion--free, then  $\cD \cong  \cU_{r-1}$ and $\cF_r/\cD \cong L_{\epsilon_r}$. 
\end{lemma}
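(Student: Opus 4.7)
My plan is to reduce to analyzing the saturation of $\cD$ in $\cF_r$ and then rule out all but one structural possibility by combining slope-semistability of $\cF_r$ (forced by its Ulrichness via Theorem \ref{thm:stab}-(a)), slope-stability of $\cU_{r-1}$, ampleness of $\xi$, and the non-splitting hypothesis on \eqref{eq:estensioneL}. Set $\mu:=\mu(\cF_r)$. Slope-semistability of $\cF_r$ forces $\mu(\cD)=\mu$. I would then replace $\cD$ by its saturation $\widetilde{\cD}$ in $\cF_r$ (the preimage of the torsion subsheaf of $\cF_r/\cD$), so that $\cF_r/\widetilde{\cD}$ is torsion-free; by semistability $\mu(\widetilde{\cD})=\mu$, and Theorem \ref{thm:stab}-(b) guarantees that both $\widetilde{\cD}$ and $\cF_r/\widetilde{\cD}$ are Ulrich vector bundles.

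Next I would analyze $\widetilde{\cD}$ through the given extension by setting $\widetilde{\cD}_0:=\widetilde{\cD}\cap\cU_{r-1}$ and letting $\widetilde{\cD}_1$ be the image of $\widetilde{\cD}$ in $L_{\epsilon_r}$, so $\operatorname{rk}(\widetilde{\cD}_1)\in\{0,1\}$. Writing $\mu(\widetilde{\cD})$ as the weighted average of $\mu(\widetilde{\cD}_0)$ and $\mu(\widetilde{\cD}_1)$, the bounds $\mu(\widetilde{\cD}_1)\leqslant\mu(L_{\epsilon_r})=\mu$ and, for $0<\operatorname{rk}(\widetilde{\cD}_0)<r-1$, $\mu(\widetilde{\cD}_0)<\mu(\cU_{r-1})=\mu$, the latter coming from slope-stability of $\cU_{r-1}$, force the equality $\mu(\widetilde{\cD})=\mu$ to fall into exactly two cases: either (a) $\widetilde{\cD}\subseteq\cU_{r-1}$, with rank necessarily equal to $r-1$, or (b) $\widetilde{\cD}\cap\cU_{r-1}=0$ and $\widetilde{\cD}\hookrightarrow L_{\epsilon_r}$ has rank one. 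To rule out (b), I would note that the induced inclusion of line bundles $\widetilde{\cD}\hookrightarrow L_{\epsilon_r}$ exhibits $L_{\epsilon_r}\otimes\widetilde{\cD}^{-1}\cong\Oc_{X_0}(D)$ for some effective divisor $D$ on $X_0$ with $D\cdot\xi^2=0$; since $\xi$ is ample on the threefold $X_0$, every irreducible component $D_i$ of $D$ satisfies $D_i\cdot\xi^2>0$, forcing $D=0$ and hence $\widetilde{\cD}\cong L_{\epsilon_r}$. But then the inclusion $\widetilde{\cD}\hookrightarrow\cF_r$ would split \eqref{eq:estensioneL}, contradicting the non-splitting assumption.

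Thus case (a) holds, and I would finish by noting that the cokernel of $\widetilde{\cD}\subseteq\cU_{r-1}$ has vanishing first Chern class (again from $\mu(\widetilde{\cD})=\mu(\cU_{r-1})$ and ampleness of $\xi$), so is supported in codimension $\geqslant 2$; however, the cokernel of a rank-preserving injection of vector bundles is locally the cokernel of a square matrix, hence of pure codimension one unless it vanishes, forcing $\widetilde{\cD}=\cU_{r-1}$ and $\cF_r/\widetilde{\cD}=L_{\epsilon_r}$. When $\cF_r/\cD$ is torsion-free one has $\cD=\widetilde{\cD}$, which immediately yields the stronger conclusion $\cD\cong\cU_{r-1}$ and $\cF_r/\cD\cong L_{\epsilon_r}$. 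In general, setting $\cT:=\widetilde{\cD}/\cD$ (torsion, sitting in $0\to\cT\to\cF_r/\cD\to L_{\epsilon_r}\to 0$), applying $\cHom(-,\Oc_{X_0})$ and using $\cT^{\vee}=0$ gives $(\cF_r/\cD)^{\vee}\cong L_{\epsilon_r}^{\vee}$. The main technical hurdle I anticipate is the companion statement $\cD^{\vee}\cong\cU_{r-1}^{\vee}$: dualizing $0\to\cD\to\cU_{r-1}\to\cT\to 0$ gives an inclusion $\cU_{r-1}^{\vee}\hookrightarrow\cD^{\vee}$ whose cokernel lies in $\cExt^{1}(\cT,\Oc_{X_0})$, and I would establish the required vanishing either from Cohen--Macaulayness of the codimension-$\geqslant 2$ sheaf $\cT$ via local duality on the smooth threefold $X_0$, or equivalently by passing to reflexive hulls and using that a torsion-free sheaf and its reflexivization share the same dual.
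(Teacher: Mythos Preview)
Your argument is correct and parallels the paper's: both intersect the destabilizer with $\cU_{r-1}$, project to $L_{\epsilon_r}$, and use slope-stability of $\cU_{r-1}$ together with ampleness of $\xi$ to pin down the destabilizer up to a torsion sheaf supported in codimension $\geq 2$, then invoke the vanishing of $\cExt^{\leq 1}(-,\Oc_{X_0})$ for such sheaves to obtain the dual isomorphisms. The organizational difference is that you saturate first (so Theorem~\ref{thm:stab}-(b) makes $\widetilde{\cD}$ a vector bundle, enabling your clean square-matrix and non-splitting arguments) and only afterwards descend to $\cD$, whereas the paper works directly with $\cD$ throughout; your explicit appeal to the non-splitting hypothesis in case~(b) in fact handles the subcase ${\rm rk}(\cD)=1$ that the paper's slope computation in its case ${\rm rk}(\cQ)=1$ leaves implicit.

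Your ``main technical hurdle'' is not one: the vanishing $\cExt^1(\cT,\Oc_{X_0})=0$ requires no Cohen--Macaulayness hypothesis on $\cT$. On a smooth variety, any coherent sheaf supported in codimension $\geq c$ satisfies $\cExt^i(-,\Oc)=0$ for all $i<c$ (this is the standard grade bound over a regular local ring), and you have already arranged ${\rm codim}({\rm Supp}\,\cT)\geq 2$ via $c_1(\cT)\cdot\xi^2=0$ and ampleness of $\xi$; this is exactly the fact the paper invokes for its sheaf $\cK'$. Your reflexive-hull alternative also works: since $\cD\hookrightarrow\cU_{r-1}$ is an isomorphism in codimension one and $\cU_{r-1}$ is locally free, one has $\cD^{\vee\vee}\cong\cU_{r-1}$, whence $\cD^{\vee}\cong\cU_{r-1}^{\vee}$.
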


\begin{proof} The reasoning is similar to \cite[Lemma\;4.5]{cfk1}, we will describe it for reader's convenience.  Assume that $\cD$ is a destabilizing subsheaf of $\cF_r$, that is  $0<{\rm rk}(\cD) < {\rm rk}(\cF_r )=r$ and $\mu(\cD) \geqslant\mu=\mu(\cF_r)$. Define the sheaves 
\[ \cQ:={\rm Im}\{\cD \subset \cU_r \to L_{\epsilon_r}\} \; \; \mbox{and} \; \; \cK:={\rm Ker}\{\cD \to \cQ\}\] so that 
 \eqref{eq:estensioneL}  may be put into the following commutative diagram with exact rows and columns:
  \[
	\begin{array}{ccccccc}
        & 0          &     & 0             &     &  0         & \\
        & \downarrow &     & \downarrow    &     & \downarrow &  \\
	0 \to & \cK        & \to & \cD           & \to &  \cQ        & \to 0 \\    
        & \downarrow &     & \downarrow    &     & \downarrow &  \\
	0 \to & \cU_{r-1} &\to  &\cF_r &\to  & L_{\epsilon_r}  & \to  0 \\
	      & \downarrow &     & \downarrow    &     & \downarrow &  \\
  0 \to   & \cK' & \to &\cF_r/\cD & \to  & \cQ'  & \to 0 \\
             & \downarrow &     & \downarrow    &     & \downarrow &  \\
						  & 0          &     & 0             &     &  0         &
	\end{array}
\]
defining the sheaves $\cK'$ and $\cQ'$. We have ${\rm rk}(\cQ) \leqslant 1$.

Assume that ${\rm rk}(\cQ)=0$. Then $\cQ =0$, whence $\cK \cong \cD$ and $\cQ' \cong L_{\epsilon_r}$. Since $\mu(\cK) =\mu(\cD) \geqslant\mu=\mu(\cU_{r-1})$ and $\cU_{r-1}$ is slope--stable,
we must have ${\rm rk}(\cK)={\rm rk}(\cU_{r-1})=r-1$. It follows that ${\rm rk}(\cK')=0$. As 
\[c_1(\cK) =  c_1(\cU_{r-1})-c_1(\cK')=c_1(\cU_{r-1})-D',\]
where $D'$ is an effective divisor supported on the codimension one locus of the support of $\cK'$, we have 
\[ \mu \leqslant \mu(\cK)=\frac{\left(c_1(\cU_{r-1})-D'\right) \cdot \xi^2}{r-1}=
  \frac{c_1(\cU_{r-1})\cdot \xi^2}{r-1}-\frac{D' \cdot \xi^2}{r-1}
= \mu-\frac{D' \cdot \xi^2}{r-1}.\]
  Hence $D'=0$, which means that $\cK'$ is supported in codimension at least two.
  Thus, the sheaves ${\mathfrak ext}^i(\cK',\mathcal O_{X_0})$ are zero, for $i \leqslant 1$, and it follows that $$ \cD^{\vee} \cong \cK^{\vee} \cong \cU_{r-1}^{\vee}\;\; {\rm and} \;\; (\cF_r/\cD)^{\vee} \cong {\cQ'}^{\vee} \cong 
	L_{\epsilon_r}^{\vee},$$as desired. If furthermore $\cF_r/\cD$ is torsion--free, then we must have $\cK'=0$, whence $\cD \cong  \cU_{r-1}$ and
${\cF_r}/\cD \cong L_{\epsilon_r}$.

 Next we prove that ${\rm rk}(\cQ)=1$ cannot happen. Indeed, if ${\rm rk}(\cQ)=1$, then  ${\rm rk}(\cK)={\rm rk}(\cD)-1 \leqslant r-2<r-1={\rm rk}(\cU_{r-1})$
and ${\rm rk}(\cQ')=0$; in particular $\cQ'$ is a torsion sheaf. Since
\[c_1(\cK) =  c_1(\cD)-c_1(\cQ)=c_1(\cD)-c_1(L_{\epsilon_r})+c_1(\cQ')=c_1(\cD)-c_1(L_{\epsilon_r})+D,\]
where $D$ is  either  an effective divisor supported on the codimension-one locus of the support of $\cQ'$ 
 or it is $D=0$ if ${\rm codim}({\rm Supp} (\cQ')) \geqslant 2$. Then, we have
\begin{eqnarray*}
  \mu(\cK) & = & \frac{\Big(c_1(\cD)-c_1(L_{\epsilon_r})+D\Big) \cdot \xi^2}{{\rm rk}(\cK)} \geqslant\frac{\Big(c_1(\cD)-c_1(L_{\epsilon_r})\Big) \cdot \xi^2}{{\rm rk}(\cK)} 
\\
          & = & \frac{\mu(\cD){\rm rk}(\cD)-c_1(L_{\epsilon_r})\cdot \xi^2}{{\rm rk}(\cK)}
= \frac{\mu(\cD){\rm rk}(\cD)-\mu}{{\rm rk}(\cD)-1}
\geq
                \frac{\mu{\rm rk}(\cD)-\mu}{{\rm rk}(\cD)-1} =\mu
                \end{eqnarray*}
                This contradicts the slope--stability of $\cU_{r-1}$.             
\end{proof}

\begin{lemma} \label{lemma:dimU} Let $r \geqslant2$  be an integer. Assume that the general member  $[\cU_{r-1}] \in \mathcal M(r-1)$ corresponds to a 
slope--stable bundle $\cU_{r-1}$. Then  the scheme $\mathcal M(r)$ is  generically  smooth of dimension 
	\begin{eqnarray*}
	\dim (\mathcal M(r) ) = \begin{cases} \frac{(r^2 -1)}{4}(6 b_0 -4), & \mbox{if $r$ is odd}, \\
			 \frac{r^2}{4} (6b_0-4) +1 , & \mbox{if $r$ is even}.
    \end{cases}
    \end{eqnarray*}
    Furthermore $\mathcal M(r)$ properly contains the locally closed subscheme $\mathcal M(r)^{\rm ext}$, namely \linebreak 
		$\dim(\mathcal M(r)^{\rm ext}) < \dim(\mathcal M(r))$.  
\end{lemma}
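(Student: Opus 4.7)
The plan is to derive the dimension of $\mathcal{M}(r)$ from the Euler characteristic recorded in Lemma \ref{lemma:genUr} via the smooth modular family structure provided by Proposition \ref{casanellas-hartshorne}, and then to bound $\dim \mathcal{M}(r)^{\rm ext}$ by a straightforward parameter count, comparing it with $\dim \mathcal{M}(r)$ to establish the strict containment.

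For the dimension of $\mathcal{M}(r)$: a general $[\mathcal{U}_r] \in \mathcal{M}(r)$ corresponds to a simple bundle with $h^2(\mathcal{U}_r \otimes \mathcal{U}_r^\vee) = 0$ by Lemma \ref{lemma:genUr}, so Proposition \ref{casanellas-hartshorne} ensures $\mathcal{M}(r)$ is smooth at $[\mathcal{U}_r]$ of dimension $h^1(\mathcal{U}_r \otimes \mathcal{U}_r^\vee)$. Using $h^0 = 1$ and $h^2 = h^3 = 0$, this dimension equals $1 - \chi(\mathcal{U}_r \otimes \mathcal{U}_r^\vee)$; substituting the $\chi$-formulae of Lemma \ref{lemma:genUr}(i) yields the claimed values in both parities of $r$, and generic smoothness is automatic from the modular-family structure.

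For the strict containment, I would exploit that $\mathcal{M}(r)^{\rm ext}$ is the image of the natural map
\[
\Psi \colon \bigl\{(\mathcal{U}_{r-1}, [\eta]) : [\mathcal{U}_{r-1}] \in \mathcal{M}(r-1),\ [\eta] \in \mathbb{P}(\Ext^1(L_{\epsilon_r}, \mathcal{U}_{r-1}))\bigr\} \longrightarrow \mathcal{M}(r),
\]
which gives
\[
\dim \mathcal{M}(r)^{\rm ext} \leq \dim \mathcal{M}(r-1) + h^1(\mathcal{U}_{r-1} \otimes L_{\epsilon_r}^\vee) - 1.
\]
The value of $h^1(\mathcal{U}_{r-1} \otimes L_{\epsilon_r}^\vee)$ will follow from the explicit $h^1(\mathcal{G}_{r-1} \otimes L_{\epsilon_r}^\vee)$ of Lemma \ref{lemma:2}(ii) via semicontinuity along the specialization of $\mathcal{U}_{r-1}$ to $\mathcal{G}_{r-1}$, with the $h^0$ correction controlled exactly as in the proof of Lemma \ref{lem:new}: slope-stability of $\mathcal{U}_{r-1}$ forces $h^0(\mathcal{U}_{r-1} \otimes L_{\epsilon_r}^\vee) = 0$, while $h^0(\mathcal{G}_{r-1} \otimes L_{\epsilon_r}^\vee)$ equals $0$ or $1$ depending on the parity of $r$, as recorded in \eqref{eq:porcavacca}. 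Plugging in the inductive value of $\dim \mathcal{M}(r-1)$, a direct computation in each parity will produce
\[
\dim \mathcal{M}(r) - \dim \mathcal{M}(r)^{\rm ext} \geq \begin{cases} r b_0 + \tfrac{r-2}{2} + 2 & \text{if } r \text{ is even}, \\ 2(r-1)b_0 - 3(r-1) + \tfrac{r-3}{2} + 1 & \text{if } r \text{ is odd},\end{cases}
\]
both strictly positive once $r \geq 2$ and $b_0 \geq 2$, the latter guaranteed by \eqref{eq:rem:assAB}.

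The main obstacle is the arithmetic bookkeeping in the second step: correctly coordinating the parity dependence of Lemma \ref{lemma:2}(ii), the projectivization contribution of $-1$ in the upper bound on $\dim \mathcal{M}(r)^{\rm ext}$, and the $h^0$ correction in the semicontinuity transfer from $\mathcal{G}_{r-1}$ to $\mathcal{U}_{r-1}$, so that both parity cases produce a strictly positive difference. Once the count is pinned down, strict positivity rests precisely on the assumption $b_0 \geq 2$.
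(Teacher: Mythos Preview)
Your proposal is correct and follows essentially the same approach as the paper: compute $\dim\mathcal M(r)=1-\chi(\cU_r\otimes\cU_r^{\vee})$ from Lemma~\ref{lemma:genUr} and Proposition~\ref{casanellas-hartshorne}, then bound $\dim\mathcal M(r)^{\rm ext}$ by $\dim\mathcal M(r-1)+\dim\Pp(\Ext^1(L_{\epsilon_r},\cU_{r-1}))$ and compare. The only minor difference is that the paper bounds $h^1(\cU_{r-1}\otimes L_{\epsilon_r}^{\vee})\leqslant h^1(\cG_{r-1}\otimes L_{\epsilon_r}^{\vee})$ directly by semicontinuity, whereas you compute it exactly via the $h^0$ correction from \eqref{eq:porcavacca}; this sharpens the final difference by $1$ in the odd case but is otherwise inessential.
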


\begin{proof} Consider the general member $[\cU_r] \in \mathcal M(r)$; then it satisfies $h^0(\cU_r \otimes \cU_r^{\vee})=1$ and 
$h^j(\cU_r \otimes \cU_r^{\vee})=0$ for $j=2,3$, by Lemma \ref{lemma:genUr}.

From the fact that $h^2(\cU_r \otimes \cU_r^{\vee})=0$, it follows that  $\mathcal M(r)$ is  generically  smooth of dimension $\dim (\mathcal M(r)) = h^1(\cU_r \otimes \cU_r^{\vee})$ (cf. 
e.g. \cite[Prop.\;2.10]{c-h-g-s}). On the other hand, since $h^3(\cU_r \otimes \cU_r^{\vee})=0$ and $h^0(\cU_r \otimes \cU_r^{\vee})=1$, we have  
$h^1(\cU_r \otimes \cU_r^{\vee})  =  -\chi(\cU_r \otimes \cU_r^{\vee})+1$. Therefore, the formula concerning $\dim (\mathcal M(r))$ directly follows from Lemma \ref{lemma:genUr}-(i).

Similarly, being slope-stable by assumptions, also the general member $\cU_{r-1}$ of $\mathcal M(r-1)$ satisfies 
	$h^0(\cU_{r-1} \otimes \cU_{r-1}^{\vee})=1$. Thus, using Lemma \ref{lemma:genUr}-(ii), the same reasoning as above 
	shows that
  \begin{equation}\label{eq:dimUr-1}
\dim (\mathcal M(r-1))=  h^1(\cU_{r-1} \otimes \cU_{r-1}^{\vee}) = - \chi(\cU_{r-1} \otimes \cU_{r-1}^{\vee}) +1,   
\end{equation}where $\chi (\cU_{r-1} \otimes \cU_{r-1}^{\vee})$ as in Lemma \ref{lemma:genUr}-(i) (with $r$ replaced by $r-1$).   Morover, by specialization 
of $\cU_{r-1}$ to  $\cG_{r-1}$ and semi-continuity, we have 
  \begin{equation}\label{eq:dimextv}
    \dim (\Ext^1(L_{\epsilon_r},\cU_{r-1}))= h^1(\cU_{r-1} \otimes L_{\epsilon_r}^{\vee})\leqslant h^1(\cG_{r-1} \otimes L_{\epsilon_r}^{\vee}),
  \end{equation} where the latter is as in Lemma \ref{lemma:2}-(ii) (with $r$ replaced by $r-1$). 
	Therefore, by the very definition of $\mathcal M(r)^{\rm ext}$ and by \eqref{eq:dimUr-1}-\eqref{eq:dimextv}, we have 
	\begin{eqnarray*}
  \dim (\mathcal M(r)^{\rm ext}) & \leqslant &  \dim (\mathcal M(r-1)) + \dim (\mathbb P (\Ext^1(L_{\epsilon_r},\cU_{r-1})) \\
          & = & - \chi(\cU_{r-1} \otimes \cU_{r-1}^{\vee}) +1 + h^1(\cU_{r-1} \otimes L_{\epsilon_r}^{\vee}) -1 \\
& \leqslant & - \chi(\cU_{r-1} \otimes \cU_{r-1}^{\vee}) + h^1(\cG_{r-1} \otimes L_{\epsilon_r}^{\vee}).
\end{eqnarray*} On the other hand, from the above discussion, 
\begin{eqnarray*}
\dim (\mathcal M(r)) = - \chi(\cU_{r} \otimes \cU_{r}^{\vee}) +1.
\end{eqnarray*}
Therefore to prove that $\dim (\mathcal M(r)^{\rm ext}) < \dim (\mathcal M(r))$ 
it is enough to  show that for any integer $r \geqslant2$ the following inequality 
\begin{eqnarray*} - \chi(\cU_{r-1} \otimes \cU_{r-1}^{\vee}) + h^1(\cG_{r-1} \otimes L_{\epsilon_r}^{\vee}) < - \chi(\cU_{r} \otimes \cU_{r}^{\vee}) +1
\end{eqnarray*}
holds true. 
Notice that the previous inequality reads also  
\begin{equation}\label{eq:ineqpal}
- \chi(\cU_{r} \otimes \cU_{r}^{\vee}) +1 + \chi(\cU_{r-1} \otimes \cU_{r-1}^{\vee}) - h^1(\cG_{r-1} \otimes L_{\epsilon_r}^{\vee}) >0,
\end{equation} which is satisfied for any $r \geqslant2$, as we can easily see. 

Indeed use Lemmas \ref{lemma:genUr}-(i) and \ref{lemma:2}-(ii): if $r$ is even, the left hand side of \eqref{eq:ineqpal} reads $rb_0 + 2 + \frac{(r-2)}{2}$ which obviously is positive since 
$r, b_0 \geqslant2$; if $r$ is odd, then $r \geqslant3$ and the left hand side of \eqref{eq:ineqpal} reads $(r-1) (2b_0-3) + \frac{(r-3)}{2}$ which obviously is positive 
under the assumptions $r \geqslant3$, $b_0 \geqslant2$. 
\end{proof}

We can now prove slope--stability of the general member of $\mathcal M(r)$.

\begin{prop}\label{prop:slopstab} Let $r \geqslant 1$ be an integer. The general member $[\cU_r] \in \mathcal M(r)$ corresponds to a 
slope--stable bundle. 
\end{prop}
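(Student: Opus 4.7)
The plan is to argue by induction on $r$. Since the cases $r=1$ and $r=2$ are already handled (by Theorem~\ref{prop:LineB} and Theorem~\ref{prop:rk 2 simple Ulrich vctB e=0;I} respectively), I fix $r\geq 3$ and assume inductively the conclusion for every $1\leq s\leq r-1$. The key observation is that $\mathcal M(r)$ is an irreducible smooth modular family of simple bundles, and by Theorem~\ref{thm:stab}(a),(c) slope-stability is an open condition on its locus of slope-semistable points; it therefore suffices to exhibit a single slope-stable bundle inside $\mathcal M(r)$, or equivalently to prove that the closed subscheme of strictly slope-semistable members is proper in $\mathcal M(r)$. I will argue this by contradiction, assuming every $[\cU_r]\in\mathcal M(r)$ is strictly slope-semistable.

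Under this hypothesis, I would apply Theorem~\ref{thm:stab}(b) iteratively to the Jordan--H\"older filtration of $\cU_r$, producing slope-stable Ulrich graded pieces $\cV_1,\dots,\cV_k$ (with $k\geq 2$) of ranks $r_i<r$ satisfying $\sum_i r_i=r$, common slope $\mu=8b_0-k_0-3$, and $\sum_i c_1(\cV_i)=c_1(\cU_r)$. Boundedness of semistable Ulrich bundles with fixed numerical data ensures that only finitely many such filtration types occur, and by induction each $\cV_i$ varies in a family of dimension at most $\dim\mathcal M(r_i)$. The corresponding stratum of $\mathcal M(r)$ therefore has dimension at most
\[\sum_{i=1}^k \dim\mathcal M(r_i)+\sum_{1\leq i<j\leq k} h^1(\cV_i\otimes\cV_j^{\vee}).\]

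The main obstacle, which will be the bulk of the work, is to check that this bound lies strictly below $\dim\mathcal M(r)$ for every admissible type. By Lemma~\ref{lemma:dimU} the slack
\[\dim\mathcal M(r)-\sum_i\dim\mathcal M(r_i)=\tfrac{1}{2}(6b_0-4)\sum_{i<j}r_i r_j+O(1)\]
is of order $b_0$ times a positive-definite quadratic form in the $r_i$'s. To estimate each $h^1(\cV_i\otimes\cV_j^{\vee})$, I would deform each $\cV_i$ within its moduli component to the polystable representative of its Jordan--H\"older class, which by Theorem~\ref{prop:LineB} must be a direct sum $L_1^{\oplus a_{i,1}}\oplus L_2^{\oplus a_{i,2}}$ with $a_{i,1}+a_{i,2}=r_i$ (in the spirit of Claim~\ref{cl:merd}). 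Semi-continuity, combined with the vanishings of Lemma~\ref{lemma:1}(i) at the degenerate limit, then yields the upper bound
\[\sum_{i<j} h^1(\cV_i\otimes\cV_j^{\vee})\leq (4b_0-4)A+(2b_0+2)B-(C+D),\]
where $A,B,C,D$ denote the cross-counts $\sum_{i<j}a_{i,\alpha}a_{j,\beta}$ with $(\alpha,\beta)=(1,2),(2,1),(1,1),(2,2)$ respectively. The constraint $\sum_i c_1(\cV_i)=c_1(\cU_r)$, compared against \eqref{eq:c1rcaso0}, forces the totals $(N_1,N_2)=(\sum_i a_{i,1},\sum_i a_{i,2})$ to equal $(\lceil r/2\rceil,\lfloor r/2\rfloor)$. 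The required strict inequality $(b_0-2)A+(4-b_0)B<(3b_0-1)(C+D)$ then reduces to a finite combinatorial check over the admissible multiplicities, carried out separately for $r$ even and odd and valid for all $k\geq 2$, $r\geq 3$, $b_0\geq 2$. This produces the desired contradiction and completes the induction.
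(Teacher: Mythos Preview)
Your overall strategy---bound the strictly semistable locus by summing over Jordan--H\"older types---has a genuine gap at the step where you invoke induction. You write ``by induction each $\cV_i$ varies in a family of dimension at most $\dim\mathcal M(r_i)$'', but the inductive hypothesis says only that the general point of the \emph{specific} component $\mathcal M(r_i)$ (with the particular $c_1$ of \eqref{eq:c1rcaso0}) is slope-stable. It does \emph{not} say that every slope-stable Ulrich bundle of rank $r_i$ on $X_0$ lies in $\mathcal M(r_i)$, nor that its own modular component has dimension bounded by $\dim\mathcal M(r_i)$. The graded pieces $\cV_i$ could a priori sit in other components with different Chern classes, so the dimension count collapses.

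Your attempted fix---degenerate each $\cV_i$ ``to the polystable representative of its Jordan--H\"older class, which \dots must be a direct sum $L_1^{\oplus a_{i,1}}\oplus L_2^{\oplus a_{i,2}}$''---does not work either: each $\cV_i$ is already slope-stable, so its Jordan--H\"older graded object is $\cV_i$ itself, not a sum of line bundles. What you would actually need is that every slope-stable Ulrich bundle of rank $r_i$ specializes, within its own modular component, to such a polystable sum; Claim~\ref{cl:merd} establishes this only for the particular rank-$2$ component $\mathcal M$, not in general. (You also tacitly exclude the sporadic line bundles $M_1,M_2$ of Theorem~\ref{prop:LineB}(ii).)

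The paper sidesteps all of this with a specialization argument: assuming the general $[\cU_r]$ is not slope-stable, one runs a one-parameter family degenerating it to a point of $\mathcal M(r)^{\rm ext}$, carries the (saturated) destabilizing sequence to the limit, and then invokes Lemma~\ref{lemma:uniquedest} to identify the limit destabilizer as $\cU_{r-1}$ and the quotient as $L_{\epsilon_r}$. Deforming back forces the general $\cU_r$ itself to lie in $\mathcal M(r)^{\rm ext}$, contradicting the strict inequality $\dim\mathcal M(r)^{\rm ext}<\dim\mathcal M(r)$ of Lemma~\ref{lemma:dimU}. The point is that by first specializing into $\mathcal M(r)^{\rm ext}$ one \emph{knows} the destabilizing pieces lie in $\mathcal M(r-1)$ and $\{L_{\epsilon_r}\}$, so no control over arbitrary stable Ulrich subquotients is needed.
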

\begin{proof} We use induction on $r$, the result being obviously true for $r=1$, where $\cU_1 = L_1$, $\mathcal M(1) = \{L_1\}$ is a singleton, 
and $\mathcal M(1)^{\rm ext} = \emptyset$.

Assume therefore $r \geqslant2$ and that the general member of $\mathcal M(r)$ is not slope--stable, whereas the general member of $\mathcal M(r-1)$ is. Then, similarly as in 
\cite[Prop.\;4.7]{cfk1}, we may find a one-parameter family of bundles $\{\cU_r^{(t)}\}$ over the unit disc $\Delta$ such that $\cU_r^{(t)}$ is a general member of $\mathcal M(r)$ for $t \neq 0$ and $\cU_r^{(0)}$ lies in $\mathcal M(r)^{\rm ext}$, and such that we have a destabilizing sequence
\begin{equation} \label{eq:destat1} 
    0 \to \cD^{(t)} \to \cU_r^{(t)} \to \cQ^{(t)} \to 0 
  \end{equation}
  for $t \neq 0$, which we can take to be saturated, that is, such that $\cQ^{(t)}$ is torsion free, whence so that $\cD^{(t)}$ and $\cQ^{(t)}$ are (Ulrich) vector bundles  (see \cite[Thm. 2.9]{c-h-g-s} or \cite[(3.2)]{b}).

  The limit of $\mathbb P(\cQ^{(t)}) \subset \mathbb P(\cU_r^{(t)})$ defines a subvariety of $\mathbb P(\cU_r^{(0)})$ of the same dimension as  $\mathbb P(\cQ^{(t)})$,   whence a coherent sheaf 
	$\cQ^{(0)}$ of rank ${\rm rk}(\cQ^{(t)})$ with a surjection $\cU_r^{(0)} \to \cQ^{(0)}$. Denoting by $\cD^{(0)}$ its kernel, we have
${\rm rk}(\cD^{(0)})={\rm rk}(\cD^{(t)})$ and $c_1(\cD^{(0)})=c_1(\cD^{(t)})$. Hence, \eqref{eq:destat1} specializes to a destabilizing sequence for $t=0$. 
Lemma \ref{lemma:uniquedest} yields  that ${\cD^{(0)}}^{\vee}$ (respectively,
${\cQ^{(0)}}^{\vee}$) is the dual of a member of $\mathcal{M}(r-1)$ (resp., the dual of $L_{\epsilon_r}$).
It follows that ${\cD^{(t)}}^{\vee}$ (resp., ${\cQ^{(t)}}^{\vee}$)
is a deformation of the dual of a member of  $\mathfrak{U}(r-1)$  (resp., a deformation of $L_{\epsilon_r}^{\vee}$), whence that $\cD^{(t)}$ is a deformation of a member of  $\mathcal{M}(r-1)$, as both are locally free, and $\cQ^{(t)} \cong L_{\epsilon_r}$, for the same reason. 

In other words, the general member of $\mathcal M(r)$ 
is an extension of $L_{\epsilon_r}$ by a member of
 $\mathcal{M}(r-1)$. Hence $\mathcal M(r)=\mathcal M(r)^{\rm ext}$, contradicting Lemma \ref{lemma:dimU}. 
\end{proof}

The collection of the previous results gives the following

\begin{theo}\label{thm:general0}  Let  $(X_0, \xi) \cong \scrollcal{E_0}$ be a $3$-fold  scroll over $\FF_0$, with $\mathcal E_0$ as in Assumptions \ref{ass:AB}  Let $\varphi: X_0 \to \FF_0$ be the scroll map and $F$ be the $\varphi$-fibre. Let $r \geqslant2$ be any integer. 
Then the moduli space of rank-$r$ vector bundles $\cU_r$ on $X_0$ which are Ulrich w.r.t. $\xi$ and with first Chern class
\begin{eqnarray*}
c_1(\cU_r) =
    \begin{cases} 
      r \xi + \varphi^*\Oc_{\FF_0}(3, b_0 -3) + \varphi^*\Oc_{\FF_0}\left(\frac{r-3}{2}, \frac{(r-3)}{2}(b_0-2) \right), & \mbox{if $r$ is odd}, \\
      r \xi + \varphi^*\Oc_{\FF_0}\left(\frac{r}{2}, \frac{r}{2}(b_0-2)\right), & \mbox{if $r$ is even}. 
    \end{cases}
    \end{eqnarray*}
    is not empty and it contains a generically smooth component $\mathcal M(r)$ of dimension 
		\begin{eqnarray*}
		\dim (\mathcal M(r) ) = \begin{cases} \frac{(r^2 -1)}{4}(6 b_0 -4), & \mbox{if $r$ is odd}, \\
			 \frac{r^2}{4} (6b_0-4) +1 , & \mbox{if $r$ is even},
    \end{cases}
    \end{eqnarray*} with $b_0 \geqslant 2$ by \eqref{eq:rem:assAB}. Moreover the general point $[\cU_r] \in \mathcal M(r)$  
corresponds to a  slope-stable vector bundle, of slope w.r.t. $\xi$ given by 
$\mu(\cU_r) = 8b_0 - k_0 -3$. 
\end{theo}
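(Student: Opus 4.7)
The plan is to argue by induction on $r \geqslant 2$, assembling the results already developed in this section. The base case $r=2$ is handled directly by Theorem \ref{prop:rk 2 simple Ulrich vctB e=0;I}: it produces a generically smooth component $\mathcal M(2)$ of dimension $6b_0 - 3 = \tfrac{4}{4}(6b_0-4)+1$ whose general member is slope-stable (and in fact special), with $c_1(\cU_2)$ matching the formula in the statement.

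For the inductive step, I would assume that $\mathcal M(r-1)$ has been constructed with the claimed properties and with slope-stable general member $[\cU_{r-1}]$. I would fix such a general $\cU_{r-1}$ and invoke Lemma \ref{lem:new} to exhibit a non-trivial extension $\cF_r$ as in \eqref{eq:estensioneL}. Since $\cU_{r-1}$ and $L_{\epsilon_r}$ are both slope-stable of common slope $8b_0 - k_0 - 3$ with respect to $\xi$, and are manifestly non-isomorphic (ranks differ for $r>2$; for $r=2$ one has $L_1 \not\cong L_2$), \cite[Lemma 4.2]{c-h-g-s} yields that $\cF_r$ is simple. It is Ulrich with respect to $\xi$ by construction (extensions of Ulrich bundles are Ulrich), of rank $r$, and $c_1(\cF_r) = c_1(\cU_{r-1}) + c_1(L_{\epsilon_r})$ which, by induction together with \eqref{eq:c1rcaso0}, matches the formula in the statement.

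Next I would produce a smooth modular family around $[\cF_r]$. Since $\cU_{r-1}$ specializes to the bundle $\cG_{r-1}$ of the iterative construction, $\cF_r$ specializes to a corresponding $\cG_r$, and semicontinuity together with Lemma \ref{lemma:2}(iii) gives $h^2(\cF_r \otimes \cF_r^{\vee}) = 0 = h^3(\cF_r \otimes \cF_r^{\vee})$. Proposition \ref{casanellas-hartshorne} then produces a smooth modular family of simple bundles containing $[\cF_r]$, and openness of the Ulrich property in flat families (by semicontinuity applied to $\cF_r(-j\xi)$, $1\leqslant j \leqslant 3$) extracts a generically smooth irreducible component $\mathcal M(r)$. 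The dimension formula then follows from Lemma \ref{lemma:dimU} via $\dim \mathcal M(r) = h^1(\cU_r \otimes \cU_r^{\vee}) = 1 - \chi(\cU_r \otimes \cU_r^{\vee})$, with $\chi$ computed as in Lemma \ref{lemma:genUr}(i); the parity-dependent formula is reproduced verbatim.

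The final and most delicate step is to upgrade the general point of $\mathcal M(r)$ from simple to slope-stable; this is exactly Proposition \ref{prop:slopstab}, and it is where the main obstacle lies. The argument proceeds by contradiction: if the general $\cU_r$ were not slope-stable, a one-parameter specialization inside $\mathcal M(r)$ degenerating to $\mathcal M(r)^{\rm ext}$, combined with the rigidity statement of Lemma \ref{lemma:uniquedest} identifying the destabilizing factors (after dualization) with $\cU_{r-1}$ and $L_{\epsilon_r}$, forces the equality $\mathcal M(r) = \mathcal M(r)^{\rm ext}$. This contradicts the strict dimension inequality $\dim \mathcal M(r)^{\rm ext} < \dim \mathcal M(r)$ established in Lemma \ref{lemma:dimU}, closing the induction and completing the proof.
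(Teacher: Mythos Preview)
Your proposal is correct and follows essentially the same route as the paper: the paper's own proof of Theorem \ref{thm:general0} is a one-line pointer to Theorem \ref{prop:rk 2 simple Ulrich vctB e=0;I}, \eqref{eq:c1rcaso0}, \eqref{eq:slopercaso0}, Lemmas \ref{lemma:genUr}, \ref{lemma:dimU} and Proposition \ref{prop:slopstab}, and you have simply unpacked how these ingredients assemble into the inductive argument. Your write-up is in fact more explicit than the paper's about the logical flow (base case, construction of $\cF_r$ via Lemma \ref{lem:new}, simplicity, vanishing of $h^2$ by specialization to $\cG_r$, smooth modular family, dimension count, and the stability contradiction), but the content is identical.
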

\begin{proof} It directly follows from Theorem  \ref{prop:rk 2 simple Ulrich vctB e=0;I}, 
\eqref{eq:c1rcaso0}, \eqref{eq:slopercaso0} and from 
Lemmas \ref{lemma:genUr}, \ref{lemma:dimU} and Proposition \ref{prop:slopstab}, where by abuse of notation 
we have used the same symbol $\mathcal M(r)$  for the smooth modular family which gives rise to the generically smooth irreducible component 
of the corresponding moduli space. 
\end{proof}

%%%%%%%%%%%%%%%%%%%%%%%%%%%
%
%  HIGHER RANK  Ulrich on F_e e>0
%
%%%%%%%%%%%%%%%%%%%%%

\subsection{Higher rank Ulrich vector bundles on $3$-fold scrolls over $\FF_e$, $e>0$}\label{S:subs52}  Here we focus on the case 
$e>0$. The strategy we will use is slightly different from that used in \S\;\ref{S:subs51}.More precisely, as before  we will inductively define irreducible families of vector bundles 
on $(X_e, \xi)$ whose general members will  be slope--stable Ulrich bundles obtained, by induction, as deformations of extensions of lower ranks Ulrich  bundles. The main difference with respect to the  case $e=0$ is that there are no Ulrich line bundles w.r.t. $\xi$ on $(X_e, \xi)$ when $e >0$, as it follows 
from Theorem \ref{prop:LineB} (cf. {\bf Main Theorem}-(a)). 

Therefore, in the even rank case,  our starting point for 
the inductive process will be given by the use of  rank-$2$ Ulrich vector bundles as in Theorem \ref{thm:rk 2 vctB e>0}. Extensions, recursive procedures, deformations and moduli theory, will then allow us to construct slope-stable Ulrich vector bundles on $(X_e, \xi)$ of even ranks $r = 2h$, for any $h \geqslant  2$, and to study their modular  components. 

 For odd ranks, instead, we  will use some results in \cite{ant} concerning rank-$3$ vector bundles over $\FF_e$ which turn out to be Ulrich w.r.t. 
$c_1(\mathcal E_e) = 3 C_e + b_e f = \Oc_{\FF_e}(3,b_e)$, for any $e>0$, and then we will apply Theorem \ref{pullback} to obtain rank-$3$ vector 
bundles on $(X_e, \xi)$, $e >0$, which are Ulrich w.r.t. $\xi$ (cf. Theorem \ref{thm:antonelli3}).  Then such rank-$3$ bundles along with  $[\mathcal U_2] \in \mathcal M$  general as in Theorem \ref{thm:rk 2 vctB e>0}, will allow us to apply an inductive process on $X_e$ also for odd ranks $r = 2h +1$, for any $h \geqslant 1$. 
In order to avoid confusion, we will separately treat even ranks $r = 2h$ and odd ranks $r = 2h +1$, for any $h \geqslant 1$.

\medskip

\noindent
{\underline {\bf Even ranks}}: in the even rank cases, set $r = 2 h$, for $h \geqslant 1$ an integer. We start by defining the irreducible scheme $\mathcal M(2)$ to be the component $\mathcal M$ as in Theorem \ref{thm:rk 2 vctB e>0}. Recall that 
$\mathcal M(2) = \mathcal M$ is generically smooth, of dimension $\dim(\mathcal M(2)) = 6b_e-9e-3$ and that the general member $[\cU_2] \in \mathcal M(2)$ 
is a rank-$2$ vector bundle on $X_e$ which is Ulrich and slope-stable w.r.t. $\xi$, of slope $\mu(\cU_2) = 8 b_e - k_e - 12 e - 3$,  whose first Chern class is $c_1(\cU_2)=2\xi+\varphi^*\Oc_{\FF_e}(1,b_e-e-2)$.

Assume by induction we have constructed an irreducible scheme $\mathcal M(2h-2)$, for some $h \geqslant2$; similarly as in \cite{cfk3} we define $\mathcal M(2h)$ 
to be the (possibly empty a priori) component of the moduli space of Ulrich bundles on $(X_e,\xi)$ containing bundles $\cF_{2h}$ 
that are  non--trivial extensions of the form
\begin{equation} \label{eq:estensione}
0 \to \cU'_2 \to \cF_{2h} \to \cU_{2h-2} \to 0, 
\end{equation}
with $[\cU'_2] \in \mathcal M(2)$, $[\cU_{2h-2}] \in \mathcal M(2h-2)$ and such that $\cU'_2 \not \cong \cU_{2h-2}$ when $h=2$.  
Similarly as in the the case $e=0$ in \S\;\ref{S:subs51}, we let $\mathcal M(2h)^{\rm ext}$ denote the locus in $\mathcal M(2h)$ of bundles that are  
non-trivial  extensions of the form \eqref{eq:estensione}. 

In the next results we will prove that non-trivial extensions as in \eqref{eq:estensione} always exist and that 
$\mathcal M(2h)^{\rm ext} \neq \emptyset$, so in particular $\mathcal M(2h) \neq \emptyset$, for any $h \geqslant2$. 
In statements and proofs below we will use the following notation: $\cU'_2$ will correspond to a general member of $\mathcal M(2)$ and $\cU_{2h-2}$ to a general member of 
$\mathcal M(2h-2)$, with $\cU'_2 \not \cong \cU_{2h-2}$ when $h=2$.  We will denote by $\cF_{2h}$ a general member of  $\mathcal M(2h)^{\rm ext}$ and,  
in bounding cohomologies, we will use the fact that $\cU_{2h}$ {\em specializes} to $\cF_{2h}$  in an irreducible flat family.

All vector bundles $\cF_{2h}$, $h \geqslant2$, recursively defined as in \eqref{eq:estensione} are of rank $2h$ and 
Ulrich w.r.t. $\xi$, since extensions of bundles which are Ulrich w.r.t. $\xi$ are again Ulrich w.r.t. $\xi$. Their first Chern class is given by 
\begin{equation} \label{eq:c1rcasoe}
    c_1(\cF_{2h}): = 2h\xi+\varphi^*\Oc_{\FF_e} (h,h(b_e-e- 2)) = h\;c_1(\cU_2),
 \end{equation} where $h = \frac{r}{2}$,  whose slope w.r.t. $\xi$ is  
	\begin{equation}\label{eq:slopercasoe}
	\mu(\cF_{2h}) = 8 b_e - k_e - 12 e - 3. 
	\end{equation}From Theorem \ref{thm:stab}-(a), 
	any such bundle $\cF_{2h}$ is strictly semistable and slope-semistable, being extensions of Ulrich bundles of the same slope.

\begin{lemma} \label{lemma:indu}
Let $h \geqslant1$ be an integer and  assume  $\mathcal M (2k) \neq \emptyset$ for all $1 \leqslant k \leqslant h$. Then
 \begin{itemize}
 \item[(i)] $h^j(\cU_{2h} \otimes {\cU'_2}^{\vee})=h^j(\cU'_2 \otimes \cU_{2h}^{\vee})=0$ for $j=2,3$,
  \item[(ii)] $\chi(\cU_{2h} \otimes {\cU'_2}^{\vee})=\chi(\cU'_2 \otimes \cU_{2h}^{\vee})=h (4 + 9e - 6 b_e)$,
    \item[(iii)] $h^j(\cU_{2h} \otimes \cU_{2h}^{\vee})=0$ for $j=2,3$,
 \item[(iv)] $\chi(\cU_{2h} \otimes \cU_{2h}^{\vee})=h^2 (4 + 9e - 6 b_e)$.
\end{itemize}
\end{lemma}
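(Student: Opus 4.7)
The plan is to run a simultaneous induction on $h \geqslant 1$, proving (i)--(iv) together so that each step has access to the previous step's cohomology vanishings for both $\cU_{2h-2} \otimes {\cU'_2}^\vee$ and $\cU_{2h-2}\otimes \cU_{2h-2}^\vee$. Throughout, the key input is the defining extension
\begin{equation}\label{eq:key}
0 \to \cU'_2 \to \cF_{2h} \to \cU_{2h-2} \to 0
\end{equation}
together with the principle that $\cU_{2h}$ specializes to $\cF_{2h}$ in the irreducible family defining $\mathcal M(2h)$, so that all $h^j$-vanishings pass from $\cF_{2h}$ to $\cU_{2h}$ by semi-continuity and all Euler characteristics agree because they depend only on Chern classes, which are locally constant in flat families.

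For the base case $h=1$, statements (iii) and (iv) are exactly what was established inside the proof of Theorem \ref{thm:rk 2 vctB e>0}, where the computations on $\FF_e$ gave $h^2(\mathcal H \otimes \mathcal H^\vee)=0$ and $\chi(\mathcal H \otimes \mathcal H^\vee) = 4-6b_e+9e$; statements (i) and (ii) for $h=1$ then follow for free because $\cU_2$ and $\cU'_2$ are both general members of the same component $\mathcal M(2)$, and hence the Chern characters of $\cU_2$ and $\cU_2'$ coincide, so $\chi(\cU_2 \otimes {\cU'_2}^\vee) = \chi(\cU_2 \otimes \cU_2^\vee)$, and the higher vanishings are identical by semi-continuity within $\mathcal M(2)$.

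For the inductive step, assuming (i)--(iv) for all indices less than $h$, I would proceed in the order (i) $\Rightarrow$ (ii) $\Rightarrow$ (iii) $\Rightarrow$ (iv). Tensoring \eqref{eq:key} by ${\cU'_2}^\vee$ and using the inductive (i) for $h-1$ together with the base (iii), one obtains $h^j(\cF_{2h}\otimes{\cU'_2}^\vee)=0$ for $j=2,3$; applying semi-continuity gives (i) for $\cU_{2h}$. The symmetric statement $h^j(\cU'_2\otimes \cU_{2h}^\vee)=0$ is obtained by dualizing \eqref{eq:key}, tensoring by $\cU'_2$, and invoking the same inductive inputs. Additivity of $\chi$ on \eqref{eq:key} then yields (ii). For (iii), tensor the dual of \eqref{eq:key} by $\cU'_2$ and by $\cU_{2h-2}$ separately: the first gives $h^j(\cU'_2\otimes \cF_{2h}^\vee)=0$ using (i) at stage $h-1$ and the base (iii); the second gives $h^j(\cU_{2h-2}\otimes \cF_{2h}^\vee)=0$ using (iii) at stage $h-1$ and (i) at stage $h-1$. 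Feeding these into \eqref{eq:key} tensored by $\cF_{2h}^\vee$ yields (iii) for $\cF_{2h}$, hence for $\cU_{2h}$ by semi-continuity. Finally, (iv) is pure bookkeeping: one expands $\chi(\cF_{2h}\otimes \cF_{2h}^\vee)$ via \eqref{eq:key} tensored by $\cF_{2h}^\vee$ and then uses the dual of \eqref{eq:key} tensored by $\cU'_2$ and $\cU_{2h-2}$ to break the two middle Euler characteristics into pieces already known from (ii), (iv) at stage $h-1$, and the base case; the algebra collapses to $h^2(4+9e-6b_e)$.

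The main obstacle is not technical but organizational: one must keep the four statements bundled in the induction so that each application has the correct prior vanishing available, and one has to be careful that the specialization of $\cU_{2h}$ to a point of $\mathcal M(2h)^{\rm ext}$ is inside the (still to be shown non-empty) component $\mathcal M(2h)$, a hypothesis already built into the statement. Once that is granted, the remaining work is a disciplined chase through the eight short exact sequences produced by tensoring \eqref{eq:key} and its dual by $\cU'_2$, $\cU_{2h-2}$, ${\cU'_2}^\vee$ and $\cF_{2h}^\vee$, which is entirely parallel to the $e=0$ bookkeeping of Lemmas \ref{lemma:1} and \ref{lemma:2}.
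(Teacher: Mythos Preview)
Your proposal is correct and follows essentially the same route as the paper's own proof: simultaneous induction on $h$, base case read off from the computations in the proof of Theorem~\ref{thm:rk 2 vctB e>0} together with semi-continuity on $\mathcal M(2)\times\mathcal M(2)$, and the inductive step carried out by tensoring the extension \eqref{eq:estensione} and its dual with ${\cU'_2}^{\vee}$, $\cU'_2$, $\cU_{2h-2}$ and $\cF_{2h}^{\vee}$, then passing from $\cF_{2h}$ to $\cU_{2h}$ by specialization/semi-continuity. The order (i)$\Rightarrow$(ii)$\Rightarrow$(iii)$\Rightarrow$(iv) and the bookkeeping in (iv) match the paper exactly.
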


\begin{proof} For $h=1$, (iii) and (iv)  follow from the proof of Theorem \ref{thm:rk 2 vctB e>0}. As for 
(i), the vanishings hold  when $\cU_2' = \cU_2$ once again by the proof of Theorem \ref{thm:rk 2 vctB e>0}, and thus, by semi-continuity, they also hold for a general pair 
  $([\cU_2'], [\cU_2]) \in \mathcal M (2) \times \mathcal M (2)$. Similarly, (ii)  follows from the proof of Theorem \ref{thm:rk 2 vctB e>0}, 
	since the given $\chi$ is constant as $\cU_2$ and $\cU'_2$ vary in $ \mathcal M (2)$.

We now prove the statements for any integer $h \geqslant2$ by induction. Assume therefore that they are  satisfied for all positive integers $k$ less than $h$. 

  (i) Let $j \in \{2,3\}$. By specialization and \eqref{eq:estensione} we have
  \[ h^j(\cU_{2h} \otimes {\cU'_2}^{\vee}) \leqslant h^j(\cF_{2h}\otimes {\cU'_2}^{\vee}) \leqslant h^j(\cU'_2\otimes {\cU'_2}^{\vee}) + h^j(\cU_{2h-2}\otimes {\cU'_2}^{\vee}),\]
  and the latter are $0$ by induction. Similarly, by specialization and the dual of \eqref{eq:estensione} we have
  \[ h^j(\cU'_2 \otimes {\cU_{2h}}^{\vee}) \leqslant h^j(\cU'_2 \otimes {\cF_{2h}}^{\vee}) \leqslant h^j(\cU'_2\otimes {\cU'_2}^{\vee}) + h^j(\cU'_2\otimes {\cU_{2h-2}}^{\vee}),\]
  which are again $0$ by induction.

(ii) By specialization, \eqref{eq:estensione} and induction we have
\begin{eqnarray*}
  \chi(\cU_{2h} \otimes {\cU'_2}^{\vee}) & = & \chi(\cF_{2h}\otimes {\cU'_2}^{\vee}) = \chi(\cU'_2\otimes {\cU'_2}^{\vee}) + \chi(\cU_{2h-2}\otimes {\cU'_2}^{\vee}) \\
  & =  & (4 + 9e - 6b_e) + (h-1)(4 + 9e - 6b_e)= h(4 + 9e - 6b_e).
  \end{eqnarray*} 
	Likewise, by specialization, the dual of \eqref{eq:estensione} and induction, the same holds for $\chi(\cU'_2 \otimes {\cU_{2h}}^{\vee})$. 
  
  (iii) Let $j = 2,3$; by specialization, \eqref{eq:estensione} and its dual we have
  \begin{eqnarray*}
    h^j(\cU_{2h} \otimes \cU_{2h}^{\vee}) & \leqslant & h^j(\cF_{2h} \otimes \cF_{2h}^{\vee}) \leqslant
                                 h^j(\cU'_2 \otimes \cF_{2h}^{\vee})+h^j(\cU_{2h-2} \otimes \cF_{2h}^{\vee}) \\
                        & \leqslant & h^j(\cU'_2 \otimes {\cU'_2}^{\vee})+h^j(\cU'_2 \otimes \cU_{2h-2}^{\vee})+
                                 h^j(\cU_{2h-2} \otimes {\cU'_2}^{\vee}) + h^j(\cU_{2h-2} \otimes \cU_{2h-2}^{\vee}),
    \end{eqnarray*}
which are all $0$ by induction. 

(iv) By specialization, \eqref{eq:estensione} and its dual we have
  \begin{eqnarray*}
    \chi(\cU_{2h} \otimes \cU_{2h}^{\vee}) & = & \chi(\cF_{2h} \otimes \cF_{2h}^{\vee}) =
                                 \chi(\cU'_2 \otimes \cF_{2h}^{\vee})+\chi(\cU_{2h-2} \otimes \cF_{2h}^{\vee}) \\
                   & = & \chi(\cU'_2 \otimes {\cU'_2}^{\vee})+\chi(\cU'_2 \otimes \cU_{2h-2}^{\vee})+ \chi(\cU_{2h-2} \otimes {\cU'_2}^{\vee}) + \chi(\cU_{2h-2} \otimes \cU_{2h-2}^{\vee}).
    \end{eqnarray*}
    By induction, this equals $(4 + 9e - 6b_e) + 2 (h-1)(4 + 9e - 6b_e)+ (h-1)^2(4 + 9e - 6b_e)=h^2(4 + 9e - 6b_e)$.
   \end{proof}

\begin{prop} \label{prop:h+1}
  For all integers $h \geqslant1$ the scheme $\mathcal M (2h)$ is not empty
and its general member $[\cU_{2h}] \in \mathcal M (2h)$ corresponds to a rank-$2h$ vector bundle $\cU_{2h}$ which is Ulrich w.r.t. $\xi$and which satisfies $$c_1(\cU_{2h})= 2h\xi+\varphi^*\Oc_{\FF_e}(h,h(b_e-e-2)) \; {\rm and} \; 
h^j(\cU_{2h} \otimes \cU_{2h}^{\vee})=0, \; j=2,3.$$ 
\end{prop}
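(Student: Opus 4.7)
We argue by induction on $h$. The base case $h=1$ is Theorem~\ref{thm:rk 2 vctB e>0}, which furnishes $\mathcal M(2)$ as a generically smooth component whose general point corresponds to a slope-stable rank-$2$ Ulrich bundle satisfying the required conditions. For the induction we shall tacitly strengthen the conclusion by carrying along slope-stability (and hence simplicity) of the general member of $\mathcal M(2h-2)$, as this is indispensable in what follows.

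Assume the strengthened claim up to $h-1$, and fix general $[\cU'_2]\in\mathcal M(2)$ and $[\cU_{2h-2}]\in\mathcal M(2h-2)$, with $\cU'_2\not\cong\cU_{2h-2}$ when $h=2$ (automatic for $h\geqslant 3$ by rank mismatch). Lemma~\ref{lemma:indu} applied with $h$ replaced by $h-1$ yields $h^j(\cU'_2\otimes\cU_{2h-2}^{\vee})=0$ for $j=2,3$ and $\chi(\cU'_2\otimes\cU_{2h-2}^{\vee})=(h-1)(4+9e-6b_e)$. Assumptions~\ref{ass:AB} and \eqref{eq:rem:assAB} force $b_e\geqslant 3e+2$, so $6b_e-9e-4\geqslant 9e+8>0$, making this Euler characteristic strictly negative for $h\geqslant 2$. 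Thus $\Ext^1(\cU_{2h-2},\cU'_2)=H^1(\cU'_2\otimes\cU_{2h-2}^{\vee})\neq 0$ and non-trivial extensions as in \eqref{eq:estensione} exist; pick one and call it $\cF_{2h}$.

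Since Ulrichness w.r.t.\ $\xi$ is preserved under extensions, $\cF_{2h}$ is Ulrich, and additivity on \eqref{eq:estensione} gives $c_1(\cF_{2h})=c_1(\cU'_2)+c_1(\cU_{2h-2})=2h\xi+\varphi^*\Oc_{\FF_e}(h,h(b_e-e-2))$. The vanishings $h^j(\cF_{2h}\otimes\cF_{2h}^{\vee})=0$ for $j=2,3$ follow by tensoring \eqref{eq:estensione} with $\cF_{2h}^{\vee}$, then tensoring its dual with each of $\cU'_2$ and $\cU_{2h-2}$, and combining the four pairwise vanishings of $h^j$ on $\cU'_2\otimes\cU'^{\vee}_2$, $\cU_{2h-2}\otimes\cU_{2h-2}^{\vee}$, $\cU'_2\otimes\cU_{2h-2}^{\vee}$, $\cU_{2h-2}\otimes\cU'^{\vee}_2$, all supplied by Lemma~\ref{lemma:indu} and the strengthened inductive hypothesis.

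Slope-stability of $\cU'_2$ (from Theorem~\ref{thm:rk 2 vctB e>0}) and of $\cU_{2h-2}$ (inductive hypothesis), together with equal slope w.r.t.\ $\xi$ and non-isomorphism, force $\mathrm{Hom}(\cU'_2,\cU_{2h-2})=\mathrm{Hom}(\cU_{2h-2},\cU'_2)=0$; hence any endomorphism of $\cF_{2h}$ preserves the subbundle $\cU'_2$ and induces scalars on both factors, which must coincide by the non-splitting of \eqref{eq:estensione}. Thus $\cF_{2h}$ is simple, and Proposition~\ref{casanellas-hartshorne} produces a smooth modular family through $[\cF_{2h}]$; the associated component $\mathcal M(2h)$ of the moduli space of Ulrich bundles on $(X_e,\xi)$ is therefore nonempty, and openness of Ulrichness, constancy of Chern classes in flat families, and upper semi-continuity of $h^j$ transfer the remaining properties to the general member $[\cU_{2h}]\in\mathcal M(2h)$. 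The central technical difficulty is the need to strengthen the induction with slope-stability of $\cU_{2h-2}$ --- a property not in the stated conclusion but indispensable for the $\mathrm{Hom}$-vanishing behind simplicity --- in complete parallel with the interplay between Lemmas~\ref{lemma:uniquedest}--\ref{lemma:dimU} and Proposition~\ref{prop:slopstab} used in the $e=0$ case.
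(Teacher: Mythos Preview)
Your proof is correct and follows the same inductive skeleton as the paper: base case from Theorem~\ref{thm:rk 2 vctB e>0}, inductive step via Lemma~\ref{lemma:indu} at level $h-1$ to obtain $\Ext^1(\cU_{2h-2},\cU'_2)\neq 0$, then Ulrichness, $c_1$, and the $h^j$-vanishings. The one divergence is your added proof of simplicity of $\cF_{2h}$ via slope-stability of the factors, which you describe as ``indispensable''. In fact the paper does \emph{not} need this for the present proposition: once a nontrivial $\cF_{2h}$ exists, $\mathcal M(2h)^{\rm ext}\subset\mathcal M(2h)$ is nonempty by definition, and the $h^j$-vanishings for the general $\cU_{2h}$ come directly from Lemma~\ref{lemma:indu}-(iii) applied at level $h$ (now legitimate, since all $\mathcal M(2k)$ for $k\leqslant h$ are known nonempty). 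Simplicity and slope-stability are deferred entirely to Proposition~\ref{prop:h+1-II}, so your strengthened induction does extra --- harmless --- work but is not forced by Proposition~\ref{prop:h+1} alone.
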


\begin{proof} We prove this by induction on $h$, the case $h=1$ being satisfied by the choice of $\mathcal M (2)$.  
Therefore, let $h \geqslant2$; for general $[\cU_{2h-2}] \in
  \mathcal M (2h-2)$ and $[\cU'_2] \in
  \mathcal M (2)$, one has 
	\begin{eqnarray*}
	\dim ({\rm Ext}^1(\cU_{2h-2},\cU'_2)) = h^1(\cU'_2 \otimes \cU_{2h-2}^{\vee}).
	\end{eqnarray*}
	By Lemma \ref{lemma:indu}-(i) we have that 
	$h^j(\cU'_2 \otimes \cU_{2h-2}^{\vee}) = 0$, for $j=2,3$. Therefore 
	\begin{eqnarray*}
	\chi(\cU'_2 \otimes \cU_{2h-2}^{\vee}) = h^0(\cU'_2 \otimes \cU_{2h-2}^{\vee}) - h^1(\cU'_2 \otimes \cU_{2h-2}^{\vee})
	\end{eqnarray*}
	so, by specialization and invariance of $\chi$ in irreducible families, we have
  \begin{eqnarray} \label{eq:dimext}
    \dim ({\rm Ext}^1(\cU_{2h-2},\cU'_2)) & = & h^1(\cU'_2 \otimes \cU_{2h-2}^{\vee})  \\ \nonumber
		                                 & = &  -\chi(\cU'_2 \otimes \cU_{2h-2}^{\vee})+h^0(\cU'_2 \otimes \cU_{2h-2}^{\vee}) \\ \nonumber
	                                   &\geqslant&  -\chi(\cU'_2 \otimes \cU_{2h-2}^{\vee}) = - \chi(\cU'_2 \otimes \cF_{2h-2}^{\vee}) = \\ \nonumber
																		 & = &  (h-1) (6b_e - 9e -4) >0  \nonumber
    \end{eqnarray} the latter equality following from Lemma \ref{lemma:indu}-(ii) (with $h$ replaced by $h-1$) whereas the last strict inequality 
		following from $h \geqslant2$. Hence, by its very definition, one has that $\mathcal M (2h)^{\rm ext}$, and so also $\mathcal M (2h)$, is not empty.
		
		The members of $\mathcal M (2h)$ have rank $2h$ and first Chern class $2h\xi+\varphi^*\Oc_{\FF_e}(h,h(b_e-e-2))$ as in \eqref{eq:c1rcasoe}, since 
		$c_1 (\cU_{2h}) = c_1(\cF_{2h})$ being constant in $\mathcal M (2h)$. It is  immediate  that extensions of Ulrich bundles are still Ulrich, so the general member $\cU_{2h}$ of 
$\mathcal M(2h)$ is an Ulrich bundle. It also satisfies 
$h^j(\cU_{2h} \otimes \cU_{2h}^{\vee})=0$ for $j=2,3$ by Lemma \ref{lemma:indu}-(iii).
\end{proof}

We need to prove that the general member of $\mathcal M(2h)$ corresponds to a slope--stable vector bundle,  
that $\mathcal M(2h)$ is generically smooth and  we need to compute the dimension at its general point $[\cU_{2h}]$. We will again prove all these facts by induction on $h$. Similarly as in the case $e=0$, we  need the following auxiliary result.

\begin{lemma} \label{lemma:uniquedest1}
Let $\cF_{2h}$ correspond to a general member of  $\mathcal M(2h)^{\rm ext}$, sitting in an extension like \eqref{eq:estensione}.
Assume furthermore that $\cU'_2$ and $\cU_{2h-2}$ are slope--stable.  Let $\cD$ be a destabilizing subsheaf of 
$\cF_{2h}$. Then $\cD^{\vee}  \cong  {\cU_2'}^{\vee}$ and $\left(\cF_{2h}/\cD\right)^{\vee} \cong \cU_{2h-2}^{\vee}$. 
\end{lemma}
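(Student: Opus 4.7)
The plan is to mirror the diagram chase of Lemma \ref{lemma:uniquedest}, with additional care for a new destabilizing configuration that cannot occur when the quotient has rank one. Starting from a destabilizing subsheaf $\cD \subset \cF_{2h}$, set $\cQ := \operatorname{Im}\{\cD \hookrightarrow \cF_{2h} \to \cU_{2h-2}\}$ and $\cK := \ker\{\cD \to \cQ\} \subseteq \cU'_2$, and complete the standard $3\times 3$ commutative diagram with bottom row $0 \to \cK' \to \cF_{2h}/\cD \to \cQ' \to 0$, where $\cK' = \cU'_2/\cK$ and $\cQ' = \cU_{2h-2}/\cQ$. Since $\cU'_2$ and $\cU_{2h-2}$ are torsion-free, $\rk{\cK} = 0$ forces $\cK = 0$ and $\rk{\cQ} = 0$ forces $\cQ = 0$, so $(\rk{\cK}, \rk{\cQ}) \in \{0,1,2\}\times\{0,1,\dots,2h-2\}$.

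The intermediate configurations are ruled out by the same slope inequalities as in Lemma \ref{lemma:uniquedest}: slope-stability of $\cU'_2$ yields $\mu(\cK) < \mu$ when $\rk{\cK} = 1$ and $\mu(\cK) \leqslant \mu$ when $\rk{\cK} = 2$ (with equality iff $\cU'_2/\cK$ is supported in codimension $\geqslant 2$), and analogous bounds come from slope-stability of $\cU_{2h-2}$ for $\cQ$. Summing these estimates contradicts $\mu(\cD) \geqslant \mu$ in every mixed configuration, leaving only the two extremes $(\rk{\cK},\rk{\cQ}) = (2,0)$ and $(\rk{\cK},\rk{\cQ}) = (0,2h-2)$; the case $(2,2h-2)$ is excluded since it would give $\rk{\cD} = \rk{\cF_{2h}}$, not a proper subsheaf.

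In the first extreme case $\cD = \cK \subseteq \cU'_2$ is of full rank, and the destabilizing inequality forces the effective divisor $D' := c_1(\cU'_2) - c_1(\cD)$ to satisfy $D' \cdot \xi^2 = 0$, hence $D' = 0$ by ampleness of $\xi$. Thus $\cU'_2/\cD$ is supported in codimension $\geqslant 2$, so $\cExt^i(\cU'_2/\cD,\cO_{X_e}) = 0$ for $i \leqslant 1$, and dualizing $0 \to \cD \to \cU'_2 \to \cU'_2/\cD \to 0$ gives $\cD^\vee \cong {\cU'_2}^\vee$. Moreover $\cQ' \cong \cU_{2h-2}$ and $\cK' = \cU'_2/\cD$ has vanishing dual, so dualizing the bottom row gives $(\cF_{2h}/\cD)^\vee \cong \cU_{2h-2}^\vee$, as desired.

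The remaining extreme case $(\rk{\cK},\rk{\cQ}) = (0,2h-2)$ is the new one, and ruling it out is the main obstacle. Here $\cD \cong \cQ$ embeds with full rank into $\cU_{2h-2}$, and the analogous slope argument forces $\cU_{2h-2}/\cQ$ to be supported in codimension $\geqslant 2$. Letting $U \subseteq X_e$ denote the complement of that support, the inclusion $\cD|_U \hookrightarrow \cU_{2h-2}|_U$ is an isomorphism, and composing its inverse with $\cD \hookrightarrow \cF_{2h}$ yields a section of the projection $\cF_{2h} \to \cU_{2h-2}$ over $U$. Since $\cF_{2h}$ is locally free, hence reflexive, and $X_e\setminus U$ has codimension $\geqslant 2$, this section extends uniquely to a morphism $\cU_{2h-2} \to \cF_{2h}$ splitting \eqref{eq:estensione} on all of $X_e$. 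This contradicts the non-triviality built into the definition of $\mathcal M(2h)^{\rm ext}$, eliminating this case and completing the proof.
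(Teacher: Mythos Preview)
Your proof is correct and follows the same diagram chase the paper points to (Lemma \ref{lemma:uniquedest}). The paper's own proof is simply ``almost identical to that of Lemma \ref{lemma:uniquedest}'', without spelling out the adaptation.

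Your treatment is in fact more complete than what the paper records. The referenced Lemma \ref{lemma:uniquedest} handles the two cases $\rk\cQ=0$ and $\rk\cQ=1$, but in the latter it silently assumes $\rk\cK\geqslant 1$ when computing $\mu(\cK)$; the subcase $\rk\cD=1$, $\cK=0$ (the analog of your $(0,2h-2)$ configuration) is not explicitly addressed there either. You correctly recognize this as the one genuinely new configuration and dispose of it via the splitting argument: the destabilizing inequality forces $\cU_{2h-2}/\cD$ to have codimension $\geqslant 2$ support, the resulting section of the projection over the complement extends by reflexivity of $\cHom(\cU_{2h-2},\cF_{2h})$, and the extended map splits \eqref{eq:estensione}, contradicting non-triviality. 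This extra care is exactly what ``almost identical'' elides, so your write-up is a strict improvement while remaining faithful to the intended method.
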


\begin{proof} The proof is almost identical to that of Lemma \ref{lemma:uniquedest}, so the reader is referred therein. 
\end{proof}

\begin{prop} \label{prop:h+1-II} For all integers $h \geqslant1$ the scheme $\mathcal M (2h)$ is not empty, generically smooth 
of dimension 
\begin{eqnarray*} \dim (\mathcal M(2h)) = h^2(6b_e - 9e \color{black} -4) +1.
\end{eqnarray*}
Its general member corresponds to a slope-stable bundle $\cU_{2h}$ 
whose slope w.r.t. $\xi$ is $\mu(\cU_{2h}) = 8b_e - k_e - 12 e -3$. Furthermore, $\mathcal M (2h)$ properly contains the locally closed 
subscheme $\mathcal M (2h)^{\rm ext}$, namely $\dim(\mathcal M (2h)^{\rm ext}) < \dim(\mathcal M (2h))$. 
\end{prop}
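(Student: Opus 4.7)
The plan is to proceed by induction on $h$, with base $h=1$ already given by Theorem~\ref{thm:rk 2 vctB e>0}; the argument will closely mirror the strategy used for $e=0$ in Lemma~\ref{lemma:dimU} and Proposition~\ref{prop:slopstab}, with $\mathcal M(2)$ replacing the line bundle $L_1$ and $\mathcal M(2h-2)$ replacing $\mathcal M(r-1)$ in the inductive step. Throughout, let $N := 6 b_e - 9 e - 4$, which is strictly positive by \eqref{eq:rem:assAB}.

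First, I would establish the dimension and generic smoothness of $\mathcal M(2h)$. By Proposition~\ref{prop:h+1} and Lemma~\ref{lemma:indu}-(iii) the general member $[\cU_{2h}]\in\mathcal M(2h)$ satisfies $h^j(\cU_{2h}\otimes\cU_{2h}^\vee)=0$ for $j=2,3$. Once slope-stability is known by induction, simplicity gives $h^0(\cU_{2h}\otimes\cU_{2h}^\vee)=1$, so standard deformation theory (as in \cite[Prop.\;2.10]{c-h-g-s}) yields generic smoothness at $[\cU_{2h}]$ with
\[
\dim\mathcal M(2h)=h^1(\cU_{2h}\otimes\cU_{2h}^\vee)=1-\chi(\cU_{2h}\otimes\cU_{2h}^\vee)=h^2 N+1,
\]
where the last equality uses Lemma~\ref{lemma:indu}-(iv). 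The slope is computed directly from $c_1(\cU_{2h})=h\,c_1(\cU_2)$ in \eqref{eq:c1rcasoe} and \eqref{eq:slopercasoe}.

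Next I would bound $\dim\mathcal M(2h)^{\mathrm{ext}}$ from above. By inductive slope-stability of $\cU'_2$ and $\cU_{2h-2}$, both of the same slope, of different ranks for $h\geqslant 3$ and non-isomorphic by assumption for $h=2$, one has $h^0(\cU'_2\otimes\cU_{2h-2}^\vee)=0$; combined with Lemma~\ref{lemma:indu}-(i)-(ii), this upgrades the inequality in \eqref{eq:dimext} to the equality $\dim\mathrm{Ext}^1(\cU_{2h-2},\cU'_2)=(h-1)N$. Parametrizing a bundle $[\cF_{2h}]\in\mathcal M(2h)^{\mathrm{ext}}$ by the triple $([\cU'_2],[\cU_{2h-2}],[\epsilon])$ with $\epsilon\in\mathbb P(\mathrm{Ext}^1)$ and using the inductive dimension of $\mathcal M(2h-2)$ yields
\[
\dim\mathcal M(2h)^{\mathrm{ext}}\leqslant (6b_e-9e-3)+\bigl((h-1)^2 N+1\bigr)+\bigl((h-1)N-1\bigr)=(N+1)+h(h-1)N.
\]
Comparing with $\dim\mathcal M(2h)=h^2 N+1$ gives $\dim\mathcal M(2h)-\dim\mathcal M(2h)^{\mathrm{ext}}\geqslant (h-1)N>0$, which proves the strict inequality as well as non-emptiness of $\mathcal M(2h)\setminus\mathcal M(2h)^{\mathrm{ext}}$.

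The main obstacle is slope-stability of the general member, which would complete the induction. Suppose to the contrary that every $[\cU_{2h}]\in\mathcal M(2h)$ corresponds to a strictly slope-semistable bundle. Take a one-parameter family $\{\cU_{2h}^{(t)}\}_{t\in\Delta}$ with $\cU_{2h}^{(t)}$ general in $\mathcal M(2h)$ for $t\neq 0$ and with $\cU_{2h}^{(0)}\in\mathcal M(2h)^{\mathrm{ext}}$, together with a saturated destabilizing sequence
\[
0\to\cD^{(t)}\to\cU_{2h}^{(t)}\to\cQ^{(t)}\to 0,\qquad t\neq 0,
\]
with $\cD^{(t)},\cQ^{(t)}$ locally free and Ulrich by Theorem~\ref{thm:stab}-(b). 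Taking the flat limit of $\mathbb P(\cQ^{(t)})\subset\mathbb P(\cU_{2h}^{(t)})$ as $t\to 0$ produces a destabilizing sequence for $\cU_{2h}^{(0)}$, to which Lemma~\ref{lemma:uniquedest1} applies, identifying ${\cD^{(0)}}^\vee$ as the dual of a member of $\mathcal M(2)$ and ${\cQ^{(0)}}^\vee$ as the dual of a member of $\mathcal M(2h-2)$. Since dualizing preserves locally freeness and the duals deform to $\cD^{(t)\vee},\cQ^{(t)\vee}$, one concludes that $\cD^{(t)}\in\mathcal M(2)$ and $\cQ^{(t)}\in\mathcal M(2h-2)$, hence $[\cU_{2h}^{(t)}]\in\mathcal M(2h)^{\mathrm{ext}}$ for every $t$. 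This forces $\mathcal M(2h)=\mathcal M(2h)^{\mathrm{ext}}$, contradicting the strict inequality of dimensions established in the previous paragraph and closing the induction.
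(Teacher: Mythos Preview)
Your argument is essentially the paper's own: the same induction on $h$, the same use of Lemma~\ref{lemma:indu} for the vanishing of $h^2,h^3$ and the Euler characteristic, the same upper bound $\dim\mathcal M(2h)^{\rm ext}\leqslant (h^2-h+1)N+1$ via $\dim\mathcal M(2)+\dim\mathcal M(2h-2)+\dim\mathbb P(\Ext^1)$, and the identical degeneration-plus-Lemma~\ref{lemma:uniquedest1} contradiction for slope-stability.

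One point to make precise: the sentence ``once slope-stability is known by induction, simplicity gives $h^0(\cU_{2h}\otimes\cU_{2h}^\vee)=1$'' reads as if you are using slope-stability of $\cU_{2h}$ itself, which is exactly what you still have to prove. The paper avoids this by first showing that the \emph{extension} bundle $\cF_{2h}$ is simple, via \cite[Lemma~4.2]{c-h-g-s} applied to the slope-stable, equal-slope, non-isomorphic pieces $\cU'_2$ and $\cU_{2h-2}$ (these are the ones whose slope-stability is known by induction), and then passing to the general $\cU_{2h}$ by semi-continuity. With that clarification your proof matches the paper's.
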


\begin{proof} We prove this by induction on $h$, the case $h=1$ being satisfied by $\mathcal M(2)$ as in Theorem \ref{thm:rk 2 vctB e>0}. 

Let therefore $h \geqslant2$ and assume that we have proved the lemma for all positive integers $k \leqslant h-1$;  we will prove it for $h$.

The slope of the members of  $\mathcal M (2)$ and $\mathcal M (2h-2)$ are both equal to $ 8b_e - k_e - 12 e -3$ as in \eqref{eq:slopercasoe}. 
Thus, by \cite[Lemma\;4.2]{c-h-g-s}, the general member $[\cF_{2h}] \in \mathcal M (2h)^{\rm ext}$ corresponds to a simple bundle. Hence, by semi-continuity, 
also the general member $[\cU_{2h}]\in \mathcal M (2h)$ corresponds to a simple bundle which also satisfies $h^j(\cU_{2h} \otimes \cU_{2h}^{\vee})=0$, $j=2,3$, by Lemma \ref{lemma:indu}-(iii).
 
Therefore  $\mathcal M (2h)$ is smooth at $[\cU_{2h}]$ (see, e.g., \cite[Prop. 2.10]{c-h-g-s}) with
\begin{eqnarray} \label{eq:dimUh+1}
  \dim(\mathcal M (2h))&=&h^1(\cU_{2h} \otimes \cU_{2h}^{\vee})=-\chi(\cU_{2h} \otimes \cU_{2h}^{\vee})+h^0(\cU_{2h} \otimes \cU_{2h}^{\vee})\\
\nonumber  &=&h^2(6b_e - 9e -4)+1,
\end{eqnarray} using the facts that $h^0(\cU_{2h} \otimes \cU_{2h}^{\vee})=1$ as $\cU_{2h}$ is simple, and that
$\chi(\cU_{2h} \otimes \cU_{2h}^{\vee})=h^2(4 + 9e - 6b_e)$ by Lemma \ref{lemma:indu}-(iv). This proves that $\mathcal M(2h)$ is generically smooth 
of the stated dimension. 

Finally, we prove that $\cU_{2h}$ general is slope--stable and that $\dim(\mathcal M (2h)^{\rm ext}) < \dim(\mathcal M (2h))$.
 If $\cU_{2h}$ general were not slope-stable then, as in the proof of Proposition \ref{prop:slopstab}, 
we could find a one-parameter family of bundles 
$\{\cU_{2h}^{(t)}\}$ over the disc $\Delta$ such that $\cU_{2h}^{(t)}$ is a general member of $\mathcal M (2h)$ for $t \neq 0$ and $\cU_{2h}^{(0)}$ lies in $\mathcal M (2h)^{\rm ext}$, and such that we have a destabilizing sequence
\begin{equation} \label{eq:destat1bis} 
    0 \to \cD^{(t)} \to \cU_{2h}^{(t)} \to \cG^{(t)} \to 0
  \end{equation} for $t \neq 0$, which we can take to be saturated, that is, such that $\cG^{(t)}$ is torsion free, whence so that $\cD^{(t)}$ and $\cG^{(t)}$ are (Ulrich) vector bundles  
	(see \cite[Thm. 2.9]{c-h-g-s} or \cite[(3.2)]{b}). The limit of $\Pp(\cG^{(t)}) \subset \Pp(\cU_{2h}^{(t)})$ defines a subvariety of $\Pp(\cU_{2h}^{(0)})$ of the same dimension as 
	$\Pp(\cG^{(t)})$, whence a coherent sheaf $\cG^{(0)}$ of rank ${\rm rk} (\cG^{(t)})$ with a surjection $\cU_{2h}^{(0)} \twoheadrightarrow \cG^{(0)}$. 
	Denoting by $\cD^{(0)}$ its kernel, we have ${\rm rk} (\cD^{(0)})= {\rm rk} (\cD^{(t)})$ and 
	$c_1(\cD^{(0)})=c_1(\cD^{(t)})$. Hence, \eqref{eq:destat1bis} specializes to a destabilizing sequence for $t=0$. 
	
Lemma \ref{lemma:uniquedest1} yields  that ${\cD^{(0)}}^{\vee}$ (resp., ${\cG^{(0)}}^{\vee}$) 
is the dual of a member of $\mathcal M (2)$ (resp., of $\mathcal M (2h)$). It follows that
${\cD^{(t)}}^{\vee}$ (resp., ${\cG^{(t)}}^{\vee}$)
is a deformation of the dual of a member of $\mathcal M (2))$ (resp., of $\mathcal M (2h)$), whence that ${\cD^{(t)}}$ (resp., ${\cG^{(t)}}$) is a deformation of a member of $\mathcal M (2)$ (resp., $\mathcal M (2h)$), as both are locally free. It follows that 
$[\cU_{2h}^{(t)}] \in \mathcal M (2h)^{\rm ext}$ for $t \neq 0$. Thus,
\begin{equation} \label{eq:sonuguali}
  \mathcal M (2h)^{\rm ext}=\mathcal M (2h).
\end{equation}

On the other hand we have
\begin{equation} \label{eq:dimext2}
  \dim (\mathcal M (2h)^{\rm ext}) \leqslant \dim (\Pp(\Ext^1(\cU_{2h-2},\cU'_2))) +\dim (\mathcal M (2h-2)) +\dim (\mathcal M (2)),
\end{equation}
for general 
  $[\cU_{2h-2}] \in \mathcal M (2h-2)$
and $[\cU'_2]\in \mathcal M (2)$. As $\cU_{2h-2}$ and  $\cU'_2$ are slope--stable by induction, of the same slope, we have 
$h^0(\cU'_2 \otimes \cU_{2h-2}^{\vee})=0$. Lemma \ref{lemma:indu}-(i), (ii) and (iii) thus yield 
\[  h^1(\cU'_2 \otimes \cU_{2h-2}^{\vee})  =  -\chi(\cU'_2 \otimes \cU_{2h-2}^{\vee}) = (h-1)(6b_e - 9e -4).
\]Hence, by \eqref{eq:dimext2} and \eqref{eq:dimUh+1} we have
\begin{eqnarray*}
  \dim (\mathcal M (2h)^{\rm ext}) & \leqslant & (h-1)(6b_e - 9e -4) -1 +\left[(h-1)^2(6b_e - 9e -4) +1 \right]+ (6b_e - 9e -3) \\
  & = & (h^2-h+1)(6b_e - 9e -4)+1  <  h^2(6b_e - 9e -4)+1=\dim (\mathcal M (2h)),
\end{eqnarray*}
as it easily follows from the fact that $h \geqslant2$. The previous inequality shows that \linebreak 
$\dim (\mathcal M (2h)^{\rm ext}) < \dim (\mathcal M (2h))$, as stated; in particular \eqref{eq:sonuguali} is a contradiction, 
which forces also $\cU_{2h}$ general to be slope-stable.
\end{proof}

%%%%%%%%%%%%%%%%%%%%%%%%%%%%%%%%%%%%%%%
%%%%%%%%%%%%%%%%%%%%%%%%%%%%%%%%%%%%%%%
%%%%%%%%%%%%%%%%%%%%%%%%%%%%%%%%%%%%%%%

\medskip

\noindent
{\underline {\bf Odd ranks}}:  in odd ranks, set $r = 2 h +1$, for $h \geqslant 1$ an integer. The first step is given by the following result.

\begin{theo}\label{thm:antonelli3}  Let  $(X_e, \xi) \cong \scrollcal{E_e}$ be a $3$-fold  scroll over $\FF_e$, with $e> 0$,  and $\mathcal E_e$ be as in Assumptions \ref{ass:AB}. Let $\varphi: X_e \to \FF_e$ be the scroll map and $F$ be the $\varphi$- fibre. Then the moduli space of  rank-$3$ vector bundles on $X_e$,  which are Ulrich w.r.t. $\xi$, with first Chern class
\begin{equation}\label{eq:chern3e}
c_1 = 3\xi+\varphi^*\Oc_{\FF_e}(3,b_e-3)
\end{equation}
is not empty and it contains a generically smooth component $\mathcal M(3)$ of dimension 
$$\dim (\mathcal M(3)) = 2(6b_e-9e-4),$$whose general point $[ \cU_3 ]$ 
corresponds to a slope-stable vector bundle $\cU_3$ of slope w.r.t. $\xi$
\begin{eqnarray}\label{eq:slope3e}
\mu( \cU_3) = 8 b_e - k_e - 12 e - 3.
\end{eqnarray}
\end{theo}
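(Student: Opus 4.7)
The plan is to mimic the strategy of Theorem \ref{thm:rk 2 vctB e>0}, replacing rank-$2$ Ulrich bundles on $\FF_e$ by rank-$3$ ones. First, I would invoke the existence results of \cite{ant} to fix a rank-$3$ stable vector bundle $\cH_1$ on $\FF_e$ which is Ulrich with respect to the very ample polarization $c_1(\mathcal E_e) = \Oc_{\FF_e}(3, b_e)$, chosen so that, upon setting $\cH := \cH_1(-c_1(\mathcal E_e))$, one has $c_1(\cH) = \Oc_{\FF_e}(3, b_e - 3)$. As a twist of a stable bundle, $\cH$ is itself stable, hence simple. By Theorem \ref{pullback}, the rank-$3$ vector bundle $\cV := \xi \otimes \varphi^*\cH$ on $X_e$ is then Ulrich with respect to $\xi$, and its first Chern class is $c_1(\cV) = 3 \xi + \varphi^*\Oc_{\FF_e}(3, b_e - 3)$ as in \eqref{eq:chern3e}; a short intersection-theoretic computation using $\xi^3 = d_e$ together with the projection formula $\varphi^*D \cdot \xi^2 = D \cdot c_1(\mathcal E_e)$ yields the claimed slope $\mu(\cV) = 8 b_e - k_e - 12 e - 3$.

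Next, I would use Leray's isomorphism together with $R^i\varphi_*\Oc_{X_e} = 0$ for $i \geqslant 1$ and $\varphi_*\Oc_{X_e} \cong \Oc_{\FF_e}$ to reduce $H^i(X_e, \cV \otimes \cV^{\vee}) \cong H^i(\FF_e, \cH \otimes \cH^{\vee})$ for every $i$. Thus $h^0(\cV \otimes \cV^{\vee}) = 1$ by simplicity of $\cH$; $h^3$ vanishes trivially since $\FF_e$ is a surface; and $h^2$ vanishes because Serre duality identifies it with $\dim \mathrm{Hom}(\cH, \cH \otimes K_{\FF_e})$, which is zero by stability of $\cH$ with respect to $c_1(\mathcal E_e)$ combined with the inequality $K_{\FF_e} \cdot c_1(\mathcal E_e) = 3e - 2b_e - 6 < 0$, coming from $b_e \geqslant 3e+2$ in \eqref{eq:rem:assAB}. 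A direct Riemann-Roch computation on $\FF_e$ for $\cH \otimes \cH^{\vee}$, using $c_1(\cH)^2 = 6b_e - 9e - 18$ and the explicit second Chern class of $\cH_1$ provided by \cite{ant}, should then give $\chi(\cV \otimes \cV^{\vee}) = 9 + 18e - 12b_e$, and hence $h^1(\cV \otimes \cV^{\vee}) = 2(6b_e - 9e - 4)$.

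At this point Proposition \ref{casanellas-hartshorne} applies: since $\cV$ is simple with $H^2(\cV \otimes \cV^{\vee}) = 0$, it admits a smooth modular family. Combined with the openness of Ulrichness in flat families and the constancy of Chern classes on irreducible families, this produces a generically smooth component $\mathcal M(3)$ of the moduli space of rank-$3$ Ulrich bundles with $c_1 = c_1(\cV)$ and of dimension $h^1(\cV \otimes \cV^{\vee}) = 2(6b_e - 9e - 4)$. To conclude, I would establish slope-stability of $\cV$ via Theorem \ref{thm:stab}-(b): if $\cV$ were strictly semistable, then any saturated destabilizing subsheaf $\mathcal F \subset \cV$ of rank $1$ or $2$ with $\mu(\mathcal F) = \mu(\cV)$ would force both $\mathcal F$ and $\cV/\mathcal F$ to be Ulrich, so in either case one of the two factors would be an Ulrich line bundle on $X_e$, contradicting Theorem \ref{prop:LineB} since $e > 0$. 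Hence $\cV$ is stable, and by Theorem \ref{thm:stab}-(c) also slope-stable; openness of all these properties on $\mathcal M(3)$ propagates them to the general point $[\mathcal U_3]$.

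The main obstacle will be the precise Chern-class bookkeeping against the models supplied by \cite{ant}: one must identify, within Antonelli's classification, the specific rank-$3$ Ulrich bundle on $\FF_e$ whose twist by $-c_1(\mathcal E_e)$ realizes $c_1(\cH) = \Oc_{\FF_e}(3, b_e - 3)$, and verify that Riemann-Roch on $\cH \otimes \cH^{\vee}$ evaluates to $9 + 18e - 12b_e$ using its second Chern class. Once this matching is secured, all remaining steps — slope computation, cohomological reduction via Leray, simplicity via stability under twist, invocation of the Casanellas-Hartshorne modular family, and the exclusion argument for slope-stability — run in direct parallel with the rank-$2$ proof of Theorem \ref{thm:rk 2 vctB e>0}.
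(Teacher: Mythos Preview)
Your proposal is correct and follows essentially the same route as the paper: pull back, via Theorem~\ref{pullback}, a rank-$3$ Ulrich bundle on $\FF_e$ supplied by \cite{ant}, transfer the cohomology of $\cV\otimes\cV^{\vee}$ down to $\FF_e$ through Leray, and deduce slope-stability from the absence of Ulrich line bundles on $X_e$ for $e>0$ (Theorem~\ref{prop:LineB}). The only differences are cosmetic: the paper pins down the Antonelli bundle explicitly by checking that $(\alpha,\beta)=(12,4b_e-3)$ is an admissible Ulrich pair and then quotes \cite[Lemma~6.3, Prop.~6.4]{ant} directly for $h^2(\cA_3\otimes\cA_3^{\vee})=0$ and for the dimension $2(6b_e-9e-4)$, whereas you propose to recover $h^2=0$ via the Serre-duality/stability trick (using $K_{\FF_e}\cdot c_1(\mathcal E_e)<0$) and the dimension via Riemann--Roch with the explicit $c_2$ from \cite{ant}; both are valid and lead to the same conclusion.
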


Recalling the expression of the first Chern class in {\bf Main Theorem}-(c) for $r=3$, this coincides with that 
in \eqref{eq:chern3e} since $\frac{r-3}{2} = 0$ in this case.

\begin{proof}[Proof of Theorem \ref{thm:antonelli3}] Similarly as in the proof of Theorem \ref{thm:rk 2 vctB e>0}, we consider a rank-3 vector bundle $\cA_3$ on $\FF_e$ which is Ulrich w.r.t. 
$c_1(\mathcal E_e) = 3C_e+b_ef = \Oc_{\FF_e}(3,b_e)$, for any $e >0$. 

Such a bundle certainly exists, as it follows from \cite[Prop.\;5.3,\;Thm.\;6.2]{ant}; indeed, using same notation therein, we let $r=3$, $D \in |\Oc_{\FF_e}(\alpha,\beta)|$ be any divisor on $\FF_e$,  polarization $c_1(\mathcal E_e) = 3C_e + b_ef = \Oc_{\FF_e}(3,b_e)$  (in \cite{ant} the polarization is denoted by $h$), it follows  that  any pair $(3,D)$, satisfying 
\[T=24e+\frac{b_e}{3}(\alpha+3) \in \mathbb Z\;\;  \mbox{and} \;\; 6+\frac{9e}{b_e} \leqslant \alpha \leqslant 15-\frac{9e}{b_e}\] 
 and such that 
  \begin{eqnarray*} 
 D (3C_e+b_ef)= \frac{3}{2}\big(2(3C_e+b_ef)^2
+ (3C_e+b_ef) K_{\FF_e}\big),
 \end{eqnarray*} is an {\em admissable Ulrich pair} on $\FF_e$ w.r.t. the polarization $\Oc_{\FF_e}(3,b_e)$ (cf. \cite[Def.\;5.1,\;Prop.\;5.3]{ant}).

Because $b_e \geqslant 3e+2$ from \eqref{eq:rem:assAB}, then one has $7 \leqslant \alpha\leqslant 14$.  
If we therefore take e.g. $\alpha=12$ and $\beta=4b_e-3$, an easy check shows that  the previous relations  hold true. Hence the pair $(3, \Oc_{\FF_e}(12, 4b_e -3)) = (3,\;12C_e+(4b_e-3)f)$ is an admissable Ulrich pair, thus $\FF_e$ certainly supports rank-$3$  vector bundles, say $\cA_3$, which are Ulrich w.r.t. $c_1(\mathcal E_e)=3C_e+b_ef$,  such 
that $c_1(\cA_3) = 12C_e+(4b_e-3)f = \Oc_{\FF_e}(12, 4b_e-3)$  and which are given as cokernels of appropriate injective vector bundle 
maps, (see \cite[Thm.\;4.1,\;(4.1) and Thm.\;6.2]{ant}). 
They moreover satisfies $Ext^2(\cA_3,\cA_3) = h^2(\cA_3 \otimes \cA_3^{\vee}) =0$ (cf. \cite[Lemma\;6.3]{ant}). From \cite[Prop.\;6.4]{ant}, any such bundle $\cA_3$  
is slope-stable,  hence simple i.e. $h^0(\cA_3 \otimes \cA_3^{\vee})=1$. These bundles belong to the moduli space $\mathcal M_{c_1(\mathcal E_e)}^U(3,c_1,c_2)$ of rank-3 vector bundles of given Chern classes $c_1$, $c_2$, which are Ulrich w.r.t. $c_1(\mathcal E_e)$, which is smooth, irreducible, of dimension 
$$\dim (\mathcal M_{c_1(\mathcal E_e)}^U(3,c_1,c_2)) = h^1(\cA_3 \otimes \cA_3^{\vee}) = 1 - \chi(\cA_3 \otimes \cA_3^{\vee}) = 2 (6b_e - 9e -4)$$(cf. \cite[Prop.\;6.4]{ant}). All points of $\mathcal M_{c_1(\mathcal E_e)}^U(3,c_1,c_2)$ correspond to slope--stable Ulrich bundles, since on $\FF_e$ there are no Ulrich line bundles w.r.t. $c_1(\mathcal E_e) = 3 C_e + b_e f = \Oc_{\FF_e}(3,b_e)$ from \cite[Thm.\;2.1]{a-c-mr} (cf. Theorem \ref{thm:stab}-(b) or 
\cite[Sect.3,\;(3.2)]{b}). 

Using Theorem \ref{pullback}, we therefore first consider on $\FF_e$ bundles $\cH_3:=  \cA_3(-c_1({\mathcal E}_e))$, which are of rank 3, with first Chen class 
$$c_1(\cH_3) = c_1(\cA_3) - 3 c_1(\mathcal E_e) = 3C_e + (b_e-3)f = \Oc_{\FF_e}(3,b_e-3),$$and then, on $X_e$, we take 
\begin{equation}\label{eq:cojo1}
\cV_3 = \xi \otimes \varphi^*(\cH_3),
\end{equation}which are rank-3 vector bundles on $X_e$ which are Ulrich w.r.t. $\xi$, as it follows from Theorem \ref{pullback}, 
whose first Chern class $c_1 := c_1 (\cV_3)$ is as in \eqref{eq:chern3e}. From 
{\bf Main Theorem}--(a), when $e>0$, there are no Ulrich line bundles w.r.t. $\xi$ thus, once again by Theorem \ref{thm:stab}-(b) or 
\cite[Sect.3,\;(3.2)]{b}, $\cV_3$ is also slope-stable w.r.t. $\xi$, whose slope is as in \eqref{eq:slope3e}; in particular it is also simple. 
Moreover, by Leray's isomorphism, projection formula and independence on the twists, one has 
\begin{equation}\label{eq:cojo0}
h^j(X_e, \cV_3\otimes \cV_3^{\vee}) = h^j(\FF_e, \cH_3\otimes \cH_3^{\vee}) = h^j(\FF_e, \cA_3\otimes \cA_3^{\vee}), \; 0 \leqslant j \leqslant 3,
\end{equation}which implies that bundles $\cV_3$ fill--up a smooth modular component $\mathcal M(3)$ of the moduli space of rank-3 vector bundles 
on $X_e$, which are Ulrich w.r.t. $\xi$, whose first Chern class and slope w.r.t. $\xi$ are as in \eqref{eq:chern3e}, \eqref{eq:slope3e}, respectively, and 
whose dimension is  
$$\dim (\mathcal M(3)) = \dim (\mathcal M_{c_1(\mathcal E_e)}^U(3,c_1,c_2)) = 2 (6b_e - 9e -4)$$as in \cite[Prop.\;6.4]{ant}. 
\end{proof}

Together with $\mathcal M(3)$ as in Theorem \ref{thm:antonelli3}, consider also the irreducible scheme $\mathcal M(2)$ to be the component $\mathcal M$ as in Theorem \ref{thm:rk 2 vctB e>0}. Recall that 
$\mathcal M(2)$ is generically smooth, of dimension $\dim(\mathcal M(2)) = 6b_e-9e-3$ and that the general member $[\cU_2] \in \mathcal M(2)$ corresponds to a rank-$2$ vector bundle on $X_e$ which is Ulrich and slope-stable w.r.t. $\xi$, of slope $\mu(\cU_2) = 8 b_e - k_e - 12 e - 3$ and whose first Chern class is $c_1(\cU_2)=2\xi+\varphi^*\Oc_{\FF_e}(1,b_e-e-2)$.

As done in the even rank case, if we assume by induction that we have constructed, for some integer $h \geqslant 2$, an irreducible scheme $\mathcal M(2h-1)$ which is a generically smooth 
modular component of the moduli space of vector bundles of rank $r = 2 h-1$ on $X_e$, which are Ulrich w.r.t. $\xi$, whose first Chern class is as in {\bf Main Theorem}--(c) 
and whose general point $[\cU_{2h-1}] \in  \mathcal M(2h-1)$ is slope-stable, with slope $\mu(\cU_{2h-1}) = 8 b_e - k_e - 12 e - 3$ w.r.t. $\xi$, we may therefore inductively define 
$\mathcal M(2h+1)$ to be the (possibly empty a priori) component of the moduli space of Ulrich bundles on $(X_e,\xi)$ containing bundles $\cF_{2h+1}$ 
that are  non--trivial extensions of the form
\begin{equation} \label{eq:estensioneodd}
0 \to \cU_2 \to \cF_{2h+1} \to \cU_{2h-1} \to 0, 
\end{equation}
with $[\cU_2] \in \mathcal M(2)$, $[\cU_{2h-1}] \in \mathcal M(2h-1)$ general, and we let $\mathcal M(2h +1)^{\rm ext}$ denote the locus in $\mathcal M(2h+1)$ of bundles that are  
non-trivial  extensions of the form \eqref{eq:estensioneodd}. 

In the next results we will prove that non-trivial extensions as in \eqref{eq:estensioneodd} always exist (cf. the proof of Proposition \ref{prop:h+1odd}) 
and that $\mathcal M(2h+1)^{\rm ext} \neq \emptyset$, so in particular $\mathcal M(2h+1) \neq \emptyset$, for any $h \geqslant 2$.

All vector bundles $\cF_{2h+1}$, $h \geqslant 2$, recursively defined as in \eqref{eq:estensioneodd} are of rank $2h+1$ and 
Ulrich w.r.t. $\xi$, since extensions of bundles which are Ulrich w.r.t. $\xi$ are again Ulrich w.r.t. $\xi$. Their first Chern class is given by 
$$c_1(\cF_{2h+1}) = c_1(\cU_2) + c_1(\cU_{2h-1}) = c_1(\cU_3) + (h-1) c_1(\cU_2),$$i.e.  
\begin{equation} \label{eq:c1rcasoeodd}
 c_1(\cF_{2h+1}): = (2h+1)\xi+ \varphi^*\Oc_{\FF_e}(3, b_e -3) + \varphi^*\Oc_{\FF_e}(h-1,(h-1)(b_e- e - 2)),
 \end{equation}where $h = \frac{r-1}{2}$, whose slope w.r.t. $\xi$ is  
	\begin{equation}\label{eq:slopercasoeodd}
	\mu(\cF_{2h+1}) = 8 b_e - k_e - 12 e - 3. 
	\end{equation} From Theorem \ref{thm:stab}-(a), any such bundle $\cF_{2h+1}$ is strictly semistable and slope-semistable, being extension of Ulrich bundles of the same slope w.r.t. $\xi$.

\begin{lemma} \label{lemma:induodd}
Let $h \geqslant 1$ be an integer and  assume  $\mathcal M (2k+1) \neq \emptyset$ for all $1 \leqslant k \leqslant h$. Then
 \begin{itemize}
 \item[(i)] $h^j(\cU_{2h+1} \otimes {\cU_2}^{\vee})=h^j(\cU_2 \otimes \cU_{2h+1}^{\vee})=0$ for $j=2,3$,
  \item[(ii)] $\chi(\cU_{2h+1} \otimes {\cU_2}^{\vee})= (h-1) (4 + 9 e - 6 b_e) + (15e - 10b_e +3)$ whereas \linebreak
	$\chi(\cU_{2} \otimes {\cU}^{\vee}_{2h+1})= (h-1) (4 + 9 e - 6 b_e) + (12e - 8b_e -3)$, 
\item[(iii)] $h^j(\cU_{2h+1} \otimes \cU^{\vee}_{2h+1})=0$ for $j=2,3$,
 \item[(iv)] $\chi(\cU_{2h+1} \otimes \cU^{\vee}_{2h+1})= 1 + 9(h-1) (3e - 2 b_e) + ((h-1)^2 +2) (4 + 9e - 6 b_e)$. 
\end{itemize}
\end{lemma}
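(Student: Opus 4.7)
The proof is by induction on $h \geq 1$, modeled on the proof of Lemma \ref{lemma:indu}, but now with two separate ``starting bundles'' $\cU_2$ and $\cU_3$ (rather than only $\cU_2$) supplied by Theorems \ref{thm:rk 2 vctB e>0} and \ref{thm:antonelli3} respectively. The base case $h=1$ therefore needs to be handled directly, after which the inductive step proceeds by unwinding the defining extension \eqref{eq:estensioneodd} exactly as in the even-rank case.

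Base case $h=1$: items (iii) and (iv) follow from Theorem \ref{thm:antonelli3}. Indeed, $[\cU_3] \in \mathcal M(3)$ general is slope-stable, hence simple, so $h^0(\cU_3 \otimes \cU_3^{\vee}) = 1$; moreover the computation \eqref{eq:cojo0} together with \cite[Lemma 6.3]{ant} yields $h^j(\cU_3 \otimes \cU_3^{\vee}) = h^j(\FF_e, \cA_3 \otimes \cA_3^{\vee}) = 0$ for $j = 2, 3$. Since $\dim \mathcal M(3) = h^1(\cU_3 \otimes \cU_3^{\vee}) = 2(6b_e - 9e - 4)$, we get $\chi(\cU_3 \otimes \cU_3^{\vee}) = 1 - 2(6b_e - 9e - 4) = 1 + 2(4 + 9e - 6b_e)$, matching (iv) at $h=1$. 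For (i) and (ii) at $h=1$, specialize $\cU_3$ and $\cU_2$ in their respective moduli components to the pullback representatives $\cV_3 = \xi \otimes \varphi^*(\cH_3)$ and $\cV = \xi \otimes \varphi^*(\cH)$ constructed in Theorems \ref{thm:antonelli3} and \ref{thm:rk 2 vctB e>0}. Then
\begin{equation*}
\cU_3 \otimes \cU_2^{\vee} \cong \varphi^*(\cH_3 \otimes \cH^{\vee}) \cong \varphi^*(\cA_3 \otimes \cH_1^{\vee}),
\end{equation*}
so by Leray $h^j(X_e, \cU_3 \otimes \cU_2^{\vee}) = h^j(\FF_e, \cA_3 \otimes \cH_1^{\vee})$, which is zero for $j = 3$ trivially and for $j = 2$ by a direct check using the filtration of $\cA_3$ from \cite{ant} together with the defining sequence \eqref{extensionsuFe} of $\cH_1$ and the numerical constraints of Assumptions \ref{ass:AB}; the $\chi$'s in (ii) are then obtained by Riemann-Roch on $\FF_e$ from the known Chern data of $\cA_3$ and $\cH_1$. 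The vanishings and $\chi$ for $\cU_2 \otimes \cU_3^{\vee}$ follow symmetrically (either by dualizing or by Serre duality applied to the previous computations).

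Inductive step from $h$ to $h+1$: tensor \eqref{eq:estensioneodd}, with $h$ replaced by $h+1$, by $\cU_2^{\vee}$ (for a second general $[\cU_2] \in \mathcal M(2)$) and combine Lemma \ref{lemma:indu}(i),(ii) (applied to the pair of bundles from $\mathcal M(2)$) with the inductive hypothesis on $\cU_{2h+1} \otimes \cU_2^{\vee}$ to obtain (i) and (ii) for $\cU_{2h+3}$ by specialization and semi-continuity, noting the telescoping $\chi(\cU_2' \otimes \cU_2^{\vee}) + \chi(\cU_{2h+1} \otimes \cU_2^{\vee}) = (4+9e-6b_e) + [(h-1)(4+9e-6b_e) + (15e - 10 b_e + 3)]$. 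The dual computation handles $\chi(\cU_2 \otimes \cU_{2h+3}^{\vee})$. Then tensoring \eqref{eq:estensioneodd} and its dual by $\cF_{2h+3}$, $\cF_{2h+3}^{\vee}$ and plugging in (i), (ii), Lemma \ref{lemma:indu}(iii),(iv), and the inductive hypothesis yields (iii) and (iv), exactly as in the even-rank argument; the resulting expression $1 + 2h \cdot (15e - 10b_e + 3) + 2h \cdot (12e - 8b_e - 3) + [(h-1)^2 + 2](4 + 9e - 6b_e)$ simplifies to the formula in (iv) with $h$ replaced by $h+1$.

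The main obstacle is the base case $h=1$ for part (i), namely the vanishing $h^2(\FF_e, \cA_3 \otimes \cH_1^{\vee}) = 0$: it cannot be read off from either paper directly and requires combining the structural sequences from \cite{ant} and \cite{a-c-mr} with the Assumption \ref{ass:AB} numerics. Once that is in place, everything else is bookkeeping in the spirit of Lemma \ref{lemma:indu}, and the companion statements concerning $\mathcal M(2h+3)$ — non-emptiness, generic smoothness, dimension, and slope-stability of the general member — then follow by the same dimension count and specialization argument used in Propositions \ref{prop:h+1} and \ref{prop:h+1-II}, with Lemma \ref{lemma:uniquedest1} replaced by its odd-rank analogue.
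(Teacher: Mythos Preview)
Your approach is essentially the same as the paper's: induction on $h$, with the base case $h=1$ handled by specializing to the pullback representatives $\cV_3=\xi\otimes\varphi^*(\cH_3)$ and $\cV=\xi\otimes\varphi^*(\cH)$, reducing to cohomology on $\FF_e$ of $\cA_3\otimes\cH_1^{\vee}$ and $\cH_1\otimes\cA_3^{\vee}$, and the inductive step carried out by tensoring \eqref{eq:estensioneodd} and its dual exactly as in Lemma~\ref{lemma:indu}. You correctly single out the vanishing $h^2(\FF_e,\cA_3\otimes\cH_1^{\vee})=0$ as the substantive point; the paper does precisely this, using the presentation of $\cA_3$ as a cokernel of a map of direct sums of line bundles (from \cite{ant}) together with \eqref{extensionsuFe}, and then the dual presentation for $h^2(\FF_e,\cH_1\otimes\cA_3^{\vee})=0$.

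Two small corrections. First, the appeal to ``Serre duality'' for the companion case $\cU_2\otimes\cU_3^{\vee}$ is misleading: the two Euler characteristics in (ii) are genuinely different numbers, so no formal symmetry produces one from the other. The paper computes both $\chi$'s separately via \cite[Prop.~2.12]{c-h-g-s} (a Riemann--Roch-type identity for pairs of Ulrich bundles on a surface); your ``dualizing'' option is the correct route for the $h^2$-vanishing, and Riemann--Roch then handles each $\chi$ independently. Second, the displayed expression you give for $\chi(\cU_{2h+3}\otimes\cU_{2h+3}^{\vee})$ in the inductive step is not right as written: the four summands $\chi(\cU_2\otimes\cU_2^{\vee})$, $\chi(\cU_2\otimes\cU_{2h+1}^{\vee})$, $\chi(\cU_{2h+1}\otimes\cU_2^{\vee})$, $\chi(\cU_{2h+1}\otimes\cU_{2h+1}^{\vee})$ each appear exactly once, so the factors ``$2h$'' should not be there, and you have also dropped the $9(h-1)(3e-2b_e)$ and the extra $(4+9e-6b_e)$ coming from the inductive hypothesis and from $\chi(\cU_2\otimes\cU_2^{\vee})$. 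Redoing the bookkeeping with the correct four terms does yield (iv) at step $h+1$.
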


\begin{proof} We prove it by induction on $h$. We start with $h=1$, namely $r=3$. 
In this case, (iii) and (iv)  follow from Theorem \ref{thm:antonelli3}. Indeed, take $\cV_3 = \xi \otimes \varphi^*(\mathcal H_3)$ so, 
by \eqref{eq:cojo0} and semi--continuity, one has 
$$h^j(X_e, \cU_{3} \otimes \cU_{3}^{\vee}) = h^j(X_e, \cV_{3} \otimes \cV_{3}^{\vee}) = h^j(\FF_e, \cA_{3} \otimes \cA_{3}^{\vee}) = 0,\; 2 \leqslant j\leqslant 3,$$which proves (iii) for $h=1$; moreover, 
by stability and semi--continuity, one has $h^0(X_e, \cU_{3} \otimes \cU_{3}^{\vee}) = h^0(X_e, \cV_{3} \otimes \cV_{3}^{\vee}) = 1$, therefore from the vanishings above one has  
$$\chi(X_e, \cU_{3} \otimes \cU_{3}^{\vee}) = 1 - h^1(X_e, \cU_{3} \otimes \cU_{3}^{\vee})  = 1 - \dim (\mathcal M(3)) = 1 - 2 (6b_e - 9e - 4) = 1 + 2(4 + 9e - 6b_e),$$which also 
proves (iv) for $h=1$. 

As for (i), by semi--continuity, one has $$h^j (X_e, \cU_3 \otimes \cU^{\vee}_2) \leqslant h^j (X_e, \cV_3 \otimes \cV^{\vee}_2) \;\; {\rm and} \;\;  
h^j (X_e, \cU_2 \otimes \cU^{\vee}_3) \leqslant h^j (X_e, \cV_2 \otimes \cV^{\vee}_3),$$for any $0 \leqslant j \leqslant 3$, where 
$\cV_2 = \cV$ as in the proof of Theorem \ref{thm:rk 2 vctB e>0}, i.e. 
$\cV_2 = \xi \otimes \varphi^*(\mathcal H)$, where $c_1(\mathcal H) = C_e + (b_e-e-2)f = {\mathcal O}_{\FF_e}(1, b_e-e-2)$.  
From Leray's isomorphism, projection formula and invariance under twists, it follows that, for any $0 \leqslant j \leqslant 3$, one has  
$$h^j(X_e, \cV_3 \otimes \cV_2^{\vee}) = h^j(\FF_e, \mathcal H_3 \otimes \mathcal H^{\vee}) = h^j(\FF_e, \cA_3 \otimes \mathcal H_1^{\vee})$$and 
$$h^j(X_e, \cV_2 \otimes \cV_3^{\vee}) = h^j(\FF_e, \mathcal H \otimes \mathcal H_3^{\vee}) = h^j(\FF_e, \mathcal H_1 \otimes \cA_3^{\vee}),$$where 
$\cA_3 = \mathcal H_3(c_1(\mathcal E_e))$ is the rank-3 vector bundle on $\FF_e$ as in the proof of Theorem \ref{thm:antonelli3} whereas $\mathcal H_1= \mathcal H(c_1(\mathcal E_e))$ 
is the rank-2 vector bundle on $\FF_e$ as in \eqref{extensionsuFe}, which are both Ulrich w.r.t. $c_1(\mathcal E_e) = {\mathcal O}_{\FF_e}(3, b_e)$. Therefore, for 
dimension reasons, $h^3(\FF_e, \cA_3 \otimes \mathcal H_1^{\vee}) = h^3(\FF_e, \mathcal H_1 \otimes \cA_3^{\vee}) = 0$, which implies therefore 
$h^3 (X_e, \cU_3 \otimes \cU^{\vee}_2) = h^3(X_e, \cU_2 \otimes \cU^{\vee}_3) = 0$, as desired. 

To prove that  $h^2(\FF_e, \cA_3 \otimes \mathcal H_1^{\vee})=0$ recall that, from \cite[Prop.\;6.4]{ant}, $\cA_3$ arises as the cokernel of a general injective vector bundle map of the form
\begin{equation}\label{eq:cojocaz}
0 \to \mathcal O_{\FF_e}(2, b_e - e -1)^{\oplus \gamma} \to \mathcal O_{\FF_e}(2, b_e - e)^{\oplus \delta} \oplus \mathcal O_{\FF_e}(3, b_e-1)^{\oplus \tau} \to \cA_3 \to 0,
\end{equation}for suitable positive integers $\gamma,\;\delta,\,\tau$ as in \cite[Thm.\;4.1]{ant}, in our situation  one can see that  $\gamma=b_e-3e+3,\;\delta=b-3e,\,\tau=6$. Tensoring \eqref{eq:cojocaz} by $\mathcal H_1^{\vee}$ gives that 
$$h^2(\FF_e, \cA_3 \otimes \mathcal H_1^{\vee}) \leqslant \delta\,h^2(\FF_e,  \mathcal H_1^{\vee} \otimes \mathcal O_{\FF_e}(2, b_e - e)) + \tau\,h^2(\FF_e,  
\mathcal H_1^{\vee} \otimes \mathcal O_{\FF_e}(3, b_e - 1)),$$
therefore to prove that $ h^2(\FF_e, \cA_3 \otimes \mathcal H_1^{\vee}) =0$ 
it is enough to show that 
\begin{equation}\label{eq:cojo4}
h^2(\FF_e,  \mathcal H_1^{\vee} \otimes \mathcal O_{\FF_e}(2, b_e - e)) = h^2(\FF_e,  \mathcal H_1^{\vee} \otimes \mathcal O_{\FF_e}(3, b_e - 1)) = 0.
\end{equation}
Taking into account that $\mathcal H_1$ is of rank 2, i.e. $\mathcal H^{\vee}_1 \cong \mathcal H_1 (- c_1(\mathcal H_1))$, where 
$c_1(\mathcal H_1) = C_e + (b_e - e - 2)f = \mathcal O_{\FF_e}(1, b_e - e -2)$, from \eqref{extensionsuFe} one has that $\mathcal H_1 (- c_1(\mathcal H_1))$ fits 
in 
\begin{equation}\label{eq:cojo3}
0 \to \mathcal O_{\FF_e}(2, e+1) \to \mathcal H_1 (- c_1(\mathcal H_1)) \to I_Z \otimes \mathcal O_{\FF_e}(3, b_e+1)\to 0,
\end{equation}where $Z \subset \FF_e$ is a general zero-dimensional subscheme of $\FF_e$ of length $\ell(Z) = 2b_e - 3e$. Thus,  from \eqref{eq:cojo3}, to prove 
\eqref{eq:cojo4}  it is enough to prove that

$$h^2(\mathcal O_{\FF_e}(4, b_e+1)) = h^2(\mathcal O_{\FF_e}(5, b_e+e)) = h^2(I_Z \otimes \mathcal O_{\FF_e}(5, 2b_e-e+1)) = h^2(I_Z \otimes \mathcal O_{\FF_e}(6, 2b_e))=0,$$
which trivially hold true from either Serre duality on $\FF_e$ or from the use of the exact sequence 
$0 \to I_Z \to \mathcal O_{\FF_e} \to \mathcal O_Z \to 0$. This shows that $h^2(\FF_e, \cA_3 \otimes \mathcal H_1^{\vee})=0$. 

To prove instead that $h^2(\FF_e, \mathcal H_1 \otimes \cA_3^{\vee})=0$, one considers the dual exact sequence of \eqref{eq:cojocaz}, i.e. 
\[
0 \to \cA_3^{\vee} \to \mathcal O_{\FF_e}(-2, - b_e + e)^{\oplus \delta} \oplus \mathcal O_{\FF_e}(-3, - b_e+1)^{\oplus \tau} \to \mathcal O_{\FF_e}(-2, - b_e + e + 1)^{\oplus \gamma}  \to 0,
\]and tensor it by $\mathcal H_1$, which gives
\begin{eqnarray*}
h^2(\FF_e, \mathcal H_1 \otimes \cA_3^{\vee}) \leqslant & \delta\,h^2(\mathcal H_1 \otimes \mathcal O_{\FF_e}(-2, -b_e + e)) + \tau\,
h^2(\mathcal H_1\otimes \mathcal O_{\FF_e}(-3, - b_e + 1)) + \\ 
& \gamma\,h^1(\mathcal H_1 \otimes \mathcal O_{\FF_e}(-2, -b_e + e +1)).
\end{eqnarray*} From \eqref{extensionsuFe}, it is enough to prove 
$$h^2(\mathcal O_{\FF_e}(1, e-1)) = h^2(\mathcal O_{\FF_e}) = h^1 (\mathcal O_{\FF_e}(1, e)) = 0,$$and 
$$h^2(I_Z \otimes \mathcal O_{\FF_e}(2, b_e-1)) = h^2(I_Z \otimes \mathcal O_{\FF_e}(1, b_e-e)) = h^1(I_Z \otimes \mathcal O_{\FF_e}(2, b_e)) =0.$$The vanishings 
of the $h^2$'s easily follow from the same reasoning as above; by Leray's isomorphism and projection formula one gets 
$h^1 (\mathcal O_{\FF_e}(1, e)) = h^1(\mathbb{P}^1, \mathcal O_{\mathbb P^1}(e) \oplus \mathcal O_{\mathbb P^1}) = 0$. At last, if we take into account that 
$Z$ is a zero-dimensional subscheme of length $\ell(Z) = 2b_e - 3e$ of general points on $\FF_e$, then $Z$ imposes independent conditions on the linear system 
$|\mathcal O_{\FF_e}(2, b_e)|$ on $\FF_e$, which is of dimension $3 b_e - 3e + 2 > \ell(Z) = 2b_e - 3e$ since $b_e \geqslant 3e+2$ by \eqref{eq:rem:assAB}; 
this means that $h^1(I_Z \otimes \mathcal O_{\FF_e}(2, b_e)) = h^1(\mathcal O_{\FF_e}(2, b_e))$ and the latter is zero by standard computations. This shows that 
$h^2(\FF_e, \mathcal H_1 \otimes \cA_3^{\vee})=0$, which concludes the proof of (i) for $h=1$.

Finally, to prove (ii) for $h=1$, from invariance of $\chi$ in irreducible families and from above one has 
$$\chi(X_e, \cU_3 \otimes \cU_2^{\vee}) = \chi(X_e, \cV_3 \otimes \cV_2^{\vee}) = \chi(\FF_e, \mathcal H_3 \otimes \mathcal H^{\vee}) = \chi(\FF_e, \cA_3 \otimes \mathcal H_1^{\vee})$$and 
$$\chi(X_e, \cU_2 \otimes \cU_3^{\vee}) =  \chi(X_e, \cV_3 \otimes \cV_2^{\vee}) = \chi(\FF_e, \mathcal H \otimes \mathcal H_3^{\vee}) = \chi(\FF_e, \mathcal H_1 \otimes \cA_3^{\vee}).$$Since 
$\cA_3$ and $\mathcal H_1$ are both Ulrich bundles w.r.t. $c_1(\mathcal E_e) = 3 C_e + b_e f= \mathcal O_{\FF_e}(3, b_e)$ on $\FF_e$, let us consider 
the smooth projective model $(S, \mathcal O_S(1)) \cong (\FF_e, \mathcal O_{\FF_e}(3, b_e))$, which is a surface, in a suitable projective space, of 
degree $d := \deg(S) = (c_1(\mathcal E_e))^2 = 6 b_e - 9e$. Thus, from \cite[Prop.\;2.12]{c-h-g-s}, one has 
$$\chi(\FF_e, \cA_3 \otimes \mathcal H_1^{\vee}) = 3 c_1(\mathcal H_1) \cdot K_S - c_1(\cA_3) \cdot c_1(\mathcal H_1) + 6 (2d-2)$$and 
$$\chi(\FF_e, \mathcal H_1 \otimes \cA_3^{\vee}) = 2 c_1(\cA_3) \cdot K_S - c_1(\cA_3) \cdot c_1(\mathcal H_1) + 6 (2d-2),$$where 
$p_a(S) =1$ and $K_S = - 2 C_e - (e+2)f$. 
Using that $ c_1(\mathcal H_1) = 7 C_e + (3b_e - e - 2) f$ and $c_1(\cA_3) = 12 C_e + (4b_e -3)f$, one gets
$$c_1(\cA_3) \cdot c_1(\mathcal H_1) = 64b_e - 96 e - 45,\; c_1(\cA_3) \cdot K_S = 12 e - 8b_e - 18,$$
$$c_1(\mathcal H_1) \cdot K_S = 9e - 6 b_e - 10, \; 6 (2d-2) = 12(d-1) = 72b_e - 108e -12;$$plugging these computations in the previous formulas, one gets 
$$\chi(\FF_e, \cA_3 \otimes \mathcal H_1^{\vee}) = 15 e - 10 b_e +3 \;\; {\rm and} \;\; \chi(\FF_e, \mathcal H_1 \otimes \cA_3^{\vee}) = 12 e - 8 b_e -3,$$which concludes the proof of 
(ii) for $h=1$.

We now prove by induction that all statements hold true also for any integer $h \geqslant 2$. Assume therefore that they are satisfied for all positive integers $k$ such that 
$1 \leqslant k \leqslant h-1$.

  (i) Let $j \in \{2,3\}$.  By specialization and  \eqref{eq:estensioneodd} tensored with  $ {\cU_2}^{\vee}$, we have \color{black}
  \[ h^j(\cU_{2h+1} \otimes {\cU_2}^{\vee}) \leqslant h^j(\cF_{2h+1}\otimes {\cU_2}^{\vee}) \leqslant h^j(\cU_2\otimes {\cU_2}^{\vee}) + h^j(\cU_{2h-1}\otimes {\cU_2}^{\vee}),\]
  and the latter are $0$ by induction. Similarly, by specialization and using the dual of  \eqref{eq:estensioneodd} tensored with  $ {\cU_2}$ we have
  \[ h^j(\cU_2 \otimes {\cU^{\vee}_{2h+1}}) \leqslant h^j(\cU_2 \otimes {\cF^{\vee}_{2h+1}}) \leqslant h^j(\cU_2\otimes {\cU_2}^{\vee}) + h^j(\cU_2\otimes {\cU^{\vee}_{2h-1}}),\] which are again $0$ by induction.

(ii) By specialization, \eqref{eq:estensioneodd}  tensored with  $ {\cU_2}^{\vee}$ and induction we have 
\begin{eqnarray*}
  \chi(\cU_{2h+1} \otimes {\cU_2}^{\vee}) & = & \chi(\cF_{2h+1}\otimes {\cU_2}^{\vee}) = \chi(\cU_2\otimes {\cU_2}^{\vee}) + \chi(\cU_{2h-1}\otimes {\cU_2}^{\vee}) \\
  & =  & (4 + 9e - 6b_e) + (h-2)(4 + 9e - 6b_e) + (15 e - 10 b_e +3).
  \end{eqnarray*} 
	Likewise, by specialization, the dual of \eqref{eq:estensioneodd} and induction, we have 
	
	\begin{eqnarray*}
  \chi(\cU_2 \otimes {\cU}^{\vee}_{2h+1}) & = & \chi( \cU_2 \otimes {\mathcal F}^{\vee}_{2h+1}) = \chi(\cU_2\otimes {\cU_2}^{\vee}) + 
	\chi(\cU_2 \otimes {\cU}^{\vee}_{2h-1}) \\
  & =  & (4 + 9e - 6b_e) + (h-2)(4 + 9e - 6b_e) + (12 e - 8 b_e -3).
  \end{eqnarray*} 
	
  (iii) Let $j = 2,3$; by specialization, \eqref{eq:estensioneodd} and its dual we have
  \begin{eqnarray*}
    h^j(\cU_{2h+1} \otimes \cU_{2h+1}^{\vee}) & \leqslant & h^j(\cF_{2h+1} \otimes \cF_{2h+1}^{\vee}) \leqslant \\
                              & \leqslant &   h^j(\cU_2 \otimes \cF_{2h+1}^{\vee})+ h^j(\cU_{2h-1} \otimes \cF_{2h+1}^{\vee}) \\
                        & \leqslant & h^j(\cU_2 \otimes {\cU_2}^{\vee})+h^j(\cU_2 \otimes \cU_{2h-1}^{\vee})+ \\
                        &  &    +   h^j(\cU_{2h-1} \otimes {\cU_2}^{\vee}) + h^j(\cU_{2h-1} \otimes \cU_{2h-1}^{\vee}),
    \end{eqnarray*}
which are all $0$ by induction. 

(iv) By specialization, \eqref{eq:estensioneodd} and its dual we have
  \begin{eqnarray*}
    \chi(\cU_{2h+1} \otimes \cU^{\vee}_{2h+1}) & = & \chi(\cF_{2h+1} \otimes \cF^{\vee}_{2h+1}) =
                                 \chi(\cU_2 \otimes \cF^{\vee}_{2h+1})+\chi(\cU_{2h-1} \otimes \cF^{\vee}_{2h+1}) \\
                   & = & \chi(\cU_2 \otimes {\cU_2}^{\vee})+\chi(\cU_2 \otimes \cU^{\vee}_{2h-1})+ \chi(\cU_{2h-1} \otimes {\cU_2}^{\vee}) + \chi(\cU_{2h-1} \otimes \cU^{\vee}_{2h-1}).
    \end{eqnarray*} By induction, this equals
    \begin{eqnarray*} 
	(4 + 9e - 6b_e) + (h-2)(4 + 9e - 6b_e)+ (h-2)(4 + 9e - 6b_e) + (27 e - 18 b_e) + 1 + \nonumber \\
	+ 9 (h-2) (3e - 2 b_e) + ((h-2)^2 + 2) (4 + 9e - 6 b_e) = \nonumber \\
		= 1 + 9(h-1) (3e - 2 b_e) + (4 + 9 e - 6 b_e) ( 1 + h-2 + h-2 + (h-2)^2 +2) = \nonumber \\	
	 = 1 + 9(h-1) (3e - 2b_e) + ((h-1)^2 + 2) (4 + 9e - 6 b_e).  \nonumber 	
	 \end{eqnarray*}
   \end{proof}
\begin{prop} \label{prop:h+1odd}
  For all integers $h \geqslant 1$ the scheme $\mathcal M (2h+1)$ is not empty
and its general member $[\cU_{2h+1}] \in \mathcal M (2h+1)$ corresponds to a rank-$(2h+1)$ vector bundle $\cU_{2h+1}$ which is Ulrich w.r.t. $\xi$ and which satisfies 
$$c_1(\cU_{2h+1})= (2h+1) \xi+ \varphi^*\Oc_{\FF_e}(3,b_e-3)) + \varphi^*\Oc_{\FF_e}(h-1,(h-1) (b_e - e -2))$$and 
$h^j(\cU_{2h+1} \otimes \cU_{2h+1}^{\vee})=0$, $j=2,3$. 
\end{prop}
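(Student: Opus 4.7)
The plan is to argue by induction on $h$, mimicking the strategy of Proposition \ref{prop:h+1} for even ranks, but with Theorem \ref{thm:antonelli3} (rather than Theorem \ref{thm:rk 2 vctB e>0}) providing the base case. The base $h=1$ is exactly Theorem \ref{thm:antonelli3}: the scheme $\mathcal M(3)$ is non-empty, its general member $[\mathcal U_3]$ is a rank-$3$ Ulrich bundle with $c_1(\mathcal U_3)=3\xi+\varphi^*\Oc_{\FF_e}(3,b_e-3)$ (which matches \eqref{eq:c1rcasoeodd} with $h=1$, since the term $\varphi^*\Oc_{\FF_e}(0,0)$ is trivial), and the vanishings $h^j(\mathcal U_3\otimes\mathcal U_3^{\vee})=0$ for $j=2,3$ follow from \eqref{eq:cojo0} together with the smoothness of the corresponding moduli space on $\FF_e$.

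For the inductive step, assume the statement holds up to $h-1\geqslant 1$, and assume moreover (as in the parallel story for even ranks, to be verified in the analogue of Proposition \ref{prop:h+1-II}) that the general member of $\mathcal M(2h-1)$ is slope-stable with slope $\mu(\mathcal U_{2h-1})=8b_e-k_e-12e-3$. The first task is to show that non-trivial extensions of the form \eqref{eq:estensioneodd} exist, namely $\operatorname{Ext}^1(\mathcal U_{2h-1},\mathcal U_2)\cong H^1(\mathcal U_2\otimes\mathcal U_{2h-1}^{\vee})\neq 0$. Because $\mathcal U_2$ and $\mathcal U_{2h-1}$ are both slope-stable of the same slope and of distinct ranks (so non-isomorphic), any non-zero map $\mathcal U_{2h-1}\to\mathcal U_2$ would be an isomorphism, so $h^0(\mathcal U_2\otimes\mathcal U_{2h-1}^{\vee})=0$. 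Combining this with the vanishings $h^j(\mathcal U_2\otimes\mathcal U_{2h-1}^{\vee})=0$ for $j=2,3$ from Lemma \ref{lemma:induodd}(i) (applied with $h$ replaced by $h-1$) gives
\begin{equation*}
\dim\operatorname{Ext}^1(\mathcal U_{2h-1},\mathcal U_2) = -\chi(\mathcal U_2\otimes\mathcal U_{2h-1}^{\vee}),
\end{equation*}
and the right-hand side is strictly positive by the explicit formula in Lemma \ref{lemma:induodd}(ii) (note that $6b_e-9e-4>0$ under Assumptions \ref{ass:AB}, and the other numerical contributions reinforce positivity for $h\geqslant 2$). Hence $\mathcal M(2h+1)^{\rm ext}$, and therefore $\mathcal M(2h+1)$, is non-empty.

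Next, the general member $[\mathcal F_{2h+1}]\in\mathcal M(2h+1)^{\rm ext}$ corresponds to a rank-$(2h+1)$ bundle which is Ulrich w.r.t.\ $\xi$, since extensions of Ulrich bundles are Ulrich, with first Chern class given by $c_1(\mathcal U_2)+c_1(\mathcal U_{2h-1})$, which matches \eqref{eq:c1rcasoeodd} by the inductive formula for $c_1(\mathcal U_{2h-1})$. Since Ulrichness is an open condition and Chern classes are constant on irreducible families, the same properties carry over to the general point $[\mathcal U_{2h+1}]\in\mathcal M(2h+1)$. Finally, Lemma \ref{lemma:induodd}(iii) yields $h^j(\mathcal U_{2h+1}\otimes\mathcal U_{2h+1}^{\vee})=0$ for $j=2,3$, which completes the proof.

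The main obstacle, which is handled inside Lemma \ref{lemma:induodd}, is the cohomological base case $h=1$: one needs the vanishings and Euler characteristic computations for the mixed term $\mathcal U_3\otimes\mathcal U_2^{\vee}$ (and its dual). These are obtained by pulling back to the base $\FF_e$ via Theorem \ref{pullback}, reducing to computations for $\mathcal A_3\otimes\mathcal H_1^{\vee}$ on $\FF_e$, and then using the explicit Beilinson-type resolution of $\mathcal A_3$ recorded in \eqref{eq:cojocaz} together with the extension \eqref{extensionsuFe} for $\mathcal H_1$; once these base-case computations are in place, the inductive step is essentially formal via the extension \eqref{eq:estensioneodd} and its dual.
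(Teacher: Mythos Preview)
Your proof is correct and follows essentially the same inductive strategy as the paper's. The one minor difference is that the paper avoids invoking slope-stability of $\cU_{2h-1}$ at this stage: instead of arguing $h^0(\cU_2\otimes\cU_{2h-1}^{\vee})=0$ via stability, it simply uses the inequality $h^1(\cU_2\otimes\cU_{2h-1}^{\vee})\geqslant -\chi(\cU_2\otimes\cU_{2h-1}^{\vee})$, which already suffices for positivity, so that Proposition~\ref{prop:h+1odd} stands on its own without needing a joint induction with the slope-stability result.
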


\begin{proof} We prove this by induction on $h$, the case $h=1$ being satisfied by the choice of $\mathcal M (3)$ as in Theorem \ref{thm:antonelli3}. Therefore, let $h \geqslant 2$; for general $[\cU_{2h-1}] \in
  \mathcal M (2h-1)$ and $[\cU_2] \in
  \mathcal M (2)$, one has 
	\begin{eqnarray*}
	\dim ({\rm Ext}^1(\cU_{2h-1},\cU_2)) = h^1(\cU_2 \otimes \cU_{2h-1}^{\vee}).
	\end{eqnarray*}
	By Lemma \ref{lemma:induodd}-(i) we have that 
	$h^j(\cU_2 \otimes \cU_{2h-1}^{\vee}) = 0$, for $j=2,3$. Therefore 
	\begin{eqnarray*}
	\chi(\cU_2 \otimes \cU_{2h-1}^{\vee}) = h^0(\cU_2 \otimes \cU_{2h-1}^{\vee}) - h^1(\cU_2 \otimes \cU_{2h-1}^{\vee})
	\end{eqnarray*}
	so, by specialization and invariance of $\chi$ in irreducible families, we have
  \begin{eqnarray} \label{eq:dimextodd}
    \dim ({\rm Ext}^1(\cU_{2h-1},\cU_2)) & = & h^1(\cU_2 \otimes \cU_{2h-1}^{\vee})  \\ \nonumber
		                                 & = &  -\chi(\cU_2 \otimes \cU_{2h-1}^{\vee})+h^0(\cU_2 \otimes \cU_{2h-1}^{\vee}) \\ \nonumber
	                                   &\geqslant&  -\chi(\cU_2 \otimes \cU_{2h-1}^{\vee}) =  - \chi(\cU_2 \otimes \cF_{2h-1}^{\vee})  \\ \nonumber
																		 & = &  (h-2) (6b_e - 9e -4) + (8b_e - 12 e +3) >0,  \nonumber 
    \end{eqnarray} the latter equality following from Lemma \ref{lemma:induodd}-(ii) (with $h$ replaced by $h-1$) whereas the last strict inequality 
		following from $h \geqslant 2$ and $b_e \geqslant 3e +2$ by \eqref{eq:rem:assAB}. 
		
		The above computations prove that $\dim ({\rm Ext}^1(\cU_{2h-1},\cU_2)) >0$,  i.e. there exist non-trivial extensions as in 
		\eqref{eq:estensioneodd}, and that the scheme $\mathcal M (2h+1)^{\rm ext}$, and so also $\mathcal M (2h+1)$, is not empty.
		
		The members of $\mathcal M (2h+1)$ have rank $2h+1$ and first Chern class as in \eqref{eq:c1rcasoeodd}, since 
		$c_1 (\cU_{2h+1}) = c_1(\cF_{2h+1})$ being constant in $\mathcal M (2h+1)$. It is  immediate  that extensions of Ulrich bundles are still Ulrich, so the general member $[\cU_{2h+1}] \in 
		\mathcal M(2h+1)$ corresponds to an Ulrich bundle w.r.t. $\xi$. It also satisfies 
$h^j(\cU_{2h+1} \otimes \cU_{2h+1}^{\vee})=0$ for $j=2,3$ by Lemma \ref{lemma:induodd}-(iii).
\end{proof} 

We need to prove that the general member of $\mathcal M(2h+1)$ corresponds to a vector bundle which is slope--stable w.r.t. $\xi$, that $\mathcal M(2h+1)$ is generically smooth and  we need to compute the dimension at its general point $[\cU_{2h+1}]$. We will again prove all these facts by induction on $h$. Similarly as in the previous cases, we  need the following auxiliary result.

\begin{lemma} \label{lemma:uniquedest1odd}
Let $\cF_{2h+1}$ correspond to a general member of  $\mathcal M(2h+1)^{\rm ext}$, sitting in an extension like \eqref{eq:estensioneodd}.
Assume furthermore that $\cU_2$ and $\cU_{2h-1}$ are slope--stable.  Let $\cD$ be a destabilizing subsheaf of 
$\cF_{2h+1}$. Then $\cD^{\vee} \cong  {\cU_2}^{\vee}$ and $\left(\cF_{2h+1}/\cD\right)^{\vee} \cong \cU_{2h-1}^{\vee}$. 
\end{lemma}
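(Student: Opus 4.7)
The plan is to adapt the argument of Lemma \ref{lemma:uniquedest} (as was done in Lemma \ref{lemma:uniquedest1} for the even-rank case), with the new wrinkle that in \eqref{eq:estensioneodd} both the subbundle $\cU_2$ and the quotient $\cU_{2h-1}$ have rank at least two, so the slope bookkeeping involves two genuine slots rather than one. First I would form the standard $3\times 3$ snake-lemma diagram associated to $\cD \subseteq \cF_{2h+1}$ and \eqref{eq:estensioneodd}: set $\cQ := \mathrm{Im}\{\cD \hookrightarrow \cF_{2h+1}\twoheadrightarrow \cU_{2h-1}\}$ and $\cK := \ker\{\cD \twoheadrightarrow \cQ\} \subseteq \cU_2$, with cokernels $\cK' := \cU_2/\cK$ and $\cQ' := \cU_{2h-1}/\cQ$. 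Write $r_K := \mathrm{rk}(\cK) \in \{0,1,2\}$ and $r_Q := \mathrm{rk}(\cQ) \in \{0,\dots,2h-1\}$, so that $\mathrm{rk}(\cD) = r_K + r_Q$ and, by properness of $\cD$, $r_K + r_Q \leq 2h$.

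Next I would run the slope analysis. By the slope-stability of $\cU_2$, any rank-$1$ subsheaf $\cK \subseteq \cU_2$ satisfies $\mu(\cK) < \mu$, while for $r_K = 2$ the inclusion is of full rank, hence $c_1(\cK) = c_1(\cU_2) - D''$ for some effective divisor $D''$, and ampleness of $\xi$ gives $\mu(\cK) \leq \mu$ with equality iff $D''=0$. Analogously, by slope-stability of $\cU_{2h-1}$, any saturation of a subsheaf $\cQ$ of rank $1 \leq r_Q \leq 2h-2$ has slope strictly less than $\mu$, while $r_Q = 2h-1$ yields $\mu(\cQ) \leq \mu$ with equality iff the codimension-$1$ part of the support of $\cQ'$ is empty. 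Since $c_1(\cD) = c_1(\cK) + c_1(\cQ)$, the slope $\mu(\cD)$ is the rank-weighted average of $\mu(\cK)$ and $\mu(\cQ)$, so the destabilizing inequality $\mu(\cD)\geq\mu$, combined with the strict inequalities above and with $r_K+r_Q\leq 2h$, forces $(r_K,r_Q)\in\{(2,0),\,(0,2h-1)\}$.

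The case $(r_K,r_Q) = (2,0)$ yields the lemma's conclusion by the same argument as in Lemma \ref{lemma:uniquedest}: $\cD \cong \cK \hookrightarrow \cU_2$ is of full rank, the destabilizing inequality forces $D''=0$, so $\cK'$ is supported in codimension $\geq 2$; the vanishing of $\mathfrak{ext}^i(\cK',\mathcal O_{X_e})$ for $i\leq 1$ together with the $3\times 3$ diagram then give $\cD^\vee\cong\cU_2^\vee$ and $(\cF_{2h+1}/\cD)^\vee\cong\cU_{2h-1}^\vee$, as desired.

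The main obstacle, and the genuinely new feature with respect to Lemma \ref{lemma:uniquedest}, is to rule out the remaining case $(r_K,r_Q)=(0,2h-1)$, where $\cD$ maps injectively onto a full-rank subsheaf $\cQ\subseteq\cU_{2h-1}$ and the destabilizing inequality, together with ampleness of $\xi$, forces $c_1(\cQ)=c_1(\cU_{2h-1})$, so $\cD\hookrightarrow\cU_{2h-1}$ is an isomorphism off a codimension-$\geq 2$ locus. The plan is a reflexive hull argument: taking double duals gives $\cD^{\vee\vee}\cong\cU_{2h-1}^{\vee\vee}=\cU_{2h-1}$; the inclusion $\cD\hookrightarrow\cF_{2h+1}$ extends to a morphism $\cU_{2h-1}\cong\cD^{\vee\vee}\to\cF_{2h+1}^{\vee\vee}=\cF_{2h+1}$, which is injective because its kernel would be a torsion subsheaf of the reflexive (hence torsion-free) sheaf $\cD^{\vee\vee}$. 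The composition with $\cF_{2h+1}\twoheadrightarrow\cU_{2h-1}$ agrees with the identity on the big open set where $\cD\cong\cU_{2h-1}$, hence on all of $X_e$ by torsion-freeness of $\cHom(\cU_{2h-1},\cU_{2h-1})$. This produces a splitting of \eqref{eq:estensioneodd}, contradicting the non-triviality of the extension defining $[\cF_{2h+1}]\in\mathcal M(2h+1)^{\mathrm{ext}}$. Only the case $(r_K,r_Q)=(2,0)$ therefore survives, and the lemma follows.
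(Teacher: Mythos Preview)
Your proof is correct and, in fact, more careful than the paper's own treatment. The paper simply declares the proof ``identical to that of Lemma \ref{lemma:uniquedest1}'', which in turn is declared ``almost identical to that of Lemma \ref{lemma:uniquedest}''; so the intended argument is exactly your snake-lemma diagram together with the slope bookkeeping. Your case analysis reducing to $(r_K,r_Q)\in\{(2,0),(0,2h-1)\}$ is clean and accurate, and your treatment of $(2,0)$ matches the ${\rm rk}(\cQ)=0$ branch of Lemma \ref{lemma:uniquedest} verbatim.

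The point worth highlighting is your handling of $(r_K,r_Q)=(0,2h-1)$. You flag this as ``genuinely new'', but in fact the analogous situation (${\rm rk}(\cQ)=1$ with ${\rm rk}(\cK)=0$, i.e.\ ${\rm rk}(\cD)=1$) already arises in Lemma \ref{lemma:uniquedest} and is \emph{silently skipped} there: the displayed computation of $\mu(\cK)$ in that lemma divides by ${\rm rk}(\cK)$ and so only applies when ${\rm rk}(\cK)\geqslant 1$. Your reflexive-hull argument---extending $\cD\hookrightarrow\cF_{2h+1}$ to $\cD^{\vee\vee}\cong\cU_{2h-1}\to\cF_{2h+1}$ and checking that the composite with the projection is the identity, hence splitting the extension---is exactly the right way to close this gap, and it works equally well to patch the earlier lemmas. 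So your proposal is not merely a faithful adaptation of the paper's approach: it actually supplies the missing step that the paper's ``identical'' references leave implicit.
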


\begin{proof} The proof is identical to that of Lemma \ref{lemma:uniquedest1}, so the reader is referred therein. 
\end{proof}

\begin{prop} \label{prop:h+1-IIodd} For all integers $h \geqslant 1$ the scheme $\mathcal M (2h+1)$ is not empty, generically smooth 
of dimension 
\begin{eqnarray*} \dim (\mathcal M(2h+1)) = ((h-1)^2 +2) (6b_e - 9e -4) + 9(h-1) (2b_e - 3e).
\end{eqnarray*}
Its general member corresponds to a slope-stable bundle $\cU_{2h+1}$ 
whose slope w.r.t. $\xi$ is $$\mu(\cU_{2h+1}) = 8b_e - k_e - 12 e -3.$$Furthermore, $\mathcal M (2h+1)$ properly contains the locally closed 
subscheme $\mathcal M (2h+1)^{\rm ext}$, namely $\dim(\mathcal M (2h+1)^{\rm ext}) < \dim(\mathcal M (2h+1))$. 
\end{prop}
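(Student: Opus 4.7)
The plan is to mirror the strategy used in Proposition \ref{prop:h+1-II}, proceeding by induction on $h$. The base case $h=1$ is exactly Theorem \ref{thm:antonelli3}. For the inductive step, assume the statement holds for all integers $1 \leqslant k \leqslant h-1$. The general member $[\mathcal{F}_{2h+1}] \in \mathcal{M}(2h+1)^{\rm ext}$ sits in a non-splitting sequence \eqref{eq:estensioneodd} whose extremes $\mathcal{U}_2$ and $\mathcal{U}_{2h-1}$ are slope-stable of the same slope (by Theorem \ref{thm:rk 2 vctB e>0} and the inductive hypothesis) and non-isomorphic for rank reasons; thus \cite[Lemma 4.2]{c-h-g-s} ensures that $\mathcal{F}_{2h+1}$ is simple. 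By semi-continuity and Proposition \ref{prop:h+1odd}, the general $[\mathcal{U}_{2h+1}] \in \mathcal{M}(2h+1)$ is simple as well, so $h^0(\mathcal{U}_{2h+1} \otimes \mathcal{U}_{2h+1}^{\vee})=1$, and Lemma \ref{lemma:induodd}-(iii) gives the vanishings in degrees $2, 3$. Hence $\mathcal{M}(2h+1)$ is smooth at $[\mathcal{U}_{2h+1}]$ with
\[
\dim \mathcal{M}(2h+1) = h^1(\mathcal{U}_{2h+1} \otimes \mathcal{U}_{2h+1}^{\vee}) = 1 - \chi(\mathcal{U}_{2h+1} \otimes \mathcal{U}_{2h+1}^{\vee}),
\]
and plugging in Lemma \ref{lemma:induodd}-(iv) yields precisely $((h-1)^2 + 2)(6b_e - 9e - 4) + 9(h-1)(2b_e - 3e)$, as desired. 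The slope is then $\mu(\mathcal{U}_{2h+1}) = 8b_e - k_e - 12e - 3$ by \eqref{eq:slopercasoeodd} and invariance of $c_1$ in irreducible families.

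For slope-stability, I follow the scheme of Proposition \ref{prop:h+1-II}. If the general $\mathcal{U}_{2h+1}$ were not slope-stable, one can produce a one-parameter family $\{\mathcal{U}_{2h+1}^{(t)}\}$ over $\Delta$ with $\mathcal{U}_{2h+1}^{(t)}$ general in $\mathcal{M}(2h+1)$ for $t \neq 0$ and $\mathcal{U}_{2h+1}^{(0)} \in \mathcal{M}(2h+1)^{\rm ext}$, together with a saturated destabilizing sequence $0 \to \mathcal{D}^{(t)} \to \mathcal{U}_{2h+1}^{(t)} \to \mathcal{G}^{(t)} \to 0$. Taking limits of $\mathbb{P}(\mathcal{G}^{(t)}) \subset \mathbb{P}(\mathcal{U}_{2h+1}^{(t)})$ produces a destabilizing sequence at $t=0$, to which Lemma \ref{lemma:uniquedest1odd} applies: the duals of $\mathcal{D}^{(0)}$ and $\mathcal{G}^{(0)}$ are isomorphic to $\mathcal{U}_2^{\vee}$ and $\mathcal{U}_{2h-1}^{\vee}$ respectively. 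Since the corresponding $\mathcal{D}^{(t)}$ and $\mathcal{G}^{(t)}$ are locally free deformations thereof, one concludes $[\mathcal{U}_{2h+1}^{(t)}] \in \mathcal{M}(2h+1)^{\rm ext}$ for $t \neq 0$, forcing $\mathcal{M}(2h+1) = \mathcal{M}(2h+1)^{\rm ext}$.

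The final and decisive step is to rule out this equality by a dimension count. For general $[\mathcal{U}_{2h-1}] \in \mathcal{M}(2h-1)$ and $[\mathcal{U}_2] \in \mathcal{M}(2)$, slope-stability gives $h^0(\mathcal{U}_2 \otimes \mathcal{U}_{2h-1}^{\vee}) = 0$, so Lemma \ref{lemma:induodd}-(i)--(ii) (applied with $h$ replaced by $h-1$) yields
\[
h^1(\mathcal{U}_2 \otimes \mathcal{U}_{2h-1}^{\vee}) = -\chi(\mathcal{U}_2 \otimes \mathcal{U}_{2h-1}^{\vee}) = (h-2)(6b_e - 9e - 4) + (8b_e - 12e + 3).
\]
Therefore
\[
\dim \mathcal{M}(2h+1)^{\rm ext} \leqslant h^1(\mathcal{U}_2 \otimes \mathcal{U}_{2h-1}^{\vee}) - 1 + \dim \mathcal{M}(2h-1) + \dim \mathcal{M}(2),
\]
and the inductive formula for $\dim \mathcal{M}(2h-1)$ together with $\dim \mathcal{M}(2) = 6b_e - 9e - 3$ gives, after simplification, the difference
\[
\dim \mathcal{M}(2h+1) - \dim \mathcal{M}(2h+1)^{\rm ext} \geqslant (h-1)(6b_e - 9e - 4) + (4b_e - 6e + 1),
\]
which is strictly positive for any $h \geqslant 2$ since $b_e \geqslant 3e+2$ by \eqref{eq:rem:assAB}. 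The main technical obstacle is precisely this arithmetic comparison: ensuring that the $\chi$-contribution from the gluing $\text{Ext}^1$ plus the inductive dimension of $\mathcal{M}(2h-1)$ and the $3$-dimensional slack from $\mathcal{M}(2)$ genuinely stay below the predicted quadratic-in-$h$ dimension of $\mathcal{M}(2h+1)$. Once this strict inequality is in hand, the equality $\mathcal{M}(2h+1) = \mathcal{M}(2h+1)^{\rm ext}$ is contradicted, proving both the slope-stability of the general member and the properness $\mathcal{M}(2h+1)^{\rm ext} \subsetneq \mathcal{M}(2h+1)$.
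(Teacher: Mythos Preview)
Your proof is correct and follows essentially the same approach as the paper's own argument: induction on $h$ with base case Theorem \ref{thm:antonelli3}, simplicity via \cite[Lemma 4.2]{c-h-g-s}, the smooth modular family from Lemma \ref{lemma:induodd}-(iii)--(iv), the one-parameter degeneration combined with Lemma \ref{lemma:uniquedest1odd}, and the dimension comparison. Your final arithmetic is in fact sharper than the paper's, which merely asserts the strict inequality without isolating the explicit gap $(h-1)(6b_e - 9e - 4) + (4b_e - 6e + 1)$.
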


\begin{proof} We prove this by induction on $h$, the case $h=1$ being satisfied by $\mathcal M(3)$ as in Theorem \ref{thm:antonelli3}, where in such a case $\mathcal M(3)^{\rm ext} = \emptyset$.

Let therefore $h \geqslant 2$ and assume that we have proved the statement for all positive integers $k \leqslant h-1$;  we will prove it for $h$.

The slope of the members of  $\mathcal M (2)$ and $\mathcal M (2h-1)$ are both equal to $ 8b_e - k_e - 12 e -3$ as in \eqref{eq:slopercasoeodd}. Thus, by \cite[Lemma\;4.2]{c-h-g-s}, the general member $[\cF_{2h+1}] \in \mathcal M (2h+1)^{\rm ext}$ (which is not empty by Proposition \ref{prop:h+1odd}) corresponds to a simple bundle. Hence, by semi-continuity, 
also the general member $[\cU_{2h+1}]\in \mathcal M (2h+1)$ corresponds to a simple bundle which also satisfies $h^j(\cU_{2h+1} \otimes \cU^{\vee}_{2h+1})=0$, $j=2,3$, by Lemma \ref{lemma:induodd}-(iii).
 
Therefore  $\mathcal M (2h+1)$ is smooth at $[\cU_{2h+1}]$ (see, e.g., \cite[Prop. 2.10]{c-h-g-s}) with
\begin{eqnarray} \label{eq:dimUh+1odd}
  \dim(\mathcal M (2h+1))&=&h^1(\cU_{2h+1} \otimes \cU^{\vee}_{2h+1})\\ \nonumber
 &=&-\chi(\cU_{2h+1} \otimes \cU^{\vee}_{2h+1})+h^0(\cU_{2h+1} \otimes \cU_{2h+1}^{\vee}) \\
\nonumber  
&=& ((h-1)^2 +2) (6b_e - 9e -4)+ 9(h-1) (2b_e - 3e), 
\end{eqnarray} using the facts that $h^0(\cU_{2h+1} \otimes \cU^{\vee}_{2h})=1$ as $\cU_{2h+1}$ is simple, and the computation of 
$\chi(\cU_{2h+1} \otimes \cU^{\vee}_{2h+1})$ in Lemma \ref{lemma:induodd}-(iv). This proves that $\mathcal M(2h+1)$ is generically smooth of the stated dimension. 

Finally, we prove that $\cU_{2h+1}$ general is slope--stable and that \linebreak $\dim(\mathcal M (2h+1)^{\rm ext}) < \dim(\mathcal M (2h+1))$.
 If $\cU_{2h+1}$ general were not slope-stable then we could find a one-parameter family of bundles 
$\{\cU_{2h+1}^{(t)}\}$ over the disc $\Delta$ such that $\cU_{2h+1}^{(t)}$ is a general member of $\mathcal M (2h+1)$ for $t \neq 0$ and $\cU_{2h+1}^{(0)}$ lies in $\mathcal M (2h+1)^{\rm ext}$, and such that we have a destabilizing sequence
\begin{equation} \label{eq:destat1odd} 
    0 \to \cD^{(t)} \to \cU_{2h+1}^{(t)} \to \cG^{(t)} \to 0
  \end{equation} for $t \neq 0$, which we can take to be saturated, that is, such that $\cG^{(t)}$ is torsion free, whence so that $\cD^{(t)}$ and $\cG^{(t)}$ are (Ulrich) vector bundles  
	(see \cite[Thm. 2.9]{c-h-g-s} or \cite[(3.2)]{b}). The limit of $\Pp(\cG^{(t)}) \subset \Pp(\cU_{2h+1}^{(t)})$ defines a subvariety of $\Pp(\cU_{2h+1}^{(0)})$ of the same dimension as 
	$\Pp(\cG^{(t)})$, whence a coherent sheaf $\cG^{(0)}$ of rank ${\rm rk} (\cG^{(t)})$ with a surjection $\cU_{2h+1}^{(0)} \twoheadrightarrow \cG^{(0)}$. 
	Denoting by $\cD^{(0)}$ its kernel, we have ${\rm rk} (\cD^{(0)})= {\rm rk} (\cD^{(t)})$ and 
	$c_1(\cD^{(0)})=c_1(\cD^{(t)})$. Hence, \eqref{eq:destat1odd} specializes to a destabilizing sequence for $t=0$. 
	
Lemma \ref{lemma:uniquedest1odd} yields  that ${\cD^{(0)}}^{\vee}$ (resp., ${\cG^{(0)}}^{\vee}$) 
is the dual of a member of $\mathcal M (3)$ (resp., of $\mathcal M (2h+1)$). It follows that
${\cD^{(t)}}^{\vee}$ (resp., ${\cG^{(t)}}^{\vee}$)
is a deformation of the dual of a member of $\mathcal M (2))$ (resp., of $\mathcal M (2h+1)$), whence that ${\cD^{(t)}}$ (resp., ${\cG^{(t)}}$) is a deformation of a member of $\mathcal M (2)$ (resp., $\mathcal M (2h+1)$), as both are locally free. It follows that 
$[\cU_{2h+1}^{(t)}] \in \mathcal M (2h+1)^{\rm ext}$ for $t \neq 0$. Thus,
\begin{equation} \label{eq:sonugualiodd}
  \mathcal M (2h+1)^{\rm ext}=\mathcal M (2h+1).
\end{equation}

On the other hand we have
\begin{equation} \label{eq:dimext2odd}
  \dim (\mathcal M (2h+1)^{\rm ext}) \leqslant \dim (\Pp(\Ext^1(\cU_{2h-1},\cU_2))) +\dim (\mathcal M (2h-1)) +\dim (\mathcal M (2)),
\end{equation}
for $[\cU_{2h-1}] \in \mathcal M (2h-1)$ and $[\cU_2]\in \mathcal M (2)$ general.  Because  $\cU_2$  is  slope--stable and also $\cU_{2h-1}$  is slope--stable by induction, of the same slope,  we have 
$h^0(\cU_2 \otimes \cU_{2h-1}^{\vee})=0$. Thus, \eqref{eq:dimextodd} gives 
\[  h^1(\cU_2 \otimes \cU^{\vee}_{2h-1})  =  -\chi(\cU_2 \otimes \cU^{\vee}_{2h-1}) = (h-2) (6b_e - 9e -4) + (8b_e - 12e +3). 
\]Hence, from \eqref{eq:dimext2odd}, using also \eqref{eq:dimUh+1odd} and the fact that $\dim(\mathcal M(2)) = (6b_e - 9e -3)$ 
from Theorem \ref{thm:rk 2 vctB e>0}, one has 
\begin{eqnarray*}
  \dim (\mathcal M (2h+1)^{\rm ext}) & \leqslant & (h-2)(6b_e - 9e -4) + (8b_e - 12 e +3) - 1 + \\
	&  & + ((h-2)^2 + 2) (6b_e - 9e - 4) + 9 (h-2) (2b_e - 3e) +  (6b_e - 9e -3) \\
  & < & ((h-1)^2 +2)(6b_e - 9e -4)+ 9(h-1) (2b_e - 3e) =\dim (\mathcal M (2h+1)), 
\end{eqnarray*} as $h \geqslant 2$. The previous inequality shows that 
$\dim (\mathcal M (2h+1)^{\rm ext}) < \dim (\mathcal M (2h+1))$, as stated; in particular \eqref{eq:sonugualiodd} is a contradiction, 
which forces also $\cU_{2h+1}$ general to be slope-stable.
\end{proof}

The collection of the previous results in even and in odd ranks, respectively, gives the following:

\begin{theo}\label{thm:generale}  Let  $(X_e, \xi) \cong \scrollcal{E_e}$ be a $3$-fold  scroll over $\FF_e$, with $e>0$ and $\mathcal E_e$ as in Assumptions \ref{ass:AB}. 
Let $\varphi: X_e \to \FF_e$ be the scroll map and $F$ be the $\varphi$-fibre. Let $r \geqslant 2$ be any integer. 

Then the moduli space of rank-$r$ vector bundles $\cU_r$ on $X_e$ which are Ulrich w.r.t. $\xi$ and with first Chern class
\begin{eqnarray*}c_1(\cU_r) =
    \begin{cases}
      r \xi + \varphi^*\Oc_{\FF_e} (3, b_e -3) + \varphi^*\Oc_{\FF_e}\left(\frac{r-3}{2}, \frac{(r-3)}{2} (b_e -  e- 2\right), & \mbox{if $r$ is odd}, \\
      r \xi + \varphi^*\Oc_{\FF_e}\left(\frac{r}{2}, \frac{r}{2}(b_e-e -2) \right), & \mbox{if $r$ is even}.
    \end{cases}\end{eqnarray*}
is not empty and it contains a generically smooth component $\mathcal M(r)$ of dimension 
		\begin{eqnarray*}\dim (\mathcal M(r) ) = \begin{cases} \left( \frac{(r-3)^2}{4} + 2 \right) (6 b_e - 9e -4) + \frac{9}{2} (r-3) (2b_e - 3e), & \mbox{if $r$ is odd}, \\
			 \frac{r^2}{4} (6b_e- 9e-4) +1 , & \mbox{if $r$ is even}.
    \end{cases}
    \end{eqnarray*} 
		The general member $[\cU_r] \in \mathcal M(r)$  corresponds to a  slope-stable vector bundle, of slope w.r.t. $\xi$ given by 
$\mu(\cU_r) = 8 b_e - k_e - 12 e - 3$. 
\end{theo}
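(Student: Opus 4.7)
The plan is to prove the theorem by reducing it to the previously established pieces and then packaging them via two parallel inductions, one on even ranks and one on odd ranks. Since the statement is essentially the amalgamation of the work done in Section \ref{Ulrich higher rk  vb} for $e > 0$, the main task is to check that the inductive machinery covers every rank $r \geqslant 2$ and yields exactly the invariants listed.

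First I would split the argument by parity. For even $r = 2h$, the base case $h=1$ is Theorem \ref{thm:rk 2 vctB e>0}, which furnishes $\mathcal M(2)$ with the correct $c_1$, slope and dimension. For $h \geqslant 2$ I would invoke the inductive construction \eqref{eq:estensione}, where $\mathcal M(2h)$ is defined as the modular component containing non-trivial extensions
\[
0 \to \cU'_2 \to \cF_{2h} \to \cU_{2h-2} \to 0,
\]
and then apply Proposition \ref{prop:h+1} for non-emptiness (the key input being the cohomological estimates in Lemma \ref{lemma:indu}, together with the computation of $\chi(\cU'_2 \otimes \cU_{2h-2}^{\vee}) = (h-1)(6b_e-9e-4) > 0$ which guarantees a non-zero $\mathrm{Ext}^1$) and Proposition \ref{prop:h+1-II} for generic smoothness, the dimension formula $\frac{r^2}{4}(6b_e-9e-4)+1$, and slope-stability. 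The first Chern class $c_1(\cF_{2h}) = h\, c_1(\cU_2) = r\xi + \varphi^*\Oc_{\FF_e}(r/2, (r/2)(b_e-e-2))$ follows directly from \eqref{eq:c1rcasoe}, and the slope $\mu = 8b_e - k_e - 12e -3$ is automatic from \eqref{eq:slopercasoe}.

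For odd $r = 2h+1$, the base case $h = 1$ is Theorem \ref{thm:antonelli3}, which supplies $\mathcal M(3)$ via pullback of rank-$3$ Ulrich bundles on $\FF_e$ obtained from \cite{ant}. For $h \geqslant 2$ I would use the extensions \eqref{eq:estensioneodd}
\[
0 \to \cU_2 \to \cF_{2h+1} \to \cU_{2h-1} \to 0,
\]
applying Proposition \ref{prop:h+1odd} for existence (relying on Lemma \ref{lemma:induodd} to bound $\chi$ and exhibit non-trivial extensions), and Proposition \ref{prop:h+1-IIodd} for smoothness, the dimension formula $\bigl(\frac{(r-3)^2}{4}+2\bigr)(6b_e-9e-4) + \frac{9}{2}(r-3)(2b_e-3e)$, and slope-stability. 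The Chern class formula \eqref{eq:c1rcasoeodd} matches the one stated in the theorem once one sets $h = (r-1)/2$.

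The hardest step, which is already absorbed into the auxiliary results, is establishing slope-stability of the general member of $\mathcal M(r)$ for $r \geqslant 4$. The strategy, carried out in Propositions \ref{prop:h+1-II} and \ref{prop:h+1-IIodd}, is a degeneration argument: assuming the general bundle is not slope-stable, one produces a one-parameter saturated destabilizing sequence whose limit lies in $\mathcal M(r)^{\mathrm{ext}}$; then Lemmas \ref{lemma:uniquedest1} and \ref{lemma:uniquedest1odd} pin down the destabilizing subsheaf and quotient as deformations of the summands of the extension, forcing $\mathcal M(r)^{\mathrm{ext}} = \mathcal M(r)$. The contradiction comes from the strict dimension inequality $\dim \mathcal M(r)^{\mathrm{ext}} < \dim \mathcal M(r)$, which in turn is proved by combining the dimension of $\mathrm{Ext}^1$ (bounded via the inductive cohomology vanishings of Lemmas \ref{lemma:indu} and \ref{lemma:induodd}) with the inductive dimension of $\mathcal M(r-2)$. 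Once these two inductions are executed in parallel, the theorem is assembled by taking the corresponding formulas in each parity and reading off the common slope $\mu(\cU_r) = 8b_e - k_e - 12e - 3$.
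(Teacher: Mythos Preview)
Your proposal is correct and follows exactly the paper's approach: the theorem is assembled by splitting into even and odd ranks, invoking Theorem~\ref{thm:rk 2 vctB e>0} and Propositions~\ref{prop:h+1}, \ref{prop:h+1-II} in the even case, and Theorem~\ref{thm:antonelli3} and Propositions~\ref{prop:h+1odd}, \ref{prop:h+1-IIodd} in the odd case, together with the Chern class and slope formulas \eqref{eq:c1rcasoe}, \eqref{eq:slopercasoe}, \eqref{eq:c1rcasoeodd}, \eqref{eq:slopercasoeodd}. One small slip: the quantity you write as $\chi(\cU'_2 \otimes \cU_{2h-2}^{\vee}) = (h-1)(6b_e-9e-4)$ is actually $-\chi$, not $\chi$, but this does not affect the argument.
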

\begin{proof} In the even case, the statement directly follows from Theorem  \ref{thm:rk 2 vctB e>0}, 
\eqref{eq:c1rcasoe}, \eqref{eq:slopercasoe} and from Propositions \ref{prop:h+1}, \ref{prop:h+1-II}. For odd cases, the statement follows from Theorem  \ref{thm:antonelli3}, 
\eqref{eq:c1rcasoeodd}, \eqref{eq:slopercasoeodd} and from Propositions \ref{prop:h+1odd}, \ref{prop:h+1-IIodd}
\end{proof}

\section{Final remarks on Ulrichness over $\FF_e$, $e \geqslant 0$}\label{S:final} As a direct consequence of {\bf Main Theorem}, {\bf Main Corollary}, Theorem \ref{pullback} and the one--to--one correspondence in \cite[Proposition 6.2]{f-lc-pl}, one has the following result concerning moduli spaces of rank-$r$ vector bundles on Hirzebruch surfaces $\FF_e$, for any $r \geqslant 1$ and any $e \geqslant 0$, which are Ulrich w.r.t. the very ample line bundle $c_1(\mathcal E_e) = 3 C_e + b_ef$, with $b_e \geqslant 3e+2$ as in \eqref{eq:rem:assAB} (the case $r=1,2,3$ 
already known by \cite{a-c-mr,cas,ant}). 

\begin{theo}\label{thm:UlrichFe} For any integer $e \geqslant 0$, consider the Hirzebruch surface 
$\mathbb{F}_e$ and let $\Oc_{\FF_e}(\alpha,\beta)$ denote the line bundle 
$\alpha C_e + \beta f$ on $\mathbb{F}_e$, where $C_e$ and $f$ are the generators 
of ${\rm Num}(\mathbb{F}_e)$.

Consider the very ample polarization $c_1(\E_e)= 3 C_e + b_e f$, where $b_e \geqslant 3e+2$. Then: 

\smallskip

\noindent 
(a) $\FF_e$  does  not support any Ulrich line bundle  w.r.t. $c_1(\E_e)$ unless $e = 0$. In this latter case, the unique line bundles on 
$\FF_0$ which are Ulrich w.r.t. $c_1(\E_e)$ are 
$$\mathcal L_1 :=\Oc_{\FF_0}(5,b_0-1) \; {\rm and} \; \mathcal L_2 := \Oc_{\FF_0}(2,2b_0-1).$$

\smallskip 

\noindent
(b) Set $e=0$ and let $r \geqslant2$ be any integer. Then the moduli space of rank-$r$ vector bundles $\cH_r$ on $\FF_0$ which are Ulrich w.r.t. $c_1(\E_0)$ and with first Chern class
\begin{eqnarray*}c_1(\cH_r) =
    \begin{cases}
      \Oc_{\FF_0}(3(r+1), (r+1) b_0 -3) \otimes  \Oc_{\FF_0}\left(\frac{r-3}{2}, \frac{(r-3)}{2}(b_0-2)\right), & \mbox{if $r$ is odd}, \\
      \Oc_{\FF_0}(3r, r b_0) \otimes \Oc_{\FF_0}\left(\frac{r}{2}, \frac{r}{2}(b_0-2)\right), & \mbox{if $r$ is even}.
    \end{cases}\end{eqnarray*}
    is not empty and it contains a generically smooth component $\mathcal M_{\FF_0}(r)$ of dimension 
		\begin{eqnarray*}\dim (\mathcal M_{\FF_0}(r) ) = \begin{cases} \frac{(r^2 -1)}{4}(6 b_0 -4), & \mbox{if $r$ is odd}, \\
			 \frac{r^2}{4} (6b_0-4) +1 , & \mbox{if $r$ is even}.
    \end{cases}
    \end{eqnarray*}
    The general point $[\cH_r] \in \mathcal M_{\FF_0}(r)$ corresponds to a  slope-stable vector bundle.

\smallskip

\noindent
(c) When $e >0$, let $r \geqslant 2$ be any integer. Then the moduli space of rank-$r$ vector bundles $\cH_{r}$ on $X_e$ which are Ulrich w.r.t. $c_1(\E_e)$ and with first Chern class
\begin{eqnarray*}c_1(\cH_r) =
    \begin{cases}
     \Oc_{\FF_e}(3 (r+1), (r+1)b_e-3) \otimes \Oc_{\FF_e}\left(\frac{r-3}{2}, \frac{(r-3)}{2}(b_e - e -2)\right), & \mbox{if $r$ is odd}, \\
      \Oc_{\FF_e}(3 r, r b_e) \otimes  \Oc_{\FF_e}\left(\frac{r}{2}, \frac{r}{2}(b_e-e-2)\right), & \mbox{if $r$ is even}.
    \end{cases}\end{eqnarray*}
    is not empty and it contains a generically smooth component $\mathcal M_{\FF_e}(r)$ of dimension 
		\begin{eqnarray*}\dim (\mathcal M_{\FF_e}(r) ) = \begin{cases} \left(\frac{(r -3)^2}{4}+ 2 \right)(6 b_e - 9e -4) + \frac{9}{2}(r-3) (2b_e-3e), & \mbox{if $r$ is odd}, \\
			 \frac{r^2}{4} (6b_e- 9e-4) +1 , & \mbox{if $r$ is even}.
    \end{cases}
    \end{eqnarray*} The general point $[\cH_{r}] \in \mathcal M_{\FF_e}(r)$ corresponds to a  slope-stable vector bundle. 
\end{theo}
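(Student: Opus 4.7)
The plan is to deduce Theorem~\ref{thm:UlrichFe} from \textbf{Main Theorem}, \textbf{Main Corollary} and Theorem~\ref{pullback}, by establishing that all rank $r$ Ulrich bundles on $(X_e,\xi)$ that appear in the proof of the \textbf{Main Theorem} are of pullback form $\mathcal{U}=\xi\otimes\varphi^{*}(\mathcal{F})$ for some rank $r$ vector bundle $\mathcal{F}$ on $\FF_e$. Once this is in place, Theorem~\ref{pullback} asserts that $\mathcal{H}:=\mathcal{F}\otimes c_{1}(\mathcal{E}_{e})$ is Ulrich w.r.t.\ $c_{1}(\mathcal{E}_{e})$ on $\FF_e$, and a direct Chern class computation gives $c_{1}(\mathcal{H})=c_{1}(\mathcal{F})+r\,c_{1}(\mathcal{E}_{e})$, matching the formulas in parts (b) and (c).

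The base cases are immediate: for $e=0$, the Ulrich line bundles $L_{1}$ and $L_{2}$ of Theorem~\ref{prop:LineB}--(i) are manifestly of pullback form (coefficient $a=1$ in the notation of its proof), yielding $\mathcal{L}_{1}=\mathcal{O}_{\FF_0}(5,b_{0}-1)$ and $\mathcal{L}_{2}=\mathcal{O}_{\FF_0}(2,2b_{0}-1)$; the line bundles $M_{1},M_{2}$ have $a\in\{0,2\}$ and are \emph{not} of pullback form, which is consistent with their absence from part (a) of Theorem~\ref{thm:UlrichFe}. Still in the base case, the rank-$2$ bundle $\mathcal{V}=\xi\otimes\varphi^{*}(\mathcal{H})$ of Theorem~\ref{thm:rk 2 vctB e>0} and the rank-$3$ bundle $\mathcal{V}_{3}=\xi\otimes\varphi^{*}(\mathcal{H}_{3})$ of Theorem~\ref{thm:antonelli3} are of pullback form by construction.

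The inductive step rests on the following observation: any extension
\[
0\to\xi\otimes\varphi^{*}\mathcal{F}_{1}\to\mathcal{U}\to\xi\otimes\varphi^{*}\mathcal{F}_{2}\to 0
\]
on $X_e$ is again of pullback form. Twisting by $\xi^{-1}$ yields $0\to\varphi^{*}\mathcal{F}_{1}\to\mathcal{U}\otimes\xi^{-1}\to\varphi^{*}\mathcal{F}_{2}\to 0$, and restricting to any $\varphi$-fibre $F\cong\mathbb{P}^{1}$ gives an extension of trivial bundles on $\mathbb{P}^{1}$, which splits since $\mathrm{Ext}^{1}_{\mathbb{P}^{1}}(\mathcal{O}_{\mathbb{P}^{1}}^{\oplus a},\mathcal{O}_{\mathbb{P}^{1}}^{\oplus b})=0$. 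Hence $(\mathcal{U}\otimes\xi^{-1})|_{F}$ is trivial for every fibre, so the adjunction map $\varphi^{*}\varphi_{*}(\mathcal{U}\otimes\xi^{-1})\to\mathcal{U}\otimes\xi^{-1}$ is an isomorphism, that is, $\mathcal{U}\cong\xi\otimes\varphi^{*}\mathcal{F}$ with $\mathcal{F}=\varphi_{*}(\mathcal{U}\otimes\xi^{-1})$. Combined with the identities $H^{i}(X_{e},\mathcal{U}\otimes\mathcal{U}^{\vee})=H^{i}(\FF_e,\mathcal{F}\otimes\mathcal{F}^{\vee})$ (by projection formula and $R^{i}\varphi_{*}\mathcal{O}_{X_{e}}=0$ for $i\geqslant 1$), this ensures that the pullback locus is open in each modular component $\mathcal{M}(r)$ produced in Sections~\ref{Ulrich rk 2 vb}--\ref{Ulrich higher rk  vb} and contains its generic point, with identical dimension formulas and generic smoothness.

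The subtlest step is the transfer of slope-stability. For $\mathcal{U}=\xi\otimes\varphi^{*}\mathcal{F}$ one checks directly, using $\varphi_{*}\xi^{2}=c_{1}(\mathcal{E}_{e})$, that
\[
\mu_{\xi}(\xi\otimes\varphi^{*}\mathcal{G})=\xi^{3}+\mu_{c_{1}(\mathcal{E}_{e})}(\mathcal{G})
\]
for every coherent subsheaf $\mathcal{G}\subset\mathcal{F}$; hence any destabilizing subsheaf of $\mathcal{F}$ on $\FF_e$ would pull back to a destabilizing subsheaf of $\mathcal{U}$ on $X_e$, contradicting the slope-stability of $\mathcal{U}$ granted by the \textbf{Main Theorem}. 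Consequently $\mathcal{F}$, and then $\mathcal{H}=\mathcal{F}\otimes c_{1}(\mathcal{E}_{e})$, is slope-stable. The main technical obstacle, which I would handle at the very end, is promoting this pointwise correspondence to an isomorphism of the relevant (generically smooth) irreducible components of the moduli spaces: this is achieved via the quasi-inverse functors $\mathcal{F}\mapsto\xi\otimes\varphi^{*}\mathcal{F}$ and $\mathcal{U}\mapsto\varphi_{*}(\mathcal{U}\otimes\xi^{-1})$, together with the tangent/obstruction matching coming from the cohomological identity above, yielding the desired components $\mathcal{M}_{\FF_e}(r)$ with the stated dimensions, Chern classes and slope-stability of the general point.
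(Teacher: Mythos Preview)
Your proof is correct and follows the same overall strategy as the paper: transfer the results of the {\bf Main Theorem} from $(X_e,\xi)$ down to $(\FF_e,c_1(\mathcal E_e))$ via the correspondence of Theorem~\ref{pullback}. The difference lies in how the correspondence is realised. The paper first checks, by the same fibrewise induction you use, that $\mathcal U_r|_F\cong\mathcal O_{\Pp^1}(1)^{\oplus r}$ on the \emph{general} fibre, and then invokes the machinery of \cite[Theorem~6.1, Proposition~6.2]{f-lc-pl} (involving the blow--up $\tilde S\hookrightarrow X_e$ and the push--forward $\varphi_*(\mathcal U_r\otimes i_*\mathcal O_{\tilde S}(\sum E_i))$) as a black box to produce $\mathcal H_r$ and the one--to--one correspondence at the level of moduli. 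You instead argue directly: extensions of bundles of the form $\xi\otimes\varphi^*(-)$ remain of that form because the extension problem on each fibre is governed by $\Ext^1_{\Pp^1}(\mathcal O^{\oplus a},\mathcal O^{\oplus b})=0$, and the ``pullback locus'' is open in $\mathcal M(r)$ because the functor $\mathcal F\mapsto \xi\otimes\varphi^*\mathcal F$ induces isomorphisms $H^i(\FF_e,\mathcal F\otimes\mathcal F^{\vee})\cong H^i(X_e,\mathcal U\otimes\mathcal U^{\vee})$ on tangent and obstruction spaces. This is more self--contained and avoids the external reference, at the cost of having to justify openness carefully (which you do via the $\Ext$--matching). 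Your slope computation $\mu_\xi(\xi\otimes\varphi^*\mathcal G)=\xi^3+\mu_{c_1(\mathcal E_e)}(\mathcal G)$, using $\varphi_*\xi^2=c_1(\mathcal E_e)$, is also cleaner than the paper's argument, which restricts attention to destabilising \emph{Ulrich} sub--bundles and again appeals to the correspondence in \cite{f-lc-pl}.
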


\begin{proof} (a) When $e >0$, the fact that $\FF_e$ does not support line bundles which are Ulrich w.r.t. $c_1(\E_e) = 3 C_e + b_ef$ is proved in \cite[Thm.\;2.1]{a-c-mr}; however, this fact is also a direct consequence of {\bf Main Theorem}--(a) and Theorem \ref{pullback}. 

For $e=0$  observe that the line bundles $L_1$ and $L_2$ as in {\bf Main Theorem}--(a), which are Ulrich w.r.t. $\xi$ on $X_0$, give rise, by push forward and by Theorem \ref{pullback}, to Ulrich line bundles  w.r.t $c_1(\E_0)=3C_0+b_0 f$ on $\FF_0$. Indeed
\begin{eqnarray*}
L_1\otimes \xi^{\vee} = \varphi^{*}\Oc_{\FF_0}(2,-1)\, \, \mbox{and} \,\, L_2\otimes \xi^{\vee} = \varphi^{*}\Oc_{\FF_0}(-1,b_0-1),
\end{eqnarray*}
 thus 
\begin{eqnarray*}
\varphi_{*}(L_1\otimes \xi^{\vee})=\Oc_{\FF_0}(2,-1)\, \, \mbox{and} \,\,\varphi_{*}(L_2\otimes \xi^{\vee})=\Oc_{\FF_0}(-1,b_0-1),
\end{eqnarray*} which give 
\begin{eqnarray*}
\mathcal L_1 := \varphi_{*}(L_1\otimes \xi^{\vee})\otimes \Oc_{\FF_0}(c_1(\E_0))=  \Oc_{\FF_0}(5,b_0-1) \\
 \mathcal L_2 := \varphi_{*}(L_2\otimes \xi^{\vee})\otimes \Oc_{\FF_0}(c_1(\E_0))= \Oc_{\FF_0}(2,2b_0-1)
\end{eqnarray*}that are the only Ulrich line bundles on $\FF_0$ w.r.t. $c_1(\E_e) =3C_0+b_0 f$, according to \cite[Example 2.3]{cas} and \cite[Proposition 4.4]{ant}.

\smallskip

\noindent
(b) As for any rank $r \geqslant 2$ in the case $e=0$, observe that vector bundles $\cU_r$ on $X_0$ as in {\bf Main Theorem}--(b) when restricted to a general fibre $F$ of $\varphi: X_0 \to \FF_0$ are such that $\cU_r|_F \cong \cO_{\Pp^1}(1)^{\oplus r}$. To see this, one proceeds by induction. By \eqref{extension1}, $L_1|_F \cong \cO_{\Pp^1}(1) \cong L_2|_F$, thus \linebreak 
$\cF_1| _F \in {\rm Ext}^1(\cO_{\Pp^1}(1),\cO_{\Pp^1}(1))=H^1(\cO_{\Pp^1})=(0)$, hence
$\cF_1| _F \cong \cO_{\Pp^1}(1)^{\oplus 2}$, which is the most balanced splitting.
Since $\cU_2=\cU$ as in Theorem \ref{prop:rk 2 simple Ulrich vctB e=0;I} 
is a deformation of  $\cF_1$ then $\cU_2 |_F \cong \cO_{\Pp^1}(1)^{\oplus 2}$.  Assume by induction, that for some $r \geqslant 3$ one has $\cU_{r-1} |_F \cong \cO_{\Pp^1}(1)^{\oplus r-1}$ for $[\cU_{r-1}] \in \cM(r-1)$ general.
Then by \eqref{eq:estensioneL}, one has
\begin{eqnarray*}
0 \to \cU_{r-1} |_F =\cO_{\Pp^1}(1)^{\oplus r-1} \to \cF_r |_F\to L_{\epsilon_r}|_F=\cO_{\Pp^1}(1) \to 0
 \end{eqnarray*}
 and, once again, since ${\rm Ext}^1(\cO_{\Pp^1}(1), \cO_{\Pp^1}(1)^{\oplus r-1} )=0$ then $\cF_r |_F\cong \cO_{\Pp^1}(1)^{\oplus r}$. 

Thus by \cite[Theorem 6.1]{f-lc-pl}, using the diagram 
 
 \begin{equation*}
    \label{composition}
    \xymatrix@-2ex{
    \tilde{S} \, \, \ar@{^{(}->}[r]^i \ar[rd]_{\varphi'} & X_0\ar[d]^{\varphi} \\
                                      & {\FF_0}
    }
\end{equation*}
therein, where $\varphi':   \tilde{S} \to \FF_0$ is the blow-up  at $c_2(\E_0)$ points on $\FF_0$ and where $\sum E_i$  is the  $\varphi'$-exceptional divisor, one has that $\varphi_*(\cU_r \otimes i_*(\cO_{\tilde{S}}(\sum E_i)))$ is a rank $r$ vector bundle on $\FF_0$ which is Ulrich w.r.t. $c_1(\E_0)=3C_0+b_0 f $. More precisely 
\cite[Proposition 6.2]{f-lc-pl} gives rise to a component of the moduli space of Ulrich bundles of rank $r$ on $\FF_0$, which are Ulrich w.r.t.  $c_1(\E_0)=3C_0+b_0 f$, of first Chern class 
\begin{eqnarray*}
c_1(\varphi_*(\cU_r \otimes i_*(\cO_{\tilde{S}}(\sum E_i)))).
 \end{eqnarray*} On the other hand, if we set $\cH_r := \varphi_*(\cU_r \otimes i_*(\cO_{\tilde{S}}(\sum E_i)))$, 
the one-to-one correspondence in \cite[Proposition 6.2]{f-lc-pl} asserts that $\cU_r \cong \xi \otimes \varphi^*(\cH_r(-c_1(\E_0)))$. From {\bf Main Theorem}-(b) one knows 
$c_1(\cU_r \otimes  \xi^{\vee})$ and, since $c_1(\cU_r \otimes  \xi^{\vee}) =  c_1(\cH_r(-c_1(\E_0))) =  c_1(\cH_r)-r c_1(\E_0)$, it follows that 
$c_1 (\cH_r)$ is as stated; pairs $(r, c_1(\cH_r))$ are {\em Ulrich admissible pairs} w.r.t. $3C_0+b_0 f$ in the sense of \cite[Def.\;5.1]{ant}.

By  the one-to-one correspondence in \cite[Proposition 6.2]{f-lc-pl} and {\bf Main Theorem}-(b), we have therefore the existence of moduli spaces 
$\cM_{\FF_0}(r)$ of Ulrich bundles on $\FF_0$ w.r.t. $ 3 C_0 + b_0 f$, of any rank $r\geqslant 2$, of first Chern class and dimension as stated. 
Moreover since $\cU_r \cong \xi \otimes \varphi^*(\cH_r(-c_1(\E_0)))$ then
\begin{eqnarray*}
h^j(X_0, \cU_r \otimes \cU_r^{\vee})&=&h^j(X_0,  \varphi^*(\cH_r(-c_1(\E_0)))\otimes \varphi^*(\cH_r(-c_1(\E_0)))^{\vee})\\ \nonumber
&=& h^j(X_0,  \varphi^*(\cH_r\otimes \cH_r^{\vee}))=h^j(\FF_0,  \cH_r\otimes  \cH_r^{\vee}),  
\end{eqnarray*}i.e. $h^2(\FF_0,  \cH_r\otimes  \cH_r^{\vee}) =0$ and $h^0(\FF_0,  \cH_r\otimes  \cH_r^{\vee}) =1$, 
namely the component  $\cM_{\FF_0}(r)$ is also generically smooth and its general point $[\cH_r] \in  \cM_{\FF_0}(r)$ corresponds to a simple bundle. 

It is also clear that $\cH_r$ is slope-stable w.r.t. $3 C_0 + b_0f$: if not, by the one-to-one correspondence given by \cite[Proposition 6.2]{f-lc-pl}, any destabilizing rank-$k$ 
Ulrich sub-bundle $\mathcal D_k$ of $ \mathcal \cH_r$ general, for some $1 \leqslant k \leqslant r-1$, would give rise to a rank-$k$ 
vector bundle $\xi \otimes \varphi^*(\mathcal D_k(- c_1 (\mathcal E_0))$ which is Ulrich on $X_0$ w.r.t. $\xi$, by Theorem \ref{pullback}, and which would be a 
destabilizing sub-bundle on $\cU_r$ general in $\mathcal M(r)$ on $X_0$, contradicting {\bf Main Theorem}--(b).  

When $r=2,3$, the previous arguments are in accordance with \cite[Prop.\;6.4]{ant}, for the polarization $3C_0+b_0 f$, and $c_1$ as stated.

\smallskip

\noindent
(c) Similar arguments as in (b), but for the case for $e>0$, are obtained by using {\bf Main Theorem}--(c), Theorem \ref{pullback} and the one-to-one correspondence in \cite[Proposition 6.2]{f-lc-pl}. 
\end{proof}
	
From Theorem \ref{thm:UlrichFe} it follows that the pairs $(\mathbb{F}_e,  3 C_e + b_e f)$ are Ulrich wild and this is in accordance with \cite[Lemma 5.2]{cas}. 

\vspace{2mm}

We want to stress that, when $r=2,\;3$, we cannot deduce the irreducibility of the moduli spaces of bundles $\cU_r$ as in {\bf Main Theorem}--(b) or (c) from the correspondence in  \cite[Proposition 6.2]{f-lc-pl} and irreducibility results on $\FF_e$, $e \geqslant 0$, given by \cite[Theorem 4.7]{CM} (cf. also \cite[Propositions\;6.1,\;6.4]{ant}). Indeed, in principle, there could exist other components, different from ours $\cM(r)$, $r=2, 3$, where the general Ulrich bundle therein does not split as $\cO_{\Pp^1}(1)^{\oplus r}$ on the general $\varphi$--fiber.

%%%%%
  %
  % BIBLIOGRAFIA
  %
  %%%%%%%%

\end{document}